\documentclass[11pt]{article}
\usepackage{fullpage}
\usepackage{authblk}
\usepackage{natbib}
\usepackage{algorithm}
\usepackage{algpseudocode}
\usepackage{amsmath,amsthm,amsfonts}
\usepackage{amsmath}
\usepackage[subnum]{cases}
\usepackage[most]{tcolorbox}
\usepackage{mathrsfs}
\usepackage{amssymb}
\usepackage{color}
\usepackage{mathrsfs}
\usepackage{empheq}
\usepackage{enumitem}
\usepackage{bm}
\usepackage{multirow}
\usepackage{booktabs}
\usepackage{makecell}
\usepackage{graphicx}
\usepackage{subcaption}
\usepackage{comment}
\usepackage{cases}
\usepackage{appendix}
\usepackage{tikz}
\usetikzlibrary{arrows,shapes}
\usepackage[colorlinks= true, linkcolor=brown, citecolor=blue, urlcolor=black]{hyperref}
\usepackage{cleveref}
\usepackage{marginnote}
\usepackage{diagbox} 
\numberwithin{equation}{section}

\theoremstyle{definition}

\newtheorem{theorem}{Theorem}[section]
\newtheorem{lemma}[theorem]{Lemma}

\newtheorem{condition}[theorem]{Condition}
\newtheorem{prop}[theorem]{Proposition}
\newtheorem{defn}[theorem]{Definition}

\newcommand{\defi}{\overset{\triangle}{=}}

\usepackage{booktabs}
\usepackage{pgfplots}
\usetikzlibrary{calc,intersections}
\usepackage{tikz}
\usetikzlibrary{patterns}

\usetikzlibrary{shapes.geometric, arrows, positioning}

\tikzset{
	mybox/.style  = {draw, rectangle, minimum width=2.0cm, minimum height=0.8cm, text centered, text width=2.0cm,   
		font=\normalsize,  align=left},
	box/.style  = {draw, rectangle, minimum width=2.0cm, minimum height=0.6cm, text centered, text width=7.4cm,   
		font=\normalsize,  align=left},		
	box1/.style  = {draw, rectangle, minimum width=2.0cm, minimum height=0.6cm, text centered, text width=5.6cm,   
		font=\normalsize},	
	box2/.style  = {draw, rectangle, minimum width=2.0cm, minimum height=0.6cm, text centered, text width=7.4cm,   
		font=\normalsize,  align=left},	
	myarrow/.style = {line width=0.2pt, draw=black, -triangle 60, postaction={draw, line width=0.2pt, shorten >=10pt,-}}	
}

\tikzstyle{arrow} = [->, >=stealth, -triangle 60]

\allowdisplaybreaks

\makeatletter
\newcommand{\leqnomode}{\tagsleft@true}
\newcommand{\reqnomode}{\tagsleft@false}
\makeatother

\begin{document}
%\maketitle

\title{Numerical Construction of Elliptic Lower-Dimensional Quasi-Periodic Solutions with a Priori Bound}
\author[1]{Mingwei Fu}
\author[2,3]{Bin Shi\thanks{Corresponding author: \url{binshi@fudan.edu.cn} } }
\affil[1]{School of Mathematical Sciences, University of Chinese Academy of Sciences, Beijing 100049, China}
\affil[2]{Center for Mathematics and Interdisciplinary Sciences, Fudan University, Shanghai 200433, China}
\affil[3]{Shanghai Institute for Mathematics and Interdisciplinary Sciences (SIMIS), Shanghai 200433, China}

\date\today

\maketitle

%\bin{Vanishing Viscosity should be included in the main part, the view is key important. At least, it is mentioned a little.}\wjs{OK.}
\begin{abstract}

A numerical framework for constructing full-dimensional quasi-periodic solutions 
in nearly integrable systems was recently developed by~\citet{fu2026numerical}. 
Based on an alternating scheme, this approach effectively overcomes the secular 
drift in angle variables, a fundamental limitation of classical symplectic 
integrators. 
However, in many applications, such as the restricted three-body problem, 
lower-dimensional quasi-periodic solutions hold greater significance. 
The construction of these solutions is considerably more challenging due to the 
presence of normal frequencies, which give rise to intricate resonance phenomena. 
Beyond the standard subspace resonance requirements, one must also account for 
the first and second Melnikov conditions to eliminate small divisors. 
In this study, we extend the proposed alternating numerical scheme to 
compute the elliptic lower-dimensional quasi-periodic solutions. 
Numerical experiments are presented for the H\'{e}non-Heiles model in galaxy 
scale and the Fermi--Pasta--Ulam (FPU) model in quantum scale, 
demonstrating the effectiveness of the proposed method. 
Furthermore, we emphasize that the perturbation Hamiltonian is not merely a 
polynomial with real coefficients but is a general real-valued function. 
As a result, the associated perturbation operator exhibits Gevrey-type decay 
without possessing a Hankel-like structure. 
Meanwhile, we further simplify the multi-scale analysis by exploiting the 
resolvent identity, showing that the global inverse can be expressed linearly in 
terms of local inverses via the gluing procedure. 
This representation reveals a regime-dependent interaction structure: weak 
interactions dominate at short range, while strong interactions emerge at long 
range. This balance ensures that the Gevrey decay of the inverse remains 
uniformly controlled. 
Moreover, within this linear representation, the inversion conditions provide a 
clearer characterization of the associated localization properties.

\end{abstract}

%\thispagestyle{empty}
%\setcounter{page}{0}
%
%\newpage
%\tableofcontents
%\newpage

\section{Introduction}
\label{sec: intro}

Nearly integrable systems constitute a fundamental class of Hamiltonian dynamics, 
serving as a bridge between perfectly predictable, ``solvable'' dynamics and the 
complex, chaotic phenomena observed in nature. 
They arise across a wide range of physical scales, from the macroscopic long-time 
stability of planetary orbits in our solar system to the microscopic vibrations 
of atoms in crystalline lattices. 
In action-angle coordinates, such systems are described by a Hamiltonian of the 
form: 
\[
H(I, \theta) = H_0(I) + \varepsilon H_1(I, \theta),
\]
where $H_0(I)$ is the integrable component and $\varepsilon H_1(I, \theta)$ is a 
small nonlinear perturbation with $0 < \varepsilon \ll 1$. 
The phase space is equipped with the standard symplectic form 
$\varpi = dI \wedge d\theta$.

In the absence of perturbation ($\varepsilon = 0$), the dynamics are governed 
exclusively by $H_0$. The resulting motion is explicitly given by: 
\begin{equation}
    \label{eqn: linear-near-soln}
    I(t) = I_0, \qquad \theta(t) = \theta_0 + \omega(I_0)t,
\end{equation}
where the frequency satisfies $\omega(I_0) = \nabla H_0(I_0)$. 
As a result, the trajectories are confined to $n$-dimensional invariant tori in 
phase space, exhibiting perfectly quasi-periodic behavior. 
To study the effect of the perturbations, we consider the dynamics in a 
neighborhood of the unperturbed torus $\{ I_0 \} \times \mathbb{T}^n$. 
By introducing the shifted action variable $J = I - I_0$, we translate this torus 
to $\{ 0 \} \times \mathbb{T}^n$. 
A Taylor expansion of $H_0(I)$ about $I_0$ allows us to reformulate the 
Hamiltonian as:
\begin{equation}
    \label{eqn: near-integrable-1}
    H(J, \theta; I_0) = \langle \omega(I_0), J \rangle + \varepsilon H_1(J, \theta; I_0),
\end{equation}
where the constant term $H_0(I_0)$ is omitted, as it does not affect the 
equations of motion. 
Note that the redefined perturbation term $H_1$ incorporates both the original 
perturbation and the higher-order terms (second-order and above) arising from the 
expansion of $H_0$. 
A rigorous derivation can be found in~\citet{poschel2001lecture}.

We restrict our attention to initial actions $I_0$ in a bounded domain 
$D_n \subseteq \mathbb{R}^n$, which necessitates an analysis of the frequency 
map $\omega: D_n \rightarrow \Omega_n \subseteq \mathbb{R}^n$. 
If this map is a diffeomorphism, the Hamiltonian~\eqref{eqn: near-integrable-1} 
can be reparameterized directly by the frequency $\omega \in \Omega_n$: 
\begin{equation}
    \label{eqn: near-integrable-2}
    H(J, \theta; \omega) = \langle \omega, J \rangle + \varepsilon H_1(J, \theta; \omega). 
\end{equation}
The validity of this reparameterization is ensured by the isoenergetic 
non-degeneracy condition. Specifically, if the following determinant remains 
non-zero for all $I_0 \in D_n$:
\[
\det 
\begin{pmatrix}  
    \nabla^2 H_0 & \nabla H_0 \\ 
    (\nabla H_0)^{\top}   & 0   
\end{pmatrix} 
\neq 0, 
\]
then the frequency map $\omega: I_0 \mapsto \omega(I_0)$ is guaranteed to be a 
diffeomorphism (see e.g.~\citet{arnol2013mathematical}). 
Throughout this work, we assume that all components of the frequency 
$\omega \in \Omega_n$ are non-zero, i.e., $\omega_j \neq 0$ for 
$j = 1, \ldots, n$, and that $H_1$ is at least quadratic in $J$. 
If $H_1$ contained linear terms in $J$, it would contribute to the effective 
frequency, necessitating a frequency drift correction to preserve the resonance 
conditions.

%%%%%%%%%%%%%%%%%%%%%%%%%%%%%%%%%%%%%%%%%%%%%%%%%%%%%%%%%%%%%%%%%%%%%%%%%%%%%%%%%%%%%%%%%%%%%%%%%
\subsection{Lower-dimensional tori}
\label{subsec: lower-dimensional-tori}

We now consider solutions of the nearly integrable 
system~\eqref{eqn: near-integrable-2}, with particular emphasis on perturbed 
trajectories near the unperturbed $n$-torus $\{0\} \times \mathbb{T}^n$. 
To characterize the domain $D_n$, let $\mathcal{H}_j $ denote the $j$-th 
coordinate hyperplane
\[
\mathcal{H}_j = \{ I_{0} \in \mathbb{R}^n | I_{0,j} = 0 \} 
\]
for $j=1,\ldots, n$. 
When the domain $D_n$ is set apart from these hyperplanes, i.e., 
$D_n \cap \mathcal{H}_j = \varnothing$ for all $j$, then the system operates in 
the regime of full-dimensional tori. 
In this setting, the classical KAM theorem, pioneered 
by~\citet{kolmogorov1954preservation, arnold1963proof} 
and~\citet{moser1962invariant}, serves as a fundamental landmark, which asserts 
that most invariant tori persist under sufficiently small perturbations, 
provided that they remain away from the resonant set:
\begin{equation}
    \label{eqn: diophantine}
    \langle k, \omega \rangle = 0,
\end{equation}
for all non-zero integer vectors $k \in \mathbb{Z}^n \setminus \{0\}$.

In many physical systems, however, some components of the initial action $I_0$ 
may vanish, i.e., $D_n \cap \mathcal{H}_j \neq \varnothing$ for some $j$, as 
noted by~\citet{moser1966theory}. 
A classic example is the restricted three-body problem 
(e.g., the Sun-Earth-Moon system), where one action variable, associated with the 
eccentricity of the primary (e.g., the Sun), vanishes in the integrable limit. 
In such cases, we may assume without loss of generality that the first $m$ components 
are non-zero while the remaining $n-m$ components vanish: 
$I_0 \in D_m \times \{0\} \subseteq \mathbb{R}^m \times \mathbb{R}^{n-m}$. 
The frequency then decomposes into tangential and normal components, 
$\omega = (\omega_T, \omega_N)$. 
The Hamiltonian~\eqref{eqn: near-integrable-2} can then be rewritten as
\begin{equation}
    \label{eqn: near-integrable-lower}
    H(J, \theta, z, \overline{z}; \omega) = \langle \omega_T, J \rangle + \langle \omega_{N}(\omega_T) z, \overline{z} \rangle + \varepsilon H_1(J, \theta; z, \overline{z}; \omega_T). 
\end{equation}
where $\omega_T(I_0) = \nabla H_0(I_0)$ satisfies the isoenergetic non-degeneracy 
condition. 
The phase space carries the symplectic form 
$\varpi = dJ \wedge d\theta + idz \wedge d\overline{z}$. 
Since only $m < n$ components are non-zero, the associated invariant manifolds are 
$m$-dimensional tori embedded in $2n$-dimensional space, referred to as 
lower-dimensional tori. 
We focus on the behavior of trajectories near the unperturbed torus 
$\{0\} \times \mathbb{T}^m \times \{ 0\} \times \{0\}$. 
When the normal frequencies are real, i.e., $\omega_N \in \mathbb{R}^{n-m}$, these 
lower-dimensional invariant tori are called elliptic. 
In this case, the resonant structure becomes more intricate. 
As first noted by~\citet{melnikov1965certain, melnikov1968certain}, in addition to 
the standard resonance condition~\eqref{eqn: diophantine} for all 
$k \in \mathbb{Z}^m \setminus \{0\}$, one must also consider the first Melnikov 
resonant condition: 
\begin{equation}
    \label{eqn: melnikov-1} 
    \langle k, \omega_T \rangle - \omega_{j} = 0,
\end{equation}
for all $ k \in \mathbb{Z}^m $ and $j = m+1, \ldots, n$, as well as the second 
Melnikov resonant condition: 
\begin{subequations}
    \label{eqn: melnikov-2}
    \begin{empheq}[left=\empheqlbrace]{align} 
    &  \langle k, \omega_T \rangle - \omega_{j_1} + \omega_{j_2}= 0,     \label{eqn: melnikov-2-1} \\
    &  \langle k, \omega_T \rangle - \omega_{j_1} - \omega_{j_2}= 0,     \label{eqn: melnikov-2-2} 
    \end{empheq}
\end{subequations}
for all $ k \in \mathbb{Z}^m $ and $j_1, j_2 = m+1, \ldots, n$. 
The KAM theorems established by~\citet{eliasson1988perturbations} 
and~\citet{poschel1989elliptic} show that most elliptic lower-dimensional invariant 
tori persist under sufficiently small perturbations, provided that these resonant 
conditions are avoided. 
In contrast, if some normal frequencies are complex, i.e., 
$\mathrm{Im}(\omega_j) \neq 0$ for some $j > m$, the corresponding lower-dimensional 
tori are termed hyperbolic, as first discussed by~\citet{moser1967convergent}. 
Unlike their elliptic counterparts, these hyperbolic tori are inherently unstable. 
Transversal intersections of their stable and unstable manifolds generate chains of 
homoclinic or heteroclinic orbits, which give rise to Arnold 
diffusion~\citep{arnold1964instability}, a phenomenon where trajectories undergo a 
slow, long-term drift in action variables across the phase space.

%%%%%%%%%%%%%%%%%%%%%%%%%%%%%%%%%%%%%%%%%%%%%%%%%%%%%%%%%%%%%%%%%%%%%%%%%%%%%%%%%%%%%%%%%%%%%%%%%
\subsection{Numerical scheme and iterative framework}
\label{subsec: numerical-scheme}

Unlike traditional normal form techniques in KAM theory, which employ successive 
symplectic transformations to separate perturbed dynamics from integrable motion, 
the Craig-Wayne-Bourgain (CWB) scheme adopts a more direct strategy. 
Originally established by~\citet{bourgain1997melnikov, bourgain2005green} for 
constructing elliptic lower-dimensional quasi-periodic solutions, this method avoids 
the explicit decomposition into integrable and perturbative components. 
Instead, it works directly with the Hamiltonian: 
\begin{equation}
    \label{eqn: near-integrable-3}
    H(I, \theta; \omega) = \langle \omega, I \rangle + \varepsilon H_1(I, \theta; \omega_T), 
\end{equation}
where the initial action is confined to a lower-dimensional subspace: 
$I_0 \in D_m \times \{ 0\} \subseteq \mathbb{R}^m \times \mathbb{R}^{n-m}$. 
Traditional numerical simulations typically rely on symplectic 
integrators~\citep{feng1995collected, hairer2006geometric, gauckler2018dynamics}. 
While these methods are highly effective at preserving underlying geometric structures, 
they suffer from an inherent limitation: the accumulation of phase errors, which 
leads to secular drift in the angle variables. 
This drift significantly degrades the long-time accuracy of quasi-periodic 
trajectories. 
In this paper, we generalize the numerical framework introduced 
by~\citet{fu2026numerical} to construct elliptic lower-dimensional quasi-periodic 
solutions with any prescribed level of accuracy, while effectively mitigating the 
drift issues that arise in standard time-integration methods.

By applying the complex canonical transformation, $z_j = \sqrt{I_j} e^{i\theta_j}$ 
for $j = 1, \ldots, n$, the Hamiltonian~\eqref{eqn: near-integrable-3} is rewritten as
\begin{equation}
    \label{eqn: near-integrable-use}
    H(z, \overline{z}) = \langle \omega \odot z, \overline{z} \rangle + \varepsilon H_1(z, \overline{z}), 
\end{equation}
where $\odot$ denotes componentwise multiplication. 
For simplicity, we omit the explicit dependence on the tangential frequencies 
$\omega_T$ within $H_1$, since all relevant properties of the perturbation hold 
uniformly across the domain with respect to $\omega_T$; this convention is adopted 
throughout the remainder of the paper. 
Under the symplectic form $dz \wedge d\overline{z} = - i dI \wedge d\theta$, the 
Hamilton's equations of motion are expressed as:
\begin{equation}
    \label{eqn: hamilton-system}
    \dot{z} = i \omega \odot z + i \varepsilon \frac{\partial H_1}{\partial \overline{z}}. 
\end{equation}
We now transform the Hamiltonian system~\eqref{eqn: hamilton-system} to an algebraic 
system available for numerical computation. 
Since the procedure closely follows~\citet{fu2026numerical}, we only provide a brief 
outline here. We seek quasi-periodic solutions of the form
\begin{equation}
    \label{eqn: quasi-periodic}
    z(\omega_T' t) = \sum_{k \in \mathbb{Z}^m} \hat{z}(k) e^{i \langle k, \omega_T' \rangle t}, 
\end{equation}
where $\hat{z}(k) \in \mathbb{R}^n$ are real Fourier coefficients, which constitute a 
key feature of the present approach. 
Substituting this ansatz~\eqref{eqn: quasi-periodic} into the Hamiltonian 
system~\eqref{eqn: hamilton-system}, we derive the following algebraic system as
\begin{equation}
    \label{eqn: lattice-algebraic}
    - \langle k, \omega'_T \rangle \hat{z} + \omega \odot \hat{z} + \varepsilon \hat{X} = 0, 
\end{equation}
where $\hat{X} = \{\hat{X}(k)\}_{k \in \mathbb{Z}^m}$ is the vector of Fourier 
coefficients corresponding to the perturbation 
$X = \partial H_1 / \partial \overline{z}$. 
We here focus on the elliptic lower-dimensional quasi-periodic solutions, with 
initial action $I_0 = a = (a_1, \ldots, a_m, 0, \ldots, 0 )$. 
Let $\mathcal{M} = \{ 1, \ldots, m \}$, and define the set of resonant indices 
\begin{equation}
    \label{eqn: resonant-set}
    \mathcal{S} = \left\{ (j, e_j) \big | j = 1, \ldots, m \right\},
\end{equation}
where $e_j \in \mathbb{Z}^m$ is the standard basis vector. 
Accordingly, we decompose the Fourier coefficients and the vector field into 
resonant and non-resonant parts: $\hat{z} = (\hat{z}_q, \hat{z}_p)^{\top}$ and 
$\hat{X} = (\hat{X}_q, \hat{X}_p)^{\top}$, where the resonant part is given by 
$\hat{z}_q = (\hat{z}_1(e_1), \ldots, \hat{z}_m(e_m))$ and 
$\hat{X}_q = (\hat{X}_1(e_1), \ldots, \hat{X}_m(e_m))$. 
For resonant indices $(j, k) \in \mathcal{S}$, the coefficients are fixed as 
$\hat{z}_j(e_j) = a_j$, so that $\hat{z}_q = a$. 
The tangential frequencies are then determined by the $Q$-equation: 
\begin{equation}
    \label{eqn: q-eqn}
    (- \omega_T' + \omega_T) \odot a + \varepsilon \hat{X}_q = 0. 
\end{equation}
For non-resonant indices $(j, k) \notin \mathcal{S}$, the non-resonant coefficients 
$\hat{z}_p$ are obtained from the $P$-equation: 
\begin{equation}
    \label{eqn: p-eqn}
    - \langle k, \omega_T' \rangle \hat{z}_p + \omega \odot \hat{z}_p + \varepsilon \hat{X}_p = 0.
\end{equation}

Since the $P$-equation~\eqref{eqn: p-eqn} constitutes an infinite-dimensional 
nonlinear algebraic equation, we employ a dimension-enlarged Newton scheme for its 
numerical solution. 
To this end, we define the nonlinear operator $F$, acting on the non-resonant 
Fourier coefficients $\hat{z}_p$, as:
\begin{equation}
    \label{eqn: nonlinear}
    F(\omega_T', \hat{z}_p; a, \omega) = - \langle k, \omega_T' \rangle \hat{z}_p + \omega \odot \hat{z}_p + \varepsilon \hat{X}_p(\hat{z}_p; a).
\end{equation}
The associated linearized (tangent) operator takes the form $T = D + \varepsilon S$, 
where $D$ is a diagonal operator and $S$ is the Jacobian arising from the nonlinear 
perturbation. 
More precisely, 
\begin{equation}
    \label{eqn: tangent-defn}
    D = \mathrm{diag}(- \langle k, \omega_T' \rangle E_{n} + \omega), \quad \mathrm{and} \quad S = \frac{ \partial \hat{X}_p(\omega, \hat{z}_p; a)}{\partial \hat{z}_p}, 
\end{equation}
where $E_{n}$ denotes the identity matrix on $\mathbb{R}^n$. 
In the case of lower-dimensional quasi-periodic solutions ($m<n$), the integer 
combinations $\langle k, \omega'_T \rangle$ involve fewer independent variables. 
As a consequence, the diagonal entries of $D$ exhibit higher multiplicity, meaning 
that multiple Fourier modes share the same tangential frequency components. 
This leads to a ``clustering'' of eigenvalues and a significant reduction in the 
prevalence of small divisors. 
In this sense, lower-dimensional quasi-periodic solutions are generally more 
accessible than their full-dimensional counterparts.

Let $M = M(\varepsilon)$ be a positive integer, whose explicit definition is given 
in~\eqref{eqn: M}, and define the truncation scale by $N_r := M^r$. 
Based on the $Q$-equation~\eqref{eqn: q-eqn} and the $P$-equation~\eqref{eqn: p-eqn}, 
we propose the following iterative scheme to update the frequency and the non-resonant 
coefficients from the $r$-th to the $(r+1)$-th step:
\begin{subequations}
    \label{eqn: numerical-scheme}
    \begin{empheq}[left=\empheqlbrace]{align} 
        & \omega^{(r+1)}_T = \omega_T + \varepsilon \hat{X}_q(\hat{z}_p^{(r)}; a) \odot a^{-1},           \label{eqn: frequency-iter} \\
        & \hat{z}_p^{(r+1)} = \hat{z}_p^{(r)} - \left[ (T + \varepsilon B)_{N_{r+1}}^{-1}\left( \hat{z}_p^{(r)}, \omega^{(r+1)}_T; a \right)  \right] F\left( \hat{z}_p^{(r)}, \omega^{(r+1)}_T; a, \omega \right). \label{eqn: coeffcient-iter}
    \end{empheq}        
\end{subequations}
where $a^{-1} = (a_1^{-1}, \ldots, a_m^{-1})$. 
The operator $B$ is introduced to ensure the convergence of the iteration which is 
given by
\begin{equation}
    \label{eqn: b-operator}
    B = - \frac{1}{e} \left( \frac{ \partial \langle k, \hat{X}_q \rangle }{\partial \hat{z}_p} \right) \hat{z}_p^{\top}.
\end{equation}
The restricted operator is defined as 
$(T + \varepsilon B)_N = P_N (T + \varepsilon B) P_N$, where $P_N$ is the projection 
operator given by
\[
(P_N \hat{z})(k) := \left\{ 
    \begin{aligned}
        & \hat{z}(k), && \mathrm{if}\; k \in \Lambda_{N}, \\
        & 0,          && \mathrm{if}\; k \notin \Lambda_{N},   
    \end{aligned} 
\right.
\]
and the truncation domain, or box lattice, is defined as:
\begin{equation}
    \label{eqn: box-lattice}
    \Lambda_N := \left\{ k \in \mathbb{Z}^m \big | |k|_{\infty} \leq N \right\}.
\end{equation}
Throughout this paper, unless otherwise specified, $|\cdot|$ and $|\cdot|_{\infty}$ 
denote the $\ell_1$-norm and the $\ell_{\infty}$-norm, respectively, on 
$\mathbb{Z}^m$ or $\mathbb{R}^m$.

%%%%%%%%%%%%%%%%%%%%%%%%%%%%%%%%%%%%%%%%%%%%%%%%%%%%%%%%%%%%%%%%%%%%%%%%%%%%%%%%%%%%%%%%%%%%%%%%%
\subsection{The main theorem: A priori bound}
\label{subsec: priori-estimate}

In this section, we establish an a priori bound for the alternating numerical 
procedure~\eqref{eqn: numerical-scheme}. 
For the Fourier coefficients $\hat{z} = \{ \hat{z}(k) \}_{k \in \mathbb{Z}^m}$, we 
denote the standard $\ell_2$-norm as:
\[
    \| \hat{z}\|_2 = \left( \sum_{k \in \mathbb{Z}^m} \|\hat{z}(k) \|_2^2 \right)^{\frac12}.
\]
Given that the Fourier vector $\hat{z}$ is continuously differentiable with respect 
to the tangential frequency $\omega_T$, we introduce the normed vector space 
\begin{equation}
    \label{eqn: l2-space}
    \mathscr{H}(\mathbb{Z}^m) = \left\{ \hat{z} = \left\{\hat{z}(k)\right\}_{k \in \mathbb{Z}^m} ~\bigg|~ \hat{z}(k) \in \mathbb{R}^n, \; \|\hat{z}\| = \|\hat{z}\|_2 + \|\partial_{\omega_T}\hat{z}\|_2 < \infty \right\}. 
\end{equation}
To characterize the regularity of solutions, we define the Gevrey decay set: 
\begin{equation}
    \label{eqn: gevrey-l2}
    \mathcal{K}(s) = \left\{ \hat{z} \in \mathscr{H}(\mathbb{Z}^m) ~\bigg|~ \sup_{k \in \mathbb{Z}^m} \big( \|\hat{z}(k)\| \exp\left\{|k|^s\right\} \big) \leq 1 \right\},
\end{equation}
where the sub-exponential decay of these Fourier coefficients plays a key role in 
preserving the regularity of the iterative scheme.

\begin{theorem}[Convergence and A Priori Bound]
    \label{thm: main}
    Let $\Omega \subseteq \mathbb{R}^m$ be a bounded domain, and let $\tau > m-1$ be fixed. 
    There exists a critical threshold $\varepsilon_0 = \varepsilon_0(H_1, \Omega, a) > 0$ 
    such that for any $0 < \varepsilon \leq \varepsilon_0$, the following properties 
    hold:
    \begin{itemize}
        \item[1.] \textbf{Measure of resonance}: There exist constants 
                  $\kappa = \kappa(n, m, \tau, \Omega) >0$ and $\delta = \delta(\varepsilon) > 0$, 
                  with $\delta \rightarrow 0$ as $\varepsilon \rightarrow 0$, such that 
                  the excluded set of ``bad'' frequencies $\Omega^{\star}$ satisfies 
                  $\mathrm{mes}(\Omega^{\star}) \leq \kappa + \delta$.

        \item[2.] \textbf{Iterative sequence}: For any initial pair
                  $\omega_T^{(0)} \in \Omega \setminus \Omega^\star$ and 
                  $\hat{z}_p^{(0)} = 0$, 
                  the alternating numerical procedure~\eqref{eqn: numerical-scheme}
                  generates a sequence of iterates 
                  $\{ (\omega_T^{(r)}, \hat{z}_p^{(r)}) \}_{r=0}^{\infty}$. 

        \item[3.] \textbf{Convergence}: There exists a Gevrey exponent $s = s(\varepsilon)$ 
                  such that this sequence remains within the product space 
                  $(\Omega \setminus \Omega^\star) \times \mathcal{K}(s)$ and converges 
                  to the exact solution pair $(\omega_T^\star, \hat{z}_p^\star)$.

        \item[4.] \textbf{A priori bound}: There exists a constant $M = M(\varepsilon)$ 
                  such that the convergence is characterized by the following 
                  super-exponential error bounds:
                  \begin{subequations}
                      \label{eqn: convergence-vector-frequency}
                      \begin{empheq}[left=\empheqlbrace]{align} 
                          & \| \hat{z}^{(r)} - \hat{z}^\star \| \leq \exp \left\{ -\frac{3}{2} (M^s)^r \right\},      \label{eqn: z-hat-converge}                 \\
                          & |\omega_T^{(r)} - \omega_T^\star| \leq \exp \left\{ -\frac{3}{2} (M^s)^r \right\}.       \label{eqn: frequency-hat-converge}
                      \end{empheq}  
                  \end{subequations}
    \end{itemize}
    Furthermore, for any $t \in [0, M^r]$, the approximate solution $z^{(r)}(t)$ 
    satisfies the error bound: 
    \begin{equation}
        \label{eqn: convergence-soln}
        \| z^{(r)}(t) - z^{\star}(t) \| \leq \exp \left\{ -(M^s)^r \right\}.
    \end{equation}
    This implies that as $r \rightarrow \infty$, the numerical solution $z^{(r)}(t)$ 
    converges to the exact quasi-periodic solution $z^{\star}(t) $ for any $t \geq 0$. 
\end{theorem}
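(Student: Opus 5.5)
The proof follows the Craig--Wayne--Bourgain scheme and is organized as an induction on the iteration index $r$, with truncation scale $N_r = M^r$. The inductive hypotheses at step $r$ are:
\begin{itemize}
\item[(H1)] $\hat{z}_p^{(r)} \in \mathcal{K}(s)$ and $\hat{z}_p^{(r)}$ is supported in $\Lambda_{N_r}$.
\item[(H2)] $\|\hat{z}_p^{(r)} - \hat{z}_p^{(r-1)}\| \le \exp\{-\tfrac{3}{2}(M^s)^{r-1}\}\cdot\exp\{-\tfrac{1}{2}(M^s)^{r-1}\}$, i.e.\ a quadratic-type gain.
\item[(H3)] $|\omega_T^{(r)}-\omega_T^{(r-1)}|$ satisfies the same bound.
\item[(H4)] For $\omega_T^{(0)}$ outside an excluded set $\Omega^\star_r$, the truncated operator $(T+\varepsilon B)_{N_r}$ is invertible with $\|(T+\varepsilon B)_{N_r}^{-1}\|\le e^{N_r^{\sigma}}$ for some $\sigma<s$, and its matrix elements have Gevrey off-diagonal decay $|(T+\varepsilon B)_{N_r}^{-1}(k,k')|\le e^{-c|k-k'|^{s}}$ once $|k-k'|\gtrsim N_r^{\sigma'}$.
\end{itemize}
The role of $B$ in~\eqref{eqn: b-operator} is to absorb the dependence of $\omega_T^{(r+1)}$ on $\hat{z}_p^{(r)}$ through the $Q$-equation~\eqref{eqn: frequency-iter}. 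Consequently, the effective Jacobian of the composite map $\hat{z}_p \mapsto F(\hat{z}_p, \omega_T(\hat{z}_p))$ is $T+\varepsilon B$ up to a quadratic remainder, and the standard Newton estimate applies. Item 2 of Theorem~\ref{thm: main} then holds by construction, provided (H4) holds at every step.

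The Newton step itself proceeds as follows. Write
\[
F(\hat{z}_p^{(r+1)}) = [F(\hat{z}_p^{(r)}) - P_{N_{r+1}}F(\hat{z}_p^{(r)})] + \text{(quadratic Taylor remainder)} + \varepsilon(\text{linear error from }B).
\]
The first term is controlled by the Gevrey tail of $\hat{X}_p$ outside $\Lambda_{N_{r+1}}$, which gives $e^{-N_{r+1}^s}=e^{-(M^s)^{r+1}}$; this is where the smoothness (Gevrey-type, non-Hankel) of the real-valued $H_1$ is used. The quadratic remainder is bounded by $\|(T+\varepsilon B)^{-1}_{N_{r+1}}\|^2\|F(\hat{z}_p^{(r)})\|^2$, which is of order $e^{2N_{r+1}^\sigma}e^{-3(M^s)^r}$. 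Since $\sigma<s$, choosing $M=M(\varepsilon)$ large (see~\eqref{eqn: M}) closes (H2) with exponent $\tfrac{3}{2}$. The Gevrey membership $\hat{z}_p^{(r+1)}\in\mathcal{K}(s)$ follows by convolving the off-diagonal decay in (H4) with the decay of $F$. The $\partial_{\omega_T}$ part of the norm~\eqref{eqn: l2-space} is handled by differentiating the Newton identity and using $\partial_{\omega_T}T^{-1}=-T^{-1}(\partial T)T^{-1}$, at the cost of one further factor $e^{N^\sigma}$. Summing the telescoping bounds then gives~\eqref{eqn: convergence-vector-frequency} and item 3. Finally, \eqref{eqn: convergence-soln} follows by Parseval in time for $t\le M^r$: the frequency error contributes $|k||\omega_T^{(r)}-\omega_T^\star|\,t\le N_r M^r e^{-\frac32(M^s)^r}$, which is at most $e^{-(M^s)^r}$.

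The main obstacle is (H4), namely the multiscale inversion together with the measure estimate of item 1. The plan is to cover $\Lambda_{N_{r+1}}$ by boxes of size $N_r$ (and smaller scales $N_{r}^{\alpha}$). A box is called ``good'' if its restricted operator has inverse bounded by $e^{N^\sigma}$ with Gevrey decay. Good boxes are glued via the resolvent identity
\[
G_{\Lambda} = G_{\Lambda_1}\oplus G_{\Lambda_2} - (G_{\Lambda_1}\oplus G_{\Lambda_2})\,\varepsilon\Gamma\, G_\Lambda,
\]
iterated until $G_\Lambda$ is expressed linearly through local inverses. In this representation short-range couplings are weak, of size $\varepsilon e^{-|k-k'|^s}$, and long-range couplings are absorbed by the exponential gap; this keeps the Gevrey exponent uniform. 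Bad boxes arise from small divisors $\langle k,\omega_T\rangle-\omega_j$ and $\langle k,\omega_T\rangle\pm\omega_{j_1}\pm\omega_{j_2}$, the latter appearing through the block structure of $S$ in the normal directions. Two facts will be needed:
\begin{itemize}
\item[(i)] Separation: for $\omega_T$ Diophantine with exponent $\tau>m-1$, bad boxes at scale $N_r$ are sparse.
\item[(ii)] Measure: the set of $\omega_T$ producing a non-sparse configuration has measure bounded by a Cartan-type/semialgebraic estimate applied to $\det(T+\varepsilon B)_{\Lambda}$ as an analytic function of $\omega_T$.
\end{itemize}
The Diophantine and Melnikov exclusions contribute the fixed amount $\kappa(n,m,\tau,\Omega)$, and the scale-by-scale Cartan exclusions sum to $\delta(\varepsilon)\to0$, which gives $\mathrm{mes}(\Omega^\star)\le\kappa+\delta$. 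Because the iterates $\omega_T^{(r)}$ move by only super-exponentially small amounts, the exclusions are stable: a frequency that is good at scale $N_r$ remains good for $\omega_T^{(r')}$ with $r'>r$. This is how the sequence is kept inside $\Omega\setminus\Omega^\star$.
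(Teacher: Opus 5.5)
Your overall architecture (Newton steps at scales $N_r=M^r$, resolvent-identity gluing, Cartan-type excisions) matches the paper's. However, the Newton step has a genuine gap, and it is exactly where the paper needs the localization half of Condition~\ref{cond: implementation}. Set $\Delta^{(r)}=-L_{N_{r+1}}^{-1}P_{N_{r+1}}F(\hat z_p^{(r)})$, with $L=T+\varepsilon B$ as the effective Jacobian (your reading of $B$). The exact expansion is
\[
F(\hat z_p^{(r+1)})=(E-P_{N_{r+1}})F(\hat z_p^{(r)})+\varepsilon(E-P_{N_{r+1}})(S+B)P_{N_{r+1}}\Delta^{(r)}+(\text{quadratic remainder}),
\]
and your decomposition drops the middle term. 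That term is linear in $\Delta^{(r)}$, so the norm bound in (H4) only gives it size $\varepsilon\gamma e^{N_{r+1}^{\sigma}}\|F(\hat z_p^{(r)})\|$. This yields no contraction at all, let alone quadratic convergence. The paper controls it through the splitting \eqref{eqn: decompostion-t-tr} and the bounds \eqref{eqn: final-super-convergence-bound-1}--\eqref{eqn: final-super-convergence-bound-2}. Since $F(\hat z_p^{(r)})$ is supported in $\Lambda_{N_{r+1}/4}$, the off-diagonal decay of $L_{N_{r+1}}^{-1}$ makes $\Delta^{(r)}$ exponentially small outside $\Lambda_{N_{r+1}/3}$. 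The remaining part couples to $\Lambda_{N_{r+1}}^{c}$ only through Gevrey-small entries of $S+B$. You use localization only for Gevrey membership, not here.

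The rate bookkeeping also does not close. A tail of size $e^{-N_{r+1}^s}$ in $F(\hat z_p^{(r+1)})$ gives $\|\Delta^{(r+1)}\|\lesssim e^{N_{r+2}^{\sigma}-N_{r+1}^{s}}$, which is incompatible with (H2). Moreover, (H1) records only membership in $\mathcal K(s)$, which bounds the truncation error $\|(E-P_{N_r})\hat z^\star\|$ only by roughly $e^{-N_r^s}$. So the factor $\tfrac32$ in \eqref{eqn: z-hat-converge} cannot come out of (H1)--(H3). The paper makes the tail vanish identically (polynomial $H_1$ of degree $d$, giving \eqref{eqn: support-d-enlarge}) and propagates the sharper shell-wise decay \eqref{eqn: vector-decay-r}. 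Finally, the exponents close by taking $s$ small, not $M$ large (the paper imposes $6M^s\le 7$ and $95M^s\le 96$). Squaring the residual doubles its exponent, while the target exponent grows by the factor $M^s$ per step, so one needs $M^s<2$ with room for the losses, i.e.\ $s\lesssim 1/\log M$.

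The multiscale part has two further problems.

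First, a Diophantine condition on $\omega_T$ cannot make bad boxes sparse at scales $N>M$. The drift $\langle k,\omega_T'-\omega_T\rangle$ and the shifts $\mu_{j,k}$ in \eqref{eqn: eigenvalue-t-n} have sizes $\varepsilon|k|$ and $\varepsilon$, which swamp $|k|^{-\tau}$ once $|k|$ is large. The paper obtains the clustering of singular centers (Lemma~\ref{lem: second-melnikov}) only after the scale-dependent excision $G_{2,N}$ in \eqref{eqn: nearly-resonant-multiscale}, which is formulated in terms of the perturbed eigenvalue differences.

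Second, applying Cartan's estimate to $\det(T+\varepsilon B)_\Lambda$ over a whole box cannot give a sub-exponential resolvent bound. Here $\log M_f$ is of the order of the box dimension, about $N^m$ up to logarithms (Hadamard). You would get only $|\det|\ge e^{-CN^m\log N}$ and, via Cramer's rule, a resolvent bound exponential in the volume, far from $e^{N^\sigma}$. In addition, sparsity of bad sub-boxes does not by itself invert the box that contains one. The paper instead proceeds as follows:
\begin{itemize}
\item it removes the singular site $\Pi=\{k_\star\}$, a single site by Proposition~\ref{prop: small-eig-one};
\item it controls $T^{-1}_{\Gamma_N\setminus\Pi}$ by gluing;
\item it reduces $\|T_N^{-1}\|$ to the Schur complement \eqref{eqn: schur-complement} on $\Pi$;
\item it applies Cartan in the single shift variable $\sigma_\star$, which produces the summable excisions $G_{1,N}$.
\end{itemize}
Note also that analyticity of your determinant in $\omega_T$ is not available. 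The iterates depend on $\omega_T$ only through the $C^1$-type norm of $\mathscr H(\mathbb Z^m)$.
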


\Cref{thm: main} remains valid under the stated analytic assumptions. 
The complete proof  is finalized in~\Cref{sec: iterative-scheme}, synthesizing the 
implementation framework established in~\Cref{sec: implement} and the multi-scale 
analysis presented in~\Cref{sec: multi-scale}.

%%%%%%%%%%%%%%%%%%%%%%%%%%%%%%%%%%%%%%%%%%%%%%%%%%%%%%%%%%%%%%%%%%%%%%%%%%%%%%%%%%%%%%%%%%%%%%%%%
\subsection{Organization of the paper}
\label{subsec: organization}

The remainder of this paper is organized as follows. 
\Cref{sec: basic-setting} establishes uniform properties of the associated vectors 
and operators within the Gevrey decay class, which serve as the foundation for the 
subsequent analysis.
\Cref{sec: implement} formulates the implementation conditions and provides the 
corresponding proofs using inductive arguments.
\Cref{sec: multi-scale} derives the inversion and localization conditions for the 
tangent operator restricted to outer boxes via a multi-scale analysis, built upon 
inductive bases for two types of small-scale cases.
\Cref{sec: iterative-scheme} establishes the iterative lemma and provides the proof 
of the main convergence results.
\Cref{sec: numer} presents numerical experiments on both the macroscopic 
H\'{e}non-Heiles model and the microscopic Fermi–Pasta–Ulam (FPU) model. 
Finally,~\Cref{sec: conclu} concludes the paper and outlines directions for future 
research.

\section{Uniform properties within the Gevrey decay set}
\label{sec: basic-setting}

In this section, we establish uniform decay and boundedness properties for the 
associated vectors and operators within the Gevrey decay set $\mathcal{K}(s)$. 
Let $\mathbb{T}^m = [0, 2\pi]^m$ denote the $m$-dimensional torus. 
For any $k \in \mathbb{Z}^{m}$, the component of the vector field $\hat{X}$ is given 
by
\begin{equation}
    \label{eqn: vec-fourier}
    \hat{X}(k) = \frac{1}{(2\pi)^{m}} \int_{\mathbb{T}^{m}} \frac{\partial H_1}{\partial \overline{z}} e^{-i \langle k, \theta \rangle } d\theta.
\end{equation}
Since $H_1$ is a polynomial with real coefficients, $\hat{X}(k)$ is real-valued. 
Furthermore, because the perturbation $H_1$ is real, the following symmetric 
relation holds:
\begin{equation}
    \label{eqn: real-identity}
    \frac{1}{(2\pi)^{m}} \int_{\mathbb{T}^{m}} \frac{\partial H_1}{\partial \overline{z}} e^{-i \langle k, \theta \rangle } d\theta =  \frac{1}{(2\pi)^{m}} \int_{\mathbb{T}^{m}} \frac{\partial H_1}{\partial z} e^{i \langle k, \theta \rangle } d\theta.
\end{equation}
We now establish the following property for the vector field 
$\hat{X} = (\hat{X}_q, \hat{X}_p)^{\top}$ (see~\Cref{subsec: vector-field} for a 
formal proof).

\begin{prop}
    \label{prop: vector-field}
    Suppose $\hat{z} \in \mathcal{K}(s)$. 
    Then there exist constants $\gamma_1, \gamma_2 > 0$, depending on $H_1$ and $s$, 
    such that the vector field $\hat{X}$ satisfies
    \begin{subequations}
        \label{eqn: vector-field}
        \begin{empheq}[left=\empheqlbrace]{align} 
            &  \sup_{k \in \mathbb{Z}^m} \left( \|\hat{X}(k)\| \exp\left\{ |k|^s \right\} \right) \leq \gamma_1,               \label{eqn: vector-field-comp}                 \\
            &  \max \big \{ \|\hat{X}_q\|, \|\hat{X}_p\| \big \} \leq \|\hat{X}\| \leq \gamma_2.                                   \label{eqn: vector-field-bound}
        \end{empheq}  
    \end{subequations}
\end{prop}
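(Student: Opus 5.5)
The plan is to treat $X=\partial H_1/\partial\overline{z}$, evaluated along $z(\theta)=\sum_k \hat z(k)e^{i\langle k,\theta\rangle}$, as a composition: an analytic (or Gevrey) function applied to a function on $\mathbb{T}^m$ whose Fourier coefficients decay like $e^{-|k|^s}$. There are three steps.

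\textbf{Step 1: multiplicative closure of the weighted class.} Work with the weighted sup-norm $\|\hat u\|_s:=\sup_k \|\hat u(k)\|e^{|k|^s}$. Because $0<s<1$, the triangle inequality gives the subadditivity $|k|^s\le |k-l|^s+|l|^s$. From this I would prove the convolution estimate
\[
\|\widehat{uv}\|_{s'}\le C(s,s')\|\hat u\|_s\|\hat v\|_s
\qquad\text{for any } s'<s .
\]
The sum over $l$ of $e^{-|k-l|^s-|l|^s}$ does not gain decay beyond $e^{-|k|^s}$ on its own. However, the sharper inequality
\[
|k-l|^s+|l|^s\ge |k|^s+c\min(|l|,|k-l|)^s
\]
makes the remaining sum finite. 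If one wants to avoid losing $s$ at every product, a cleaner route is to put a polynomial weight in front, for example $(1+|k|)^{-m-1}e^{-|k|^s}$; that weight forms an algebra with a constant $C_m$.

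\textbf{Step 2: pass to the nonlinearity.} Write $H_1$, hence $X$, as a convergent power series $\sum_\alpha c_\alpha z^{\alpha}\overline{z}^{\beta}$. Iterating Step 1 bounds each monomial by $C^{|\alpha|+|\beta|}$ in the weighted norm, since $\hat z\in\mathcal{K}(s)$ gives unit size. If the coefficients satisfy $|c_\alpha|\rho^{-|\alpha|}$ summable for some $\rho>C$, summing over monomials yields \eqref{eqn: vector-field-comp} with
\[
\gamma_1=\sum|c_{\alpha\beta}|C^{|\alpha|+|\beta|}.
\]
The real-coefficient identity \eqref{eqn: real-identity} handles the conjugate factors: $\overline{z}$ has coefficients $\hat z(-k)$, and the weight is symmetric in $k$. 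For a general real-analytic $H_1$ rather than a polynomial, I would replace the power series by a Cauchy-estimate or Faà di Bruno bound on a complex neighbourhood of the range of $z$. The range is bounded because
\[
\sup_\theta|z(\theta)|\le\sum_k e^{-|k|^s}<\infty .
\]

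\textbf{Step 3: the $\ell_2$ bounds.} Inequality \eqref{eqn: vector-field-bound} follows from \eqref{eqn: vector-field-comp} by summation,
\[
\|\hat X\|_2^2\le\gamma_1^2\sum_k e^{-2|k|^s}<\infty ,
\]
together with $\max\{\|\hat X_q\|,\|\hat X_p\|\}\le\|\hat X\|$, which is trivial since these are coordinate restrictions. The derivative part $\|\partial_{\omega_T}\hat X\|_2$ of the norm in \eqref{eqn: l2-space} is handled by the chain rule: $\partial_{\omega_T}X=D_zX\,\partial_{\omega_T}z+D_{\overline z}X\,\partial_{\omega_T}\overline z$, plus any explicit $\omega_T$-dependence of $H_1$. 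Here $D_zX$ is again a convergent series bounded as in Step 2. The convolution with $\partial_{\omega_T}\hat z$ is then controlled in $\ell_2$ by Young's inequality ($\ell_1*\ell_2\subset\ell_2$), because $\widehat{D_zX}\in\ell_1$ by its Gevrey decay.

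\textbf{Main obstacle.} I expect Step 1 to be the hard part, specifically obtaining a convolution estimate for $e^{-|k|^s}$ weights with \emph{no} loss in $s$ or in the constant. Otherwise repeated products degrade the exponent and the radius of convergence in Step 2 must absorb $C^{|\alpha|}$. If this cannot be done cleanly, I would allow $\gamma_1$ to depend on $s$ (as the statement permits). I would also accept the weighted-algebra version with its polynomial prefactor, absorbing that prefactor into $\gamma_1$ at the end.
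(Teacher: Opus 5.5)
Your argument is essentially the paper's. The sharper inequality you invoke is exactly Lemma~\ref{lemma: App-1} with $c=2-2^s$. The paper uses it in Lemma~\ref{lemma: App-2} to get $\sup_k|(\hat a*\hat b)(k)|e^{|k|^s}\le C_2(m,s)$ with no loss in $s$, then iterates this over the monomials of the polynomial $H_1$ and sums the Gevrey series for the $\ell_2$ bound. So your ``main obstacle'' is already resolved, and the polynomial-weight detour is unnecessary; it would not apply directly anyway, since elements of $\mathcal{K}(s)$ carry no polynomial prefactor.

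Two minor points. First, the paper also obtains pointwise Gevrey decay of $\partial_{\omega_T}\hat X(k)$, via the Leibniz rule and the same lemma. That is what \eqref{eqn: vector-field-comp} needs if $\|\hat X(k)\|$ includes the $\omega_T$-derivative, as the norm of $\mathscr{H}$ suggests; your Young-inequality step gives only the $\ell_2$ part. Second, your summability condition for non-polynomial $H_1$ should read $\sum|c_{\alpha\beta}|\rho^{|\alpha|+|\beta|}<\infty$ for some $\rho\ge C$, not $\rho^{-|\alpha|}$.
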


Next, we analyze the tangent operator 
$\partial \hat{X} / \partial \hat{z}_p = ( \partial \hat{X}_q / \partial \hat{z}_p, \partial \hat{X}_p / \partial \hat{z}_p )^{\top}$. 
Recalling the tangent operator defined in~\eqref{eqn: tangent-defn}, it follows that 
$S = \partial \hat{X}_p / \partial \hat{z}_p$. 
For any two resonant indices $k, k' \notin \mathcal{S}$, we 
utilize~\eqref{eqn: real-identity} to derive its component as
\[    
    S(k, k') = S(k-k') = \frac{2}{(2\pi)^{m}} \int_{\mathbb{T}^{m}} \frac{\partial^2 H_1}{\partial z \partial \overline{z}} \cos\left( \langle k - k', \theta \rangle\right) d\theta. 
\]
The tangent operator satisfies the following property 
(see~\Cref{subsec: convolution} for a formal proof).

\begin{prop}
    \label{prop: convolution}
    Suppose $\hat{z} \in \mathcal{K}(s)$. 
    Then there exist constants $\gamma_3, \gamma_4 > 0$, depending on $H_1$ and $s$, 
    such that the tangent operator $S $ is symmetric and satisfies:
    \begin{subequations}
        \label{eqn: tangent}
        \begin{empheq}[left=\empheqlbrace]{align} 
            & \sup_{k , k' \notin \mathcal{S}} \left(\|S(k, k')\| \exp\left\{ |k - k'|^s \right\} \right) \leq \gamma_3, \label{eqn: tangent-comp} \\
            & \max \left\{ \left\| \frac{\partial \hat{X}_q}{\partial \hat{z}_p} \right\|, \|S\| \right\} \leq \left\| \frac{\partial \hat{X}}{\partial \hat{z}_p} \right\| \leq \gamma_4. \label{eqn: tangent-bound}
        \end{empheq}        
    \end{subequations}
\end{prop}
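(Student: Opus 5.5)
The plan is to reduce everything to the Fourier coefficients of the scalar function $W := \partial^2 H_1/\partial z\partial\overline{z}$ evaluated along the quasi-periodic curve $z(\theta)=\sum_k \hat z(k)e^{i\langle k,\theta\rangle}$. The argument parallels the one for \Cref{prop: vector-field}.

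\emph{Symmetry.} Differentiating $\hat X(k)$ from \eqref{eqn: vec-fourier} with respect to $\hat z(k')$, and using the real identity \eqref{eqn: real-identity} to combine the contributions through $z$ and $\overline z$, gives the cosine kernel displayed before the statement. Because $\hat z(k)$ is real, $W$ is real. The kernel therefore depends only on $k-k'$, and $\cos$ is even, so $S(k,k')=S(k-k')=S(k'-k)=S(k',k)$. In the vector case we must also check that the $n\times n$ block is symmetric. This holds because $W$ is a Hessian of a real function in $(z,\overline z)$, and the real-coefficient structure makes the mixed block Hermitian and real, hence symmetric.

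\emph{Entrywise decay \eqref{eqn: tangent-comp}.} First I will show that $\hat z\in\mathcal K(s)$ implies that the Fourier coefficients of $W(z(\theta),\overline z(\theta))$ satisfy $\|\widehat W(\ell)\|\le C e^{-|\ell|^s}$. The tool is the Gevrey algebra property. For $0<s<1$ the triangle inequality $|\ell|^s\le|\ell_1|^s+|\ell-\ell_1|^s$ holds, so
\[
\sum_{\ell_1}e^{-|\ell_1|^s}e^{-|\ell-\ell_1|^s}\le e^{-|\ell|^s}\cdot C_s ,
\]
provided we use a slightly stronger weight, or absorb the factor via $\sum_{\ell_1} e^{-c\min(|\ell_1|,|\ell-\ell_1|)^s}<\infty$. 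Iterating this for products shows that the weighted $\ell^\infty$ Gevrey class is closed under multiplication, with constants growing geometrically in the number of factors. If $H_1$ is a general real-analytic function rather than a polynomial, I expand it as a power series $W=\sum_\alpha c_\alpha z^\alpha\overline z^\beta$. Since $\|\hat z(k)\|\le 1$, the sup norm of $z$ on the torus is bounded by $\sum_k e^{-|k|^s}$. Analyticity of $H_1$ on a polydisc of that radius then makes the series of Gevrey norms converge, which gives $\gamma_3$ depending on $H_1$ and $s$. Setting $\ell=k-k'$ yields \eqref{eqn: tangent-comp}.

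\emph{Operator bound \eqref{eqn: tangent-bound}.} The inequality $\max\{\cdot\}\le\|\partial\hat X/\partial\hat z_p\|$ is immediate: $S$ and $\partial\hat X_q/\partial\hat z_p$ are row-restrictions of the full Jacobian, and the norm of a restriction is at most the norm of the whole. For the upper bound I apply Schur's test to the Toeplitz kernel:
\[
\|S\|_{\ell^2\to\ell^2}\le\sup_k\sum_{k'}\|S(k-k')\|\le\gamma_3\sum_{\ell\in\mathbb Z^m}e^{-|\ell|^s}<\infty .
\]
The same estimate applies to the $\hat X_q$ rows. The norm in $\mathscr H$ also includes the $\omega_T$-derivative. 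Differentiating the kernel in $\omega_T$ produces terms $\partial_{\omega_T}W$ and third derivatives of $H_1$ multiplied by $\partial_{\omega_T}\hat z$. These are handled by the same convolution estimate, using that $\partial_{\omega_T}\hat z\in\ell^2$ with controlled norm and Young's inequality $\ell^1*\ell^2\subset\ell^2$. Together these give $\gamma_4$.

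The main obstacle is the Gevrey algebra step for a general analytic $H_1$. The convolution constant $C_s$ compounds with the degree of each monomial, so it must be dominated by the analyticity radius. If the plain weight $e^{-|k|^s}$ loses too much, I will first try proving decay with $e^{-c|k|^s}$ for some $c<1$, and then absorb the constant by rescaling, as the statement allows $\gamma_3$ to depend on $s$.
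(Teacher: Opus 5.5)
Your argument follows the paper's proof step for step: a Toeplitz cosine kernel; Gevrey decay of its Fourier coefficients through the convolution estimate built on $a^s+b^s-(a+b)^s\ge(2-2^s)\min\{a,b\}^s$ (\Cref{lemma: App-1} and \Cref{lemma: App-2}); the bound $\|\mathcal H\hat z\|\le\big(\sum_k\|\mathcal H(k)\|\big)\|\hat z\|$, which is your Schur test; and the restriction argument for the $\max$. It therefore inherits the one step of that route that does not hold. That step is the claim that differentiating $\hat X(k)$ in $\hat z(k')$ and invoking \eqref{eqn: real-identity} yields the cosine kernel. Since $\hat z$ is real, $\overline z(\theta)=\sum_{k'}\hat z(k')e^{-i\langle k',\theta\rangle}$ also depends on $\hat z(k')$. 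Write $A_{ij}=\partial^2H_1/\partial z_j\partial\overline z_i$ and $V_{ij}=\partial^2H_1/\partial\overline z_j\partial\overline z_i$, evaluated along $z(\theta)$. The chain rule then gives
\[
\frac{\partial\hat X_i(k)}{\partial\hat z_j(k')}=\hat A_{ij}(k-k')+\hat V_{ij}(k+k').
\]
Identity \eqref{eqn: real-identity} only says that $\hat X(k)$ is real. It cannot turn the Hankel term $\hat V_{ij}(k+k')$ into a function of $k-k'$, and that term decays in $|k+k'|$, not in $|k-k'|$. So \eqref{eqn: tangent-comp} fails in general for $S=\partial\hat X_p/\partial\hat z_p$. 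A concrete case is the H\'{e}non--Heiles model ($m=1$, $n=2$) with $\hat z_q=a_1\le e^{-1}$ and $\hat z_p=0$. There $\partial\hat X_2(k)/\partial\hat z_1(1-k)=a_1/\sqrt2$ for every $k\notin\{0,1\}$, while $|k-k'|=|2k-1|\to\infty$.

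The paper asserts the same cosine formula without any computation beyond citing \eqref{eqn: real-identity}. So you have not missed anything relative to the paper, but as a proof this is a genuine gap, and it is exactly the step on which \eqref{eqn: tangent-comp} rests. With the correct kernel, symmetry survives. The Toeplitz part is symmetric because $\hat A_{ij}(\ell)=\hat A_{ji}(-\ell)$, which is your Hermitian-plus-reality argument. The Hankel part is symmetric because $V_{ij}=V_{ji}$ and $k+k'$ is symmetric in $k,k'$. The operator bound also survives, since Schur's test applies verbatim to a Hankel kernel in $\ell^1$. The entrywise estimate, however, becomes $\|S(k,k')\|\le\gamma_3\big(e^{-|k-k'|^s}+e^{-|k+k'|^s}\big)$. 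A genuinely Toeplitz Jacobian requires treating $\hat z(k)$ and $\overline{\hat z(-k)}$ as independent complex unknowns, as in the standard Craig--Wayne--Bourgain formulation.

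Three smaller points concern material you add beyond the paper, which takes $H_1$ to be a polynomial and bounds only $\ell_2$ norms.

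First, your fallback (prove decay $e^{-c|k|^s}$ with $c<1$, then rescale) cannot give \eqref{eqn: tangent-comp}. Since $\sup_k e^{(1-c)|k|^s}=\infty$, a loss in the exponent is not a constant factor. The estimate you already state, $\sum_{\ell_1}e^{-(2-2^s)\min\{|\ell_1|,|\ell-\ell_1|\}^s}\le 2\sum_{\ell'}e^{-(2-2^s)|\ell'|^s}$ uniformly in $\ell$, loses nothing in the exponent.

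Second, for analytic $H_1$ the submultiplicative norm is $C_s\sup_k|\hat f(k)|e^{|k|^s}$. What you need is therefore absolute convergence of the power series of the second derivatives of $H_1$ on a polydisc of radius $C_s$, not of radius $\sup_\theta|z(\theta)|$. This is an additional hypothesis on $H_1$.

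Third, for the $\omega_T$-derivative, Young's inequality $\ell^1*\ell^2\subset\ell^2$ only places the differentiated kernel in $\ell^2$, which does not bound the corresponding convolution operator on $\ell^2$. You need that kernel in $\ell^1$. This holds if the Gevrey weight in $\mathcal K(s)$ also controls $\partial_{\omega_T}\hat z(k)$, as the paper tacitly assumes in its Leibniz-rule remark for \Cref{prop: vector-field}.
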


To facilitate the iteration lemma~(\Cref{lem: iteration}), we also require estimates 
for the second derivative of the vector field $\hat{X}$, as stated below 
(see~\Cref{subsec: tensor} for the proof).

\begin{prop}
    \label{prop: tensor}
    Suppose $\hat{z} \in \mathcal{K}(s)$, and assume further that 
    $\mathrm{supp}~\hat{z} \subseteq \Lambda_N $. 
    Then there exists a constant $\gamma_5 = \gamma_5(H_1, s, m)$ such that 
    \begin{equation}
        \label{eqn: tensor}
        \max \left\{ \left\| \frac{\partial^2 \hat{X}_q}{\partial \hat{z}_p^2} \right\|, \left\| \frac{\partial S}{\partial \hat{z}_p } \right\| \right\} \leq \left\| \frac{\partial^2 \hat{X}}{\partial \hat{z}_p^2}  \right\| \leq \gamma_5 (2N+1)^\frac{m}{2}. 
    \end{equation}
\end{prop}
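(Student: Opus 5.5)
We prove Proposition~\ref{prop: tensor} by writing the second derivative of $\hat{X}$ as a $3$-tensor with Gevrey-decaying, convolution-type entries, and then converting entrywise decay into an operator-norm bound on $\ell_2(\Lambda_N)$. The factor $(2N+1)^{m/2}$ comes from a Cauchy--Schwarz step over the box $\Lambda_N$, whose cardinality is $(2N+1)^m$.

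\textbf{Step 1: entries of the tensor.} Differentiate~\eqref{eqn: vec-fourier} twice in $\hat{z}_p$. Because $z(\theta)=\sum_k \hat{z}(k)e^{i\langle k,\theta\rangle}$ with real coefficients, we have $\partial z/\partial \hat{z}(k') = e^{i\langle k',\theta\rangle}$ and $\partial \overline{z}/\partial \hat{z}(k') = e^{-i\langle k',\theta\rangle}$. Each entry $\partial^2\hat{X}(k)/\partial\hat{z}(k')\partial\hat{z}(k'')$ is therefore a finite sum of Fourier coefficients of the third derivatives $\partial^3 H_1/\partial z^\alpha\partial\overline{z}^\beta$, with $|\alpha|+|\beta|=2$ plus the $\overline{z}$-derivative, evaluated along $z(\theta)$. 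These are taken at frequencies $k\mp k'\mp k''$. Using~\eqref{eqn: real-identity} as in the derivation of $S(k,k')$, they combine into cosine integrals.

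Since $\hat{z}\in\mathcal{K}(s)$, the argument of Proposition~\ref{prop: vector-field} and Proposition~\ref{prop: convolution} applies verbatim to $\partial^3 H_1$ in place of $\partial H_1$, $\partial^2 H_1$. Here $H_1$ is analytic/Gevrey and composition with a Gevrey-class function preserves Gevrey decay up to constants. This yields a constant $\gamma_5' = \gamma_5'(H_1,s)$ with
\[
\left\| \frac{\partial^2 \hat{X}(k)}{\partial \hat{z}(k')\partial\hat{z}(k'')} \right\| \leq \gamma_5' \sum_{\pm,\pm} \exp\left\{-|k\pm k'\pm k''|^s\right\}.
\]

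\textbf{Step 2: operator norm.} View $\partial^2\hat{X}/\partial\hat{z}_p^2$ as a bilinear map $(u,v)\mapsto w$ with $w(k)=\sum_{k',k''} A(k,k',k'')u(k')v(k'')$. The support hypothesis $\mathrm{supp}\,\hat{z}\subseteq\Lambda_N$ means the relevant directions $u$ range only over $\Lambda_N$; this is the point of the assumption.

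For fixed $k''$, the matrix $A(\cdot,\cdot,k'')$ is a convolution-type kernel in $(k,k')$. Schur's test with $\sum_{j\in\mathbb{Z}^m}e^{-|j|^s}=:C_s<\infty$ bounds its $\ell_2\to\ell_2$ norm by $4\gamma_5' C_s$. Hence
\[
\|w\|_2 \leq 4\gamma_5' C_s \sum_{k''\in\Lambda_N}|v(k'')|\,\|u\|_2 \leq 4\gamma_5' C_s (2N+1)^{m/2}\|u\|_2\|v\|_2,
\]
where the last step is Cauchy--Schwarz over $\Lambda_N$.

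The $\partial_{\omega_T}$-part of the norm $\|\cdot\|$ in~\eqref{eqn: l2-space} is handled identically. Differentiating in $\omega_T$ brings down one more derivative of $H_1$ times $\partial_{\omega_T}\hat{z}$, which is again Gevrey-bounded, so we simply enlarge $\gamma_5$; it then depends on $m$ through $C_s$.

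\textbf{Step 3: the left inequality.} The bounds for $\partial^2\hat{X}_q/\partial\hat{z}_p^2$ and $\partial S/\partial\hat{z}_p=\partial^2\hat{X}_p/\partial\hat{z}_p^2$ follow because both are restrictions (sub-blocks) of the full tensor, and restriction does not increase the norm.

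\textbf{Main obstacle.} The delicate point is Step 1: obtaining Gevrey decay of the Fourier coefficients of $\partial^3 H_1(z(\theta),\overline{z}(\theta))$ uniformly over $\hat{z}\in\mathcal{K}(s)$ for a general real-valued $H_1$ rather than a polynomial. I would reuse the composition estimate that underlies Proposition~\ref{prop: vector-field}, applied to the analytic function $\partial^3H_1$, accepting a possibly smaller Gevrey constant absorbed into $\gamma_5$.
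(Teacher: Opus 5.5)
Your proposal is correct and follows essentially the same route as the paper. Both arguments write the entries as the four Gevrey-decaying kernels $\mathcal{T}_{ij}$ at frequencies $k\pm k'\pm k''$, take the directions supported in $\Lambda_N$, extract $(2N+1)^{m/2}$ from the cardinality of the box via Cauchy--Schwarz, and get the left inequality by restriction. For the entrywise decay the paper simply uses that $H_1$ is a real polynomial, so the finite-convolution Lemma~\ref{lemma: App-2} applies; this is exactly the point you flag as the obstacle for a general $H_1$. The only cosmetic difference is in the norm step: you bound each $k''$-slice by Schur's test and then apply Cauchy--Schwarz to $\sum_{k''\in\Lambda_N}|v(k'')|$, whereas the paper uses a single weighted Cauchy--Schwarz jointly over $(k',k'')\in\Lambda_N\times\Lambda_N$.
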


In the numerical scheme, we introduce the frequency 
iteration~\eqref{eqn: numerical-scheme}. 
To control the associated error, we define an additional operator $B$, whose 
properties are summarized below (see~\Cref{subsec: b-operator} for the proof).

\begin{prop}
    \label{prop: B-operator}
    Suppose $\hat{z} \in \mathcal{K}(s)$. 
    Then there exist constants $\gamma_6, \gamma_7 > 0$, depending on $H_1$ and $s$, 
    such that the operator $B$ satisfies: 
    \begin{subequations}
        \label{eqn: b-operator-bound}
        \begin{empheq}[left=\empheqlbrace]{align} 
            & \sup_{k, k' \notin \mathcal{S}} \left( \|B(k, k')\| \exp\left\{ |k|^s + |k'|^s \right\} \right) \leq \gamma_6, \label{eqn: b-comp} \\
            & \| B \| \leq \gamma_7. \label{eqn: b-bound}
        \end{empheq}        
    \end{subequations}
\end{prop}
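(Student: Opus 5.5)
The operator $B$ in~\eqref{eqn: b-operator} is a rank-one operator, an outer product of two vectors indexed by $k \notin \mathcal{S}$. The plan is to split it into these two factors and obtain both estimates from \Cref{prop: vector-field} and \Cref{prop: convolution} applied to $\hat{z} \in \mathcal{K}(s)$. Write
\[
u(k') := \frac{\partial \langle k, \hat{X}_q \rangle}{\partial \hat{z}_p(k')}, \qquad v(k) := \hat{z}_p(k),
\]
so that, up to the reading of the index $k$ inside $\langle k, \hat{X}_q\rangle$ (which enters only through the fixed resonant modes $e_j$, hence through a bounded factor $|k|\le$ const on $\mathcal{S}$), the matrix entries are $B(k,k') = -e^{-1}\, v(k)\, u(k')^{\top}$. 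Each entry is therefore a product of a decay factor in $k$ and a decay factor in $k'$. This product structure is exactly why the weight in~\eqref{eqn: b-comp} is $\exp\{|k|^s + |k'|^s\}$ rather than the Toeplitz-type weight $\exp\{|k-k'|^s\}$ of~\eqref{eqn: tangent-comp}.

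\textbf{Step 1 (decay of $v$).} Since $\hat{z} \in \mathcal{K}(s)$, the definition~\eqref{eqn: gevrey-l2} gives directly $\|v(k)\| \le \exp\{-|k|^s\}$.

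\textbf{Step 2 (decay of $u$).} The component $\hat{X}_j(e_j)$ of $\hat{X}_q$ has derivative with respect to $\hat{z}_p(k')$ equal to the $(e_j, k')$ entry of $\partial \hat{X}/\partial \hat{z}_p$. This entry has the convolution form of $S$, i.e.\ it is a Fourier coefficient of $\partial^2 H_1/\partial z\partial\overline{z}$ (or $\partial^2 H_1/\partial \overline{z}^2$) at frequency $e_j - k'$. The Gevrey argument behind~\eqref{eqn: tangent-comp} then yields $\|u(k')\| \le C\gamma_3 \exp\{-|e_j - k'|^s\}$, with $C$ depending on $m$ and $\max_j |e_j| = 1$. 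Since $0<s<1$, the function $t\mapsto t^s$ is subadditive, so $|k'|^s \le |k'-e_j|^s + 1$ and hence
\[
\exp\{-|e_j-k'|^s\} \le e\,\exp\{-|k'|^s\}.
\]
The factor $e$ produced here is presumably what the normalisation $1/e$ in~\eqref{eqn: b-operator} is meant to absorb.

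\textbf{Step 3 (entrywise bound).} Multiplying the two estimates gives
\[
\|B(k,k')\|\exp\{|k|^s+|k'|^s\} \le C'\gamma_3 =: \gamma_6,
\]
which is~\eqref{eqn: b-comp}.

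\textbf{Step 4 (operator norm).} For a rank-one operator, $\|B\| = e^{-1}\|v\|_2\,\|u\|_2$. Here $\|v\|_2 \le \|\hat{z}\|$, which is bounded by the $\ell_2$ sum of $\exp\{-|k|^s\}$; this sum is finite and depends only on $s$ and $m$. The factor $\|u\|_2$ is at most $\|\partial\hat{X}_q/\partial\hat{z}_p\| \le \gamma_4$, times a constant coming from the bounded weights $k$ on $\mathcal{S}$. This gives~\eqref{eqn: b-bound}. The norm in $\mathscr{H}$ also involves $\partial_{\omega_T}$. Differentiating the product $v\,u^\top$ by the Leibniz rule gives two terms, each of the same rank-one form, bounded through $\partial_{\omega_T}\hat{z}$ and the corresponding derivative estimates already contained in the proofs of \Cref{prop: vector-field} and \Cref{prop: convolution}.

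\textbf{Main obstacle.} The main point to check is Step 2. It requires that the off-diagonal block $\partial\hat{X}_q/\partial\hat{z}_p$ inherits the convolution decay~\eqref{eqn: tangent-comp} with respect to the fixed resonant frequencies $e_j$. It also requires care that the subadditivity loss is a uniform constant rather than one that depends on $k'$. I would first verify Step 2 by repeating the Fourier-coefficient argument of~\Cref{subsec: convolution} for the rows indexed by $\mathcal{S}$.
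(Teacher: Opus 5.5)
Your strategy matches the paper's. You split $B$ into a factor in the row index $k$ and a factor in the column index $k'$, take decay of each from $\hat z\in\mathcal K(s)$ and from the convolution structure of $\partial\hat X/\partial\hat z_p$, multiply to get \eqref{eqn: b-comp}, and bound $\|B\|$ by a product of lattice sums for \eqref{eqn: b-bound}.

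The genuine problem is your reading of $\langle k,\hat X_q\rangle$. The $k$ there is not tied to the resonant modes $e_j$; it is the row index of $B$ itself. This is how $B$ arises. Differentiate the term $-\langle k,\omega_T'\rangle\hat z_p(k)$ of $F$ in \eqref{eqn: nonlinear} with respect to $\hat z_p(k')$, through the frequency update \eqref{eqn: frequency-iter}. Up to the normalizing prefactor, this produces $\varepsilon\,\hat z_p(k)\,\langle k,\partial\hat X_q/\partial\hat z_p(k')\rangle$. Accordingly, the paper's proof starts from $\|B(k,k')\|\le e^{-1}\|k\,\hat z_p(k)\|\,\|\partial\hat X_q/\partial\hat z_p(k')\|$. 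So your left factor is really $k\,\hat z_p(k)$, and $|k|$ is unbounded over $k\notin\mathcal S$. Also, $B$ is a sum of $m$ rank-one operators (one for each component $k_j$), not a single one.

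With the correct reading, Step 1 becomes $\|k\,\hat z_p(k)\|\le|k|\,e^{-|k|^s}$. Step 3 then gives only $\|B(k,k')\|\exp\{|k|^s+|k'|^s\}\le C|k|$, which is not a uniform constant, so the full weight $e^{|k|^s}$ in \eqref{eqn: b-comp} does not follow. What your argument does deliver is \eqref{eqn: b-comp} with $|k|^s$ replaced by $(1-\delta)|k|^s$ for any $\delta>0$, since $\sup_{t\ge0}t\,e^{-\delta t^s}<\infty$. Be aware that the paper's proof writes the factor $\|k\,\hat z_p(k)\|$ and then simply invokes \Cref{prop: vector-field}, passing over exactly this loss, so it does not supply the missing step either.

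The norm bound \eqref{eqn: b-bound} survives. In Step 4, replace $\|v\|_2$ by $\bigl(\sum_k|k|^2\|\hat z_p(k)\|^2\bigr)^{1/2}\le\bigl(\sum_k|k|^2e^{-2|k|^s}\bigr)^{1/2}<\infty$. Then use Cauchy--Schwarz on the inner product $\langle k,\cdot\rangle$ instead of the exact rank-one norm identity. The paper uses the cruder product of $\ell_1$ sums, $\sum_k\|k\,\hat z_p(k)\|\cdot\sum_{k'}\|\partial\hat X_q/\partial\hat z_p(k')\|$, which also converges. Your Step 2, including the subadditivity bound $e^{-|e_j-k'|^s}\le e\cdot e^{-|k'|^s}$, is correct and more explicit than the paper's treatment; the resulting constant simply goes into $\gamma_6$.
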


Let $\gamma = \max_{1 \leq j \leq 7} \gamma_j$, where each $\gamma_j$ depends on the 
perturbation $H_1$. 
By appropriately scaling $H_1$, or equivalently, by choosing $\varepsilon$ 
sufficiently small, we may assume without loss of generality that 
$\gamma \leq 1/(2e) < 1/2$. 
Utilizing the boundedness of the vector field $\hat{X}_q$ and its tangent operator 
from~\Cref{prop: vector-field} and~\Cref{prop: convolution}, we can establish the 
following result concerning the tangential frequency drift.

\begin{prop}
    \label{prop: frequency-drift}
    Suppose $\hat{z} \in \mathcal{K}(s)$. 
    Then the tangential frequency drift determined by the 
    $Q$-equation~\eqref{eqn: q-eqn} satisfies:
    \begin{equation}
        \label{eqn: frequency-drift-bound}
        \left| \omega'_T - \omega_T \right| \leq \varepsilon,
    \end{equation}
    and the Jacobian of the tangential frequency map satisfies: 
    \begin{equation}
        \label{eqn: frequency-drift-derivative}
        1 - \varepsilon \leq \left\| \frac{\partial \omega'_T }{\partial \omega_T} \right\|_2 \leq 1 + \varepsilon.
    \end{equation}
\end{prop}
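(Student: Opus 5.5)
From the $Q$-equation~\eqref{eqn: q-eqn} we read off the frequency map explicitly as
\[
\omega'_T = \omega_T + \varepsilon\, \hat{X}_q \odot a^{-1},
\]
the same expression as the frequency iteration~\eqref{eqn: frequency-iter}. Both claims then come down to bounding $\hat{X}_q$ and its derivative in $\omega_T$, using~\Cref{prop: vector-field} and~\Cref{prop: convolution} together with the normalization $\gamma \le 1/(2e)$. The only quantitative input beyond these is that the fixed amplitudes $a_j$ are nonzero. I will therefore absorb the factor $|a^{-1}|$ into the scaling convention, in the same way the paper absorbs the $\gamma_j$. Concretely, I assume $\gamma \max_j |a_j|^{-1} \le 1$ after replacing $\varepsilon$ by $\varepsilon \max_j|a_j|^{-1}$; this is consistent with $\varepsilon_0$ depending on $a$ in~\Cref{thm: main}.

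\textbf{Drift bound.} I would write $|\omega'_T-\omega_T| = \varepsilon\,|\hat{X}_q \odot a^{-1}| \le \varepsilon \max_j |a_j|^{-1}\, \|\hat{X}_q\|$. By~\eqref{eqn: vector-field-bound}, $\|\hat{X}_q\| \le \gamma_2 \le \gamma$, and the normalization above then gives $\le \varepsilon$.

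A minor point is that $|\cdot|$ is the $\ell_1$-norm on $\mathbb{R}^m$, while $\|\hat{X}_q\|$ is an $\ell_2$-type norm on $m$ entries. The two differ by at most $\sqrt{m}$, which I would also absorb into the scaling. Since $\|\cdot\|$ on $\mathscr{H}$ includes the $\omega_T$-derivative, it dominates the plain $\ell_2$ norm, so no further work is needed there.

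\textbf{Jacobian bound.} Differentiating in $\omega_T$ gives
\[
\frac{\partial \omega'_T}{\partial \omega_T} = E_m + \varepsilon\, \mathrm{diag}(a^{-1})\, \frac{\partial \hat{X}_q}{\partial \omega_T},
\]
so it suffices to show $\|\partial_{\omega_T}\hat{X}_q\|_2 \le \gamma$ (up to the absorbed constants). The triangle inequality then gives $\|E_m\| \pm \varepsilon$, i.e.\ the two-sided bound~\eqref{eqn: frequency-drift-derivative}; the lower bound comes from $\|E_m + A\| \ge 1 - \|A\|$.

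The main obstacle is this derivative. $\hat{X}_q$ depends on $\omega_T$ in two ways: explicitly through $H_1$, and implicitly through $\hat{z}_p$. By the chain rule,
\[
\partial_{\omega_T}\hat{X}_q = (\partial_{\omega_T}\hat{X}_q)_{\mathrm{explicit}} + \frac{\partial \hat{X}_q}{\partial \hat{z}_p}\,\partial_{\omega_T}\hat{z}_p .
\]
The second term is bounded by $\gamma_4 \,\|\partial_{\omega_T}\hat{z}_p\|_2$ via~\eqref{eqn: tangent-bound}. Since $\hat{z}\in\mathcal{K}(s)$ lies in $\mathscr{H}$, whose norm controls $\partial_{\omega_T}\hat z$, I expect $\|\partial_{\omega_T}\hat{z}_p\|_2 \le \|\hat{z}\|$, which is bounded by a constant depending on $s,m$ through the Gevrey decay. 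The explicit term is just the first (derivative) component of the $\mathscr{H}$-norm of $\hat{X}$, already covered by~\eqref{eqn: vector-field-bound} because $\|\hat X\|$ includes $\|\partial_{\omega_T}\hat X\|_2$.

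Summing the two contributions gives $\|\partial_{\omega_T}\hat{X}_q\|_2 \le C\gamma$ with $C$ depending only on $m,s$. Choosing $\varepsilon_0$ small (equivalently, rescaling $H_1$) makes $\varepsilon\, C\gamma \max_j|a_j|^{-1} \le \varepsilon$, which yields both inequalities.
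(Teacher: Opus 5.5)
Your argument is essentially the paper's: the paper justifies this proposition only by invoking the boundedness of $\hat X_q$ (\Cref{prop: vector-field}) and of $\partial\hat X_q/\partial\hat z_p$ (\Cref{prop: convolution}) under the normalization $\gamma\le 1/(2e)$. You fill in exactly those steps, and usefully make explicit the factors $\max_j|a_j|^{-1}$, $\sqrt m$ and $C(m,s)$ that the paper absorbs silently.

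One small correction, which applies equally to the paper's own ``or equivalently'' remark: these factors can be absorbed only by rescaling $H_1$, not by taking $\varepsilon_0$ small. An inequality of the form $\varepsilon\, C\gamma\max_j|a_j|^{-1}\le\varepsilon$ is homogeneous in $\varepsilon$, and $C(m,s)$ in fact grows as $s=s(\varepsilon)\to 0$.
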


\Cref{prop: frequency-drift} ensures that the mapping between the drifted tangential 
frequency $\omega'_T$ and the original tangential frequency $\omega_T$ is a 
diffeomorphism.

\section{Implementation conditions}
\label{sec: implement}

The heart of the alternating numerical scheme~\eqref{eqn: numerical-scheme} hinges on 
updating the non-resonant vector via the dimension-enlarged Newton 
scheme~\eqref{eqn: coeffcient-iter}. 
To overcome the small-divisor problem, we must rigorously control the inverse of the 
truncated matrix $(T + \varepsilon B)_N$ as the dimension grows. 
To this end, we introduce a scale-dependent threshold parameter:
\begin{equation}
    \label{eqn: scale-dependent}
    \varepsilon_N := \exp\left\{ - (\log N)^{15} \right\}. 
\end{equation}
We formulate the following implementation conditions to ensure the convergence of 
the alternating numerical scheme.

\begin{tcolorbox}[breakable]
    \begin{condition}[Implementation Conditions]
        \label{cond: implementation}
        The truncated linearized operator $(T + \varepsilon B)_{N}$ is required to 
        satisfy the following two conditions:
        \begin{itemize}
            \item[(1)] \textbf{Inversion condition}: The operator norm of the inverse 
                       is bounded by the reciprocal of the threshold parameter:
                       \begin{equation}
                           \label{eqn: inverse-grow}
                           \| (T + \varepsilon B)_{N}^{-1} \|_2 \leq \frac{1}{\varepsilon_{N}}.
                       \end{equation}

            \item[(2)] \textbf{Localization condition}: For sufficiently large spatial 
                       separations within the lattice $\Lambda_N$, specifically when 
                       $|k - k'| \geq N^{\frac12}$, the entries 
                       of the inverse matrix must exhibit Gevrey-type decay:
                       \begin{equation}
                           \label{eqn: off-diagonal}
                           | (T + \varepsilon B)_{N}^{-1}\left( k, k' \right) | \leq \exp\left\{ -  \frac{|k - k'|^{s}}{2} \right\}.
                       \end{equation}
        \end{itemize}
    \end{condition}
\end{tcolorbox}

These implementation conditions are satisfied by systematically excluding ``bad'' 
frequencies. 
To handle the restricted operators, we distinguish between small and large scales 
using the threshold $M$ dictated by the perturbation intensity $\varepsilon$:
\begin{equation}
    \label{eqn: M}
    M = \exp\left\{ \left( \log \frac{1}{\varepsilon} \right)^{\frac{1}{20}} \right\}. 
\end{equation}
The scale $N$ is subsequently divided into two distinct regimes: the small-scale 
regime $M_0 \leq N \leq M$ and the large-scale regime $N > M$, where the initial 
scale is defined as 
$M_0 = \exp \left\{ \left( \log M \right)^{\frac{1}{20}} \right\}$. 
For notational simplicity, we denote the linearized operator as 
$L := T + \varepsilon B$ throughout the remainder of the paper.

%======================================================%
\subsection{Nearly-resonant sets}
\label{subsec: nearly-resonant}

We first consider the small-scale regime $M_0 \leq N \leq M$. 
To guarantee the invertibility of the restricted operator, we must exclude the 
frequencies that trigger resonance. 
In the case of full-dimensional quasi-periodic solutions, nearly-resonant sets 
involve the entire frequency $\omega$. 
However, in the lower-dimensional setting, only the tangent frequency $\omega_T$ is 
relevant. 
Accordingly, we define the following nearly-resonant sets in the tangent frequency 
space, corresponding to the resonance conditions outlined 
in~\Cref{subsec: lower-dimensional-tori}: 
\begin{itemize}
    \item \textbf{Tangent nearly-resonant sets}: Corresponding to the resonance 
    condition~\eqref{eqn: diophantine}, we define:
    \begin{equation}
        \label{eqn: diophantine-nearly}
        \Omega_{0, M}^{\tau} = \bigcup_{k \in \Lambda_{2M} \setminus \{0\}} \left\{ \omega_T \in \Omega \bigg |  |\langle k, \omega_T \rangle | < \frac{1}{|k|^{\tau}} \right\}.
    \end{equation}

    \item \textbf{Single-mode nearly-resonant sets}: Corresponding to the first 
    Melnikov condition~\eqref{eqn: melnikov-1}, we define
    \begin{equation}
        \label{eqn: 1-melnikov}
        \Omega_{1, M}^{\tau} = \bigcup_{k \in \Lambda_{2M}} \bigcup_{j=m+1}^{n} \left\{ \omega_T \in \Omega \bigg |  |\langle k, \omega_T \rangle - \omega_j| < \frac{1}{(|k|+1)^{\tau}} \right\}. 
    \end{equation}

    \item \textbf{Difference nearly-resonant sets}: Corresponding to the second 
    Melnikov condition~\eqref{eqn: melnikov-2-1}, we define:
    \begin{equation}
        \label{eqn: 2-melnikov}
        \Omega_{2, M}^{\tau,+} = \bigcup_{k \in \Lambda_{2M}} \bigcup_{\substack{j_1,j_2=m+1  \\ j_1 \neq j_2}}^{n}  \left\{ \omega_T \in \Omega \bigg| |\langle k, \omega_T \rangle - \omega_{j_1} + \omega_{j_2}| < \frac{1}{(|k|+2)^{\tau}} \right\}.
    \end{equation}
    In this scheme, we do not need to consider the second case of the second 
    Melnikov condition~\eqref{eqn: melnikov-2-2} typically found in the classical 
    KAM theorem.
\end{itemize}

For the total nearly-resonant set, 
$\Omega_{M}^{\tau} = \Omega_{0, M}^{\tau} \cup \Omega_{1, M}^{\tau} \cup \Omega_{2,M}^{\tau,+}$, 
we establish the following measure estimate, showing that the set of ``bad'' 
frequencies only occupies a negligible portion of the domain.

\begin{lemma}
    \label{lem: near-mes-initial}
    Let $\tau > m - 1$ be fixed. 
    There exists a constant $\kappa = \kappa(n, m, \tau, \Omega) > 0$, depending on 
    the dimension $n$, the tangent frequency dimension $m$, the parameter $\tau$, 
    and the domain $\Omega$, such that the nearly-resonant set satisfies
    \begin{equation}
        \label{eqn: near-mes-initial}
        \mathrm{mes}(\Omega_{M}^{\tau}) \leq \kappa. 
    \end{equation}
\end{lemma}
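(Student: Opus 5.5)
We bound $\Omega_M^{\tau}$ by subadditivity, summing over the three families $\Omega_{0,M}^{\tau}$, $\Omega_{1,M}^{\tau}$, $\Omega_{2,M}^{\tau,+}$ and, within each family, over $k\in\Lambda_{2M}$ and the normal indices. Every individual set has the form
\[
R_{k,c,\rho}=\left\{\omega_T\in\Omega \,\big|\, |\langle k,\omega_T\rangle - c|<\rho\right\}.
\]
Here $c\in\{0,\ \omega_j,\ \omega_{j_1}-\omega_{j_2}\}$ and $\rho=(|k|+\ell)^{-\tau}$ with $\ell\in\{0,1,2\}$. Since $\Omega$ is bounded, we fix $\Omega\subseteq B(0,R)$ with $R=R(\Omega)$.

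\textbf{Step 1 (slab estimate for $k\neq 0$).} The set $R_{k,c,\rho}$ lies between two parallel hyperplanes orthogonal to $k$, at distance $2\rho/\|k\|_2$ apart, intersected with $B(0,R)$. By Fubini, $\mathrm{mes}(R_{k,c,\rho})\le C_m R^{m-1}\cdot 2\rho/\|k\|_2$. Using $\|k\|_2\ge |k|/\sqrt m$, this gives $\mathrm{mes}\le C(m,\Omega)\,|k|^{-1}(|k|+\ell)^{-\tau}\le C(m,\Omega)\,|k|^{-1-\tau}$.

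This bound must hold uniformly in $c$. In $\Omega_{1,M}$ and $\Omega_{2,M}^{+}$ the constant $c$ involves the normal frequencies $\omega_N$, which may depend on $\omega_T$ through $\omega_N(\omega_T)$ as in \eqref{eqn: near-integrable-lower}. To handle this, I would treat $\omega_N$ as fixed parameters, as the displayed definitions suggest, so that $c$ is constant. If instead $\omega_N$ depends on $\omega_T$ in a Lipschitz way with small constant, the gradient of $\omega_T\mapsto\langle k,\omega_T\rangle-c(\omega_T)$ is still comparable to $\|k\|_2$ once $|k|$ exceeds a fixed size. In that case the same coarea argument applies, and the finitely many small $k$ are absorbed into the constant.

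\textbf{Step 2 (the case $k=0$).} This case occurs only in $\Omega_{1,M}$ and $\Omega_{2,M}^{+}$. The conditions become $|\omega_j|<1$ or $|\omega_{j_1}-\omega_{j_2}|<2^{-\tau}$, which do not involve $\omega_T$ when $\omega_N$ is fixed. Such a set is therefore either empty or all of $\Omega$. Its measure is thus at most $\mathrm{mes}(\Omega)$ times the number of such pairs, which is at most $n^2\,\mathrm{mes}(\Omega)$. This is harmless because $\kappa$ is only required to be a constant depending on $(n,m,\tau,\Omega)$, not a small one.

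\textbf{Step 3 (summation).} For $k\neq0$ we sum over the $n-m$ indices $j$ and the at most $(n-m)^2$ pairs $(j_1,j_2)$. Counting lattice points on $\ell_1$-spheres, $\#\{k:|k|=p\}\le C_m p^{m-1}$, so
\[
\mathrm{mes}(\Omega_M^\tau)\le C(m,\Omega)\bigl(1+n+n^2\bigr)\sum_{p\ge1}p^{m-1}\,p^{-1-\tau}+n^2\,\mathrm{mes}(\Omega).
\]
The series $\sum_p p^{m-2-\tau}$ converges exactly when $\tau>m-1$, which is the hypothesis of the lemma. The resulting bound is independent of $M$, and the right-hand side defines $\kappa(n,m,\tau,\Omega)$.

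The main obstacle is Step 1 in the Melnikov families, namely keeping the slab width uniform when $c$ depends on $\omega_T$. I would first try the fixed-$\omega_N$ reading described above, and fall back on the transversality argument if $\omega_N$ does depend on $\omega_T$.
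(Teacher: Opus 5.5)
Your proposal is correct and follows essentially the same route as the paper: each resonant set is a slab of width $2\rho/\|k\|_2$ orthogonal to $k$, whose intersection with $\Omega$ has measure $\lesssim d^{m-1}\rho/\|k\|_2$. Summing over $k$ and the normal indices then gives a lattice series that converges, uniformly in $M$, precisely because $\tau>m-1$. Your separate treatment of $k=0$ (where the set is not a slab and only the trivial bound $\mathrm{mes}(\Omega)$ applies) and your remark on a possible $\omega_T$-dependence of $\omega_N$ are small refinements that the paper glosses over, since it simply includes the $k=0$ terms in the same slab-type sum.
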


The proof is based on elementary measure estimates and is deferred 
to~\Cref{sec: near-mes-initial}.

%======================================================%
\subsection{Restricted operators on small boxes}
\label{subsec: restricted-small}

In the small-scale regime, for tangent frequencies within the non-resonant set 
$\Omega \setminus \Omega_{M}^{\tau}$, we establish the invertibility and decay 
properties of the operator $L$ restricted to central boxes. 
The following theorem verifies the implementation conditions for the restricted 
operator.

\begin{theorem}
    \label{thm: small-size}
    Suppose $\hat{z} \in \mathcal{K}(s)$ and $\omega_T \in \Omega \setminus \Omega_{M}^{\tau}$. 
    Then the restricted operator $L_N$ is invertible and satisfies the operator norm 
    bound
    \begin{equation}
        \label{eqn: small-size-inverse}
        \| L_{N}^{-1} \|_2 \leq \frac{1}{\varepsilon_N}.
    \end{equation}
    Moreover, for any $k \neq k'$, its matrix entries satisfy the Gevrey decay 
    estimate:
    \begin{equation}
        \label{eqn: small-size-localization}
        \left| L_{N}^{-1} \left( k, k' \right) \right| \leq \exp \left\{ -| k - k'|^{s} \right\} \leq \exp\left\{ - \frac{ |k - k'|^{s}}{2} \right\}.
    \end{equation} 
\end{theorem}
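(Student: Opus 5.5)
The plan is a Neumann series perturbation argument around the diagonal part $D_N$. On the small-scale regime $N \le M$ the coupling $\varepsilon(S+B)$ is tiny compared with the smallest divisor of $D_N$, so no multi-scale analysis is needed.

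\textbf{Step 1: lower bound on the diagonal.} Write $L_N = D_N + \varepsilon(S+B)_N$. For $(j,k)\notin\mathcal{S}$ with $k\in\Lambda_N$, the diagonal entry is $-\langle k,\omega_T'\rangle+\omega_j$.
\begin{itemize}
\item For a tangential index $j\le m$, we have $\omega_j=\langle e_j,\omega_T\rangle$. The entry equals $-\langle k-e_j,\omega_T'\rangle$ up to the drift $\langle e_j,\omega_T-\omega_T'\rangle$, and $k-e_j\in\Lambda_{2M}\setminus\{0\}$ since $(j,k)\notin\mathcal{S}$.
\item For a normal index $j>m$, the entry is $-(\langle k,\omega_T'\rangle-\omega_j)$.
\end{itemize}
Exclusion of $\Omega^\tau_{0,M}$ and $\Omega^\tau_{1,M}$ gives a divisor lower bound $(2|k|+2)^{-\tau}\gtrsim (mN)^{-\tau}$ at the relevant frequency. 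The frequency drift is at most $\varepsilon$ by \Cref{prop: frequency-drift}, and $\varepsilon\ll M^{-\tau}$ because $\log(1/\varepsilon)=(\log M)^{20}$. Hence the bound survives at $\omega_T'$, losing at most a factor $2$. I will either assume the exclusion is phrased in the drifted frequency (using the diffeomorphism from \Cref{prop: frequency-drift}), or absorb the drift as just described. The outcome is $\|D_N^{-1}\|\le C(m)N^{\tau}$.

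\textbf{Step 2: Neumann series.} By \Cref{prop: convolution} and \Cref{prop: B-operator}, $\|\varepsilon(S+B)\|\le 2\gamma\varepsilon$. Then
\[
\|\varepsilon D_N^{-1}(S+B)_N\| \le C\varepsilon N^{\tau} \le C\varepsilon M^{\tau} \ll 1 ,
\]
because $M^{\tau}=\exp\{\tau(\log 1/\varepsilon)^{1/20}\}=\varepsilon^{-o(1)}$. Therefore $L_N^{-1}=\sum_{\ell\ge0}(-\varepsilon D_N^{-1}(S+B)_N)^\ell D_N^{-1}$ converges and $\|L_N^{-1}\|\le 2CN^{\tau}$. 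Since $N\ge M_0$ is large, $N^\tau=\exp\{\tau\log N\}\le\exp\{(\log N)^{15}\}=\varepsilon_N^{-1}$, which proves \eqref{eqn: small-size-inverse}.

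\textbf{Step 3: off-diagonal decay (main obstacle).} Expand the entry $L_N^{-1}(k,k')$ for $k\ne k'$. The $\ell=0$ term vanishes. The $\ell$-th term is a sum over paths $k=k_0,\dots,k_\ell=k'$ of products of entries $\varepsilon D^{-1}(k_i)\,(S+B)(k_i,k_{i+1})$.
\begin{itemize}
\item For $S$, \eqref{eqn: tangent-comp} gives the factor $\exp\{-|k_i-k_{i+1}|^s\}$. For $B$, \eqref{eqn: b-comp} gives $\exp\{-|k_i|^s-|k_{i+1}|^s\}\le\exp\{-|k_i-k_{i+1}|^s\}$.
\item Since $0<s<1$, subadditivity $|a+b|^s\le|a|^s+|b|^s$ gives $\prod_i\exp\{-|k_i-k_{i+1}|^s\}\le\exp\{-|k-k'|^s\}$.
\end{itemize}
The difficulty is the combinatorics. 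Each step carries a factor $C\varepsilon N^{\tau}$, and the sum over $k_{i+1}\in\Lambda_N$ costs $(2N+1)^m$. So the $\ell$-th term is bounded by $(C\varepsilon N^{\tau+m})^\ell\exp\{-|k-k'|^s\}\,N^\tau$. This is summable with an overall factor $C\varepsilon N^{\tau+m}\cdot N^{\tau}\le 1$, because $N\le M$ and $\varepsilon M^{2\tau+m}\ll1$. That yields $|L_N^{-1}(k,k')|\le\exp\{-|k-k'|^s\}$.

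If the path-counting constant turns out too crude, I would instead reserve half the decay for the sum over intermediate sites, using $\sum_{k''}\exp\{-\tfrac12|k''|^s\}\le C(s,m)$. In that case the smallness of $\varepsilon$ absorbs $C(s,m)$ because $s=s(\varepsilon)$ is chosen so that this constant stays $\varepsilon^{-o(1)}$. The second inequality in \eqref{eqn: small-size-localization} is trivial.
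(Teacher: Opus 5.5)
Your proposal is correct and follows the paper's proof: the diagonal lower bound \eqref{eqn: uniform-lower-original-small} (with the frequency drift absorbed), the Neumann series for $L_N^{-1}$ about $D_N$, and term-by-term Gevrey decay of that series for $k\neq k'$. The one difference is how products are controlled: the paper invokes \Cref{lemma: App-3}, built on the refined inequality of \Cref{lemma: App-1}, to get decay constants independent of $N$, whereas you use plain subadditivity of $x\mapsto x^s$ and pay $(2N+1)^m$ per intermediate site. That cost is harmless because $N\le M$ and $\varepsilon$ is super-polynomially small in $M$, and it also avoids the $s$-dependent summation constants of \Cref{lemma: App-3}.
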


By~\Cref{prop: frequency-drift}, the diagonal entries of $D_{N}$ admit the uniform 
lower bound as
\begin{align}
    | D_{ N; j, k} | 
    & \geq \left| - \langle k, \omega_T \rangle + \omega_j \right| - \left| \langle k, \omega'_T - \omega_T \rangle \right| \nonumber  \\
    & \geq \frac{1}{m^{\tau}N^{\tau}} - N \exp\left\{ - (\log M)^{20} \right\} \geq \frac{1}{2m^{\tau}N^{\tau}},   \label{eqn: uniform-lower-original-small}
\end{align}
for any $k \in \Lambda_{N} \setminus \{e_j\}$ when $j=1,\ldots,m$, and any 
$k \in \Lambda_{N} $ when $j=m+1,\ldots,n$, which ensures that $D_N$ remains 
uniformly bounded away from zero. 
Consequently, $(T + \varepsilon B)_N^{-1}$ can be estimated via a Neumann series. 
Since the off-diagonal terms involve only convolution operations preserving Gevrey 
decay, the proof follows from standard arguments and is deferred 
to~\Cref{sec: small-size-original}.

%================================================================================%
\subsection{Verification of the implementation conditions}
\label{subsec: verification}

In this section, we demonstrate that the central restricted operator satisfies the 
implementation conditions (\Cref{cond: implementation}) within the large-scale 
regime $N > M$. 
We employ an inductive approach, utilizing~\Cref{thm: small-size} as the inductive 
base for the small-scale regime $M_0 \leq N \leq M$. Assume that for every scale $N$ 
within the range $M_0 \leq N \leq M^r$, the central restricted operator satisfies 
the implementation conditions. 
Our objective is to prove that these conditions remain valid for the subsequent 
scales $M^{r} < N \leq M^{r+1}$. 
This inductive step relies on the analytical properties of the operator when 
restricted to the outer lattice boxes $k_0 + \Lambda_N$, where 
$k_0 \notin \Lambda_{2N}$. 
For brevity, we denote the restriction of the operator $L$ to $k_0+\Lambda_N$ as 
$L_{k_0,N}$. 
The following theorem establishes its invertibility and off-diagonal decay 
properties to complete the induction. 
The proof, which leverages a multi-scale analysis framework, is detailed 
in~\Cref{sec: multi-scale}.

\begin{theorem}
\label{thm: off-side-box}
    Suppose $\hat{z} \in \mathcal{K}(s)$ and 
    $\omega_T \in \Omega \setminus (\Omega_{M}^{\tau} \cup G)$, where $G$ will be 
    clarified in~\Cref{sec: multi-scale}. 
    Then for any $k_0 \notin \Lambda_{2N}$, the restricted operator $L_{k_0,N}$ is 
    invertible and satisfies: 
    \begin{equation}
        \label{eqn: off-side-inverse}
        \| L_{k_0, N}^{-1} \|_2 \leq \frac{1}{\varepsilon_N}.
    \end{equation}
    Moreover, for any $k, k' \in  k_0 + \Lambda_N$ such that $|k - k'| \geq N^{1/2}$, 
    the matrix entries satisfy:
    \begin{equation}
        \label{eqn: off-side-localization}
        \left| L_{k_0, N}^{-1} \left( k, k' \right) \right| \leq \exp\left\{ - \frac{ |k - k'|^{s}}{2} \right\}.
    \end{equation} 
\end{theorem}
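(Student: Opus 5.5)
The plan is a multi-scale (Bourgain-type) argument, organized by the size of $N$. The inductive hypothesis is \Cref{thm: small-size} together with the implementation conditions for central boxes at all scales up to $M^r$. The statement is then proved for outer boxes $k_0+\Lambda_N$ with $k_0\notin\Lambda_{2N}$.

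First, the operator on the outer box is split as $L_{k_0,N}=D_{k_0,N}+\varepsilon S_{k_0,N}+\varepsilon B_{k_0,N}$. Because $|k|\geq N$ on $k_0+\Lambda_N$, \Cref{prop: B-operator} gives $\|B_{k_0,N}\|\le\gamma_6 e^{-2N^s}$. So $\varepsilon B$ is an exponentially small perturbation there and can be absorbed at the end by a Neumann series, provided the inverse of $T_{k_0,N}$ is bounded by $\varepsilon_N^{-1}/2$.

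For $T_{k_0,N}$, translation covariance is the key point. By \Cref{prop: convolution}, $S(k,k')=S(k-k')$ is Toeplitz. Hence $T_{k_0,N}$ is unitarily equivalent to $T_{0,N}$ with the diagonal shifted, $\omega_j\mapsto \omega_j-\langle k_0,\omega'_T\rangle$. The problem thus becomes controlling a one-parameter family $E\mapsto (D_N-E+\varepsilon S_N)$ with $E=\langle k_0,\omega'_T\rangle$.

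Next comes the choice of the set $G$. Cover $k_0+\Lambda_N$ by sub-boxes $k_1+\Lambda_{N_1}$ with $N_1\approx N^{1/2}$, at a scale already covered by the induction. A sub-box is called \emph{good} if its restricted inverse obeys \eqref{eqn: inverse-grow} and \eqref{eqn: off-diagonal} at scale $N_1$. Badness of a sub-box is a condition on the energy $\langle k_1,\omega'_T\rangle$ lying within $\varepsilon_{N_1}$ of the spectrum of the central operator at scale $N_1$. That spectrum has at most $n(2N_1+1)^m$ eigenvalues. The set $G$ is defined as the set of $\omega_T$ for which, for some $N$ and $k_0$, two disjoint bad sub-boxes at separation $\gtrsim N^{1/2}$ occur inside $k_0+\Lambda_{2N}$ (a double-resonance event), or the whole box has $\mathrm{dist}(\langle k_0,\omega_T'\rangle,\mathrm{spec}\,T_{0,N})<\varepsilon_N$.

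The measure of $G$ is bounded using $\partial\langle k,\omega'_T\rangle/\partial\omega_T\approx k$, which holds by \Cref{prop: frequency-drift}. Each condition then cuts a slab of width $\lesssim\varepsilon_{N_1}/|k|$, and summing over $k_0$ and eigenvalues gives $\lesssim N^{C}\varepsilon_{N_1}$. This is summable over dyadic $N>M$ and yields $\mathrm{mes}(G)\le\delta(\varepsilon)\to0$. Since $\omega_T'$ is a diffeomorphic image of $\omega_T$, the exclusion transfers back to $\omega_T$.

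Off $G$, all but at most one cluster of sub-boxes is good. The inverse is then obtained by the resolvent identity, gluing the local inverses. Concretely, write $T^{-1}_{k_0,N}$ as a linear combination of the good-box inverses $T_{k_1,N_1}^{-1}$ and the inverse on the bad cluster $\Lambda'$, whose size is $\lesssim N_1$ and whose norm bound comes from the spectral-distance exclusion. The coupling terms $\varepsilon S(k-k')$ with $k$ inside and $k'$ outside a sub-box carry factors $\exp\{-|k-k'|^s\}$.

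Iterating the resolvent identity along chains of sub-boxes gives the entry bound
\[
|T^{-1}_{k_0,N}(k,k')|\le \varepsilon_{N}^{-1}\exp\{-(1-o(1))|k-k'|^s\}.
\]
At short range the weak interactions dominate, while at long range the off-diagonal decay wins. The loss in the exponent is controlled by $\sum_\ell \varepsilon_{N_\ell}^{-1}$ against $\exp\{-N_1^{s}/2\}$. For $|k-k'|\ge N^{1/2}$ the prefactor $\varepsilon_N^{-1}=\exp\{(\log N)^{15}\}$ is absorbed by $\exp\{-|k-k'|^s/2\}$, provided $s$ is chosen with $N^{s/2}\gg(\log N)^{15}$ for $N>M$. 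This is how $s=s(\varepsilon)$ is tied to $M$. The norm bound then follows by a Schur test.

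The main obstacle is this gluing step. One must keep the exponent loss at each scale summable, so that the decay rate stays above $1/2$ uniformly in $r$, while the bad cluster only contributes polynomially many $\varepsilon_{N_1}^{-1}$ factors. The first attempt will be to take $N_1=N^{1/2}$ with a single bad cluster and verify $(1-C N_1^{-s/2}(\log N)^{15})$-type losses. If that fails, the fallback is an intermediate scale $N^{1/3}$, with the Gevrey triangle inequality $|a+b|^s\le|a|^s+|b|^s$ used to chain the decay.
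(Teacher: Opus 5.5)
Your skeleton matches the paper's: a translation-invariant double-resonance exclusion that forces bad sub-boxes to cluster (the paper's $G_2$ and \Cref{lem: second-melnikov}), followed by gluing through the resolvent identity. You depart from it in how the singular region is treated, and as written that step fails.

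\textbf{The bad cluster's inverse is not controlled.} You glue using ``the inverse on the bad cluster $\Lambda'$, whose norm bound comes from the spectral-distance exclusion''. None of your exclusions bounds $\|T_{\Lambda'}^{-1}\|_2$. Sub-box badness records failure, not success. The whole-box condition $\mathrm{dist}(\langle k_0,\omega_T'\rangle,\mathrm{spec}\,T_{0,N})\ge\varepsilon_N$ does not pass to the principal submatrix $T_{\Lambda'}$: the cluster may have an eigenvalue at $0$ that the coupling to the rest of the box moves away.

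The paper never inverts the singular region. It glues only the regular part $\Gamma_N\setminus\Pi$, then gets localization from the whole-box bound through \eqref{eqn: inversion-localization}, i.e.\ inversion implies localization. In your formulation the whole-box exclusion already gives \eqref{eqn: off-side-inverse} for $T_{k_0,N}$ at once, since $T_{k_0,N}=T_{0,N}-\langle k_0,\omega_T'\rangle$ is symmetric. So gluing is needed only for localization, and it should be done this way. What you must carry through the induction is then the deterministic implication ``inversion $\Rightarrow$ localization'' (\Cref{lem: induction-multi-scale}) for boxes at \emph{every} position, valid off the translation-invariant set. Your identification of badness with small spectral distance already presupposes it. The theorem's own conclusion can only be arranged for $k_0$ in a polynomial range (see below). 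Adding a separate exclusion for the cluster boxes themselves is an alternative, but the next problem remains either way.

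\textbf{The scale $N_1=N^{1/2}$ is too coarse for localization.} Under this mechanism the entries of $(T_{k_0,N}^{-1})_{\Lambda'\times\Lambda'}$ are bounded only by $\varepsilon_N^{-1}$. So \eqref{eqn: off-side-localization} at separations $\ge N^{1/2}$ requires $\mathrm{diam}\,\Lambda'=o(N^{1/2})$. With $N_1=N^{1/2}$ and clustering at separation $\gtrsim N^{1/2}$, the cluster has diameter of order $N^{1/2}$, so your first attempt cannot work. The paper descends to $N'=\exp\{(\log N)^{1/10}\}$ with a single singular site $\Pi$. Your $N^{1/3}$ fallback, with clustering at separation $CN_1$, would also cure this.

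\textbf{Secondary points.}
\begin{itemize}
\item Do not tie $s$ to $M$ through $N^{s/2}\gg(\log N)^{15}$ for all $N>M$. That forces $s\gtrsim\log\log M/\log M$, which contradicts $6M^s\le7$ and $95M^s\le96$ in \Cref{lem: iteration}. For $M<N\le\exp\{(\log M)^{4/3}\}$ the factor $\varepsilon=\exp\{-(\log M)^{20}\}$ already beats $\varepsilon_N^{-1}$, as in \Cref{lem: small-inverse-localization}. Only beyond that must the Gevrey factor take over, and there $s\sim1/\log M$ suffices once $\varepsilon$ is small.
\item The same applies to $\varepsilon B_{k_0,N}$. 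Near $N=M$, $e^{-2N^s}$ is of order $1$, and the operator norm carries an extra factor $(2N+1)^m$. What makes $\varepsilon B_{k_0,N}$ negligible there is $\varepsilon$.
\item Your slab estimate treats $\mathrm{spec}\,T_{0,N_1}$ as independent of $\omega_T$, but its eigenvalues move at rate $O(N_1)$. The double-resonance separation must therefore exceed a large multiple of $N_1$.
\item For the whole-box condition, the clean argument is monotonicity: in a coordinate $i$ with $|k_{0,i}|>2N$, the derivative $\partial_{\omega_{T,i}}T_{k_0,N}$ is sign-definite of size $\gtrsim N$, so every eigenvalue moves monotonically.
\item The union over $k_0$ must be restricted. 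The paper uses $\Lambda_{N^{10}}$ in $G_{1,N}$, which is all \Cref{subsec: verification} needs. For $m\ge2$ the values $\langle k_0,\omega_T'\rangle$ are dense, so no $\omega_T$ satisfies the spectral condition for all $k_0$. Only the double-resonance set is genuinely translation invariant.
\end{itemize}

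\textbf{What each route buys.} With these repairs, your energy-shift formulation is a legitimate and more elementary substitute for the paper's Schur complement $U=\sigma_\star E+\Upsilon$ plus Cartan estimate. Both exploit the same shift $\langle k_\star,\omega_T'\rangle$. Yours reduces the measure bound to one-dimensional eigenvalue monotonicity. The paper's reduction to a single singular site is what simultaneously delivers localization.
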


To establish the implementation conditions at scale $N$, we set $K = N^{1/10}$ and 
consider a central box $\Lambda_{10K}$. 
By the inductive hypothesis, the operator $L$ restricted to the box $\Lambda_{10K}$ 
satisfies the implementation conditions. 
We then introduce a subbox $\Lambda_{9K} \subseteq \Lambda_{10K}$ and a family of 
translated boxes $\{ k + \Lambda_{K} \}$ where $k \notin \Lambda_{9K}$. 
Consequently, the lattice box $\Lambda_{N}$ admits the following decomposition:
\begin{equation}
    \label{eqn: center-gluing-9k-implementation}
    \Lambda_{N} = \Lambda_{10K} \cup \left( \bigcup_{k \notin \Lambda_{9K}} (k + \Lambda_{K})\right),
\end{equation}
where the properties established in~\Cref{thm: off-side-box} are applicable to the 
restricted operators $L_{k, K}$. 
We now demonstrate how to derive the global inverse $L_{N}^{-1}$ using local 
information, specifically, the inverses of $L$ restricted to $\Lambda_{10K}$ and the 
family $\{ k + \Lambda_{K}\}$.

\paragraph{Central subbox ($k \in \Lambda_{9K}$)} For lattice points within the 
central subbox, we partition the lattice as 
$\Lambda_{N} = \Lambda_{10K} \cup \Lambda_{10K} ^{c} $, where 
$\Lambda_{10K} ^{c} = \Lambda_{N} \setminus  \Lambda_{10K} $. 
Applying the resolvent identity, we obtain the block representation:
\begin{equation}
    \label{eqn: resolvent-center}
    \begin{pmatrix} 
        L^{-1}_{10K } & 0  \\ 
        0 & L^{-1}_{10K,-} 
    \end{pmatrix} 
    = \left[ E + \varepsilon 
    \begin{pmatrix} 
        0 & L_{10K}^{-1}P^*  \\  
        L^{-1}_{10K,-}P & 0 
    \end{pmatrix}  
    \right] 
    \begin{pmatrix} 
        (L^{-1}_{N })_{\Lambda_{10K} \times \Lambda_{10K}} & (L^{-1}_{N })_{\Lambda_{10K} \times \Lambda_{10K}^{c}}  \\ 
        (L^{-1}_{N })_{\Lambda_{10K}^{c} \times \Lambda_{10K}} & (L^{-1}_{N })_{\Lambda_{10K}^c \times \Lambda_{10K}^c} 
    \end{pmatrix}, 
\end{equation}
where $P$ and $P^*$ are coupling operators generated from $B+S$, and 
$L^{-1}_{10K,-}$ denotes the inverse of the operator $L$ restricted to 
$\Lambda_{10K} ^{c}$. 
From the relation, the components are expressed as:   
\begin{subequations}
    \label{eqn: resolvent-center-relation}
    \begin{empheq}[left=\empheqlbrace]{align} 
        & (L^{-1}_{N })_{\Lambda_{10K} \times \Lambda_{10K}} = L^{-1}_{10K } - \varepsilon L_{10K}^{-1}P^* (L^{-1}_{N })_{\Lambda_{10K}^{c} \times \Lambda_{10K}},           \label{eqn: resolvent-center-relation-1} \\
        & (L^{-1}_{N })_{\Lambda_{10K} \times \Lambda_{10K}^{c}} = - \varepsilon  L_{10K}^{-1}P^*(L^{-1}_{N })_{\Lambda_{10K}^c \times \Lambda_{10K}^c}.                     \label{eqn: resolvent-center-relation-2}
    \end{empheq}        
\end{subequations}
By gluing~\eqref{eqn: resolvent-center-relation-1} 
and~\eqref{eqn: resolvent-center-relation-2} and extending the terms to the full 
index set $\Lambda_N$, we derive
\begin{equation}
    \label{eqn: resolvent-center-final}
    (L^{-1}_{N })_{\Lambda_{10K} \times \Lambda_{N}} = \begin{pmatrix} L_{10K}^{-1}& 0 \end{pmatrix}  - \varepsilon  \begin{pmatrix}  0  & (L_{10K}^{-1}P^*) \end{pmatrix} L_{N}^{-1}.
\end{equation}
Utilizing the inductive assumption alongside~\Cref{prop: convolution} 
and~\Cref{prop: B-operator}, we establish upper bounds for the operator 
$L^{-1}_{10}P^{*}$. 
For any $k \in \Lambda_{9K}$ and $k' \notin \Lambda_{10K}$, the distance satisfies 
$|k - k'| \geq K$. 
Consequently, the entries of $(L^{-1}_{10}P^{*})(k,k')$ exhibits Gevrey decay. 
Specifically, by decomposing the region into short-range and long-range domains, the 
entries satisfy:
\begin{equation}
    \label{eqn: inversion-localization-center}
    | (L^{-1}_{10K}P^*)(k, k')| \leq \left\{ 
    \begin{aligned}
        & \exp\left\{ - \frac{ |k - k'|^{s}}{2} \right\},    &&\mathrm{if}\; |k' - k| < 100K,  \\
        & \exp\left\{ - \frac{ 3|k - k'|^{s}}{4} \right\},   &&\mathrm{if}\; |k' - k| \geq 100K.
    \end{aligned} 
    \right.
\end{equation}
Finally, by restricting the identity~\eqref{eqn: resolvent-center-final} to the 
subset $\Lambda_{9K} \times \Lambda_{N}$, we arrive at:
\begin{equation}
    \label{eqn: resolvent-center-final-9k}
    (L^{-1}_{N })_{\Lambda_{9K} \times \Lambda_{N}} = \begin{pmatrix} L_{10K}^{-1}& 0 \end{pmatrix}_{\Lambda_{9K} \times \Lambda_{N}} - \varepsilon \begin{pmatrix}  0  & (L_{10K}^{-1}P^*) \end{pmatrix}_{\Lambda_{9K} \times \Lambda_{N}} L_{N}^{-1},
\end{equation}
where the entries of $(L_{10K}^{-1}P^*)_{\Lambda_{9K} \times \Lambda_{10K}^{c}}$ 
satisfy the Gevrey decay conditions~\eqref{eqn: inversion-localization-center}.

\paragraph{Outer boxes ($k \notin \Lambda_{9K}$)} For lattice points outside the 
central subbox, we partition the lattice as 
$\Lambda_{N} = (k + \Lambda_{K})\cup (k + \Lambda_{K})^{c} $, where 
$(k + \Lambda_{K})^{c} = \Lambda_{N} \setminus (k + \Lambda_{K}) $. 
Following a block representation and resolvent identity procedure analogous 
to~\eqref{eqn: resolvent-center-relation} ---~\eqref{eqn: resolvent-center-final-9k}, 
we establish
\begin{equation}
    \label{eqn: resolvent-outside-final-9k}
    (L^{-1}_{N })_{\{ k\} \times \Lambda_{N}} = \begin{pmatrix} (L_{k,K}^{-1})_{\{ k\} \times (k + \Lambda_{K})} & 0 \end{pmatrix} - \varepsilon \begin{pmatrix}  0  & (L_{k,K}^{-1}P^*)_{\{ k\} \times (k + \Lambda_{K})^c} \end{pmatrix} L_{N}^{-1},
\end{equation}
where the entries of $(L_{k, K}^{-1}P^*)_{ {\{ k\} \times (k + \Lambda_{K})^c}}$ 
satisfy the Gevrey decay estimate:
\begin{equation}
    \label{eqn: inversion-localization-outside}
    | (L^{-1}_{10K}P^*)(k, k')| \leq \left\{ 
    \begin{aligned}
        & \exp\left\{ - \frac{ |k - k'|^{s}}{2} \right\},     &&\mathrm{if}\; |k' - k| < 4K,  \\
        & \exp\left\{ - \frac{ 3|k - k'|^{s}}{4} \right\},   &&\mathrm{if}\; |k' -k| \geq 4K.
    \end{aligned} 
    \right.
\end{equation}

By combining~\eqref{eqn: resolvent-center-final-9k} 
and~\eqref{eqn: resolvent-outside-final-9k}, the global inverse can be expressed as
\begin{equation}
    \label{eqn: resolvent-relation}
    L_{N}^{-1} = \Psi_N  - \varepsilon \Phi _{N} L_{N}^{-1},
\end{equation}
where the operator $\Psi_N$ is constructed from the lower-scale inverses, 
$L_{10K}^{-1}$ and $L^{-1}_{k, K}$, as follows: 
\begin{equation}
    \label{eqn: entries-psi}
    \Psi_{N}(k, k') = \left\{ 
    \begin{aligned}
        & L_{10K}^{-1}(k, k'),  &&\mathrm{if}\; k \in \Lambda_{9K},\; k' \in \Lambda_{10K},        \\
        & L_{k, K}^{-1}(k, k'), &&\mathrm{if}\; k \notin \Lambda_{9K},\; k' -k \in \Lambda_K,    \\
        & 0,                    && \mathrm{otherwise}.   
    \end{aligned} 
    \right.
\end{equation}
Based on the estimates provided in~\eqref{eqn: inversion-localization-center} 
and~\eqref{eqn: inversion-localization-outside}, the entries of the operator 
$\Phi_N$ satisfy:
\begin{equation}
    \label{eqn: entries-phi}
    \Phi_N(k, k') \leq \left\{ 
    \begin{aligned} 
        & 0,                                                 &&\mathrm{if}\; k =  k'; \\
        & \exp\left\{ - \frac{ |k - k'|^{s}}{2} \right\},    &&\mathrm{if}\; 0 < |k' - k| < 100K;  \\ 
        & \exp\left\{ - \frac{ 3|k - k'|^{s}}{4} \right\},   &&\mathrm{if}\; |k' -k| \geq 100K.
    \end{aligned} 
    \right.
\end{equation}                                    
For a sufficiently small $\varepsilon$, the operator $E + \varepsilon \Phi_N$ is 
invertible via the Neumann series expansion. 
For long-range interactions where $|k-k'| \geq N^{1/4}$, the inverse satisfies:
\begin{equation}
    \label{eqn: inverse-decay-long}
    | (E + \varepsilon \Phi_N)^{-1}(k, k') | \leq \exp\left\{ - \frac{|k - k'|^s}{2} - \frac{1}{4} (100K)^s \right\}, 
\end{equation}
where this bound accounts for interaction paths containing at least one long-range 
jump. If a path consisted solely of short-range interactions to cover the distance 
$|k-k'|$, the ``cost'' (the cumulative decay exponent) would be even higher. 
Specifically, according to the concavity property for $s < 1$, the total index must 
satisfy:
\[
    \frac{|k - k'|}{100K} (100K)^{s} \geq \left( \frac{N^{\frac14}}{(100K)} \right)^{1-s} \cdot |k-k'|^{s}  \geq |k-k'|^s + \frac{1}{2}(100K)^s
\]
A more refined analysis for the Gevrey case reveals that the inverse 
$(E + \varepsilon \Phi_N)^{-1}(k, k')$ preserves short-range decay, while the 
long-range decay follows~\eqref{eqn: entries-phi} with the transition threshold 
shifted from $100K$ to $K^2$. 
In contrast, the concavity property is unavailable in the analytic case. 
Consequently, the parameter $\varepsilon$ must be more strictly tuned to compensate 
for the lack of sub-linear exponent growth and to ensure sufficient decay for purely 
short-range interaction paths. 
We can thus rewrite~\eqref{eqn: resolvent-relation} as
\begin{equation}
    \label{eqn: l-n-inverse}
    L_{N}^{-1} = (E + \varepsilon \Phi_{N})^{-1} \Psi_N. 
\end{equation}

Finally, by~\eqref{eqn: entries-psi}, the $\ell_2$-norm satisfies 
$\|\Psi_N\|_2 \leq (20K)^m \exp\left\{ (\log(10K))^{15} \right\}$.
By substituting this bound together with~\eqref{eqn: inverse-decay-long} 
into~\eqref{eqn: l-n-inverse}, we conclude that $L_N^{-1}$ satisfies the 
implementation conditions at scale $N$.

\section{Multi-scale analysis}
\label{sec: multi-scale}

In this section, we employ a multi-scale analysis to 
establish~\Cref{thm: off-side-box} for the ``outer'' boxes $k_0 + \Lambda_N$, where 
the center satisfies $k_0 \notin \Lambda_{2N}$. 
For any indices $k, k' \in k_0 + \Lambda_N$, the condition $k_0 \notin \Lambda_{2N}$ 
implies a lower bound of $|k| + |k'| \geq 2N$. 
Consequently, the restricted operator satisfies the following norm estimate:
\[
    \|B_{k_0, N}\|_2 \leq \sqrt{ \|B_{k_0, N}\|_1 \cdot \|B_{k_0, N}\|_\infty} \leq (2N+1)^{m} \exp\left\{ -N^{s} \right\}.
\]
Given that $T_{k_0,N} = D_{k_0,N} + \varepsilon S_{k_0,N}$ is symmetric with respect 
to the indices $k$ and $k'$, we analyze the numerical operator 
$L_{k_0,N} = T_{k_0,N} + \varepsilon B_{k_0,N}$. 
When the smallest eigenvalue of $T_{k_0, N}$ scales as $\exp(-(\log N)^s)$ for 
$s > 0$, we examine the product
\[
    L_{k_0, N}^{\top}L_{k_0, N} = T^{2}_{k_0, N} + \varepsilon (B_{k_0, N}^{\top}T_{k_0, N} + T_{k_0, N}B_{k_0, N}) + \varepsilon^2 B_{k_0, N}^{\top}B_{k_0, N}.
\]
Due to the rapid decay of $B_{k_0,N}$ established above, the perturbation terms are 
negligible compared to the leading term $T_{k_0,N}^2$. 
Therefore, obtaining a robust bound for $\| T_{k_0,N}^{-1} \|_2 $ is sufficient to 
control the inverse norm $\| L_{k_0,N}^{-1} \|_2$.

For the remainder of this section, we focus on estimating $T_{k_0,N}^{-1}$. 
Since the specific location of the center $k_0 \notin \Lambda_{2N}$ does not 
explicitly affect the subsequent analysis, we omit the subscript $k_0$ unless 
otherwise specified and simplify the notation by setting 
$\Gamma_N = k_0 + \Lambda_N$, $T_{N} := T_{k_0, N}$, and $D_{N} := D_{k_0, N}$.

%============================================================================%
\subsection{Inductive base for small boxes}
\label{subsec: small-away}

We begin the multi-scale analysis by considering the small-scale regime 
$M_0 \leq N \leq M$. 
According to~\Cref{prop: frequency-drift}, the gap between any two eigenvalues of 
$D_{N}$ satisfies the following lower bound: 
\begin{equation}
    \label{eqn: frequency-difference-small-multi}
    \left| D_{N; j_1, k_1} - D_{N; j_2, k_2} \right| = \left| - \langle k_1 - k_2, \omega_{T}' \rangle + \omega_{j_1} -\omega_{j_2} \right| \geq \left| - \langle k_1 - k_2, \omega_{T} \rangle + \omega_{j_1} -\omega_{j_2} \right| - 2N\varepsilon.
\end{equation}
For any tangent frequency $\omega_{T} \in \Omega \setminus \Omega_{M}^{\tau}$, we 
exploit the fact that $\varepsilon$ is ``super-polynomially'' small, specifically, 
it decays faster than any polynomial in $N$. 
This leads to several key observations regarding the resonance conditions. 
First, when the normal frequency difference is not zero, the definitions of the 
single-mode resonant sets~\eqref{eqn: 1-melnikov} and the difference resonant 
sets~\eqref{eqn: 2-melnikov} imply that the gap condition can only fail if 
$j_1 = j_2$ and $k_1 = k_2$. 
Hence, for distinct eigenvalues, the normal frequency components must vanish. 
Furthermore, the tangent resonant set defined in~\eqref{eqn: diophantine-nearly} 
ensures that the gap condition is violated by 
$(k_{1} - e_{j_1}) - (k_{2} - e_{j_2}) = 0$. 
Given that $j=1, \ldots, m$, there exists at most $m$ entries of $D_n$ falling below 
the threshold $(4mN)^{-\tau}$, which is formalized in the following proposition.

\begin{prop} 
    \label{prop: small-eig-one}
    Suppose that $\hat{z} \in \mathcal{K}(s)$ and the tangent frequency satisfies 
    $\omega_T \in \Omega \setminus \Omega_{M}^{\tau}$. 
    Then the diagonal operator $D_{N}$ admits at most $m$ entries
    $D_{N; j, k} = - \langle k, \omega_T' \rangle + \omega_j $ for $j=1,\ldots, n$, 
    whose absolute value is smaller than $(4mN)^{-\tau}$.
\end{prop}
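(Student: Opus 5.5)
We argue by contradiction: if more than $m$ diagonal entries were small, the pairwise differences of two of them would violate the non-resonance built into $\Omega_{M}^{\tau}$. Throughout, $N \le M$, every index $k$ ranges over $\Gamma_N = k_0+\Lambda_N$, and differences $k_1-k_2$ lie in $\Lambda_{2N}\subseteq\Lambda_{2M}$. Hence all the nearly-resonant sets \eqref{eqn: diophantine-nearly}, \eqref{eqn: 1-melnikov}, \eqref{eqn: 2-melnikov} apply to these differences.

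\textbf{Step 1: two small entries have a small difference.} Suppose $(j_1,k_1)$ and $(j_2,k_2)$ both satisfy $|D_{N;j,k}|<(4mN)^{-\tau}$. Then
\[
\bigl|-\langle k_1-k_2,\omega_T'\rangle+\omega_{j_1}-\omega_{j_2}\bigr|<2(4mN)^{-\tau}.
\]
By \eqref{eqn: frequency-difference-small-multi} and \Cref{prop: frequency-drift}, replacing $\omega_T'$ with $\omega_T$ costs at most $2N\varepsilon$. Since $\varepsilon\le \exp\{-(\log M)^{20}\}$ is super-polynomially small in $N\le M$, the same quantity evaluated at $\omega_T$ remains below $3(4mN)^{-\tau}$.

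\textbf{Step 2: case analysis on the normal indices $j_1,j_2$.}
\begin{itemize}
\item[(a)] If $j_1,j_2>m$ and $j_1\neq j_2$, membership outside $\Omega_{2,M}^{\tau,+}$ gives the lower bound $(|k_1-k_2|+2)^{-\tau}\ge(2mN+2)^{-\tau}$. This exceeds $3(4mN)^{-\tau}$, a contradiction. The comparison uses $|k|\le m|k|_\infty$, and the constants work out since $4^\tau$ beats $3\cdot 2^{\tau}$ up to adjusting $\tau>m-1\ge 1$; if needed, enlarge the constant $4m$ slightly. This constant-juggling is the only delicate point.
\item[(b)] If exactly one index is normal, say $j_1>m\ge j_2$, then $\omega_{j_2}$ is a tangential frequency, namely the $j_2$-th component of $\omega_T$. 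The expression becomes $\langle k_1-k_2+e_{j_2},\omega_T\rangle-\omega_{j_1}$, and exclusion of $\Omega_{1,M}^{\tau}$ again contradicts smallness.
\item[(c)] If $j_1=j_2>m$, the expression is $\langle k_1-k_2,\omega_T\rangle$. Exclusion of $\Omega_{0,M}^{\tau}$ forces $k_1=k_2$.
\item[(d)] If $j_1,j_2\le m$, the expression is $\langle (k_1-e_{j_1})-(k_2-e_{j_2}),\omega_T\rangle$, and $\Omega_{0,M}^{\tau}$ forces $k_1-e_{j_1}=k_2-e_{j_2}$.
\end{itemize}

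\textbf{Step 3: counting.} By (a) and (b), the small entries cannot mix distinct normal indices, nor normal with tangential ones. By (c), there is at most one small entry with a normal index. By (d), the tangential small entries are all of the form $(j,\bar k+e_j)$ for a single fixed $\bar k$, giving at most $m$ of them. The remaining point is to exclude simultaneous presence of one normal small entry together with tangential ones; case (b) does exactly this. Hence the total number of small entries is at most $m$.

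The main obstacle is purely arithmetical: verifying that the thresholds $(|k|+1)^{-\tau}$ and $(|k|+2)^{-\tau}$, evaluated at $|k|\le 2mN$, dominate $3(4mN)^{-\tau}$ after absorbing the $2N\varepsilon$ drift term.
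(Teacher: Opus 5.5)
\textbf{Approach versus the paper.} Your route is the paper's. You compare two small entries through their difference. You use $\Omega_{2,M}^{\tau,+}$ and $\Omega_{1,M}^{\tau}$ to forbid two distinct normal indices or a normal/tangential mix, and $\Omega_{0,M}^{\tau}$ to force $k_1-e_{j_1}=k_2-e_{j_2}$. Then you count one site per $j\le m$. Your case split and the counting in Step 3 are correct, and more explicit than the paper's sketch.

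\textbf{The gap.} It sits in the step you call delicate, and the claim you make there is false. With your inflated bound $3(4mN)^{-\tau}$, cases (a), (b), (d) need $\bigl(\frac{2mN}{mN+1}\bigr)^{\tau}\ge 3$, i.e.\ asymptotically $2^{\tau}\ge 3$. The hypothesis $\tau>m-1$ does not give this. It fails for $m=2$ with $\tau\in(1,\log_2 3]$, and ``$m-1\ge 1$'' is false for $m=1$. Enlarging $4m$ would prove a different statement, and the enlargement needed is of order $2m\cdot 3^{1/\tau}$, which is not slight as $\tau\to 0$.

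\textbf{Repair for $m\ge 2$.} Do not inflate. Keep the upper bound $2(4mN)^{-\tau}+2N\varepsilon$; the cases then close as soon as
\[
\Bigl(\frac{2mN}{mN+1}\Bigr)^{\tau}>2+2N\varepsilon\,(4mN)^{\tau}.
\]
This holds for $\tau>1$ once $N\ge M_0$ is large in terms of $\tau$; shrink $\varepsilon_0$ accordingly. The error term is at most $2M(4mM)^{\tau}e^{-(\log M)^{20}}$, since $\varepsilon=e^{-(\log M)^{20}}$. This covers all $m\ge 2$.

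\textbf{Repair for $m=1$, $\tau\le 1$.} Here the pairwise comparison cannot close at this threshold, because $2(4N)^{-\tau}\ge(2N+2)^{-\tau}$, so (a) and (b) give no contradiction. You must use that $\Gamma_N$ is an outer box, which your proof never does:
\begin{itemize}
\item $k_0\notin\Lambda_{2N}$ gives $|k|>N$ for all $k\in\Gamma_N$;
\item $k=\pm1$ in $\Omega_{0,M}^{\tau}$ gives $|\omega_T|\ge 1$, hence $|\omega_T'|\ge 1-\varepsilon$;
\item therefore $|D_{N;j,k}|\ge N(1-\varepsilon)-\max_j|\omega_j|$, which is large since the frequencies are bounded on the bounded domain $\Omega$.
\end{itemize}
So in this case there are no small entries at all.

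\textbf{Minor edge case.} At $N=M$, the shifted vectors in (b) and (d) can have $\ell_\infty$-norm $2M+1$ and so fall outside the index range $\Lambda_{2M}$ of the excluded sets.

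The paper's own sketch never checks these constants either, so this is where your write-up must go beyond it.
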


For entries falling below the threshold $(4mN)^{-\tau}$, the indices 
$k_{j} - e_{j}$ for $j=1, \ldots, m$ remain fixed. 
For convenience, we denote this fixed index as $k_{\star} = k_{j} - e_j$. 
We then define the set of these entries as
\begin{equation}
    \label{eqn: singular-small}
    \Xi= \left\{ - \langle k_{\star}, \omega_T' \rangle + \omega_j - \omega_j' \big |  j = 1, \ldots, m \right\}. 
\end{equation}
This index set identifies the singular site, a singleton set denoted as 
$\Pi = \{ k_{\star} \}$. 
To establish the inductive base in the small-scale regime, we formulate the 
following two lemmas. 
First, we consider the operator $T$ restricted to the index set excluding the 
singular site, $\Gamma_{N} \setminus \Pi$. 
By applying~\Cref{prop: small-eig-one} and following a procedure similar to the 
proof of~\Cref{thm: small-size}, we derive the following bounds for its inverse.

\begin{lemma}
    \label{lem: small-singular}
    Suppose that $\hat{z} \in \mathcal{K}(s)$ and 
    $\omega_T \in \Omega \setminus \Omega_{M}^{\tau}$. 
    The restricted operator $T_{\Gamma_N \setminus \Pi}$ is invertible, and its 
    inverse satisfies the operator norm bound
    \begin{equation}
        \label{eqn: small-size-inverse-prepare}
        \| T_{\Gamma_N \setminus \Pi}^{-1} \|_2 \leq \frac{1}{\varepsilon_N}.
    \end{equation}
    In addition, the off-diagonal entries of $T _{\Gamma_N \setminus \Pi}^{-1}$ 
    satisfy the decay estimate
    \begin{equation}
        \label{eqn: small-size-off-diagonal-prepare}
        \left| T_{\Gamma_N \setminus \Pi}^{-1} \left( k, k' \right) \right| \leq \exp \left\{ - \frac{| k - k'|^{s}}{2} \right\},
    \end{equation}
    for any $k \neq  k' \in \Gamma_N \setminus \Pi$.
\end{lemma}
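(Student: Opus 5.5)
After removing the singular site, we treat $T_{\Gamma_N\setminus\Pi}=D_{\Gamma_N\setminus\Pi}+\varepsilon S_{\Gamma_N\setminus\Pi}$ as a diagonal operator with a uniform lower bound plus a small Gevrey-decaying perturbation, and invert it by a Neumann series. This mirrors the proof of \Cref{thm: small-size}.

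\textbf{Step 1: diagonal lower bound.} By \Cref{prop: small-eig-one}, every entry of $D_N$ has modulus at least $(4mN)^{-\tau}$, except for at most $m$ entries, and these are all located at the singular site $\Pi=\{k_\star\}$. So after deleting $\Pi$,
\[
\|D_{\Gamma_N\setminus\Pi}^{-1}\|_2\le (4mN)^{\tau}.
\]
Since $N\le M$, we have $\varepsilon\le\exp\{-(\log M)^{20}\}$. This is far smaller than any negative power of $N$, so
\[
\varepsilon (4mN)^\tau\|S\|_2\le \varepsilon\gamma (4mN)^\tau\ll 1,
\]
using \Cref{prop: convolution} and $\gamma\le 1/(2e)$.

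\textbf{Step 2: Neumann series and the norm bound.} Write
\[
T^{-1}_{\Gamma_N\setminus\Pi}=\sum_{\ell\ge0}(-\varepsilon)^\ell (D^{-1}S)^\ell D^{-1},
\]
with all operators restricted to $\Gamma_N\setminus\Pi$. The series converges, and
\[
\|T^{-1}_{\Gamma_N\setminus\Pi}\|_2\le 2(4mN)^\tau .
\]
Since $(\log N)^{15}\gg \tau\log(4mN)$ for $N\ge M_0$ large, this gives $\|T^{-1}_{\Gamma_N\setminus\Pi}\|_2\le 1/\varepsilon_N$.

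\textbf{Step 3: off-diagonal decay.} For $k\neq k'$ the $\ell=0$ term vanishes. The $\ell$-th term is a sum over paths $k=k_0,k_1,\dots,k_\ell=k'$ with nonzero steps, and each path contributes at most
\[
\varepsilon^\ell(4mN)^{\tau(\ell+1)}\gamma^\ell\exp\Big\{-\sum_i|k_{i}-k_{i-1}|^s\Big\}.
\]
Subadditivity of $x\mapsto x^s$ for $s\le1$ gives $\sum_i|k_i-k_{i-1}|^s\ge|k-k'|^s$. The number of paths of length $\ell$ is at most $(2N+1)^{m\ell}$. Hence the $\ell$-th term is bounded by
\[
(4mN)^\tau\big(\varepsilon\gamma(4mN)^\tau(2N+1)^m\big)^\ell e^{-|k-k'|^s}.
\]
Alternatively, we sum the remaining factor $\sum_{j}e^{-|j|^s/2}$, keeping half of the exponent. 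Summing over $\ell\ge1$ yields
\[
|T^{-1}(k,k')|\le C\,\varepsilon (4mN)^{2\tau}(2N+1)^m e^{-|k-k'|^s}.
\]

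\textbf{Main obstacle.} The hard part is absorbing the polynomial prefactor $N^{2\tau+m}$ so as to reach $\exp\{-|k-k'|^s/2\}$. Here we use that the leading factor $\varepsilon$ is super-polynomially small in $N$, which gives $\varepsilon N^{2\tau+m}\le1$ for $N\le M$. A separate issue is the Gevrey exponent in the path count: the summation over intermediate sites must not destroy the exponent. Splitting $e^{-|k_i-k_{i-1}|^s}=e^{-|\cdot|^s/2}e^{-|\cdot|^s/2}$ handles this, because the first halves combine by subadditivity to $e^{-|k-k'|^s/2}$, while the second halves are summable with a constant depending only on $s$ and $m$.
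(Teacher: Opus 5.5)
Your proposal is correct and follows the paper's route. The paper also derives this lemma from \Cref{prop: small-eig-one}, which gives that all diagonal entries off $\Pi$ have modulus at least $(4mN)^{-\tau}$, combined with the Neumann-series argument of \Cref{thm: small-size}, and its off-diagonal decay comes from the product estimate \Cref{lemma: App-3}, which is your half-exponent splitting (your cruder path count over the finite box, absorbed by $\varepsilon N^{2\tau+m}\ll 1$ for $N\le M$, works equally well). One harmless slip: $S(k,k)\neq 0$ in general, so paths can contain zero steps, but these contribute $0$ to $\sum_i|k_i-k_{i-1}|^s$ and the estimate is unchanged.
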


Next, we analyze $T_{N}^{-1}$ by leveraging the properties of 
$T^{-1}_{\Gamma_N \setminus \Pi }$. 
According to the resolvent identity, we express the block structure of the inverse as 
follows:
\begin{equation}
    \label{eqn: resolvent-small-inversion-local}
    \begin{pmatrix} 
        T^{-1}_{\Gamma_N \setminus \Pi } & 0  \\ 
        0 & T^{-1}_{\Pi} 
    \end{pmatrix} 
    = \left[ E + \varepsilon 
    \begin{pmatrix} 
        0 & T^{-1}_{\Gamma_N \setminus \Pi }P^*  \\  
        T^{-1}_{\Pi} P & 0 
    \end{pmatrix}  
    \right] 
    \begin{pmatrix} 
        (T^{-1}_{N })_{(\Gamma_N \setminus \Pi) \times (\Gamma_N \setminus \Pi)} & (T^{-1}_{N})_{(\Gamma_N \setminus \Pi) \times \Pi}  \\ 
        (T^{-1}_{N })_{\Pi \times (\Gamma_N \setminus \Pi) } \mathrel{\phantom{=l}} & (T^{-1}_{N})_{\Pi \times \Pi} \mathrel{\phantom{=l}}
    \end{pmatrix}. 
\end{equation}
Since $T_{N}$ is symmetric with respect to the indices $k$ and $k'$, the 
identity~\eqref{eqn: resolvent-small-inversion-local} yields the relation:
\begin{equation}
    \label{eqn: inversion-localization}
    (T^{-1}_{N })_{\Pi \times (\Gamma_N \setminus \Pi)}^{\top} = (T^{-1}_{N})_{(\Gamma_N \setminus \Pi) \times \Pi} = - \varepsilon T^{-1}_{\Gamma_N \setminus \Pi }P^*  (T^{-1}_{N })_{\Pi \times \Pi}.
\end{equation}
If the inverse is bounded, specifically, 
$\| T_{N}^{-1} \|_2 \leq \varepsilon_{N}^{-1}$, the 
relation~\eqref{eqn: inversion-localization} implies the localization condition. 
We thus establish that, within the small-scale regime, the existence of an inverse 
with exponential-logarithmic growth implies localization.

\begin{lemma}
    \label{lem: small-inverse-localization}
    Suppose that $\hat{z} \in \mathcal{K}(s)$ and 
    $\omega_T \in \Omega \setminus \Omega_{M}^{\tau}$. 
    If the restricted operator $T_N$ is invertible and satisfies the operator norm 
    bound
    \begin{equation}
        \label{eqn: trans-small-size-original-inverse}
        \| T_{N}^{-1} \|_2 \leq \frac{1}{\varepsilon_N},
    \end{equation}
    then the off-diagonal entries of $T_{N}^{-1}$ satisfy the decay estimate
    \begin{equation}
        \label{eqn: trans-small-size-off-diagonal}
        \left| T_{N}^{-1} \left( k, k' \right) \right| \leq \exp \left\{ - \frac{| k - k'|^{s}}{2} \right\},
    \end{equation}
    for any $k \neq k' \in \Gamma_{N}$.
\end{lemma}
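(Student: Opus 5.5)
The argument runs through the block resolvent identity \eqref{eqn: resolvent-small-inversion-local} with respect to the partition $\Gamma_N = (\Gamma_N\setminus\Pi)\cup\Pi$. It combines the three available ingredients: the hypothesis \eqref{eqn: trans-small-size-original-inverse}, the decay of $T^{-1}_{\Gamma_N\setminus\Pi}$ from \Cref{lem: small-singular}, and the decay of the coupling $P^*$, which is the restriction of $S$ to $(\Gamma_N\setminus\Pi)\times\Pi$ and is controlled by \eqref{eqn: tangent-comp} of \Cref{prop: convolution}. Since $\Pi=\{k_\star\}$ is a single site, $(T_N^{-1})_{\Pi\times\Pi}$ is a fixed $n\times n$ block. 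By \eqref{eqn: trans-small-size-original-inverse} its norm is at most $\varepsilon_N^{-1}=\exp\{(\log N)^{15}\}$. I would split the entries $T_N^{-1}(k,k')$, $k\neq k'$, into three classes.

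\textbf{Class one: $k\in\Gamma_N\setminus\Pi$, $k'=k_\star$.} Here I use \eqref{eqn: inversion-localization}:
\[
T_N^{-1}(k,k_\star) = -\varepsilon\,\big(T^{-1}_{\Gamma_N\setminus\Pi}P^*\big)(k,k_\star)\,(T_N^{-1})_{\Pi\times\Pi}.
\]
Expanding the product gives $\sum_{k''\in\Gamma_N\setminus\Pi}T^{-1}_{\Gamma_N\setminus\Pi}(k,k'')S(k''-k_\star)$. I bound the diagonal term $k''=k$ by $\varepsilon_N^{-1}\gamma e^{-|k-k_\star|^s}$ and the off-diagonal terms by $e^{-|k-k''|^s/2}\gamma e^{-|k''-k_\star|^s}$. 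Subadditivity $|a|^s+|b|^s\ge|a+b|^s$ for $s<1$ bounds each summand by $e^{-|k-k_\star|^s/2}$. The sum over $k''$ costs at most a factor $(2N+1)^m$. Altogether
\[
|T_N^{-1}(k,k_\star)| \le \varepsilon\,\gamma\,(2N+1)^m\,\varepsilon_N^{-2}\,e^{-|k-k_\star|^s/2}.
\]
Because $N\le M$ and $\varepsilon\le\exp\{-(\log M)^{20}\}$, we have $\varepsilon(2N+1)^m\varepsilon_N^{-2}\le 1$. This is the super-polynomial smallness of $\varepsilon$ relative to $\varepsilon_N$ in the small-scale regime, already used in \eqref{eqn: uniform-lower-original-small}. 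The bound follows, and the case $k=k_\star$, $k'\neq k_\star$ follows by symmetry of $T_N$.

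\textbf{Class two: $k,k'\in\Gamma_N\setminus\Pi$.} The first row of \eqref{eqn: resolvent-small-inversion-local} gives
\[
(T_N^{-1})_{(\Gamma_N\setminus\Pi)^2} = T^{-1}_{\Gamma_N\setminus\Pi} - \varepsilon\,T^{-1}_{\Gamma_N\setminus\Pi}P^*\,(T_N^{-1})_{\Pi\times(\Gamma_N\setminus\Pi)}.
\]
The first term decays by \eqref{eqn: small-size-off-diagonal-prepare}. The correction term factors through the single site $k_\star$. Its entry is $\varepsilon\,(T^{-1}_{\Gamma_N\setminus\Pi}P^*)(k,k_\star)\,T_N^{-1}(k_\star,k')$. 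Using the class-one bound for both factors and subadditivity gives
\[
\varepsilon\cdot\text{poly}(N)\,\varepsilon_N^{-3}\,e^{-(|k-k_\star|^s+|k_\star-k'|^s)/2} \le \varepsilon\cdot\text{poly}(N)\,\varepsilon_N^{-3}\,e^{-|k-k'|^s/2}.
\]

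\textbf{Main obstacle.} The difficulty is that adding the two contributions produces a constant like $1+o(1)$ rather than exactly $1$ in front of $e^{-|k-k'|^s/2}$. To absorb this, I would reuse the slack in \Cref{lem: small-singular}. The Neumann-series proof there, as in \Cref{thm: small-size}, actually gives the stronger bound $e^{-|k-k'|^s}$, and for $k\neq k'$ we have $e^{-|k-k'|^s}\le e^{-1/2}e^{-|k-k'|^s/2}$. So the leading term carries a factor $e^{-1/2}<1/2$, and the correction term then fits within the remaining margin. If only the stated $e^{-|\cdot|^s/2}$ bound of \Cref{lem: small-singular} is available, I would instead track the exponent $s$ slightly larger in the intermediate step and lose the margin there. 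Either way, the bookkeeping of $\varepsilon$ against powers of $\varepsilon_N^{-1}$ and $(2N+1)^m$ for $N\le M$ is the point to verify carefully.
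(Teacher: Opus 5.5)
Your argument follows the paper's route: the block resolvent identity \eqref{eqn: resolvent-small-inversion-local} at the singular site, the relation \eqref{eqn: inversion-localization}, and the super-polynomial smallness $\varepsilon=\exp\{-(\log M)^{20}\}$ absorbing the powers of $\varepsilon_N^{-1}$ and $(2N+1)^m$ for $N\le M$; you also supply the $(\Gamma_N\setminus\Pi)\times(\Gamma_N\setminus\Pi)$ block and the constant-absorption step, which the paper leaves implicit. One harmless slip: $e^{-1/2}\approx 0.61$ is not below $1/2$, but you only need $e^{-1/2}<1$, since the correction term is at most $\varepsilon\,\mathrm{poly}(N)\,\varepsilon_N^{-3}e^{-|k-k'|^s/2}$ and so fits in the margin $(1-e^{-1/2})e^{-|k-k'|^s/2}$.
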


In the small-scale regime, the growth of $\varepsilon_N^{-1}$ is suppressed by the 
small parameter $\varepsilon$. 
In the large-scale regime, however, this mechanism is no longer sufficient; the 
growth of $\varepsilon_N^{-1}$ is instead counteracted by the Gevrey decay of the 
interaction terms originating from $P^*$.

%===================================================%
\subsection{Size reduction: clustering of singular sites}
\label{sec: size-reduce-cluster}

In this section, we analyze the eigenvalues of the restricted operator $T_{N}$, which 
are given by 
\begin{equation}
    \label{eqn: eigenvalue-t-n}
    T_{N; j, k} = - \langle k, \omega'_T \rangle + \omega_j + \mu_{j,k}(\omega'_T). 
\end{equation}
Based on the results established in~\Cref{prop: convolution}, the perturbation term 
$\mu_{j, k}$ is bounded. 
Specifically, it satisfies the following gradient estimate:
\begin{equation}
    \label{eqn: perturb-omega-t}
    \max_{j=1,\ldots, n}\max_{k \in \mathbb{Z}^m} \left\| \frac{\partial \mu_{j,k}}{\partial \omega'_T} \right\|_2 \leq \varepsilon \left\| \frac{\partial S}{\partial \omega'_T} \right\|_2  \leq \frac{\varepsilon}{2}. 
\end{equation}
Recall from~\Cref{prop: frequency-drift} that the mapping between the tangent 
frequencies $\omega_T$ and $\omega'_T$ is a diffeomorphism. 
Given that the normal frequencies $\omega_j$ (for $j = m+1, \dots, n$) are 
differentiable with respect to $\omega'_T$, we conclude there exists a constant 
$K > 0$ such that:
\begin{equation}
    \label{eqn: frequency-omega-t}
    \max_{j=1,\ldots, n} \left\| \frac{\partial \omega_{j}}{\partial \omega'_T} \right\|_2 \leq K.
\end{equation}
We now examine the difference between any two eigenvalues of $T_N$:  
\begin{equation}
    \label{eqn: eigenvalue-difference}
    T_{N; j_1, k_1} - T_{N; j_2, k_2} = - \langle k, \omega_T' \rangle + \omega_{j_1} - \omega_{j_2} + \mu_{j_1,k_1}(\omega'_T) - \mu_{j_2,k_2}(\omega'_T)
\end{equation}
where $k = k_1 - k_2$. 
To resolve the small divisor problem, we exclude the resonant regions along the 
direction $k/\|k\|_2$. 
Using the bounds established in~\eqref{eqn: perturb-omega-t} 
and~\eqref{eqn: frequency-omega-t}, we observe that when $\|k\|_2$ is sufficiently 
large, the same excision procedure as in the unperturbed case applies. 
When the perturbation vanishes and the drifted tangent frequency $\omega_{T}'$ 
degenerates to $\omega_T$, the curved resonance regions ``straighten out'' into the 
nearly-resonant sets defined 
in~\eqref{eqn: diophantine-nearly},~\eqref{eqn: 1-melnikov}, 
and~\eqref{eqn: 2-melnikov}, which is shown in~\Cref{fig: melnikov_comparison}.

\begin{figure}[htb!]
    \centering
    \begin{subfigure}[t]{0.46\textwidth}
        \centering
        \begin{tikzpicture}[scale=1.6]
            \useasboundingbox (0, -0.8) rectangle (4, 2.3);
            
            \fill[pattern=north east lines, pattern color=gray!200] (0,0) rectangle (4,1);
            
            \draw[thick] (0,0) -- (4,0);
            \draw[thick] (0,1) -- (4,1);
            
            \draw[->, thick] (4.5, -0.3) -- (4.5, 1.5) node[below right] {$\frac{k}{\|k\|_2}$};
        \end{tikzpicture}
        \caption{Unperturbed resonance strip}
        \label{fig: melnikov_standard}
    \end{subfigure}
    \hspace{0.05\textwidth}%  <-- fixed space instead of \hfill
    \begin{subfigure}[t]{0.46\textwidth}
        \centering
        \begin{tikzpicture}[scale=1.6]
            \useasboundingbox (0, -0.8) rectangle (4, 2.3);
            
            % 内层波浪边界（虚线，加粗）
            \def\bottomwave{(0,0) .. controls (1.2,0.4) and (2.8,-0.1) .. (4,0.1)}
            \def\topwave{(0,1) .. controls (1.5,1.4) and (2.5,1.0) .. (4,1.2)}
            
            % 先填充外层实线之间的整个区域（斜线）
            \fill[pattern=north east lines, pattern color=black!60] (0, -0.3) rectangle (4, 1.5);
            
            % 再在虚线内部填充深色交叉线（先白色覆盖，再深色交叉线）
            \begin{scope}
                \clip (0, -0.3) rectangle (4, 1.5);
                \fill[pattern=north east lines, pattern color=gray!200] 
                    \bottomwave -- (4,1.2) .. controls (2.5,1.0) and (1.5,1.4) .. (0,1) -- cycle;
            \end{scope}
            
            % 绘制外层平直边界（实线）
            \draw[thick] (0, -0.3) -- (4, -0.3);
            \draw[thick] (0, 1.5) -- (4, 1.5);
            
            % 绘制内层波浪边界（虚线，加粗）
            \draw[very thick, dashed] \bottomwave;
            \draw[very thick, dashed] \topwave;
            
            % 垂直向上箭头及标注
            \draw[->, thick] (4.5, -0.3) -- (4.5, 1.5) node[below right] {$\frac{k}{\|k\|_2}$};
        \end{tikzpicture}
        \caption{Perturbed resonance region}
        \label{fig: melnikov_perturbed}
    \end{subfigure}
    
    \caption{Geometric illustration of resonance regions. In (b), the solid lines denote the boundaries of the excluded strip, while the dashed curves delineate the perturbed resonant region arising from the variation of the term $\mu_{j,k}$.}
    \label{fig: melnikov_comparison}
\end{figure}

Following the intuitive description above, we now introduce the rigorous scaling 
parameters and nearly-resonance sets required for the large-scale regime where $N>M$. 
We begin by defining a reduced scale $N'$, derived from the primary scale $N$ via 
the following polylogarithmic relation:
\begin{equation}
    \label{eqn: 10-log-size}
    N' = \exp\left\{ (\log N)^{\frac{1}{10}} \right\}. 
\end{equation}
We next define the difference nearly-resonant set for the large-scale regime as the 
union of its components across all scales $N > M$.
\begin{defn}
    \label{defn: second-melnikov}
    The difference nearly-resonant set in the large-scale regime is given by
    \[
        G_{2} = \bigcup_{N>M} G_{2, N} 
    \]
    where each component set $G_{2,N}$ is defined as:
    \begin{equation}
        \label{eqn: nearly-resonant-multiscale}
        G_{2, N} = \bigcup_{j_1,j_2=1}^{n} \bigcup_{k \in \Lambda_{2N} \setminus \Lambda_{2N'}} \left\{ \omega_T \in \Omega \bigg| \left| - \langle k, \omega_T' \rangle + \omega_{j_1} - \omega_{j_2} + \mu_{j_1,k_1} - \mu_{j_2,k_2} \right| < 2\varepsilon_{N'} \right\}.
    \end{equation}
\end{defn}

Based on the scaling relation in~\eqref{eqn: 10-log-size}, the total measure 
satisfies
\[
    \mathrm{mes}(G_2) = \sum_{N > M} (2N+1)^{m} \exp\left\{ - (\log N)^{\frac32}\right\}  < \infty.  
\]
In particular, as $\varepsilon \rightarrow 0$, it follows that $M \rightarrow \infty$, 
which implies $\mathrm{mes}(G_2) \rightarrow 0$.

Consider the operator $T$ restricted to two boxes, $k_{0,1} + \Lambda_{N'}$ and 
$k_{0,2} + \Lambda_{N'}$, centered at $k_{0,1}, k_{0,2} \in \Gamma_N$. 
If the two centers satisfy the separation condition $|k_{0,1} - k_{0,2}| \geq 4N'$, then at 
least one of the restricted operators $T_{k_{0,1}, N'}$ and $T_{k_{0,2}, N'}$ has its 
smallest eigenvalue (in absolute value) bounded below by $\varepsilon_{N'}$. 
Equivalently, at most one of the resolvent norms, $\lVert T_{k_{0,1}, N'}^{-1} \rVert_2$ or 
$\lVert T_{k_{0,2}, N'}^{-1} \rVert_2$, can exceed $\varepsilon_{N'}^{-1}$. 
To formalize this spatial clustering, we define the set of ``singular'' centers 
within the domain $\Gamma_N$ as
\begin{equation}
    \label{eqn: eqn-violated-center}
    \Sigma(N, N') = \left\{ k \in \Gamma_N \bigg| \| T_{k, N'}^{-1} \|_2 > \frac{1}{\varepsilon_{N'}} \right\}.
\end{equation}
The spatial distribution of these singular centers is characterized by the following 
lemma.

\begin{lemma}
    \label{lem: second-melnikov}
    Suppose that $\hat{z} \in \mathcal{K}(s)$ and 
    $\omega_T \in \Omega \setminus (\Omega_{M}^{\tau} \cup G_{2})$. 
    If two box centers satisfy $k_{0,1}, k_{0,2} \in \Sigma(N, N') $, then they must 
    be clustered such that:   
    \[
        k_{0,1} - k_{0,2} \in  \Lambda_{4N'}.
    \]    
\end{lemma}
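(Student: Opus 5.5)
We argue by contradiction. Suppose $k_{0,1},k_{0,2}\in\Sigma(N,N')$ with $k_{0,1}-k_{0,2}\notin\Lambda_{4N'}$, i.e. $|k_{0,1}-k_{0,2}|_\infty>4N'$. The plan is to extract from each singular box a near-eigenvalue of $T$ that is almost zero. The difference of these two near-eigenvalues is then a small divisor of the form appearing in \eqref{eqn: nearly-resonant-multiscale}, at a lattice vector $k\in\Lambda_{2N}\setminus\Lambda_{2N'}$. This is exactly what the removal of $G_{2,N}$ forbids.

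\textbf{Step 1: a small eigenvalue in each box.} Since $T_{k_{0,i},N'}$ is symmetric, $\|T_{k_{0,i},N'}^{-1}\|_2>\varepsilon_{N'}^{-1}$ means it has an eigenvalue of modulus $<\varepsilon_{N'}$. Following \eqref{eqn: eigenvalue-t-n}, I label the eigenvalues as $-\langle k,\omega_T'\rangle+\omega_j+\mu_{j,k}$. This labelling is justified in two ways:
\begin{itemize}
\item[(a)] \Cref{prop: small-eig-one} and \Cref{lem: small-singular}, applied at the scale $N'$, show that at most the cluster $\Pi$ of sites carries small diagonal entries.
\item[(b)] The remaining sites are uniformly invertible, by the Neumann-series argument of \Cref{thm: small-size}.
\end{itemize}
After a Schur-complement reduction onto $\Pi$, the small eigenvalue is therefore $D_{j_i,k_i}+\mu_{j_i,k_i}$, with $k_i\in k_{0,i}+\Lambda_{N'}$ and $|\mu|\lesssim\varepsilon$. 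Hence
\[
\big|-\langle k_i,\omega_T'\rangle+\omega_{j_i}+\mu_{j_i,k_i}\big|<\varepsilon_{N'},\qquad i=1,2 .
\]

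\textbf{Step 2: subtract and place the difference vector.} Subtracting the two inequalities gives
\[
\big|-\langle k,\omega_T'\rangle+\omega_{j_1}-\omega_{j_2}+\mu_{j_1,k_1}-\mu_{j_2,k_2}\big|<2\varepsilon_{N'},\qquad k=k_1-k_2 .
\]
For the lower bound on $k$, use $|k_i-k_{0,i}|_\infty\le N'$ together with the separation $|k_{0,1}-k_{0,2}|_\infty>4N'$. This gives $|k|_\infty>2N'$, so $k\notin\Lambda_{2N'}$. For the upper bound, $k_1,k_2\in\Gamma_N$ gives $|k|_\infty\le 2N$, so $k\in\Lambda_{2N}$. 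Hence $\omega_T\in G_{2,N}\subseteq G_2$, contradicting the hypothesis, and we conclude $k_{0,1}-k_{0,2}\in\Lambda_{4N'}$.

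\textbf{Main obstacle.} The delicate step is Step 1. We must justify that the small eigenvalue of $T_{k_{0,i},N'}$ can be written as $-\langle k_i,\omega_T'\rangle+\omega_{j_i}+\mu_{j_i,k_i}$ with the same perturbation functions $\mu$ that enter the definition of $G_{2,N}$. The $\mu$'s must depend only on the labelled site, and not on the particular box. My first attempt is to interpret $\mu_{j,k}$ as the eigenvalue perturbation obtained from the Schur complement onto the singular cluster, exploiting the Toeplitz structure $S(k,k')=S(k-k')$. With this structure, translating the box shifts the diagonal by $-\langle k_{0},\omega_T'\rangle$ and leaves the off-diagonal part unchanged. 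The correction $\mu$ is then translation-covariant, and the error from truncating the box boundary is $O(\exp(-N'^{s}))$, which is negligible against $\varepsilon_{N'}$.

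If this identification is too rigid, the fallback is to absorb these boundary errors into the factor $2$ in the threshold $2\varepsilon_{N'}$, using that $\exp(-cN'^{s})\ll\varepsilon_{N'}=\exp\{-(\log N')^{15}\}$. The Gevrey decay from \Cref{prop: convolution} is what makes this comparison hold.
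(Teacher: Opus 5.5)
Your argument is the paper's own. The paragraph preceding the lemma argues exactly that each singular box carries an eigenvalue of modulus below $\varepsilon_{N'}$, that subtracting the two labelled eigenvalues as in \eqref{eqn: eigenvalue-difference} gives a divisor below $2\varepsilon_{N'}$ at some $k\notin\Lambda_{2N'}$, and that removing $G_{2,N}$ forbids this. The paper leaves implicit the consistency of $\mu_{j,k}$ that you flag. Your exact translation-covariance reading settles it with no boundary error: since $S(k,k')=S(k-k')$, the spectrum of $T_{k_0,N'}$ is that of a fixed reference box shifted by $-\langle k_0,\omega_T'\rangle$. Three small points remain, all harmless if you enlarge the threshold and range in $G_{2,N}$ slightly or work with the centre difference $k_{0,1}-k_{0,2}$ directly:
\begin{itemize}
\item Your fallback of absorbing errors into the factor $2$ has no slack, because adding the two bounds already uses that factor in full.
\item Strictly, $|k_1-k_2|_\infty\le 2N+2N'$, because the sub-boxes may stick out of $\Gamma_N$.
\item The appeal to \Cref{prop: small-eig-one} in Step 1 is unnecessary, and it is only valid for $N'\le M$.
\end{itemize}
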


%===================================================%
\subsection{Multi-scale induction: inversion implies localization}
\label{sec: inversion-localization}

Based on~\Cref{lem: second-melnikov}, we establish the inclusion 
$\Sigma(N, N') \subseteq \Gamma_{4N'}$, which is subsequently contained within 
$\Gamma_{10N'}$. 
For the purpose of our decomposition, we designate $\Gamma_{10N'}$ as the singular 
box. Accordingly, any box whose center lies outside $\Sigma(N, N')$ is referred to 
as a regular box. 
With this classification, we decompose the box $\Gamma_N$ into a singular box 
together with a collection of regular boxes: 
\begin{equation}
    \label{eqn: decomposition-lattice}
    \Gamma_{N} = \Gamma_{10N'} \cup \left( \bigcup_{k \notin \Gamma_{9N'}} (k + \Lambda_{N'}) \right).
\end{equation}
where the choice of size $10N'$ rather than $4N'$ is deliberate. 
Indeed, any box $ k+ \Lambda_{N'}$ centered at 
$k \in \Gamma_{10N'} \setminus \Gamma_{9N'}$ is still treated as regular. 
This enlargement facilitates the subsequent gluing procedure, consistent with the 
strategy detailed in~\Cref{subsec: verification}.

We repeat the scale reduction process for each box in the 
decomposition~\eqref{eqn: decomposition-lattice}. 
This iteration continues through a sequence of scales satisfying the hierarchy 
in~\eqref{eqn: 10-log-size} until they reach the small-scale regime $[M_0, M]$. 
For the singular box,~\Cref{prop: small-eig-one} ensures that the set of singular 
sites is a singleton, $\Pi = \{k_{\star}\}$. 
Within this small-scale regime, we have established the following results.  
\begin{itemize}
    \item[(i)] For the singular box excluding the singular site 
    $\Pi$,~\Cref{lem: small-singular} guarantees that the inversion 
    condition~\eqref{eqn: small-size-inverse-prepare} and the localization 
    conditions~\eqref{eqn: small-size-off-diagonal-prepare} hold simultaneously.

    \item[(ii)] For the regular boxes,~\Cref{lem: small-inverse-localization}, we 
    establishes that the inversion 
    condition~\eqref{eqn: trans-small-size-original-inverse} directly implies the 
    localization condition~\eqref{eqn: trans-small-size-off-diagonal}.
\end{itemize}
With these small-scale results serving as the inductive bases, we establish that 
inversion implies localization across the large-scale regime $N>M$, rigorously stated 
as the following lemma.

\begin{lemma}
    \label{lem: induction-multi-scale}
    Suppose that $\hat{z} \in \mathcal{K}(s)$ and 
    $\omega_T \in \Omega \setminus (\Omega_{M}^{\tau} \cup G_{2})$. 
    If the restricted operator $T_{N}$ satisfies the inversion 
    condition~\eqref{eqn: trans-small-size-original-inverse} for any 
    $M^{r} < N \leq M^{r+1}$, then the entries of its inverse satisfies the 
    localization condition
    \begin{equation}
        \label{eqn: ri-off-diagonal-final}
        |T_N^{-1}(k_1, k_2)| \leq \exp\left\{- \frac{|k_1- k_2|^s}{2} \right\}
    \end{equation}
    for any $|k_1 - k_2| \geq N^{\frac12}$. 
\end{lemma}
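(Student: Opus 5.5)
We argue by induction on the scale, following the gluing strategy of \Cref{subsec: verification} but now applied to the outer box $\Gamma_N$ with the decomposition~\eqref{eqn: decomposition-lattice}. The inductive hypothesis is that for every scale $N''$ between $M_0$ and $M^r$, and every box of size $N''$ (singular or regular in the sense of \Cref{sec: size-reduce-cluster}), the corresponding conclusions hold. Regular boxes $k+\Lambda_{N''}$ with centers outside $\Sigma$ satisfy inversion by definition of $\Sigma$. Whenever a box satisfies inversion, it also satisfies localization for separations at least $(N'')^{1/2}$. At the bottom scale these facts are \Cref{lem: small-singular} and \Cref{lem: small-inverse-localization}. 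Since $N' = \exp\{(\log N)^{1/10}\} \le M^r$ for $N\le M^{r+1}$, the hypothesis applies to the boxes $\Gamma_{10N'}$ and $k+\Lambda_{N'}$.

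\emph{Step 1 (local resolvent identities).} For $k\in\Gamma_{9N'}$ we write the block resolvent identity for the partition $\Gamma_N=\Gamma_{10N'}\cup\Gamma_{10N'}^c$, exactly as in~\eqref{eqn: resolvent-center}--\eqref{eqn: resolvent-center-final-9k}. The hypothesis $\|T_N^{-1}\|_2\le\varepsilon_N^{-1}$ together with the inductive hypothesis gives $\|T_{10N'}^{-1}\|_2\le\varepsilon_{10N'}^{-1}$. The singular box $\Gamma_{10N'}$ itself is handled by the induction, with $\Sigma\subseteq\Gamma_{4N'}$ from \Cref{lem: second-melnikov}. For $k\notin\Gamma_{9N'}$, the box $k+\Lambda_{N'}$ is regular, so $\|T_{k,N'}^{-1}\|_2\le\varepsilon_{N'}^{-1}$, and we use the identity~\eqref{eqn: resolvent-outside-final-9k}. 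In both cases the coupling operators $T_{\cdot}^{-1}P^*$ connect $k$ only to sites at distance at least $N'$. Their entries are bounded by combining the Gevrey decay of $S$ from \Cref{prop: convolution} with localization of the local inverse, and by $\varepsilon_{10N'}^{-1}$ on the short range. The key arithmetic is $\varepsilon_{10N'}^{-1}=\exp\{(\log 10N')^{15}\}=\exp\{O((\log N)^{3/2})\}$, which is negligible against $\exp\{-(N')^s/4\}$ for $N>M$. This yields bounds of the form~\eqref{eqn: inversion-localization-center}: at least $e^{-|k-k'|^s/2}$, improved to $e^{-3|k-k'|^s/4}$ beyond range $100N'$.

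\emph{Step 2 (global linear representation).} Gluing the rows yields $T_N^{-1}=\Psi_N-\varepsilon\Phi_N T_N^{-1}$, with $\Psi_N$ built from the local inverses as in~\eqref{eqn: entries-psi} and $\Phi_N$ satisfying~\eqref{eqn: entries-phi}. Since $\|\Phi_N\|$ is bounded by a constant times the summable $\sum e^{-|\ell|^s/2}$, $E+\varepsilon\Phi_N$ is inverted by a Neumann series. We then obtain $T_N^{-1}=(E+\varepsilon\Phi_N)^{-1}\Psi_N$. Equivalently, iterating the relation $L$ times leaves a remainder $(\varepsilon\Phi_N)^L T_N^{-1}$, which is controlled by $\varepsilon^L\varepsilon_N^{-1}$. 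This is where the inversion hypothesis enters: it kills the remainder once $L\sim(\log N)^{15}/\log(1/\varepsilon)$.

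\emph{Step 3 (path expansion, main obstacle).} Each entry $(k_1,k_2)$ is expanded as a sum over paths $k_1=\ell_0\to\ell_1\to\cdots\to\ell_p$, followed by one $\Psi_N$ step to $k_2$, weighted by $\varepsilon^p\prod\Phi_N(\ell_{i-1},\ell_i)$. We must show the sum is at most $e^{-|k_1-k_2|^s/2}$ whenever $|k_1-k_2|\ge N^{1/2}$. There are two sources of loss. First, short jumps each cost only $e^{-|\cdot|^s/2}$, and subadditivity $|a+b|^s\le|a|^s+|b|^s$ gives total decay at least $|k_1-k_2|^s/2$ up to combinatorial factors. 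Those factors, of order $(3^m)^p$ times lattice counting, are absorbed by $\varepsilon^p$. Second, and this is the delicate part, the final $\Psi_N$ factor may be a non-decaying entry of size up to $\varepsilon_{10N'}^{-1}$, namely the short-range part of the singular-box inverse. Here we use the concavity gain from paths of length at least $N^{1/2}$, as in the displayed estimate after~\eqref{eqn: inverse-decay-long}. Any path covering distance $\ge N^{1/2}$ either contains a jump longer than $100N'$, which gains an extra $e^{-|\cdot|^s/4}$, or consists of at least $N^{1/2}/(100N')$ short jumps, which gains a factor $\varepsilon^{N^{1/2}/(100N')}$. Either gain dominates $\exp\{(\log 10N')^{15}\}$ and the polynomial volume factors $(20N')^m$. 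I expect the careful bookkeeping here to be the hardest step: keeping the exponent exactly $1/2$ without losing constants across infinitely many scales. It requires arranging that each scale's gain beats its loss uniformly in $r$. I would try first to prove the sharper intermediate bound $e^{-|k-k'|^s/2-(N')^s/8}$ for long-range entries, which leaves room for the induction to close.
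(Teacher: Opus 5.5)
Your gluing machinery for the regular boxes matches the paper's. The argument breaks at the singular box in Step 1.

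You assert that $\|T_N^{-1}\|_2\le\varepsilon_N^{-1}$ together with the induction yields $\|T_{10N'}^{-1}\|_2\le\varepsilon_{10N'}^{-1}$ for the central box $\Gamma_{10N'}$, but neither ingredient gives this. As you formulate and use it, the inductive hypothesis provides inversion only for regular boxes (by the definition of $\Sigma(N,N')$), plus the implication that inversion gives localization. It says nothing about inverting the box that contains the singular cluster, which is precisely where inversion can fail. Nor does an inverse bound for $T_N$ pass to a principal sub-block: $T_N$ is indefinite, and a principal sub-block of a well-conditioned indefinite matrix can be singular.

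Concretely, once $T_{\Gamma_{10N'}\setminus\Pi}$ is controlled, $T_{\Gamma_{10N'}}$ is well invertible only if its own Schur complement $T_\Pi-\varepsilon^2P\,T^{-1}_{\Gamma_{10N'}\setminus\Pi}P^*$ is not small. The hypothesis, however, concerns the scale-$N$ Schur complement~\eqref{eqn: schur-complement}, which also involves the couplings of $k_\star$ to $\Gamma_N\setminus\Gamma_{10N'}$. With only $\Omega_M^\tau\cup G_2$ excluded and inversion assumed only at scale $N$, nothing prevents the first from vanishing while the second stays of size at least $\varepsilon_N$.

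A consistency check confirms that Step 1 asks too much. If it held, your gluing would give $\|T_N^{-1}\|_2\lesssim(20N')^m\exp\{(\log 10N')^{15}\}$, far better than $\varepsilon_N^{-1}$. Yet $(T_N^{-1})_{\Pi\times\Pi}$ is the inverse of the scale-$N$ Schur complement, which the hypotheses allow to be as small as $\varepsilon_N$. So $\Psi_N$ in Step 2 may contain an inverse that does not exist, and the Step 3 bookkeeping rests on it.

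The missing idea, which is how the paper's proof is organized, is to remove the singular site before gluing and reinsert it afterwards. You cite \Cref{lem: small-singular} at the bottom scale but never use the operator with $\Pi$ removed.

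The paper glues on $\Gamma_N\setminus\Pi$ via~\eqref{eqn: decomposition-lattice-singular}. The central piece $\Gamma_{10N'}\setminus\Pi$ is controlled by the lower-scale version of \Cref{lem: small-singular} (inversion and localization), and the outer pieces are regular boxes $k+\Lambda_{N'}$, so every local inverse entering $\Psi_N$ is good. This gives~\eqref{eqn: final-expression-msa}, hence $\|T^{-1}_{\Gamma_N\setminus\Pi}\|_2\le\exp\{(\log 10N')^{15}\}$ with Gevrey off-diagonal decay. That is \Cref{lem: small-singular} at scale $N$, and your induction must also propagate it, since it is the central input at the next scale.

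Only then is $\Pi$ put back through~\eqref{eqn: resolvent-small-inversion-local}. Besides~\eqref{eqn: inversion-localization}, the same block identity together with the symmetry of $T_N$ gives
\[
(T_N^{-1})_{(\Gamma_N\setminus\Pi)\times(\Gamma_N\setminus\Pi)} = T^{-1}_{\Gamma_N\setminus\Pi} + \varepsilon^2\, T^{-1}_{\Gamma_N\setminus\Pi}P^*\,(T_N^{-1})_{\Pi\times\Pi}\,P\,T^{-1}_{\Gamma_N\setminus\Pi}.
\]
The hypothesis $\|T_N^{-1}\|_2\le\varepsilon_N^{-1}$ enters exactly once, to bound the single block $(T_N^{-1})_{\Pi\times\Pi}$, which is flanked by decaying factors. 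Since $|k_1-k_\star|^s+|k_\star-k_2|^s\ge|k_1-k_2|^s$ and $|k_1-k_2|\ge N^{1/2}$, it is the decay of $T^{-1}_{\Gamma_N\setminus\Pi}P^*$ and its transpose that the paper uses to counteract $\varepsilon_N^{-1}$. In this arrangement no expansion ever passes through an uncontrolled local inverse, so the obstacle you anticipate in Step 3 does not arise in that form.
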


The proof consists of two successive steps: first, a ``gluing'' process following 
the procedure as detailed in~\Cref{subsec: verification}; and second, a 
demonstration that inversion implies localization, which remains entirely consistent 
with~\eqref{eqn: resolvent-small-inversion-local} 
and~\eqref{eqn: inversion-localization}.

\paragraph{The ``gluing'' procedure}
We begin by considering the set $\Gamma_{N} \setminus \Pi $ at a scale satisfying 
$M^r\leq N \leq M^{r+1}$. 
According to~\eqref{eqn: decomposition-lattice}, the set $\Gamma_{N} \setminus \Pi$ 
can be decomposed as:
\begin{equation}
    \label{eqn: decomposition-lattice-singular}
    \Gamma_{N} \setminus \Pi = ( \Gamma_{10N'} \setminus \Pi ) \cup \left( \bigcup_{k \notin \Gamma_{9N'}} (k + \Lambda_{N'}) \right).
\end{equation}
By the inductive hypothesis for $M_0 \leq N \leq M^r$, the following holds:
\begin{itemize}
    \item The operator restricted to the central region 
    $\Gamma_{10N'} \setminus \Pi $, with a scale satisfying 
    $M_0 \leq 4N' \leq M^r$, fulfills the inversion 
    condition~\eqref{eqn: small-size-inverse-prepare} and the localization 
    condition~\eqref{eqn: small-size-off-diagonal-prepare}.

    \item The operator restricted to each outer box $k + \Lambda_{N'}$, where 
    $M_0 \leq N' \leq M^r$, satisfies the property that the inversion 
    condition~\eqref{eqn: trans-small-size-original-inverse} directly implies the 
    localization condition~\eqref{eqn: trans-small-size-off-diagonal}.
\end{itemize}

By applying the same resolvent identity framework developed 
in~\eqref{eqn: resolvent-center} ---~\eqref{eqn: resolvent-center-final-9k}, we can 
derive the following representation for the inverse operator restricted to 
$\Gamma_{9N'} \setminus \Pi \times \Gamma_{N}$ as
\begin{equation}
    \label{eqn: resolvent-10N'}
    \left( T^{-1}_{\Gamma_{10N'} \setminus \Pi } \right)_{(\Gamma_{9N'} \setminus \Pi) \times \Gamma_{N}} = \begin{pmatrix} T_{\Gamma_{10N'} \setminus \Pi }^{-1}& 0 \end{pmatrix}_{(\Gamma_{9N'} \setminus \Pi) \times \Gamma_{N}} - \varepsilon \begin{pmatrix}  0  & \left(T_{\Gamma_{10N'} \setminus \Pi }^{-1}P^*\right) \end{pmatrix}_{(\Gamma_{9N'} \setminus \Pi) \times \Gamma_{N}} T_{N}^{-1},
\end{equation}
where the entries of 
$(T_{\Gamma_{10N'} \setminus \Pi}^{-1}P^*)_{(\Gamma_{9N'} \setminus \Pi) \times (\Gamma_{10N'} \setminus \Pi)^c}$ 
exhibit the Gevrey decay. 
Specifically, these entries satisfy the following decay estimates across the 
short-range and long-range domains:
\begin{equation}
    \label{eqn: msa-gvery-center}
    | (T^{-1}_{\Gamma_{10N'} \setminus \Pi}P^*)(k, k')| \leq \left\{ 
    \begin{aligned}
        & \exp\left\{ - \frac{ |k - k'|^{s}}{2} \right\},    &&\mathrm{if}\; |k' - k| < 100N',  \\
        & \exp\left\{ - \frac{ 3|k - k'|^{s}}{4} \right\},   &&\mathrm{if}\; |k' -k| \geq 100N'.
    \end{aligned} 
    \right.
\end{equation} 
Similarly, for a singleton $\{k\}$ within the outer boxes, the inverse is represented 
as:
\begin{equation}
    \label{eqn: resolvent-outside-N'}
    (T^{-1}_{k, N' })_{\{ k\} \times \Gamma_{N}} = \begin{pmatrix} (T_{k,N'}^{-1})_{\{ k\} \times (k + \Lambda_{N'})} & 0 \end{pmatrix}  - \varepsilon  \begin{pmatrix}  0  & (T_{k,N'}^{-1}P^*)_{\{ k\} \times (k + \Lambda_{N'})^c} \end{pmatrix} T_{N}^{-1},
\end{equation}
where the corresponding Gevrey decay for these entries is: 
\begin{equation}
    \label{eqn: msa-gvery-offside}
    | (T^{-1}_{\Gamma_{10N'} \setminus \Pi }P^*)(k, k')| \leq \left\{ 
    \begin{aligned}
        & \exp\left\{ - \frac{ |k - k'|^{s}}{2} \right\},    &&\mathrm{if}\; |k' - k| < 4N',  \\
        & \exp\left\{ - \frac{ 3|k - k'|^{s}}{4} \right\},   &&\mathrm{if}\; |k' - k| \geq 4N'.
    \end{aligned} 
    \right.
\end{equation}
Following the same procedure established 
in~\eqref{eqn: resolvent-relation} ---~\eqref{eqn: l-n-inverse}, we derive the 
unified representation:
\begin{equation}
    \label{eqn: final-expression-msa}
    T_{\Gamma_{N} \setminus \Pi }^{-1} = (E - \varepsilon \Phi_{N})^{-1} \Psi_N,
\end{equation}
which confirms that 
$\| T_{\Gamma_N \setminus \Pi}^{-1} \|_2 \leq \exp \left\{ (\log 10N')^{15} \right\} < \exp \left\{ (\log N)^{15}\right\}$ 
and that the entries satisfy the Gevrey decay 
condition~\eqref{eqn: small-size-off-diagonal-prepare}. 
Furthermore, $T_{\Gamma_{N} \setminus \Pi}$ satisfies~\Cref{lem: small-singular} at 
the scale $M^{r} < N \leq M^{r+1}$.

\paragraph{Inversion implies localization}
We now establish~\Cref{lem: small-inverse-localization} for the scale 
$M^{r} < N \leq M^{r+1}$. 
At this scale, the relation~\eqref{eqn: inversion-localization} is maintained as 
follows:
\[
    (T^{-1}_{N})_{\Pi \times (\Gamma_N \setminus \Pi)}^{\top} = (T^{-1}_{N})_{(\Gamma_N \setminus \Pi) \times \Pi} = - \varepsilon T^{-1}_{\Gamma_N \setminus \Pi }P^*  (T^{-1}_{N })_{\Pi \times \Pi}
\]
Building upon~\Cref{lem: small-singular} and assuming that the inversion 
condition~\eqref{eqn: trans-small-size-original-inverse} holds, we derive the 
localization condition~\eqref{eqn: trans-small-size-off-diagonal}. 
It is important to note that the parameter $\varepsilon$ is insufficient to suppress 
the growth of $(T_N^{-1})_{\Pi \times \Pi}$ at large scales. 
Instead, the proof relies crucially on the Gevrey decay inherited from the 
off-diagonal operator $P^*$, which effectively counteracts the exponential 
polylogarithmic growth, ensuring that localization properties are preserved across 
the entire domain.

%===================================================================%
\subsection{Measure estimate for ``bad'' frequencies}
\label{subsec: measure-estimate}

The final stage of our analysis involves quantifying the set of ``bad'' frequencies, 
those that must be excluded to ensure the inversion 
condition~\eqref{eqn: trans-small-size-original-inverse} remains valid. 
In this section, we establish the rigorous measure estimates for these resonant sets. 
Recall that the restricted operator $T_{N}$ admits the following block decomposition: 
\[
    T_{N} = 
    \begin{pmatrix} 
        T_{\Gamma_N \setminus\Pi} & \varepsilon P^*  \\ 
        \varepsilon P & T_{\Pi} 
    \end{pmatrix}.
\]
By analyzing this block structure, we identify the Schur complement with respect to 
$T_{\Pi}$ as:
\begin{equation}
    \label{eqn: schur-complement}
    U = T_{\Pi} - \varepsilon^2 P T_{\Gamma_N \setminus\Pi}^{-1}P^*.
\end{equation}
Based on the Schur complement defined in~\eqref{eqn: schur-complement}, we establish 
an upper bound for the $\ell_2$-norm of the inverse operator:
\[
    \| T_N^{-1} \|_2 \leq 4 \| T_{\Gamma_N \setminus \Pi}^{-1} \|_2^2 \cdot \| U^{-1} \|_2 + \| T_{\Gamma_N \setminus \Pi}^{-1} \|_2.
\]
Given the established bound 
$\| T_{\Gamma_N \setminus \Pi}^{-1} \|_2 \leq \exp \left\{ (\log 10N')^{15} \right\}$, 
we derive the following inclusion for the resonant sets:
\begin{equation}
    \label{eqn: inverse-relation-schur-complement}
    \left\{ \omega_T \in \Omega \bigg | \| T_N^{-1} \|_2 > \frac{1}{\varepsilon_N} \right\} \subseteq \left\{ \omega_T \in \Omega \bigg | \| U^{-1} \|_2 > \frac{1}{\sqrt{\varepsilon_N}} \right\}, 
\end{equation}
which implies that analyzing the Schur complement $U$ is sufficient for our measure 
estimates. 
Thus, we further expand the Schur complement~\eqref{eqn: schur-complement} as
\[
    U = \mathrm{diag}\left( - \langle k_{\star}, \omega_{T}' \rangle+ \omega_j \right) + \varepsilon S(k_{\star}, k_{\star}) - \varepsilon^2 P T_{ \Gamma_{N} \setminus \Pi}^{-1} P^*.
\]
By introducing the shift parameter 
$\sigma_{\star} = - \langle k_{\star}, \omega_{T}' \rangle$, we define the residual 
term as:
\[
    \Upsilon = \mathrm{diag}\left( \omega_j \right) + \varepsilon S(k_{\star}, k_{\star}) - \varepsilon^2 P T_{\Gamma_{N} \setminus \Pi}^{-1} P^*
\]
Given that $\| T_{\Gamma_N \setminus \Pi}^{-1} \|_2 \leq \exp \left\{ (\log 10N')^{15} \right\}$, 
it follows that $\| \Upsilon  \|_2 \leq \exp \left\{ (\log 10N')^{15} \right\}$. 
Consequently, we can express the Schur complement from~\eqref{eqn: schur-complement} 
in the simplified form:
\begin{equation}
    \label{eqn: schur-complement-1}
    U = \sigma_{\star} E + \Upsilon,
\end{equation}
where, to verify the implementation conditions detailed 
in~\Cref{subsec: verification}, there is one and only one singular site within 
each outer box of the decomposition~\eqref{eqn: center-gluing-9k-implementation}. 
Combining~\eqref{eqn: inverse-relation-schur-complement} 
and~\eqref{eqn: schur-complement-1}, we define the single-mode nearly-resonant set at 
the scale $N$ as: 
\begin{equation}
    \label{eqn: single-mode-resonant-set}
    G_{1,N} = \bigcup_{k \in \Lambda_{N^{10}}} \left\{ \omega_T \in \Omega \bigg | \left| |\sigma_{\star}| - \| \Upsilon \|_2 \right| < \sqrt{ \varepsilon_{N} } \right\},
\end{equation}
which ensures that the inversion 
condition~\eqref{eqn: trans-small-size-original-inverse} holds for all tangent 
frequencies $\omega_{T} \notin G_{1,N}$. 
To estimate the measure of this set, we utilize a simplified version of the Cartan 
estimate~\citep{levin1996lectures}.

\begin{lemma}[Theorem 4 of Lecture 11.3 in~\citet{levin1996lectures}]
    \label{lem: cartan}
    Let $f(x)$ be a function analytic in the disk $\{ z: |z| \leq 2eR \}$, $|f(0)| = 1$, 
    and let $\eta$ be an arbitrary small positive number. 
    We use the notation $ M_f(r) := \max_{|z|=r} |f(z)| $.
    Then the estimate
    \[
        \log |f(z)| > - H(\eta) \log M_f(2eR), \qquad H(\eta) = \log \frac{15e^3}{\eta},
    \]
    is valid everywhere in the disk $\{z: |z| \leq R \}$ except a set of disks 
    $(C_j)$ with sum of radii
    \[
        \sum r_{j} \leq \eta R.
    \]
\end{lemma}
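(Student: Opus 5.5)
This is the classical Cartan lemma for analytic functions. I would prove it by factoring $f$ into a Blaschke-type product that carries its zeros and a zero-free part that can be handled with harmonic function estimates.

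\textbf{Step 1: Jensen's formula.} By $|f(0)|=1$ and Jensen's formula on $|z|\le 2eR$, the number $n$ of zeros of $f$ in $|z|\le 2R$ satisfies
\[
n\log e \le \int_{2R}^{2eR}\frac{n(t)}{t}\,dt\le \log M_f(2eR).
\]
Hence $n\le \log M_f(2eR)$.

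\textbf{Step 2: Factorization.} Let $a_1,\dots,a_n$ be these zeros. I would write
\[
f(z)=g(z)\prod_{j=1}^n\frac{(2R)(z-a_j)}{(2R)^2-\bar a_j z}.
\]
Then $g$ is zero-free in $|z|<2R$.

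\textbf{Step 3: The zero-free part $g$.} On $|z|=2R$ the Blaschke factors have modulus $1$, so $\log|g|\le\log M_f(2R)$ there. Also $\log|g(0)|\ge 0$, since $|f(0)|=1$ and each Blaschke factor has modulus at most $1$ at $0$. Harnack's inequality applied to the nonnegative harmonic function $\log M_f(2R)-\log|g|$ on $|z|<2R$ then gives
\[
\log|g(z)|\ge -3\log M_f(2eR)\qquad\text{for } |z|\le R.
\]

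\textbf{Step 4: The polynomial part (main obstacle).} For $|z|\le R$ each denominator satisfies $|(2R)^2-\bar a_j z|\le 6R^2$. It therefore suffices to bound $\prod_j|z-a_j|$ from below by $(\eta R/c)^n$ outside small disks. This is the Boutroux–Cartan lemma: for any $h>0$, the inequality $\prod_{j=1}^n|z-a_j|>(h/e)^n$ holds outside at most $n$ disks whose radii sum to at most $2h$.

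The proof of that lemma is the delicate part. I would use the standard greedy argument: choose the largest $p$ such that some disk of radius $ph/n$ contains at least $p$ of the zeros, remove those zeros, and iterate. A point outside all the selected (doubled) disks then has, for each $k$, at most $k-1$ zeros within distance $kh/n$. Ordering the zeros by distance from $z$ gives $|z-a_{(k)}|\ge kh/n$, so
\[
\prod_{k=1}^n|z-a_{(k)}|\ge \frac{n!\,h^n}{n^n}\ge \left(\frac{h}{e}\right)^n .
\]

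\textbf{Step 5: Combining.} Take $h=\eta R/2$. Outside the exceptional disks, whose radii sum to at most $\eta R$, I get
\[
\log|f(z)|\ge -3\log M_f(2eR)+n\log\frac{\eta R/(2e)\cdot 2R}{6R^2}\ge -\Big(3+\log\frac{6e}{\eta}\Big)\log M_f(2eR),
\]
using $n\le\log M_f(2eR)$ from Step 1 (when $\eta<6e$). Adjusting the constants gives $H(\eta)=\log(15e^3/\eta)$.
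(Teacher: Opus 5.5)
The paper gives no proof of this lemma; it only cites Levin. Your outline is the standard proof of Cartan's estimate, and its structure is sound: Jensen's formula gives $n\le\log M_f(2eR)$ zeros in $|z|\le 2R$, you divide by the Blaschke-type product, apply Harnack to the zero-free quotient, and use the Boutroux--Cartan lemma for $\prod_j|z-a_j|$. In the greedy proof of the Boutroux--Cartan lemma you should say explicitly that the successive maximal counts $p_1\ge p_2\ge\cdots$ are non-increasing, which follows from maximality. Your claim that a point outside the doubled disks has at most $k-1$ zeros within distance $kh/n$ depends on this.

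The one step that fails as written is the final constant. Your Steps 3--5 give
\[
\log|f(z)|\ge-\Bigl(3+\log\frac{6e}{\eta}\Bigr)\log M_f(2eR)=-\log\frac{6e^4}{\eta}\,\log M_f(2eR).
\]
Since $6e\approx 16.3>15$, this is strictly weaker than the claimed bound with $H(\eta)=\log(15e^3/\eta)$. ``Adjusting the constants'' cannot lower a constant unless some step is sharpened.

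The loss is in Step 3. With $u=\log M_f(2R)-\log|g|\ge 0$, Harnack gives $u(z)\le 3u(0)\le 3\log M_f(2R)$ for $|z|\le R$. Hence
\[
\log|g(z)|=\log M_f(2R)-u(z)\ge-2\log M_f(2R)\ge-2\log M_f(2eR),
\]
using $\log M_f(2R)\ge\log|f(0)|=0$. So the correct constant is $2$, not $3$. With it you get $H=2+\log(6e/\eta)=\log(6e^3/\eta)\le\log(15e^3/\eta)$. Since $\log M_f(2eR)\ge 0$, this implies the stated inequality, and the inequality is strict whenever $f$ is not constant.
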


To complete the measure estimates, we analyze the magnitude of the residual term 
$\Upsilon$ across two distinct regimes. 
For small residuals, given by $ \| \Upsilon \|_2 < \sqrt{\varepsilon_N}$, the 
single-mode nearly-resonant set is contained within
\begin{equation}
    \label{eqn: single-msa-small-residual}
    G_{1,N} \subseteq \bigcup_{k \in \Lambda_{N^{10}}} \left\{ \omega_T \in \Omega \bigg | |\sigma_{\star}| < 2\sqrt{\varepsilon_N} \right\}.
\end{equation}
Note that this case encompasses the small-scale regimes, where the tighter condition 
$|\sigma_\star| < \varepsilon_N$ is actually sufficient. 
The measure for this regime is bounded by:
\begin{equation}
    \label{eqn: mes-g-1-N-1}
    \mathrm{mes}(G_{1,N}) \leq 2^{m+2} d^{m-1} (2N^{10}+1)^{m} \sqrt{\varepsilon_N}. 
\end{equation}
When the residual term is large, i.e., $\| \Upsilon \|_2 \geq \sqrt{\varepsilon_N}$, 
we define an analytic function:
\[
    f(\sigma_{\star}) = \frac{|\sigma_{\star}|}{\| \Upsilon \|_2 } - 1. 
\]
with the disk parameter set as $\| \Upsilon \|_2 = 2e(1+e)^{-1}R$. 
It follows that $|f(0)| = 1$ and $M_f(2eR) = e$. 
Accordingly, the single-mode nearly-resonant set satisfies:
\begin{equation}
    \label{eqn: single-msa-large-residual}
    G_{1,N} \subseteq \bigcup_{k \in \Lambda_{N^{10}}} \left\{ \omega_T \in \Omega \bigg | |f(\sigma_{\star})| < \varepsilon_N^{1/4} \right\}. 
\end{equation}
By setting $\eta = 15e^3 \varepsilon_N^{1/4}$,~\Cref{lem: cartan} provides the 
following measure estimate: 
\begin{equation}
    \label{eqn: mes-g-1-N-2}
    \mathrm{mes}(G_{1,N}) \leq 8(1+e)e^2 (2N^{10}+1)^m \varepsilon_{N}^{1/4}. 
\end{equation}
Based on~\eqref{eqn: single-mode-resonant-set}, the total single-mode resonant set is 
the union across all scales $N \geq M_0$ as 
\begin{equation}
    \label{eqn: final-msa-single-mode-nearly-resonant}
    G_1 = \bigcup_{N \geq M_{0}} G_{1,N}. 
\end{equation}
Synthesizing the estimates from~\eqref{eqn: mes-g-1-N-1} and~\eqref{eqn: mes-g-1-N-2}, 
we derive
\[
    \mathrm{mes}(G_1) \leq \sum_{N \geq M_0} 2^{m+2} d^{m-1} (2N^{10}+1)^m \varepsilon_{N}^{1/4} < \infty.
\]
Notably, as $\varepsilon \rightarrow 0$, if follows $M_0 \rightarrow \infty$, which 
implies $\mathrm{mes}(G_1) \rightarrow 0$. 
The combined set $G = G_1 \cup G_2$ corresponds to the set defined 
in~\Cref{thm: off-side-box}. 
Consequently, we obtain $\text{mes}(G) \leq \delta$, where $\delta \to 0$ as 
$\varepsilon \to 0$, consistent with the statement of~\Cref{thm: main}.

\section{Convergence of the numerical scheme}
\label{sec: iterative-scheme}

In this section, we establish the theoretical foundation for the convergence of the 
proposed numerical scheme~\eqref{eqn: numerical-scheme}.~\Cref{subsec: induction} 
derives the iteration lemma, which serves as the technical cornerstone of our 
analysis. 
Subsequently, in~\Cref{subsec: proof-main}, we utilize this result to complete the 
formal proof of the main statement.

%===================================================%
\subsection{The iteration lemma}
\label{subsec: induction}

Before proceeding to the inductive estimates, we must characterize the derivative of 
the inverse operator $\partial L_N^{-1}$, specifically regarding its norm bounds and 
off-diagonal decay properties. 
Here, the derivative is taken with respect to the tangent frequency $\omega_T$, 
which we omit from the notation for simplicity. 
Using the standard operator identity $L_N^{-1} L_N = I$, the derivative of the 
inverse is given by:
\begin{equation}
    \label{eqn: t-n-derivative-inverse}
    \partial L_{N}^{-1} = - L_{N}^{-1}(\partial L_{N} ) L_{N}^{-1}.
\end{equation}
This relationship allows us to establish the following lemma regarding the operator's 
composite norm and spatial localization.

\begin{lemma}
    \label{lem: inverse-derivative}
    If the restricted operator $L_{N}$ satisfies~\Cref{cond: implementation}, then 
    its inverse satisfies the norm bound:
    \begin{equation}
        \label{eqn: inverse-restricted-norm}
        \| \partial L_{N}^{-1} \|_2 \leq \frac{4\sqrt{m}N}{\varepsilon_N^2}. 
    \end{equation}
    Furthermore, for sufficiently large spatial separations in the lattice 
    $\Lambda_N$, specifically when $|k - k'| \geq N^{\frac34}$ the entries 
    of the inverse matrix must exhibit Gevrey-type decay:
    \begin{equation}
        \label{eqn: inverse-restricted-entries}
        | \partial L_{N}^{-1}\left( k, k' \right) | \leq \exp\left\{ - \frac{| k - k' |^{s}}{4}\right\} .
    \end{equation}
\end{lemma}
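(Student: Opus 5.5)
The plan is to work directly from the identity \eqref{eqn: t-n-derivative-inverse}, $\partial L_N^{-1} = -L_N^{-1}(\partial L_N)L_N^{-1}$, and estimate the three factors separately.

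\textbf{The middle factor.} The derivative is taken with respect to $\omega_T$. Write $L_N = D_N + \varepsilon(S+B)_N$. Then
\[
\partial D_N = \mathrm{diag}\big(-\langle k, \partial\omega_T'\rangle + \partial\omega\big),
\]
whose entries are bounded by $\sqrt{m}\,|k|_\infty\,\|\partial\omega_T'/\partial\omega_T\|_2$ plus a bounded contribution from the normal frequencies. By \Cref{prop: frequency-drift}, and since $k\in\Lambda_N$, this gives $\|\partial D_N\|_2 \le (1+\varepsilon)\sqrt{m}N + O(1) \le 2\sqrt{m}N$. The perturbative part $\varepsilon\,\partial(S+B)_N$ is controlled as follows:
\begin{itemize}
\item[(a)] the $\omega_T$-dependence enters through $\hat{z}\in\mathscr{H}$, whose $\partial_{\omega_T}$-norm is bounded;
\item[(b)] the chain rule combined with \Cref{prop: tensor} and \Cref{prop: B-operator} bounds it;
\item[(c)] the resulting bound has the same Gevrey off-diagonal decay as $S$ and $B$, by \eqref{eqn: tangent-comp} and \eqref{eqn: b-comp}.
\end{itemize}
With $\gamma\le 1/(2e)$ and $\varepsilon$ small, the total is $\|\partial L_N\|_2\le 4\sqrt{m}N$. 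Combining this with the inversion condition \eqref{eqn: inverse-grow} applied to both outer factors gives \eqref{eqn: inverse-restricted-norm} immediately.

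\textbf{Entrywise decay.} For the bound \eqref{eqn: inverse-restricted-entries}, expand
\[
\partial L_N^{-1}(k,k') = -\sum_{k_1,k_2\in\Lambda_N} L_N^{-1}(k,k_1)\,(\partial L_N)(k_1,k_2)\,L_N^{-1}(k_2,k').
\]
The middle factor has entries at most $2\sqrt{m}N\,\delta_{k_1k_2} + \exp\{-|k_1-k_2|^s\}$. Given $|k-k'|\ge N^{3/4}$, split the sum according to which of the three hops $|k-k_1|$, $|k_1-k_2|$, $|k_2-k'|$ is at least $N^{1/2}$; by the triangle inequality, at least one must be, since $3N^{1/2}<N^{3/4}$ for $N$ large.

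\begin{itemize}
\item \emph{All three hops at least $N^{1/2}$.} Use the localization condition \eqref{eqn: off-diagonal} on the two outer factors and Gevrey decay on the middle one. Subadditivity of $x\mapsto x^s$ for $s<1$ gives $\sum_i |d_i|^s\ge|k-k'|^s$, so the product is at most $\exp\{-|k-k'|^s/2\}$ times the polynomial factor $2\sqrt{m}N$.
\item \emph{Some hop shorter than $N^{1/2}$.} That factor can only be bounded crudely, by $\varepsilon_N^{-1}$ or $2\sqrt{m}N$. The remaining long hops still cover a distance at least $|k-k'|-2N^{1/2}\ge |k-k'|(1-2N^{-1/4})$. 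Their decay is therefore at least $\exp\{-\tfrac12(1-2N^{-1/4})^s|k-k'|^s\}$.
\end{itemize}

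\textbf{Main obstacle.} The difficulty is absorbing the losses into the gap between the exponents $1/2$ and $1/4$. The losses are:
\begin{itemize}
\item up to $\varepsilon_N^{-2}=\exp\{2(\log N)^{15}\}$ from the crude bounds on short hops;
\item the polynomial factor $N$ from $\partial D_N$;
\item the counting factor $(2N+1)^{2m}$ from the double sum.
\end{itemize}
Since $|k-k'|\ge N^{3/4}$, the spare decay $\tfrac14|k-k'|^s\ge \tfrac14 N^{3s/4}$ must dominate $2(\log N)^{15}+O(\log N)$. This requires $N^{3s/4}\gg(\log N)^{15}$, which holds once $N>M_0$ provided $s=s(\varepsilon)$ is not too small relative to $\log M_0$. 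I would first check that the choice $M_0=\exp\{(\log M)^{1/20}\}$ together with the eventual definition of $s$ makes this uniform in $N$. If it does not, I would fall back to using the localization condition \eqref{eqn: off-diagonal} even on short hops wherever it applies, so that the crude $\varepsilon_N^{-1}$ bound is needed only for genuinely near-diagonal entries.
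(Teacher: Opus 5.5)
You follow the paper's proof step for step. The norm bound comes from $\|L_N^{-1}\|_2^2\,\|\partial L_N\|_2$. The entrywise bound comes from expanding the double sum, paying a prefactor of order $N^2(2N+1)^{2m}\varepsilon_N^{-2}$ for short hops and keeping $\exp\{-\tfrac12(|k-k'|-2N^{1/2})^s\}$ from the long ones. The appendix stops at exactly this expression and simply asserts that $|k-k'|\ge N^{3/4}$ finishes the job.

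You are right that this absorption step is the real content of \eqref{eqn: inverse-restricted-entries}. However, your proposal leaves it open, and the check you propose comes out negative. So this is a genuine gap, and the paper's argument has the same one. The iteration lemma takes $95M^s\le 96$, i.e.\ $s\le\log(96/95)/\log M$. Hence at a scale $N=M^r$ and for all $k,k'\in\Lambda_N$ one has $\tfrac14|k-k'|^s\le\tfrac14(2mN)^s\le\tfrac14(2m)^s(96/95)^r$. The loss to absorb is $\log\varepsilon_N^{-2}=2r^{15}(\log M)^{15}$, with $(\log M)^{15}=(\log\tfrac1\varepsilon)^{3/4}$ huge.

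Therefore the absorption inequality $N^2(2N+1)^{2m}\varepsilon_N^{-2}\exp\{-\tfrac12(|k-k'|-2N^{1/2})^s\}\le\exp\{-\tfrac14|k-k'|^s\}$ already fails at $N=M$. It keeps failing at $N=M^r$ until $(96/95)^r$ overtakes roughly $8r^{15}(\log M)^{15}$. Your proviso on $s$ is violated, since $s\log M_0\le\log(96/95)(\log M)^{-19/20}\to0$. In fact, at $N=M$ one has $\exp\{-\tfrac14|k-k'|^s\}\ge\exp\{-\tfrac14(2m)^s\tfrac{96}{95}\}$ on all of $\Lambda_M$. So not even the counting factor $(2N+1)^{2m}$ can be absorbed, including in your ``all hops long'' case.

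Your fallback does not repair this. Entries with $|k-k_1|<N^{1/2}$ are exactly those on which \Cref{cond: implementation} gives nothing beyond $|L_N^{-1}(k,k_1)|\le\|L_N^{-1}\|_2\le\varepsilon_N^{-1}$. A single such factor, e.g.\ in the term $L_N^{-1}(k,k_1)\,(\partial L_N)(k_1,k_1)\,L_N^{-1}(k_1,k')$, already costs $\varepsilon_N^{-1}N$, which again needs $\tfrac14|k-k'|^s\gtrsim(\log N)^{15}$. So \eqref{eqn: inverse-restricted-entries} cannot be extracted from \Cref{cond: implementation} and \eqref{eqn: t-n-derivative-inverse} by bookkeeping alone; you need structure that the hypothesis does not record.

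For $N\le M$ that structure is available. In the Neumann-series regime of \Cref{thm: small-size}, the diagonal of $L_N^{-1}$ is only polynomially large in $N$. Every off-diagonal entry of $L_N^{-1}$ and of $\partial L_N$ carries a factor $\varepsilon=\exp\{-(\log M)^{20}\}$. Since every term of the double sum with $k\neq k'$ contains at least one off-diagonal factor, this $\varepsilon$ beats all polynomial losses. At larger scales you would need one of two things. Either you need to know where $L_N^{-1}$ is large, e.g.\ that its $\varepsilon_N^{-1}$-sized part is tied to the singular site of the multi-scale decomposition. Or you need an exponent and threshold for which $|k-k'|^s\gg(\log N)^{15}$ genuinely holds, which is incompatible with $M^s\le 96/95$.

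Separately, for $\|\partial L_N\|_2\le 4\sqrt m N$, bound $\varepsilon\,\partial(S+B)$ by the Schur test using the Gevrey decay of its kernel, as in your item (c). Do not go through \Cref{prop: tensor}, whose factor $(2N+1)^{m/2}$ is not $O(N)$ uniformly in $N$ once $m\ge 3$.
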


The inversion result~\eqref{eqn: inverse-restricted-norm} follows directly from the 
inversion condition~\eqref{eqn: inverse-grow} and the derivative 
identity~\eqref{eqn: t-n-derivative-inverse}. 
The localization result~\eqref{eqn: inverse-restricted-entries} involves basic 
inequalities associated with Gevrey regularity; the detailed proof is provided 
in~\Cref{subsec: derivative-app}. 
Given that~\Cref{cond: implementation} was verified 
previously,~\Cref{lem: inverse-derivative} implies the iteration lemma. 
For technical convenience, we adopt the convention that $N_{-1} = 0$ and 
$\Lambda_{N_{-1}} = \varnothing$.

\begin{theorem}[The Iteration Lemma]
    \label{lem: iteration}
    Suppose that the tangent frequency satisfies 
    $\omega_T \in \Omega \setminus (\Omega_{M}^{\tau} \cup G)$. 
    Let $\varepsilon_0 = \varepsilon_0(H_1, \Omega, a) > 0$ be the threshold provided 
    in~\Cref{thm: main}. 
    For any $0 < \varepsilon \leq \varepsilon_0$, there exists a Gevrey exponent 
    $s(\varepsilon) > 0$ such that, at the $r$-th iteration, the approximate solution 
    $\hat{z}^{(r)}$ is supported on: 
    \begin{equation}
        \label{eqn: vector-support-r}
        \mathrm{supp} \; \hat{z}^{(r)} \subseteq  \Lambda_{N_r}. 
    \end{equation}
    For any index $i \in \{ 0, 1, \ldots, r\}$ and any 
    $k \in \Lambda_{N_i} \setminus \Lambda_{N_{i-1}}$, assume the Gevrey decay 
    property holds: 
    \begin{equation}
        \label{eqn: vector-decay-r}   
        \| \hat{z}^{(r)}(k ) \| \leq \sum_{\ell=i}^{r}\exp\left\{ - \frac32 N_{\ell}^{s} \right\},
    \end{equation}
    and the vector field $F$ satisfies: 
    \begin{equation}
        \label{eqn: vector-field-decay-r}  
        \big\| F\big( \hat{z}^{(r)}_p, \;\omega_{T}^{(r+1)}; a, \omega \big) \big\| \leq  \exp\left\{-2N_{r}^{s} \right\}.                                                                                                                                                                                                                                                                   
    \end{equation}
    Under the numerical update scheme~\eqref{eqn: numerical-scheme}, these 
    properties are preserved at the $(r+1)$-th iteration. 
    Specifically, the update iterate $\hat{z}^{(r+1)} $ satisfies: 
    \begin{equation}
        \label{eqn: vector-support-r+1}
        \mathrm{supp} \; \hat{z}^{(r+1)} \subseteq  \Lambda_{N_{r+1}}. 
    \end{equation}
    For any $i \in \{ 0, 1, \ldots, r+1\}$ and any 
    $k \in \Lambda_{N_i} \setminus \Lambda_{N_{i-1}}$, the following decay property 
    holds: 
    \begin{equation}
        \label{eqn: vector-decay-r+1}   
        \| \hat{z}^{(r+1)}(k) \| \leq \sum_{\ell=i}^{r+1}\exp\left\{ - \frac32 N_{\ell}^s \right\},
    \end{equation}
    and the updated vector field $F$ satisfies: 
    \begin{equation}
        \label{eqn: vector-field-decay-r+1}  
        \big\| F \big( \hat{z}_p^{(r+1)},\omega_T^{(r+2)}; a, \omega \big) \big\| \leq  \exp\left\{-2N_{r+1}^{s} \right\}.                                                                                                                                                                                                                                                                 
    \end{equation}
\end{theorem}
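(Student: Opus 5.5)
The argument is a standard Newton–Nash–Moser induction step carried out on the truncated lattice. We write
\[
\Delta^{(r+1)} := -\,L_{N_{r+1}}^{-1}\, F\big(\hat z_p^{(r)},\omega_T^{(r+1)};a,\omega\big),
\]
where $L_{N_{r+1}} = (T+\varepsilon B)_{N_{r+1}}$ is evaluated at $(\hat z_p^{(r)},\omega_T^{(r+1)})$, so that $\hat z_p^{(r+1)} = \hat z_p^{(r)} + \Delta^{(r+1)}$.

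\textbf{Support.} Since $L_{N_{r+1}} = P_{N_{r+1}}LP_{N_{r+1}}$ and its inverse is taken on $\Lambda_{N_{r+1}}$, the correction $\Delta^{(r+1)}$ is supported in $\Lambda_{N_{r+1}}$. Because $\Lambda_{N_r}\subseteq\Lambda_{N_{r+1}}$, this gives~\eqref{eqn: vector-support-r+1} immediately.

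\textbf{Decay of the correction.} For $k\in\Lambda_{N_i}\setminus\Lambda_{N_{i-1}}$, the target~\eqref{eqn: vector-decay-r+1} follows from~\eqref{eqn: vector-decay-r} once we show
\[
\|\Delta^{(r+1)}(k)\| \le \exp\{-\tfrac32 N_{r+1}^s\}.
\]
We split $F = P_{N_{r+1}}F + (E-P_{N_{r+1}})F$; only the first part enters.

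\begin{itemize}
\item \emph{Bulk of $F$.} The inversion condition~\eqref{eqn: inverse-grow}, already verified through \Cref{thm: small-size} and \Cref{thm: off-side-box} for $\omega_T\notin\Omega^\tau_M\cup G$, gives
\[
\|\Delta^{(r+1)}\|\le \varepsilon_{N_{r+1}}^{-1}\, e^{-2N_r^s}.
\]
Since $N_{r+1}=M N_r$ and $\varepsilon_{N}^{-1}=\exp\{(\log N)^{15}\}$, we need $2N_r^s-(\log N_{r+1})^{15}\ge \tfrac32 N_{r+1}^s = \tfrac32 M^sN_r^s$. This forces $M^s$ close to $1$, which is where the choice of $s(\varepsilon)$ enters.
\item \emph{Tail of $F$.} Outside $\Lambda_{N_r}$, the residual $F$ equals $\varepsilon\hat X_p$ evaluated on data supported in $\Lambda_{N_r}$. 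By \Cref{prop: vector-field} it has Gevrey decay $\gamma e^{-|k|^s}$.
\item \emph{Pointwise refinement.} For entries with $|k-k'|\ge N_{r+1}^{1/2}$ we use the localization condition~\eqref{eqn: off-diagonal}. Combined with the concavity $|k|^s\le |k-k'|^s+|k'|^s$, this yields the pointwise bound $\|\Delta^{(r+1)}(k)\|\lesssim e^{-\frac32 N_{r+1}^s}$ after summing over $(2N+1)^m$ sites.
\end{itemize}

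\textbf{New residual.} We Taylor-expand $F$ at $(\hat z_p^{(r)},\omega_T^{(r+1)})$ in both $\hat z_p$ and the frequency update $\omega_T^{(r+2)}-\omega_T^{(r+1)}$. By \eqref{eqn: frequency-iter}, this update equals $\varepsilon(\hat X_q(\hat z^{(r+1)})-\hat X_q(\hat z^{(r)}))\odot a^{-1}$. We then use the Newton identity $F+L_{N_{r+1}}\Delta=0$ on $\Lambda_{N_{r+1}}$, where the $\varepsilon B$ term is designed precisely to absorb the linear effect of the frequency shift through $\langle k,\omega_T'\rangle\hat z_p$; this is \eqref{eqn: b-operator}. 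The residual $F(\hat z_p^{(r+1)},\omega_T^{(r+2)})$ then decomposes into three pieces:
\begin{itemize}
\item the quadratic remainder, bounded by \Cref{prop: tensor} and \Cref{prop: frequency-drift}, which gives $\gamma_5(2N_{r+1}+1)^{m/2}\|\Delta\|^2$;
\item the truncation tail $(E-P_{N_{r+1}})\big(F+\varepsilon S\Delta\big)$, bounded via the Gevrey decay of $S$ and $\hat X$ (\Cref{prop: convolution}, \Cref{prop: vector-field}) by roughly $e^{-N_{r+1}^s}$ times a small factor;
\item the frequency-derivative contributions, controlled by \Cref{lem: inverse-derivative}, with the $\omega_T$-dependence of the $\mathscr H$-norm handled the same way.
\end{itemize}
The quadratic piece is of order $\varepsilon_{N_{r+1}}^{-2}e^{-4N_r^s}$. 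Requiring this to be at most $e^{-2N_{r+1}^s}$ needs $4\ge 2M^s+o(1)$, and again fixes the range of admissible $s(\varepsilon)$.

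\textbf{Main obstacle.} I expect the hardest step to be the truncation tail. To reach $e^{-2N_{r+1}^s}$ rather than merely $e^{-N_{r+1}^s}$, we must use that $\hat z^{(r+1)}$ is concentrated, with the bulk of its mass inside $\Lambda_{N_r}$, so that the convolution output at $|k|>N_{r+1}$ pays $|k-k'|^s\gtrsim (N_{r+1}-N_r)^s$ in addition to the decay of $\hat z$. We also need $\varepsilon$ (and the $\gamma\le 1/(2e)$ normalization) small enough to cover the gap between exponents $1$ and $2$. The first attempt will be to bound the tail by $\varepsilon\gamma\,e^{-N_{r+1}^s}\sum_{k'}e^{-\frac32|k'|^s+\dots}$ and check whether the $\varepsilon$ factor, combined with $\log(1/\varepsilon)\gg N^s$ in the relevant range, closes the estimate. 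If it does not, the fallback is to slightly weaken the exponents via the choice of $s(\varepsilon)$.
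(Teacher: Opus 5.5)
The gap is in the new-residual step, and the route you propose through it cannot close. Your support argument (the correction lies in the range of $L_{N_{r+1}}^{-1}$) is correct and more direct than the paper's. Your bound on the correction is the paper's: $\|\Delta\|\le \varepsilon_{N_{r+1}}^{-1}e^{-2N_r^s}\le e^{-\frac{7}{4} N_r^s}\le e^{-\frac32 N_{r+1}^s}$ once $6M^s\le 7$. That bound controls every entry at once, so your ``tail of $F$'' and ``pointwise refinement'' bullets are not needed there.

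After the Newton identity, the dangerous term is $(E-P_{N_{r+1}})F(\hat z_p^{(r)})+[L-L_{N_{r+1}}]\Delta$. You plan to bound it using only the Gevrey decay of $S$, $\hat X$ and $\hat z^{(r+1)}$. On the outer shell $\Lambda_{N_{r+1}}\setminus\Lambda_{N_r}$, where $\hat z^{(r+1)}=\Delta$, this information only gives entries of size $e^{-\frac32 N_{r+1}^s}$. So $(E-P_{N_{r+1}})\varepsilon(S+B)\Delta$ is controlled only by roughly $\varepsilon\gamma e^{-\frac32 N_{r+1}^s}$, and your first attempt gives only $\varepsilon\gamma e^{-N_{r+1}^s}$.

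None of your three escape routes works:
\begin{itemize}
\item Closing the gap to $e^{-2N_{r+1}^s}$ with the factor $\varepsilon$ requires $\log(1/\varepsilon)\gtrsim N_{r+1}^s=(M^s)^{r+1}$. This fails for all large $r$, since $\varepsilon$ is fixed.
\item Adjusting $s$ does not help, because the available bound and the target differ by a fixed factor in the coefficient of $N_{r+1}^s$.
\item The exponent $2$ cannot be weakened, since \eqref{eqn: vector-field-decay-r+1} has to reproduce the hypothesis \eqref{eqn: vector-field-decay-r} at the next step.
\end{itemize}
What is needed is a gain \emph{relative to the old residual}: a bound of the form $e^{-cN_{r+1}^s}\|F(\hat z_p^{(r)})\|$ with $c>0$ independent of $r$.

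The paper obtains this from two ingredients missing from your plan. First, it uses that $H_1$ is a polynomial of degree at most $d$, so $\mathrm{supp}\,F(\hat z_p^{(r)})\subseteq\Lambda_{dN_r}\subseteq\Lambda_{N_{r+1}/4}$ once $M\ge 4d$. This is the content of its support step. It makes $(E-P_{N_{r+1}})F$ vanish identically; with only Gevrey decay off $\Lambda_{N_r}$, as in your bullet, that term is bounded only by about $\varepsilon\gamma e^{-N_{r+1}^s}$. It also places the source of the Newton step deep inside the box.

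Second, the localization condition \eqref{eqn: off-diagonal} for $L_{N_{r+1}}^{-1}$ forces $\Delta$ itself to concentrate in $\Lambda_{N_{r+1}/3}$. Write $[L-L_{N_{r+1}}]\Delta=[(E-P_{N_{r+1}})LP_{N_{r+1}}][(E-P_{N_{r+1}/3})\Delta]+[(E-P_{N_{r+1}})LP_{N_{r+1}/3}]\Delta$. In the first term, the first factor has norm at most $\varepsilon\|S+B\|\le 1$. Its second factor is $(E-P_{N_{r+1}/3})\Delta=-(E-P_{N_{r+1}/3})L_{N_{r+1}}^{-1}P_{N_{r+1}/4}F$, with norm at most $\frac13 e^{-N_{r+1}^s/48}\|F\|$. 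In the second term, the off-diagonal decay of $S$ and $B$ over distances at least $\frac23 N_{r+1}$ supplies a factor $\frac13 e^{-\frac{7}{12}N_{r+1}^s}$.

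Hence the truncation error is at most $\frac23\exp\{-(2M^{-s}+\frac{1}{48})N_{r+1}^s\}$. A second constraint on the Gevrey exponent, $95M^s\le 96$, turns this into the required $e^{-2N_{r+1}^s}$. So the concentration you need is that of the correction $\Delta$, not that of $\hat z^{(r+1)}$ inside $\Lambda_{N_r}$. It comes from the compact support of $F$ together with localization of the truncated inverse, and the gain is paid for by $N_{r+1}^s$ itself rather than by the smallness of $\varepsilon$.
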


\begin{proof}[Proof of~\Cref{lem: iteration}]
    For conciseness, we suppress the auxiliary parameters from the 
    dimension-enlarged Newton scheme~\eqref{eqn: coeffcient-iter} and define the 
    iterative update as:
    \begin{equation}
        \label{eqn: nash-moser-single-simplification}
        \Delta^{(r)} = \hat{z}^{(r+1)}_p - \hat{z}^{(r)}_p = - L_{N_{r+1}}^{-1} \big(  \hat{z}^{(r)}_p \big) F\big(  \hat{z}^{(r)}_p \big).
    \end{equation}
    The proof is organized in the following three steps.

    \paragraph{Support enlargement} This step verifies the inclusion 
    property~\eqref{eqn: vector-support-r+1}. 
    Given that the perturbation $H_1$ is a polynomial of total degree at most $d$, 
    the vector field is located in the support:
    \begin{equation}
        \label{eqn: support-d-enlarge}
        \mathrm{supp} \; F\big( \hat{z}^{(r)}_p \big) \subseteq [-dN_{r}, dN_{r}]^m \subseteq \left[ -\frac{N_{r+1}}{4}, \frac{N_{r+1}}{4} \right]^m,
    \end{equation}
    which confirms~\eqref{eqn: vector-support-r+1} in accordance with the update 
    rule~\eqref{eqn: numerical-scheme}.

    \paragraph{Lattice vector decay} This step establishes the 
    bound~\eqref{eqn: vector-decay-r+1}. 
    Given the inversion condition~\eqref{eqn: inverse-grow} and the previous vector 
    field estimate~\eqref{eqn: vector-field-decay-r}, we bound the iterative 
    difference as:
    \[
        \| \Delta^{(r)} \|_2 \leq \big \| L_{N_{r+1}}^{-1} \big( \hat{z}^{(r)}_p \big) \big\|_2 \big \| F\big( \hat{z}^{(r)}_p\big) \big\|_2 \leq \frac{1}{\varepsilon_{N_{r+1}}} \exp\left\{-2N_{r}^{s}\right\}.
    \]
    Furthermore, by applying~\Cref{lem: inverse-derivative}, we estimate the 
    derivative of the update as:
    \begin{align*}
        \| \partial \Delta^{(r)} \|_2 
        & \leq \big\| \partial L_{N_{r+1}}^{-1}  \big(  \hat{z}^{(r)}_p  \big) \big \|_2 \big\| F\big(  \hat{z}^{(r)}_p\big) \big\|_2 + \big \| L_{N_{r+1}}^{-1}  \big(  \hat{z}^{(r)}_p  \big) \big \| _2\big\| \partial F\big(  \hat{z}^{(r)}_p\big) \big \|_2                                                            \nonumber \\
        & \leq \left( \frac{4\sqrt{m}N_{r+1} + \varepsilon_{N_{r+1}}}{\varepsilon_{N_{r+1}}^2}\right) \exp\left\{-2N_{r}^{s}\right\}.       
    \end{align*} 
    By combining the two bounds above, there exists a threshold $\varepsilon_0 > 0$ 
    such that such that:
    \begin{equation}
        \label{eqn: difference-derivative-estimate}
        \| \Delta^{(r)} \| \leq \exp\left\{-\frac{7N_{r}^{s}}{4}\right\},
    \end{equation}
    for any $0 < \varepsilon \le \varepsilon_0$. 
    We now choose $s = s(\varepsilon)>0$ such that $6M^{s} \leq 7$. 
    Consequently, at the $(r+1)$-th iteration, for any index 
    $i \in \{0, 1, \ldots, r+1\}$ and any 
    $k \in \Lambda_{M^i} \setminus \Lambda_{M^{i-1}}$, the following holds:
    \begin{align*}
        \| \hat{z}^{(r+1)}(k) \| \leq \| \hat{z}^{(r)}(k) \| + \|\Delta^{(r)}(k)\| \leq \sum_{\ell=i}^{r+1}\exp\left\{ - \frac32 N_{\ell}^{s} \right\},
    \end{align*}
    which confirms that the decay property~\eqref{eqn: vector-decay-r+1} is 
    preserved across successive iterations.

    \paragraph{Super-exponential convergence} To establish the super-convergence 
    property~\eqref{eqn: vector-field-decay-r+1}, we analyze the behavior of the 
    vector field following the numerical update. 
    By applying a Taylor expansion to the vector field and the frequency, and 
    invoking~\Cref{prop: tensor}, the post-update residual is bounded by the 
    truncation error and the quadratic terms of the iteration: 
    \begin{equation}
        \label{eqn: vector-estimate-taylor-r+1}
        \| F(\hat{z}_p^{(r+1)}, \omega_T^{(r+2)}) \|_2 \leq \| [ L - L_{N_{r+1}}] \Delta^{(r)} \|_2 + (2N_{r+1}+1)^\frac{m}{2} \exp\left\{- 3N_{r+1}^{s} \right\}. 
    \end{equation}
    Similarly, the derivative of the residual satisfies:
    \begin{align}
        \| \partial F(\hat{z}_p^{(r+1)}, \omega_T^{(r+2)}) \|_2 
        \leq & \| \partial [ L - L_{N_{r+1}}] \Delta^{(r)} \|_2 + \| [ L - L_{N_{r+1}} ]\partial \Delta^{(r)} \|_2  \nonumber  \\ 
             & + 3(2N_{r+1}+1)^\frac{m}{2}\exp\left\{- 3N_{r+1}^{s} \right\}.     \label{eqn: vector-estimate-derivative-taylor-r+1}
    \end{align}
    To estimate the Gevrey norms in~\eqref{eqn: vector-estimate-taylor-r+1} 
    and~\eqref{eqn: vector-estimate-derivative-taylor-r+1}, we exploit the 
    off-diagonal decay of the operators $L $ and $L_{N_{r+1}}^{-1}$ established 
    in~\Cref{prop: convolution} and~\Cref{lem: inverse-derivative}. 
    Since the estimation procedures for these terms are analogous, we focus on 
    $[L- L_{N_{r+1}}] \Delta^{(r)}$ as the representative example. 
    We utilize the following decomposition:
    \begin{align}
        [L- L_{N_{r+1}}] \Delta^{(r)} 
        =  & \left[ (E - P_{N_{r+1}}) LP_{N_{r+1}} \right] \left[ (E - P_{N_{r+1}/3})  \Delta^{(r)} \right]  \nonumber \\
           & + \left[ (E - P_{N_{r+1}}) L P_{N_{r+1}/3} \right] \Delta^{(r)}.  \label{eqn: decompostion-t-tr}
    \end{align}
    Given the confirmed off-diagonal decay, there exists a threshold 
    $\varepsilon_0 > 0$ such that for $0 < \varepsilon \leq \varepsilon_0$, the 
    following estimates holds:
    \begin{equation}
        \label{eqn: final-super-convergence-bound-1}
        \|(E - P_{N_{r+1}}) L P_{N_{r+1}/3} \| \leq \frac13 \exp\left\{ - \frac{7}{12} N_{r+1}^{s} \right\},
    \end{equation}
    and
    \begin{equation}
        \label{eqn: final-super-convergence-bound-2}
        \|(E - P_{N_{r+1}/3}) L_{N_{r+1}}^{-1} P_{N_{r+1}/4} \| \leq \frac13 \exp\left\{ - \frac{1}{48} N_{r+1}^{s} \right\}.
    \end{equation}
    Noting that $\|(E - P_{N_{r+1}}) LP_{N_{r+1}} \| \leq \varepsilon \|S+B\| \leq 1$, 
    we combine the bounds~\eqref{eqn: final-super-convergence-bound-1} 
    and~\eqref{eqn: final-super-convergence-bound-2} with the decomposition 
    in~\eqref{eqn: decompostion-t-tr} to account for the truncation errors 
    in~\eqref{eqn: vector-estimate-taylor-r+1} 
    and~\eqref{eqn: vector-estimate-derivative-taylor-r+1}. 
    For a sufficiently small threshold $\varepsilon_0 > 0$, we obtain
    \begin{equation}
        \| F(\hat{z}_p^{(r+1)}, \omega_T^{(r+2)}) \| \leq  \frac{2}{3} \exp\left\{ -\left( \frac{2}{M^{s}}  + \frac{1}{48}\right) N_{r+1}^{s} \right\}  +  4(2N_{r+1}+1)^\frac{m}{2}\exp\left\{- 3N_{r+1}^{s} \right\}.   \label{eqn: final-super-convergence-1}
    \end{equation}
    By choosing $s = s(\varepsilon)>0$ such that $95 M^{s} \leq 96$, we arrive at 
    the desired super-convergence bound:
    \begin{equation}
        \label{eqn: final-super-convergence-2}
        \| F(\hat{z}_p^{(r+1)}, \omega_T^{(r+2)}) \|_s \leq \exp\left\{ - 2  N_{r+1}^{s} \right\}, 
    \end{equation}
    which concludes the proof of the Iteration Lemma~(\Cref{lem: iteration}). 
\end{proof}

The decay estimates in~\Cref{lem: iteration} demonstrate that despite potential 
fluctuations in individual terms, the cumulative decay remains strictly controlled. 
For any multi-index $k \in \Lambda_{N_r} \setminus \Lambda_{N_{r-1}}$, the series 
converges as follows:
\[
    \sum_{\ell = r}^{\infty} \exp\left\{-\frac{3N_{\ell}^{s}}{2}\right\} \leq \exp\left\{-N_{r}^{s} \right\} \leq \exp\left\{ - |k|^{s} \right\},
\]
which ensures that the sequence $\hat{z}^{(r)}$ remains within the space 
$\mathcal{K}(s)$. 
Consequently, the solution preserves its spatial localization throughout the 
iteration, preventing any degradation of regularity as the lattice size $N$ increases.

%=============================================%
\subsection{Proof of the main theorem}
\label{subsec: proof-main}

Since $\hat{z}_q^{(r)}$ stays fixed throughout the iterations, the estimate 
\eqref{eqn: difference-derivative-estimate} remains valid when $ \hat{z}_p $ is 
replaced by $ \hat{z} $. The displacement between successive iterations is bounded as:
\[
    \| \hat{z}^{(r+1)} - \hat{z}^{(r)} \| = \| \Delta^{(r)} \| \leq \exp \left\{ - \frac{7}{4} N_{r}^s \right\} = \exp \left\{ - \frac{7}{4} (M^s)^r \right\},
\]
which implies the convergence of the sequences $\{ \hat{z}^{(r)} \}$. 
Furthermore, we can bound the distance to the limit $\hat{z}^\star$ by estimating 
the tail of the sequence:
\[
    \| \hat{z}^{(r)} - \hat{z}^\star \| \leq \sum_{\ell = 0}^{\infty} \| \hat{z}^{(r+\ell +1)} - \hat{z}^{(r+\ell )} \| \leq \sum_{\ell  = 0}^{\infty} \exp \left\{ - \frac{7}{4} (M^s)^{r+\ell } \right\} \leq \exp \left\{ - \frac{3}{2} (M^s)^{r} \right\},
\]
which confirms \eqref{eqn: z-hat-converge}. 
Moreover, from the tangent frequency update~\eqref{eqn: frequency-iter} and the 
uniform bound \eqref{eqn: tangent-bound}, we have:
\begin{equation*}
    | \omega_T^{(r+1)} - \omega_T^{(r)} | \leq \varepsilon \gamma \| \hat{z}^{(r+1)} - \hat{z}^{(r)} \| \leq \varepsilon \gamma \exp \left\{ - \frac{7}{4} (M^s)^r \right\}.
\end{equation*}
Thus, the convergence of the frequency sequences $\{ \omega_T^{(r)} \}$ is guaranteed, 
and we also have
\[
    | \omega_T^{(r)} - \omega_T^\star | \leq \sum_{\ell = 0}^{\infty} | \omega_T^{(r+\ell +1)} - \omega_T^{(r+\ell )} | \leq \sum_{\ell = 0}^{\infty} \varepsilon \gamma \exp \left\{ - \frac{7}{4} (M^s)^{r+\ell } \right\}  \leq \exp \left\{ - \frac{3}{2} (M^s)^{r} \right\},
\]
which confirms \eqref{eqn: frequency-hat-converge}. 
Finally, we express the exact solution via its Fourier expansion:
\[
    z^\star(t) = \sum_{k \in \mathbb{Z}^m} \hat{z}^\star(k) e^{i \langle k, \omega_T^\star \rangle t}.
\]
Based on the support property \eqref{eqn: vector-support-r}, the error can be split 
into two terms:
\begin{align*}
    \| z^{(r)}(t) - z^\star(t) \| &\leq \left\| \sum_{k \in \mathbb{Z}^m} \left( \hat{z}^\star(k) - \hat{z}^{(r)}(k) \right) e^{i \langle k,  \omega_T^\star\rangle t} \right\|
    + \left\| \sum_{k \in \Lambda_{N_{r}}} \hat{z}^{(r)}(k) \left( e^{i \langle k, \omega_T^\star \rangle t} - e^{i \langle k, \omega_T^{(r)} \rangle t} \right) \right\|  \\
    & \defi I_1 + I_2.
 \end{align*}
By applying the compact support property~\eqref{eqn: vector-support-r} and the Gevrey 
decay property~\eqref{eqn: vector-decay-r} alongside the Cauchy-Schwarz inequality, 
we estimate the term $I_1$ as: 
\begin{align*}
    I_1 &\leq \sum_{k \in \Lambda_{M^{r}} } \| \hat{z}^\star(k) - \hat{z}^{(r)}(k) \| + \sum_{k \notin \Lambda_{M^{r}} } \| \hat{z}^\star(k) \|  \\
        &\leq \| \hat{z}^{(r)} - \hat{z}^\star \| \cdot (2M^{r} + 1)^{m/2} + \sum_{k \notin \Lambda_{M^{r}} } e^{-\frac{5}{4} |k|^s}  \leq \frac{1}{2} \exp \left\{ - (M^s)^{r} \right\}.
\end{align*}
where the last inequality follows from the bound~\eqref{eqn: z-hat-converge} 
established above. 
Given the uniform boundedness of $\| \hat{z}^{(r)} \|$, we again apply the 
Cauchy-Schwarz inequality to estimate the term $I_2$ as
\begin{align*}
    I_2 &\leq \| \hat{z}^{(r)} \|  \left( \sum_{k \in \Lambda_{M^{r}}} \left| e^{i \langle k, \omega_T^\star \rangle t} - e^{i \langle k, \omega_T^{(r)} \rangle t} \right|^2 \right)^\frac{1}{2}   \\
        &\leq \left( \| \hat{z}^\star \| + \exp \left\{ - \frac{3}{2} (M^s)^{r} \right\} \right) \cdot | \omega_T^{(r)} - \omega_T^\star | \cdot \left( \sum_{k \in \Lambda_{M^{r}}} \|k t \|^2 \right)^\frac{1}{2}  \leq \frac{1}{2} \exp \left\{ - (M^s)^{r} \right\},
\end{align*}
which holds uniformly for any $t \in [0, N_r] = [0, M^{r}]$. 
By combining the estimate for terms $I_1$ and $I_2$, we arrive at the final 
time-domain error bound \eqref{eqn: convergence-soln}, which completes the proof 
of~\Cref{thm: main}.

\section{Numerical Experiments}
\label{sec: numer}

Building upon the numerical construction of full-dimensional quasi-periodic 
solutions developed in~\citet{fu2026numerical}, we extend this framework to the 
computation of elliptic lower-dimensional quasi-periodic solutions. 
In contrast to the full-dimensional setting, the lower-dimensional case requires the 
simultaneous treatment of both the tangent frequencies $\omega_T$ and the normal 
frequencies $\omega_N$. The latter are of fundamental importance, as they govern the 
stability and variations of the associated linear quasi-periodic solutions. 
More specifically, the tangent directions correspond to the excited amplitudes where 
$a \neq 0$, while the normal directions correspond to the non-excited modes where 
$a = 0$.

In this section, we demonstrate the computational efficiency of our alternating 
numerical procedure~\eqref{eqn: numerical-scheme} for constructing such 
quasi-periodic solutions. 
Our numerical experiments are conducted on two representative physical models 
spanning different scales: the macroscopic system from celestial mechanics given by 
the H\'{e}non-Heiles model, and the microscopic nonlinear lattice system described 
by the FPU model.

%======================================================%
\subsection{The H\'{e}non-Heiles model}
\label{subsec: henon-heiles}

The H\'{e}non-Heiles model serves as a seminal two-degree-of-freedom system, 
originally developed to describe stellar motion within an axisymmetric galactic 
potential~\citep{henon1964applicability}. 
The dynamics of this system evolve within the phase space $\mathbb{R}^4$, dictated 
by the Hamiltonian
\begin{equation}
    \label{eqn: henon-heiles-hamiltonian}
    H = H(x, y) = \frac{1}{2} \omega_1(x_1^2 + y_1^2) + \frac{1}{2} \omega_2(x_2^2 + y_2^2) + \varepsilon \left(y_1^2 y_2 - \frac{ y_2^3}{3} \right),
\end{equation}
which is endowed with the canonical symplectic structure $\varpi = dx \wedge dy$. 
To facilitate the implementation of our alternating numerical procedure~\eqref{eqn: numerical-scheme}, we introduce a complex change of variables for $j=1,2$, defined as: 
\[
    z_{j} = \frac{y_j - ix_{j}}{\sqrt{2}} \quad \mathrm{and} \quad \overline{z}_{j} = \frac{y_j + ix_{j}}{\sqrt{2}}.
\]
Under this transformation, the Hamiltonian is reformulated as:
\begin{equation}
    \label{eqn: henon-complex-hamiltonian}
    H = H(z, \overline{z}) = \omega_{1}|z_{1}|^{2} + \omega_{2}|z_{2}|^{2} + \varepsilon\left[ \frac{1}{2\sqrt{2}}(z_{1} + \overline{z}_{1})^{2}(z_{2} + \overline{z}_{2}) - \frac{1}{6\sqrt{2}}(z_{2} + \overline{z}_{2})^{3} \right],
\end{equation}
with the corresponding symplectic structure $\varpi = i dz \wedge d\overline{z}$.

\begin{figure}[htb!]
    \centering
    \begin{subfigure}[t]{0.46\linewidth}
    \centering
    \includegraphics[scale=0.28]{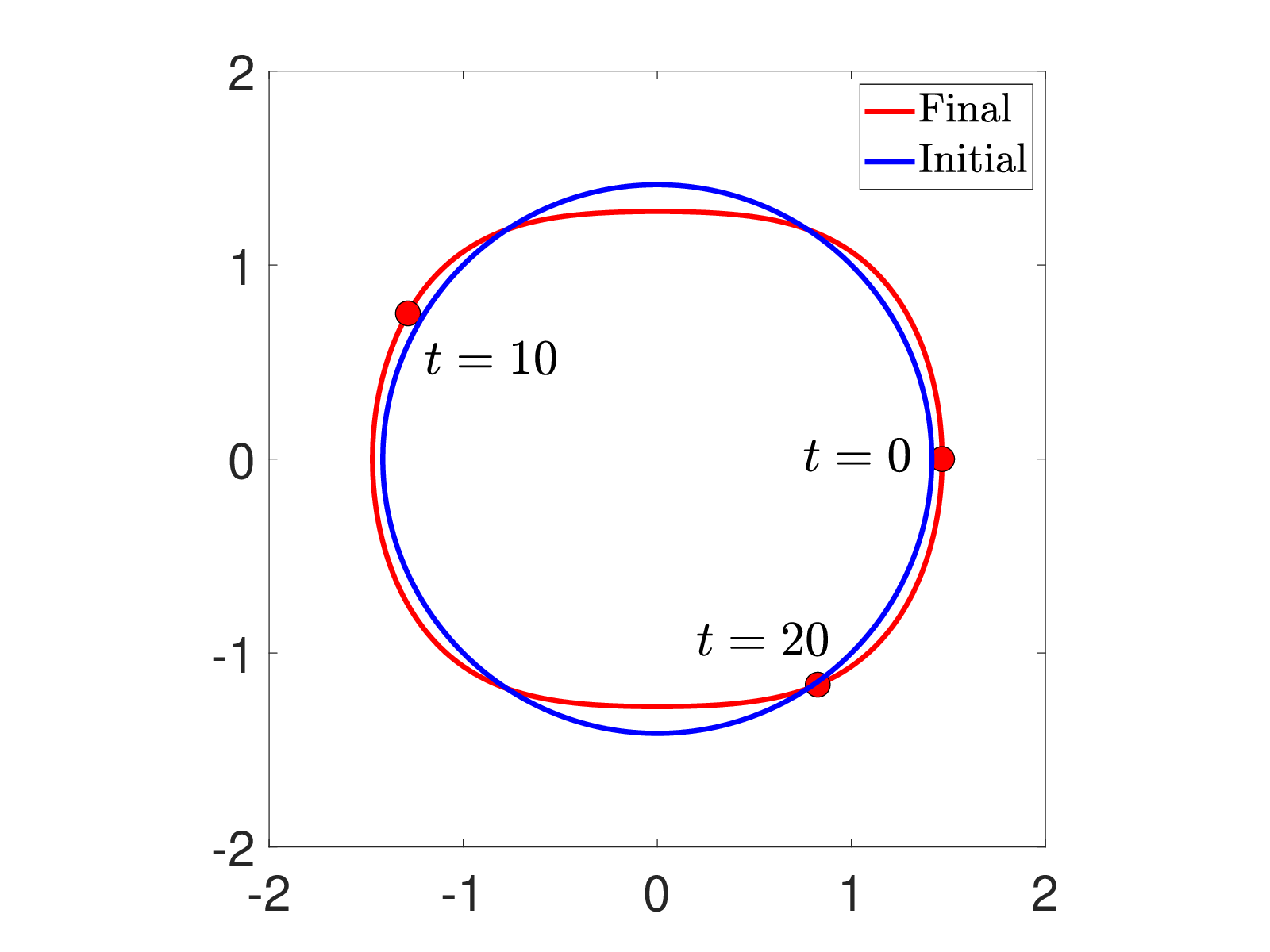}
    \end{subfigure}
    \begin{subfigure}[t]{0.46\linewidth}
    \centering
    \includegraphics[scale=0.28]{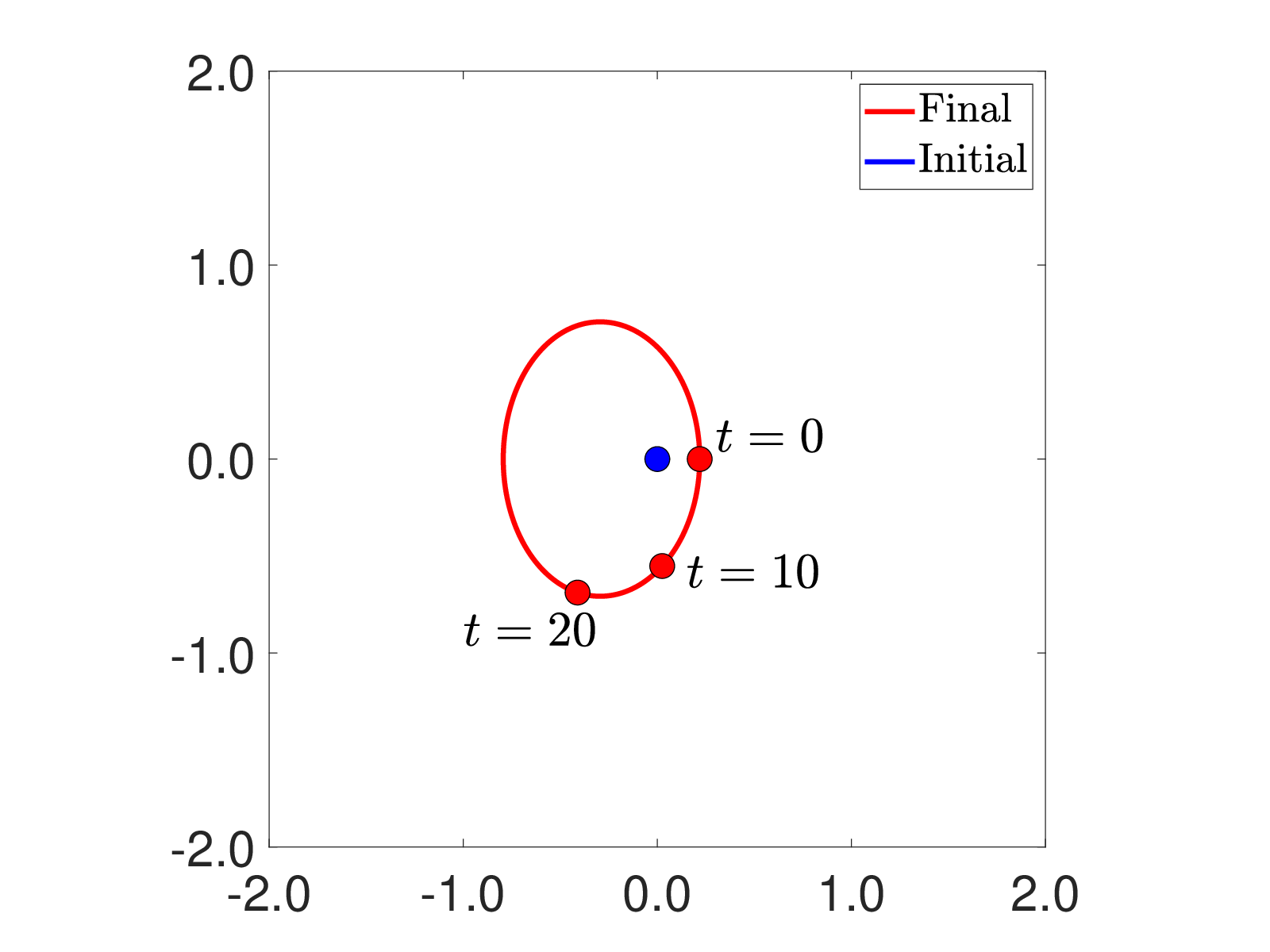}
    \end{subfigure} \\
        \begin{subfigure}[t]{0.46\linewidth}
    \centering
    \includegraphics[scale=0.28]{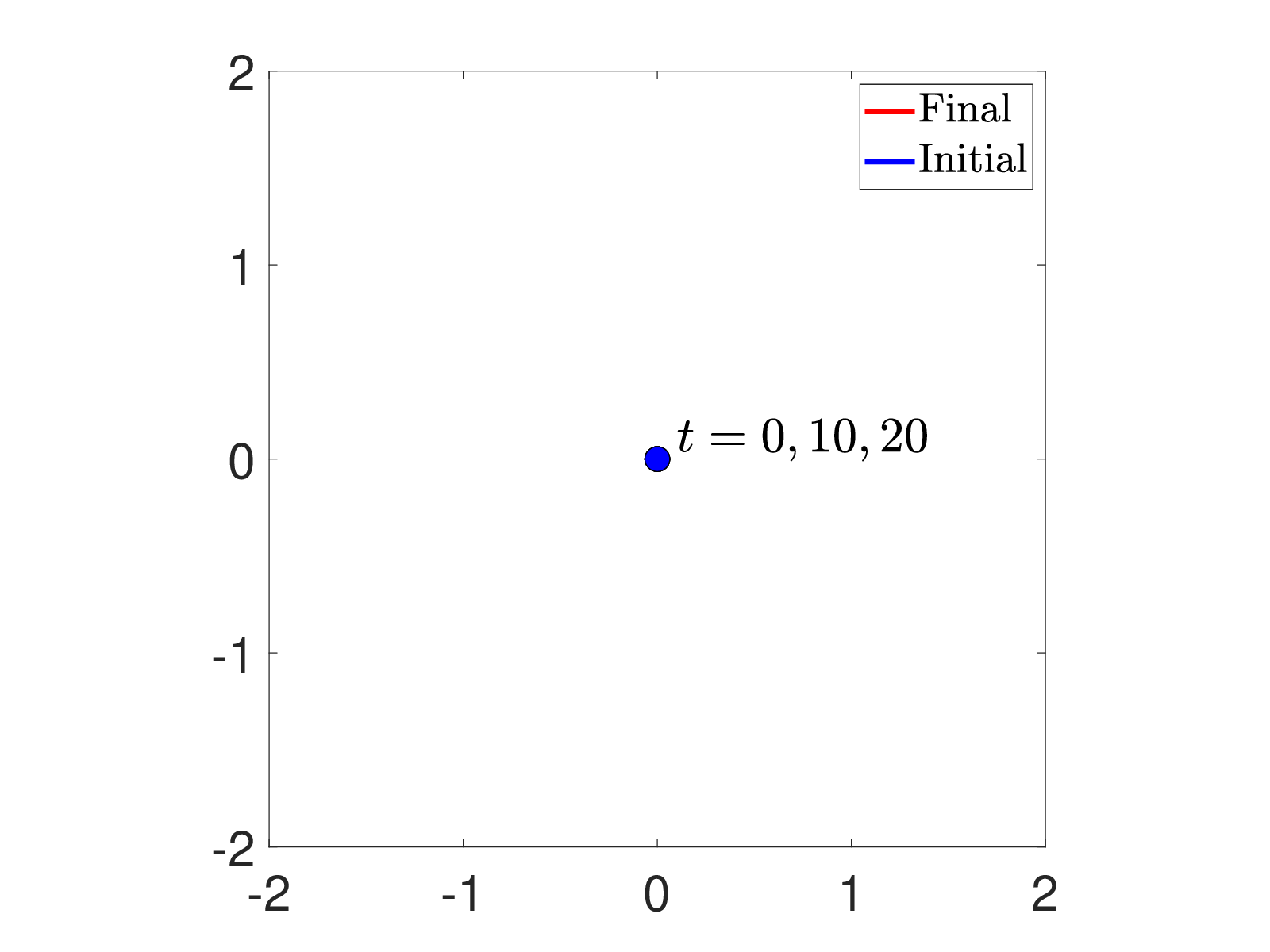}
    \caption{$(x_1, y_1)$}
    \end{subfigure}
    \begin{subfigure}[t]{0.46\linewidth}
    \centering
    \includegraphics[scale=0.28]{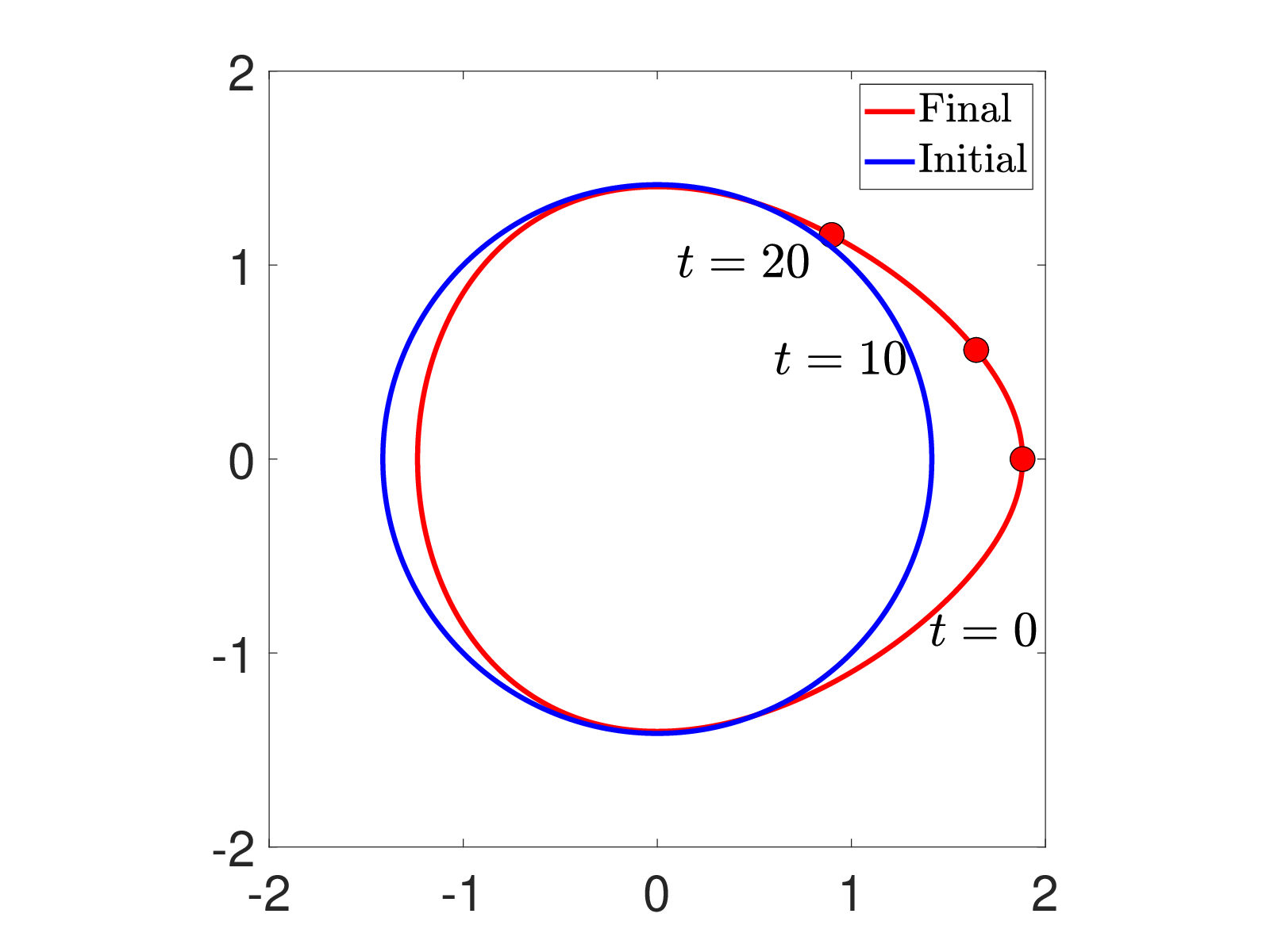}
    \caption{$(x_2, y_2)$}
    \end{subfigure}
    \caption{Phase space trajectories of the H\'{e}non-Heiles model with markers indicating specific time points at $t = 0$, $10$,  and $20$, where the top and bottom panels correspond to initial amplitudes $a=(1,0)$ and $a=(0,1)$, respectively.}
    \label{fig: henon-heiles-traj}
\end{figure}

In the numerical experiments, the perturbation parameter is set to 
$\varepsilon = 0.5$, and the two frequencies are chosen as $\omega_1 = 1$ and 
$\omega_2 = \sqrt{2}$ to satisfy the irrational Diophantine condition. 
In contrast to the full-dimensional setting where the initial amplitude is typically 
taken as $a = (1, 1)$, we here select the initial amplitudes as $a= (1, 0)$ and 
$a=(0,1)$ to obtain the lower-dimensional quasi-periodic solutions, with the 
resulting numerical performance illustrated in~\Cref{fig: henon-heiles-traj}. 
It can be observed that when the normal frequency $\omega_N$ occupies different modes, 
the system exhibits distinct dynamical behaviors: when the normal frequency satisfies 
$\omega_{N} = \omega_2$, the tangent mode is perturbed and shifts inward, while the 
normal mode deviates from the origin to form a perturbed periodic solution; 
conversely, when the normal frequency satisfies $\omega_{N} =\omega_1$, the tangent 
mode is also perturbed, manifesting as a rightward protrusion, whereas the normal 
mode remains localized at the origin. 
These variations are attributed to the intrinsic asymmetry of the nonlinear 
perturbation.

To further assess the convergence performance, which is consistent with the rigorous 
statement provided in~\Cref{thm: main}, we examine the iteration errors which 
exhibit super-exponential convergence as illustrated in~\Cref{fig: henon-heiles-con}. 
\begin{figure}[htb!]
    \centering
    \begin{subfigure}[t]{0.325\linewidth}
        \centering
        \includegraphics[scale=0.19]{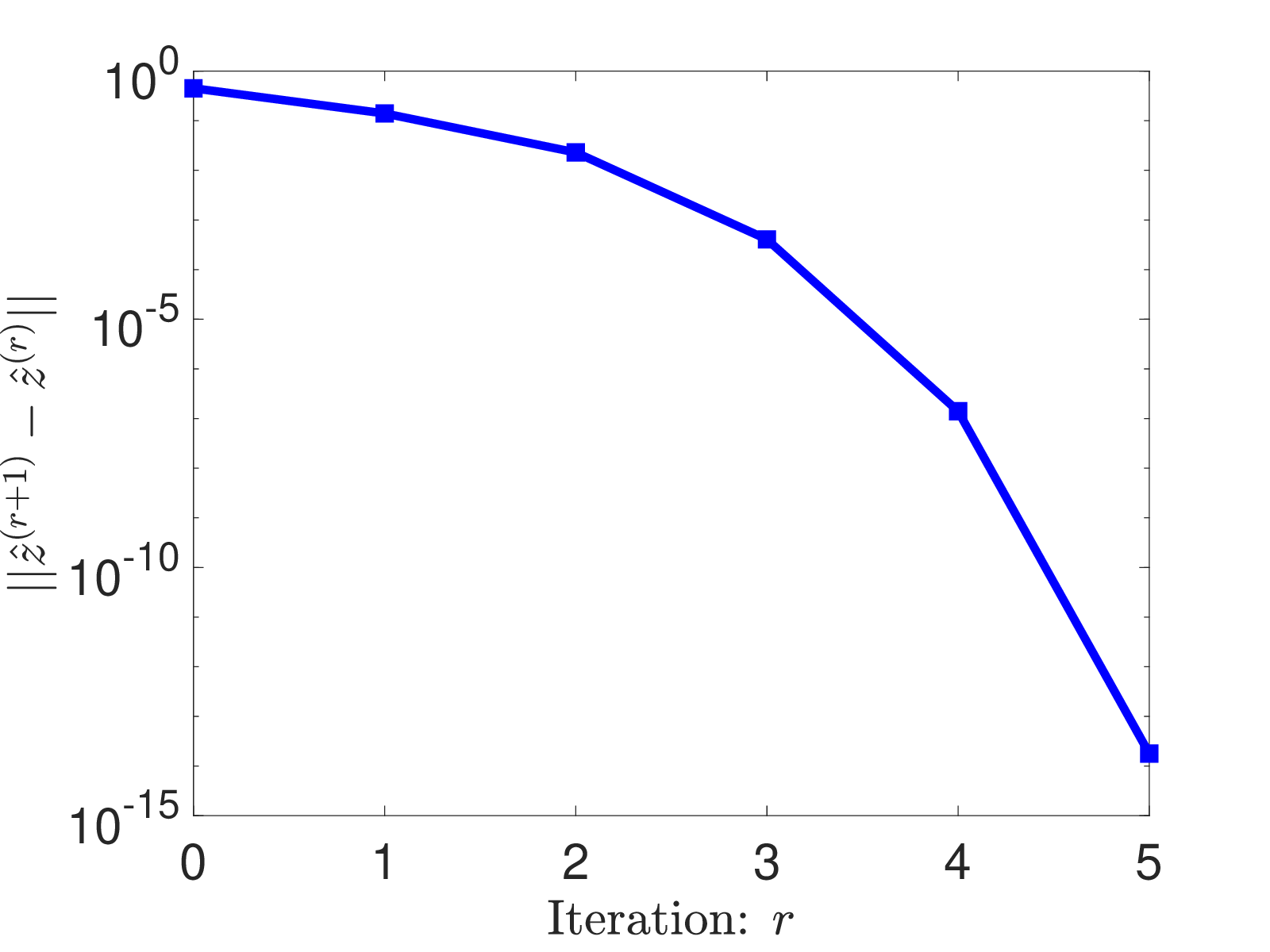}
    \end{subfigure}
    \begin{subfigure}[t]{0.325\linewidth}
      \centering
    \includegraphics[scale=0.19]{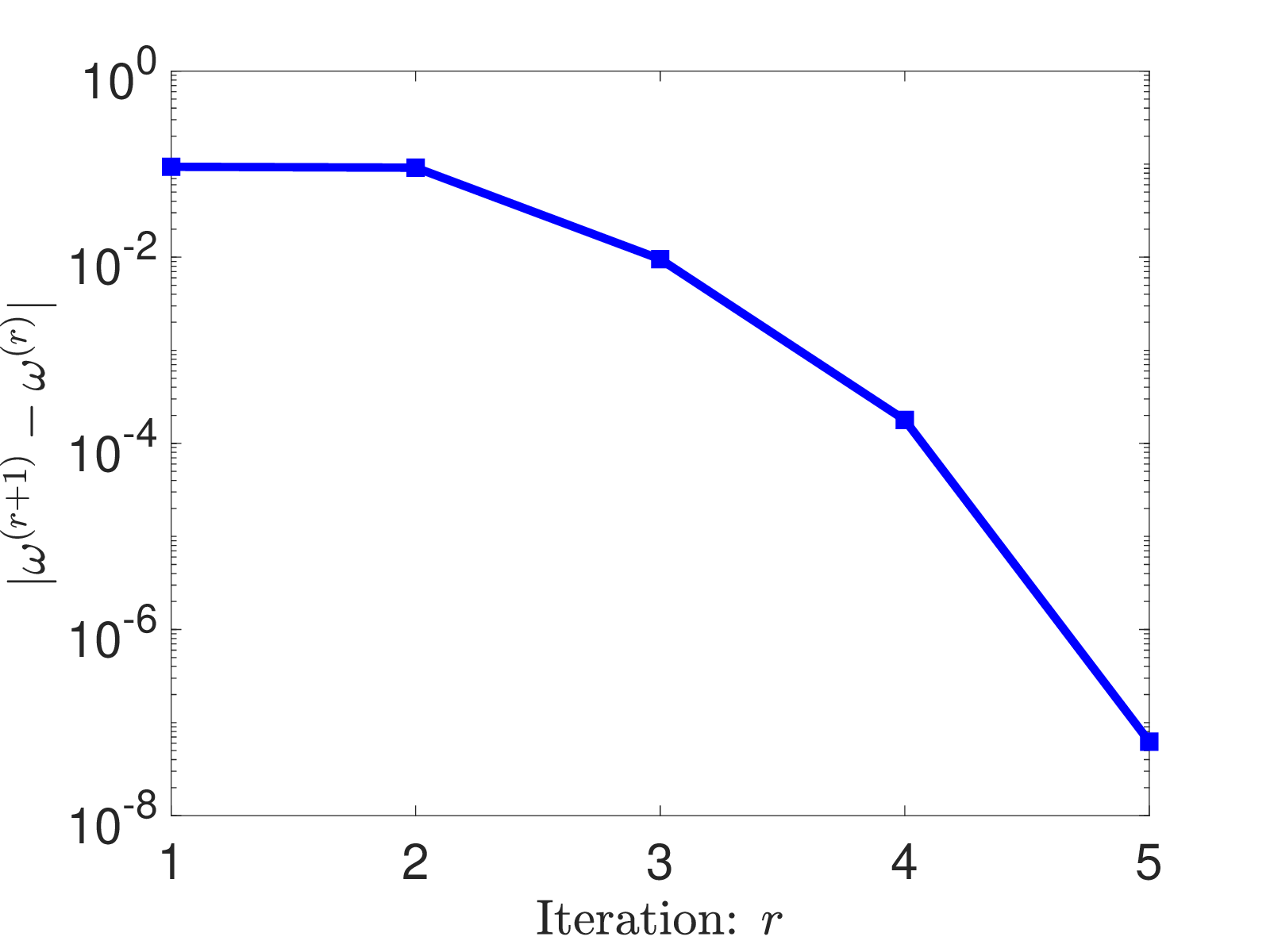} 
    \end{subfigure}
        \begin{subfigure}[t]{0.325\linewidth}
        \centering
        \includegraphics[scale=0.19]{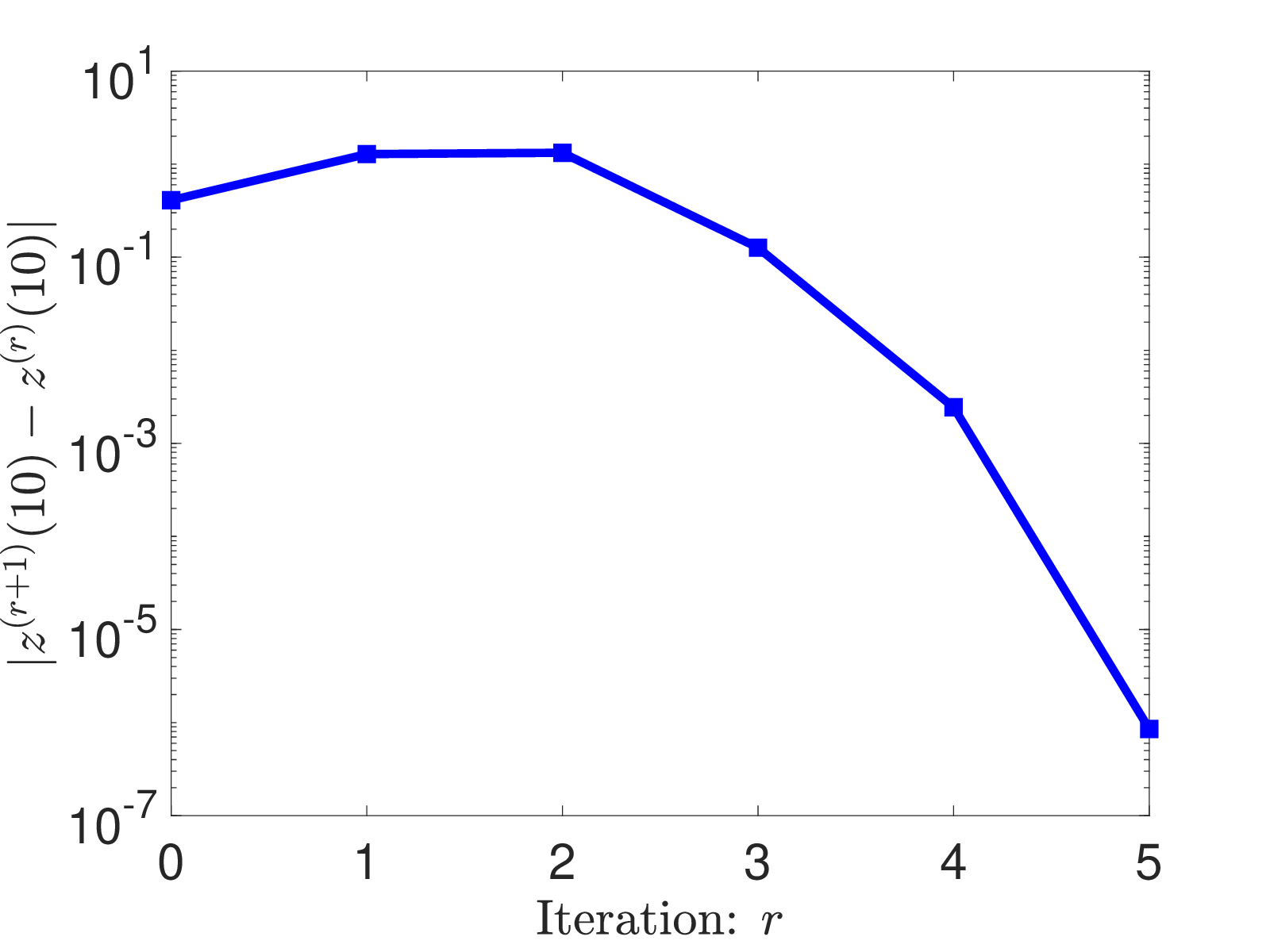}
    \end{subfigure} \\
        \begin{subfigure}[t]{0.325\linewidth}
        \centering
        \includegraphics[scale=0.19]{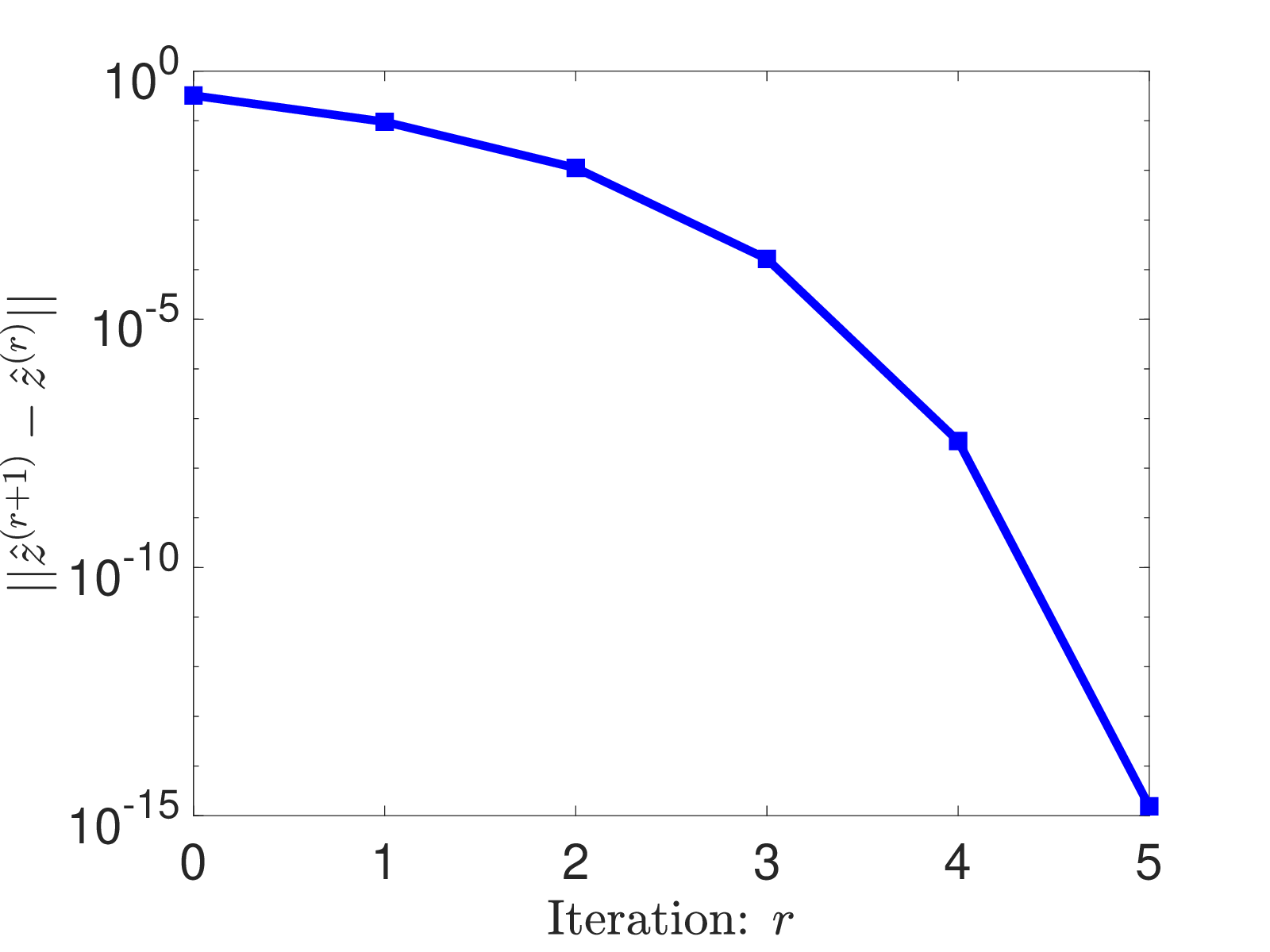}
        \caption{Fourier coefficient vectors}
    \end{subfigure}
    \begin{subfigure}[t]{0.325\linewidth}
        \centering
        \includegraphics[scale=0.19]{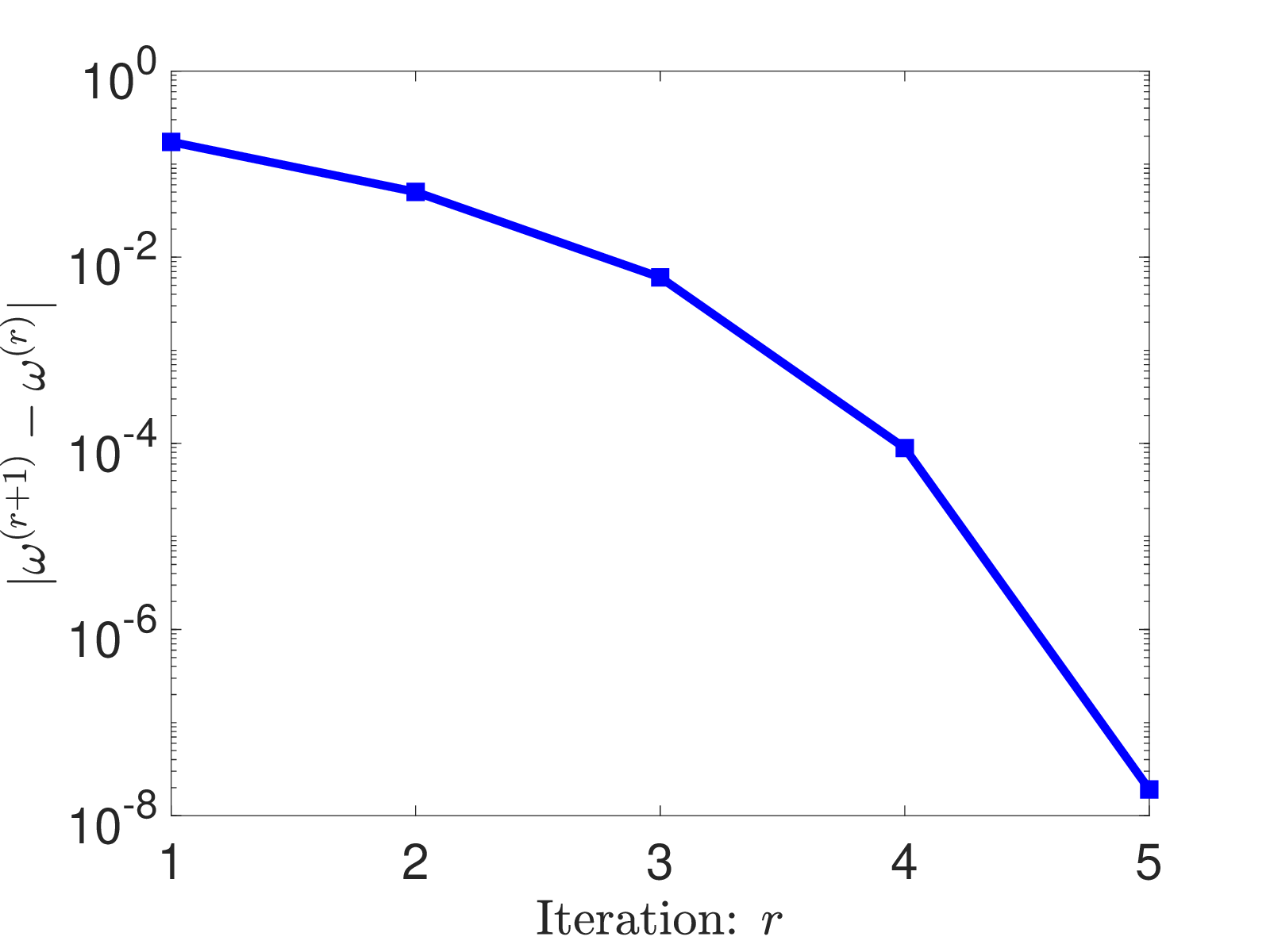}
        \caption{State at $t=10$}
    \end{subfigure}
        \begin{subfigure}[t]{0.325\linewidth}
        \centering
        \includegraphics[scale=0.19]{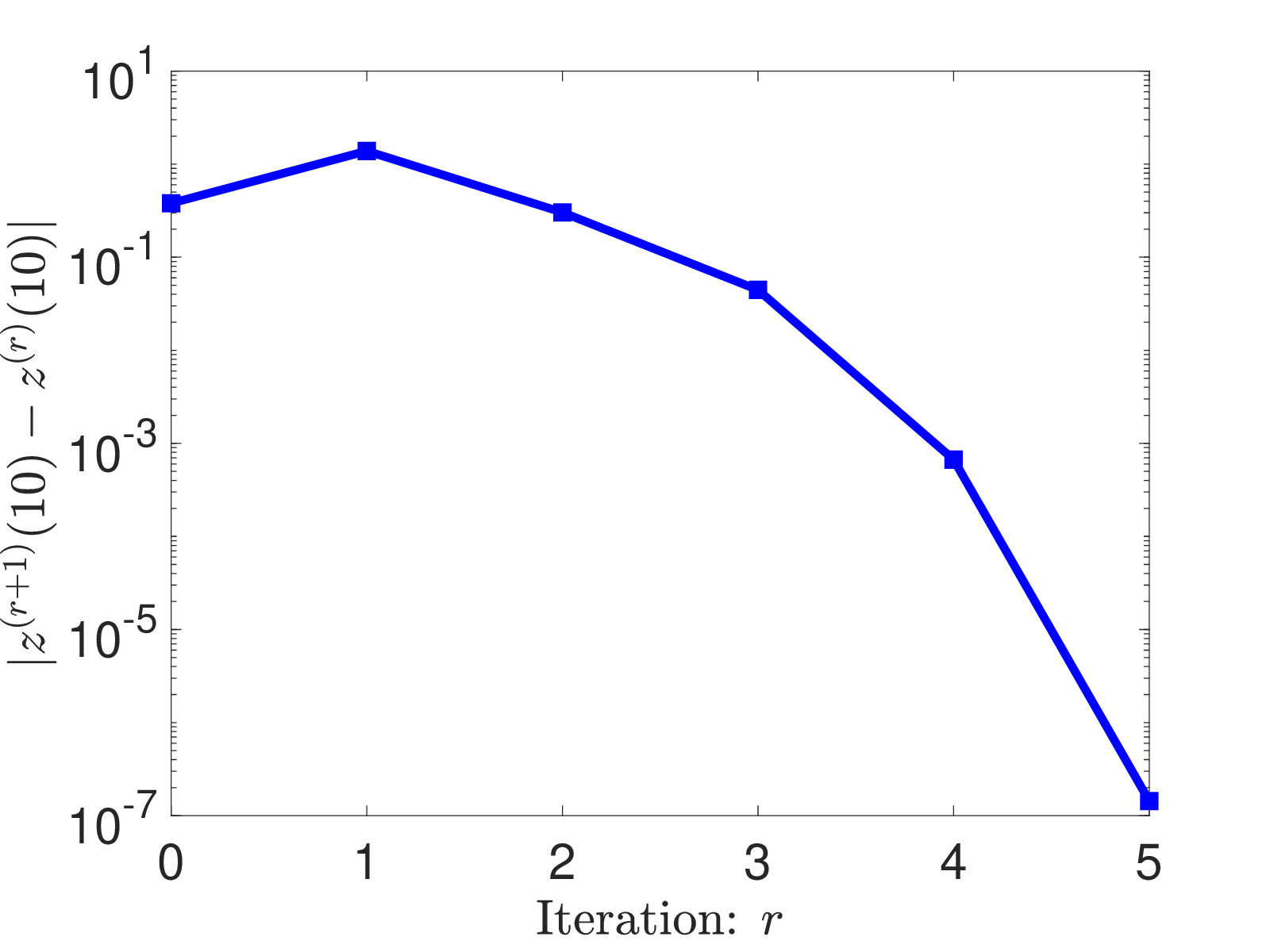}
        \caption{Frequency error}
    \end{subfigure}
    \caption{Convergence profiles across iterations for the H\'{e}non-Heiles model, where the top and bottom panels correspond to initial amplitudes $a=(1,0)$ and $a=(0,1)$, respectively.}
    \label{fig: henon-heiles-con}
\end{figure}
Specifically, we track the three key quantities: (i) the Fourier coefficient error, 
$\| \hat{z}^{(r+1)} - \hat{z}^{(r)} \|_2$, measuring the convergence of the 
coefficients, (ii) the frequency error, $|\omega^{r+1} - \omega^{(r)}|$, quantifying 
the refinement of the computed frequency, and (iii) the pointwise solution error at 
$t = 10$, $|z^{r+1}(10) - z^{(r)}(10)|$, evaluating the discrepancy of the solution 
at a fixed time. 
All three metrics consistently confirm the rapid convergence behavior of the 
proposed alternating scheme~\eqref{eqn: numerical-scheme}.

%======================================================%
\subsection{The FPU model}
\label{subsec: fpu}

Although the H\'{e}non-Heiles model provides a two-dimensional benchmark, it is 
important to further validate the effectiveness of the proposed numerical 
scheme~\eqref{eqn: numerical-scheme} in higher-dimensional and more complex systems. 
For this purpose, we apply this scheme to the Fermi--Pasta--Ulam (FPU) model 
($\beta$-class), a foundational system in nonlinear physics introduced 
by~\citet{fermi1965studies} to investigate the rate of energy transfer toward 
equipartition and thermal equilibrium in a vibrating string.

The FPU model describes a one-dimensional chain of $n$ identical particles of unit 
mass ($m=1$) coupled via nonlinear springs. 
Let $q = (q_1, \ldots, q_n)^{\top}$ denote the generalized coordinates and 
$p = (p_1, \ldots, p_n)^{\top}$ the conjugate momenta. 
Under the Dirichlet boundary condition ($q_0 = q_{n+1} = 0$), the Hamiltonian is 
defined on $\mathbb{R}^{2n}$ as 
\begin{equation}
    \label{eqn: fpu-hamiltonian}
    H = \sum_{j=1}^{n} \frac{p_j^2}{2} + \sum_{j=0}^{n} \frac{(q_{j+1} - q_j)^2}{2} + \varepsilon \sum_{j=0}^{n} \frac{(q_{j+1} - q_j)^4}{4},
\end{equation}
which is endowed with the canonical symplectic structure $\varpi = dp \wedge dq$. 
To facilitate the analysis, we diagonalize the quadratic part via a symmetric 
orthogonal matrix $V \in \mathbb{R}^{n \times n}$ with entries:
\[
    V_{k,j} = \sqrt{\frac{2}{n+1}} \sin\left(\frac{j k \pi}{n+1}\right), \quad j, k = 1, 2, \dots, n.
\]
We then perform the canonical transformation $(p, q) \mapsto (x, y)$ defined by 
$x = W^{-1}Vp$ and $y = WVq$, where $W = \mathrm{diag}(\sqrt{\omega_j})$ is a 
diagonal scaling matrix, and the harmonic frequencies are given by:
\[
    \omega_k = 2 \sin\left(\frac{k \pi}{2(n+1)}\right), \quad k = 1, 2, \dots, n.
\]
Under the defined canonical transformation, the 
Hamiltonian~\eqref{eqn: fpu-hamiltonian} is recast into a more computationally 
tractable form:
\begin{equation}
    \label{eqn: fpu-canonical-hamiltonian}
    H =  H(x, y) = \sum_{j=1}^{n} \frac{\omega_j}{2} (x_j^2 + y_j^2) + \varepsilon H_1(y),
\end{equation}
where the symplectic structure remains invariant 
($\varpi = dx \wedge dy = dp \wedge dq$). 
The quartic perturbation $H_1(y)$ incorporates the scaling factors and takes the 
form:
\[
    H_1(y) = \frac{1}{4} \sum_{j=0}^{n} \left( \sum_{k=1}^{n} \frac{V_{j+1, k} - V_{j, k}}{\sqrt{\omega_k}} y_k \right)^4.
\]
Finally, following the procedure 
in~\eqref{eqn: henon-heiles-hamiltonian} ---~\eqref{eqn: henon-complex-hamiltonian}, 
we transition the Hamiltonian~\eqref{eqn: fpu-canonical-hamiltonian} into complex 
coordinates $z$ and its conjugate $\bar{z}$ as
\begin{equation}
    \label{eqn: fpu-complex-hamiltonian}
    H = H(z, \overline{z}) = \sum_{j=1}^{n} \omega_j |z_j|^2 + \varepsilon H_1(z, \overline{z}),
\end{equation}
which is endowed with the symplectic $2$-form $\varpi = i dz \wedge d \overline{z}$. 
In the complex coordinates, the quartic perturbation is then expressed as
\[
    H_1(z, \overline{z}) = \frac{1}{4} \sum_{j=0}^{n} \left( \sum_{k=1}^{n} \frac{V_{j+1, k} - V_{j, k}}{\sqrt{2\omega_k}} (z_k + \overline{z}_k) \right)^4.
\]

With the Hamiltonian formulated in complex 
variables~\eqref{eqn: fpu-complex-hamiltonian}, we demonstrate the effectiveness of 
the proposed alternating numerical procedure~\eqref{eqn: numerical-scheme} in 
capturing the lower-dimensional quasi-periodic solutions within the FPU model. 
For these numerical trials, we set the system dimension at $n = 3$. 
Unlike the two-dimensional Hénon-Heiles model, which is constrained to a single 
tangent frequency and one normal frequency, the three-particle FPU model allows us 
to verify the algorithm's performance across more complex frequency distributions. 
Specifically, we investigate the following two configurations:
\begin{itemize}
    \item \textbf{Case-I (Periodic solutions)}: One tangent frequency and two normal 
    frequencies.

    \item \textbf{Case-II (Quasi-periodic solutions)}: Two tangent frequencies and 
    one normal frequency.
\end{itemize}
The subsequent numerical experiments are divided into two regimes to rigorously 
evaluate the capture of both one-dimensional and two-dimensional quasi-periodic 
solutions.

%======================================================%
\subsubsection{Single-frequency periodic solutions}
\label{subsubsec: fpu-1d}

To provide a clear demonstration of the numerical experiments, we set the 
perturbation parameter to $\varepsilon = 1$. The resulting dynamics, depicted 
in~\Cref{fig: FPU-1d-traj}, demonstrate that the interaction between two normal 
frequencies produces behavior significantly more complex than those observed in the 
Hénon-Heiles model (see~\Cref{fig: henon-heiles-traj}).
\begin{figure}[htb!]
    \centering
    \begin{subfigure}[t]{0.325\linewidth}
    \centering
    \includegraphics[scale=0.19]{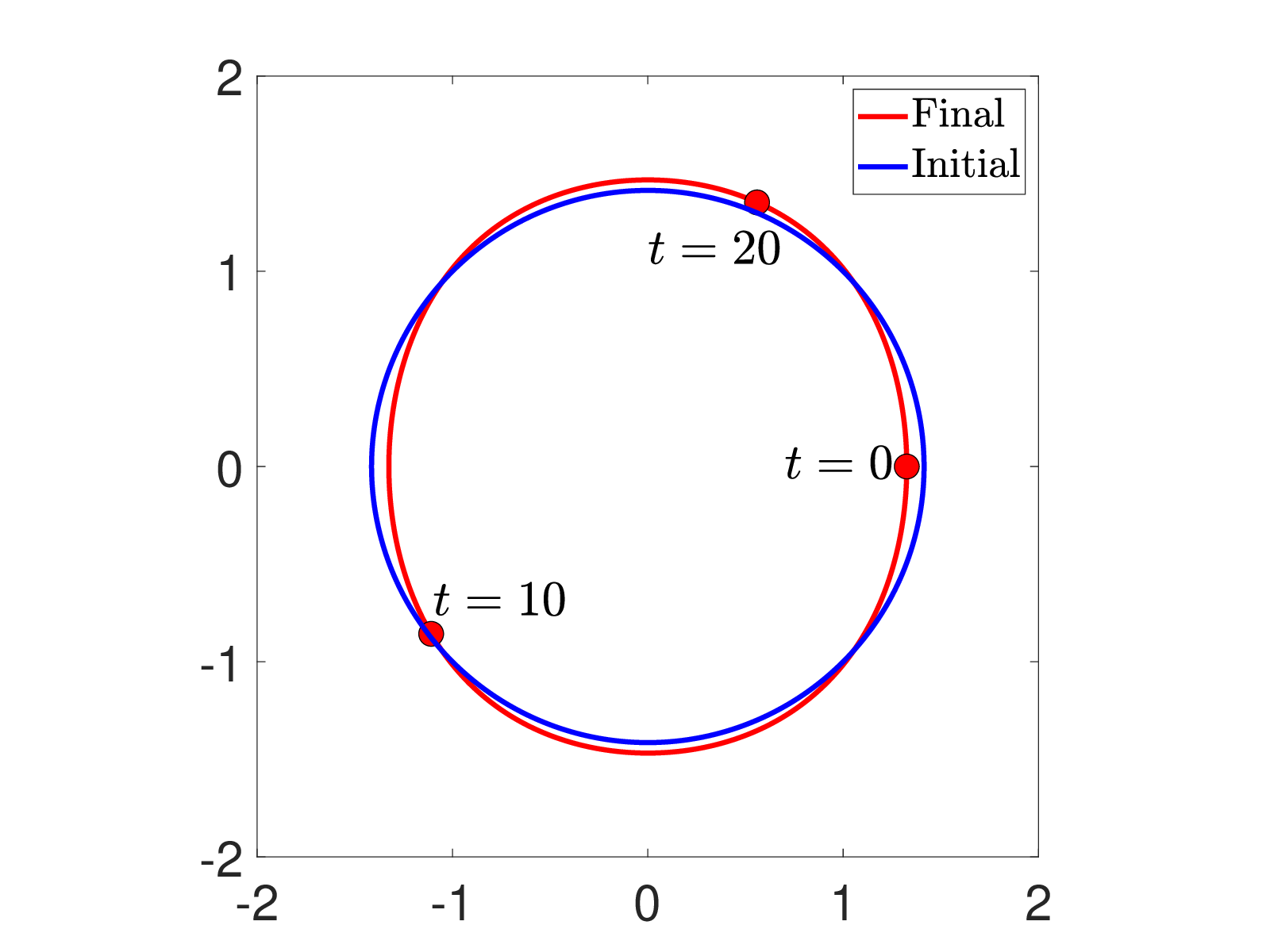}
    \end{subfigure}
    \begin{subfigure}[t]{0.325\linewidth}
    \centering
    \includegraphics[scale=0.19]{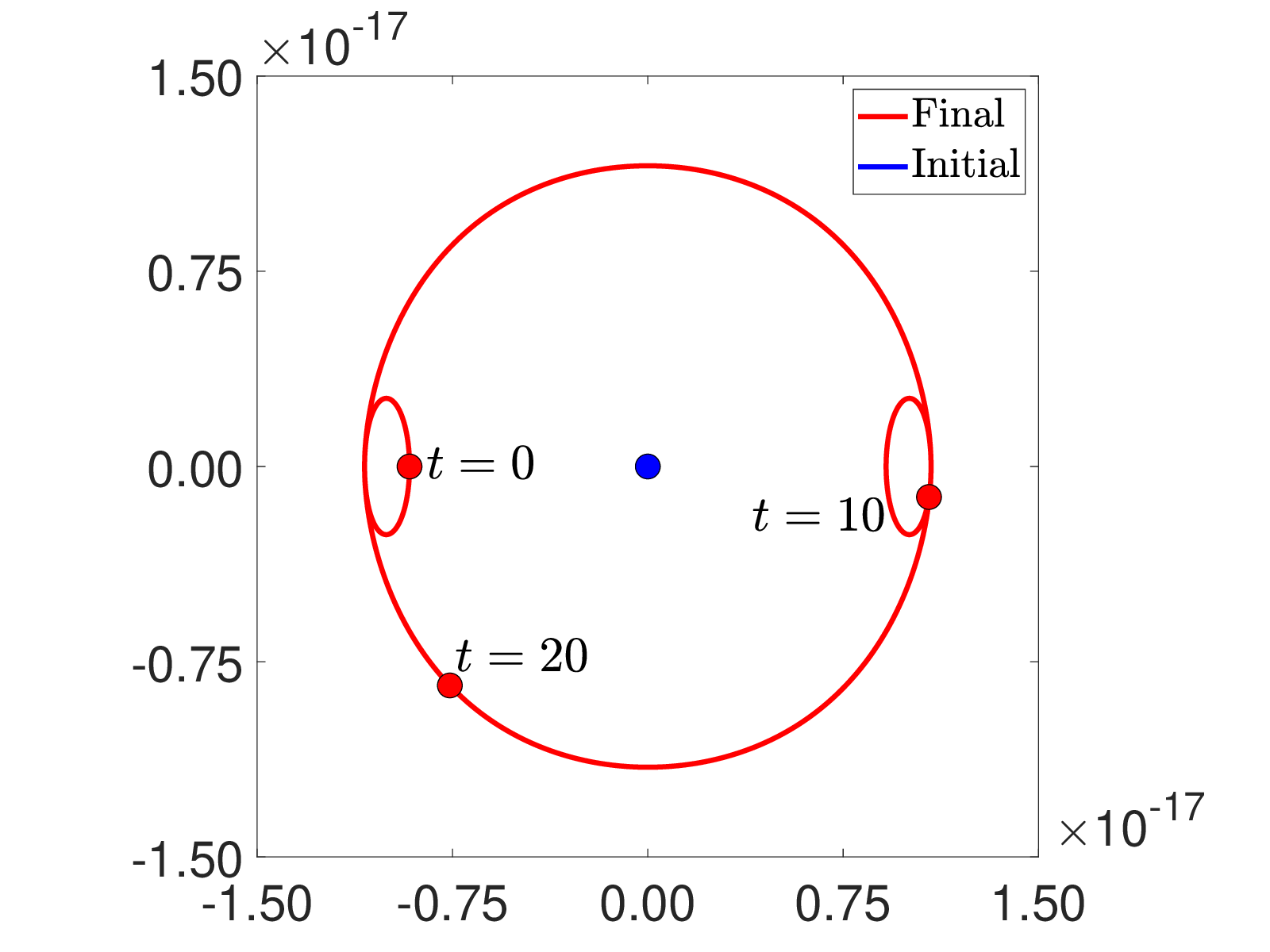}
    \end{subfigure}
    \begin{subfigure}[t]{0.325\linewidth}
    \centering
    \includegraphics[scale=0.19]{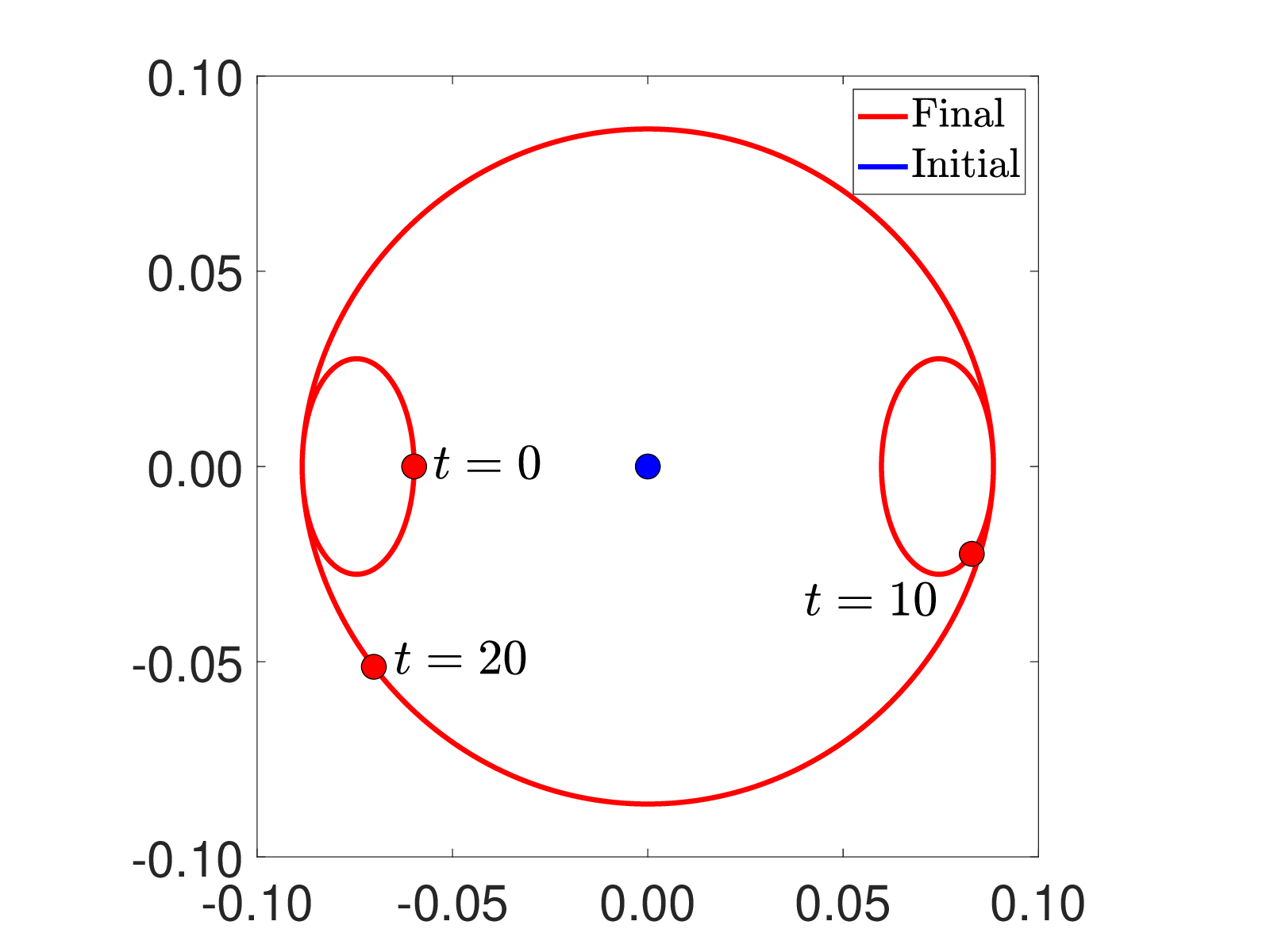}
    \end{subfigure} \\
    \begin{subfigure}[t]{0.325\linewidth}
    \centering
    \includegraphics[scale=0.19]{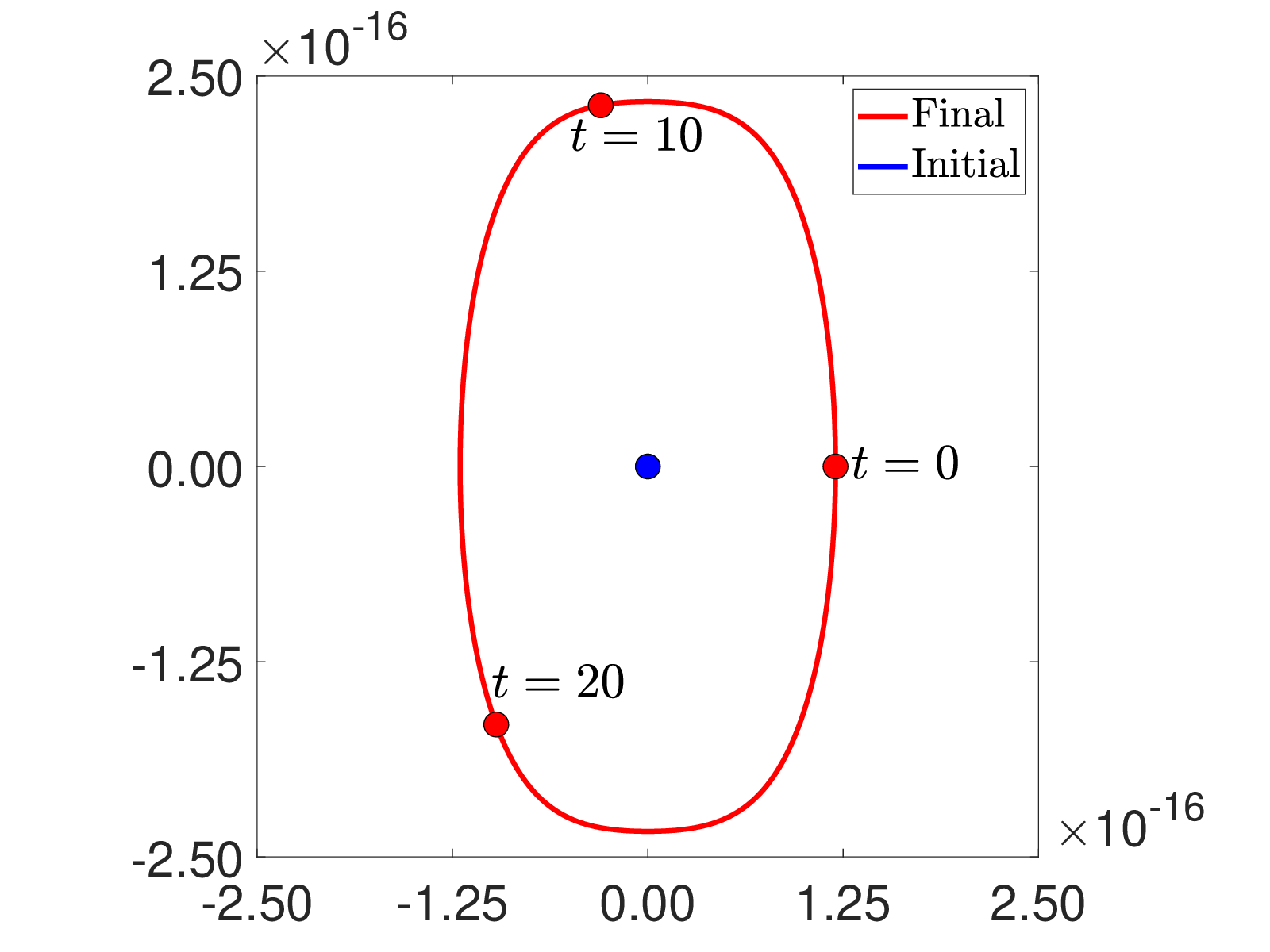}
    \end{subfigure}
    \begin{subfigure}[t]{0.325\linewidth}
    \centering
    \includegraphics[scale=0.19]{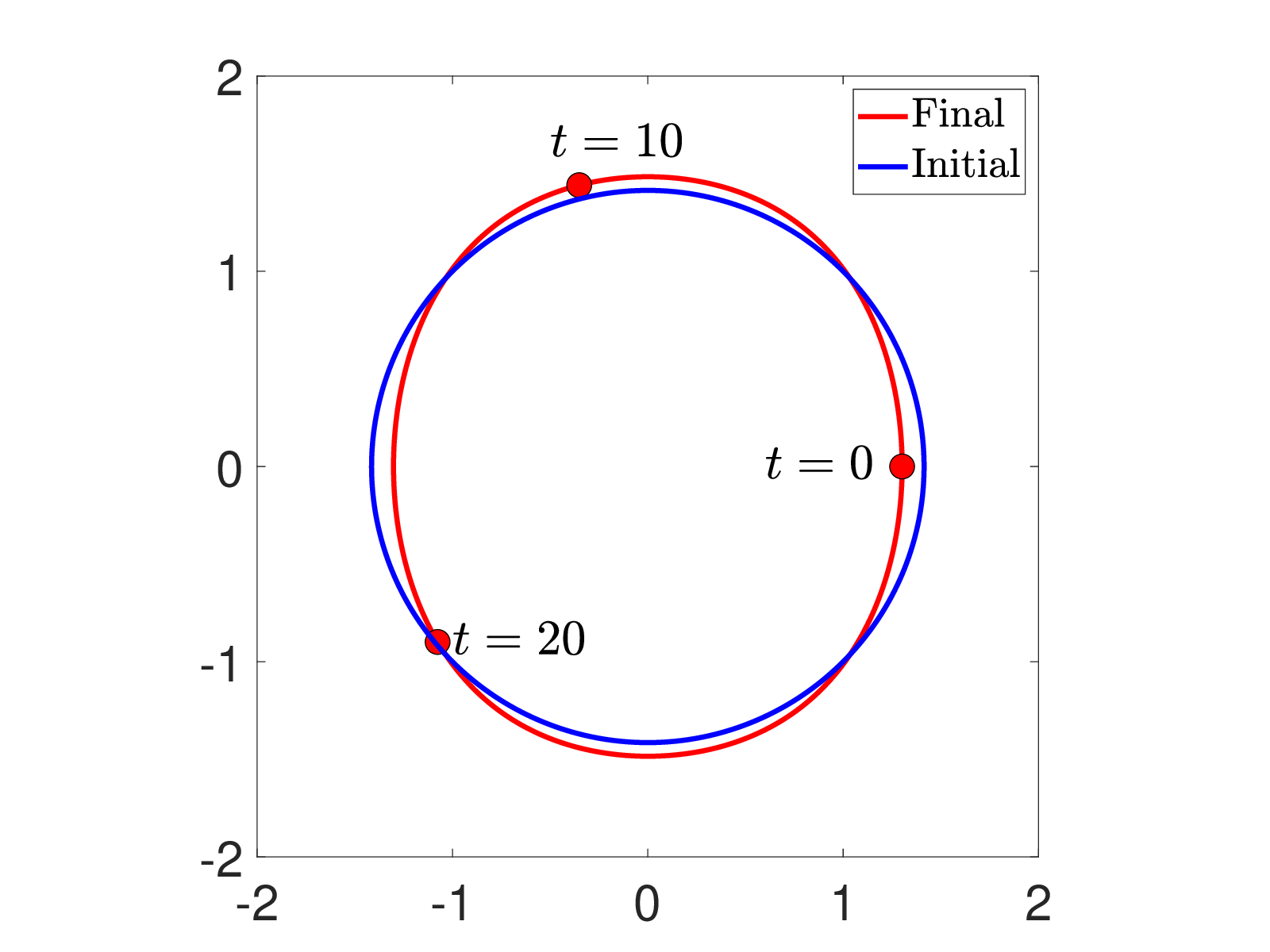}
    \end{subfigure}
    \begin{subfigure}[t]{0.325\linewidth}
    \centering
    \includegraphics[scale=0.19]{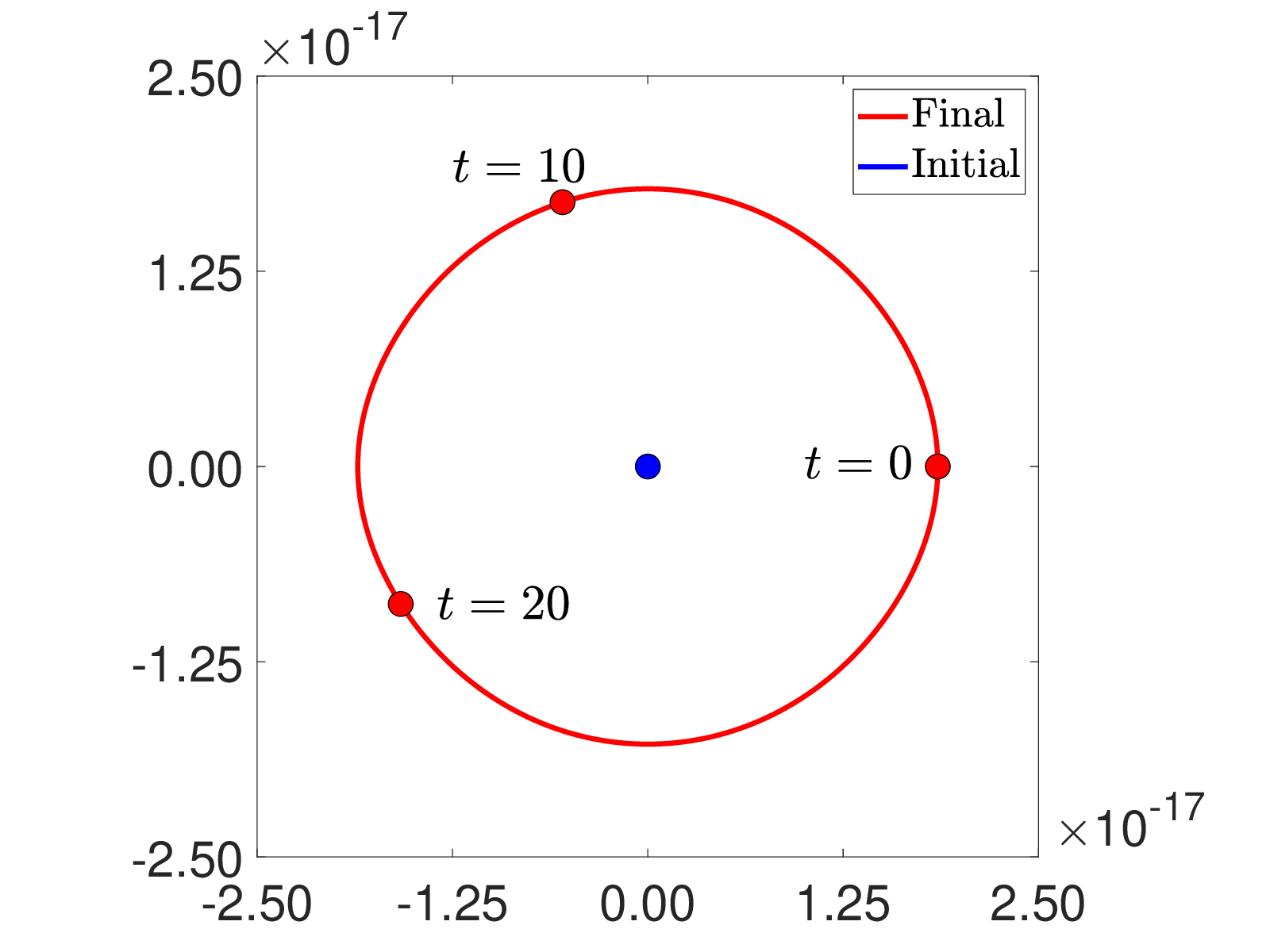}
    \end{subfigure} \\
    \begin{subfigure}[t]{0.325\linewidth}
    \centering
    \includegraphics[scale=0.19]{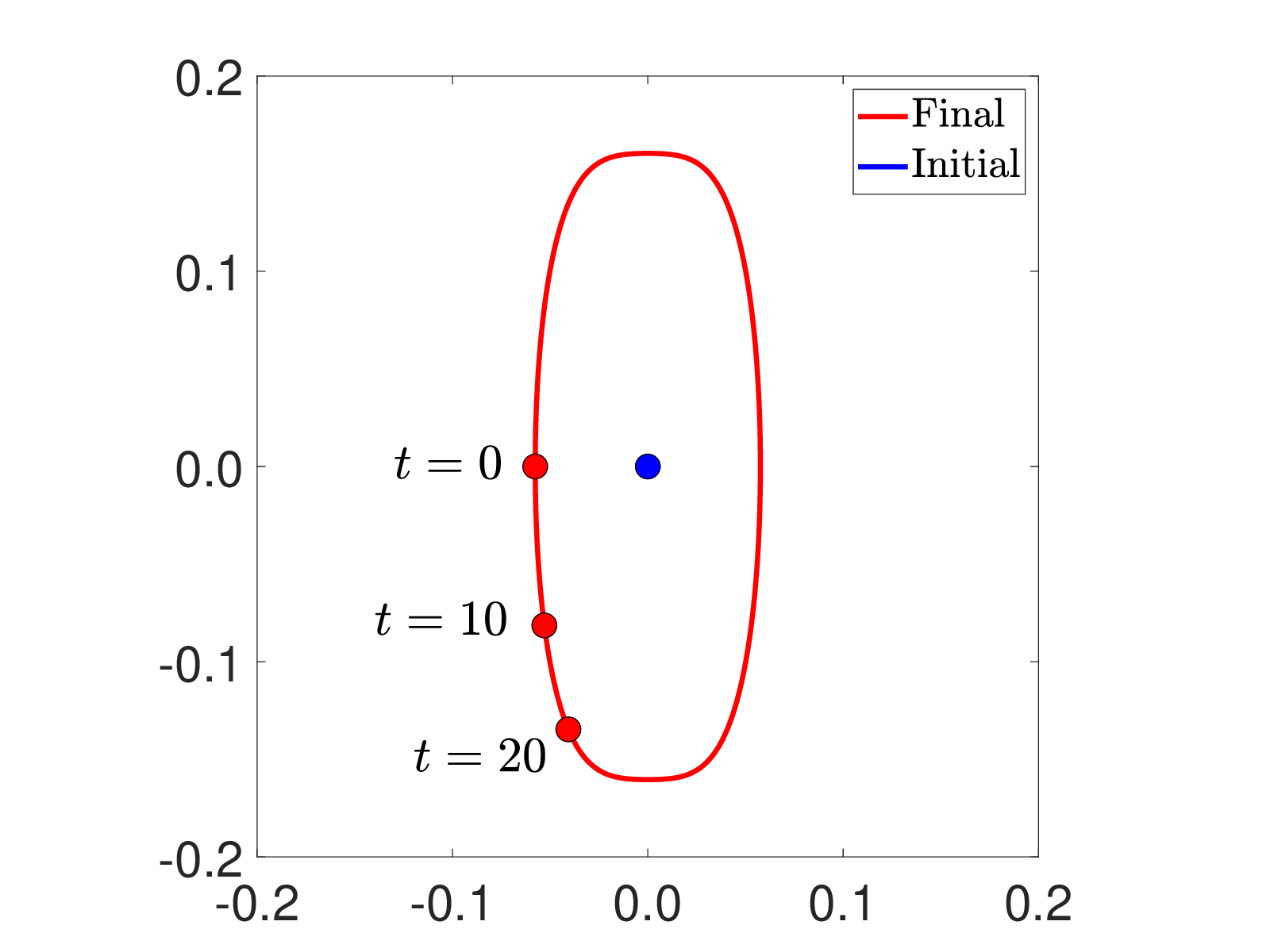}
    \caption{$(x_1, y_1)$}
    \end{subfigure}
    \begin{subfigure}[t]{0.325\linewidth}
    \centering
    \includegraphics[scale=0.19]{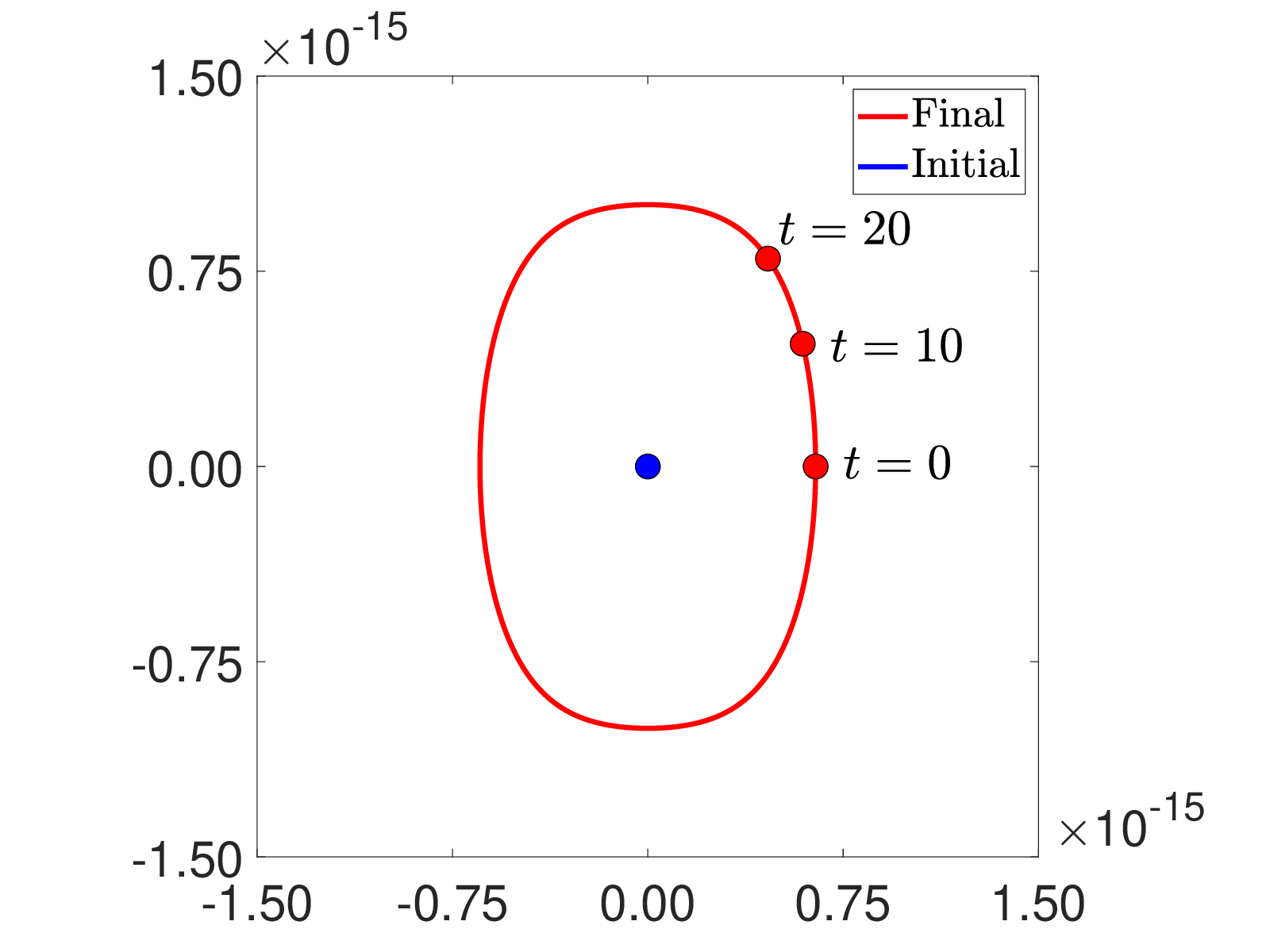}
    \caption{$(x_2, y_2)$}
    \end{subfigure}
    \begin{subfigure}[t]{0.325\linewidth}
    \centering
    \includegraphics[scale=0.19]{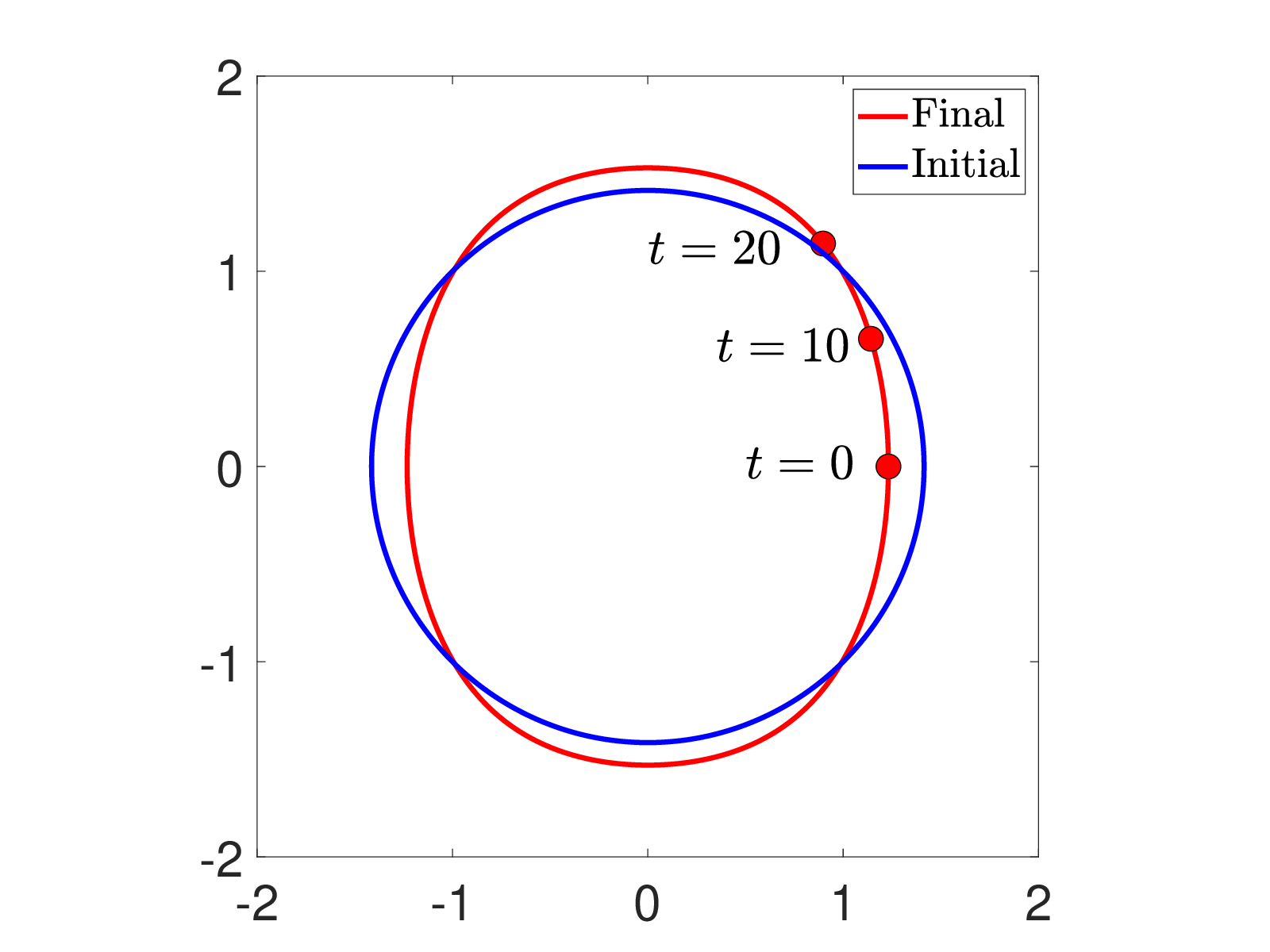}
    \caption{$(x_3, y_3)$}
    \end{subfigure}
    \caption{Phase space trajectories of the FPU model with markers indicating specific time points at $t = 0$, $10$,  and $20$. The panels represent different initial conditions: (top) $a = (1, 0, 0)$; (middle) $a=(0, 1, 0)$; (bottom) $a = (0, 0, 1)$.} 
    \label{fig: FPU-1d-traj}
\end{figure}
The dynamical evolution depends heavily on which mode is selected for the tangent 
frequency. 
Specifically, when the tangent frequency is fixed to either the first or third mode 
(located near the boundary), one normal mode undergoes significant perturbation 
while the other remains minimally affected. 
In the two instances, the two normal modes exhibit distinct motion behaviors. 
Conversely, when the tangent frequency is assigned to the second mode 
(situated in the middle), both normal modes experience only marginal perturbations. 
Overall, the coupling of these two normal frequencies fosters more chaotic long-term 
dynamics, increasing the complexity of the system's quasi-periodic behavior.

The convergence performance for the three cases is illustrated 
in~\Cref{fig: FPU-1d-con}. 
To evaluate  the efficiency of the proposed alternating 
scheme~\eqref{eqn: numerical-scheme}, we also monitor the three metrics: the Fourier 
coefficient error $\| \hat{z}^{(r+1)} - \hat{z}^{(r)} \|_2$, the frequency error 
$|\omega^{r+1} - \omega^{(r)}|$, and the pointwise solution error at $t = 10$, 
$|z^{r+1}(10) - z^{(r)}(10)|$. 
\begin{figure}[htb!]
    \centering
     \begin{subfigure}[t]{0.325\linewidth}
    \centering
    \includegraphics[scale=0.19]{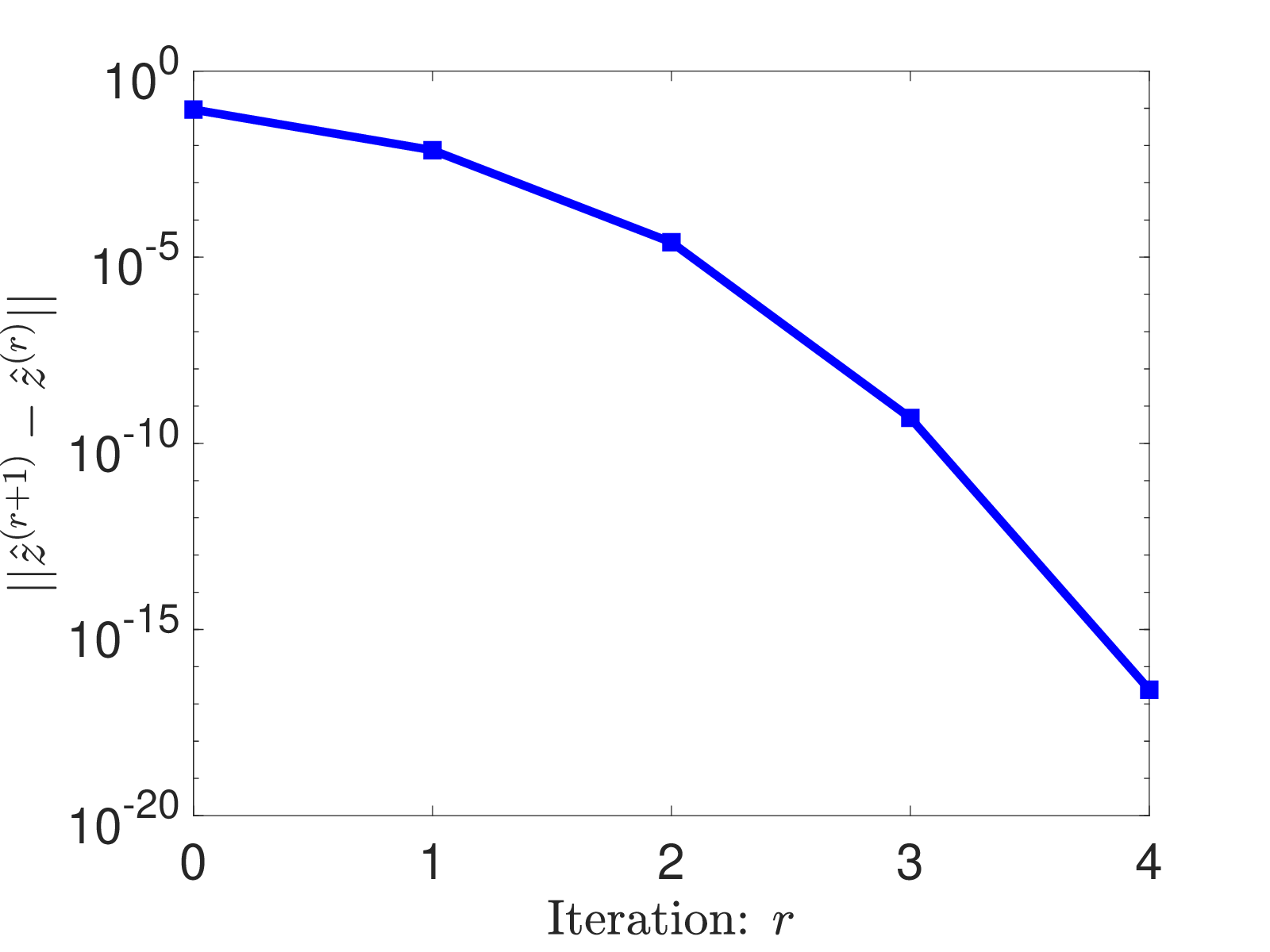}
    \end{subfigure}
    \begin{subfigure}[t]{0.325\linewidth}
    \centering
    \includegraphics[scale=0.19]{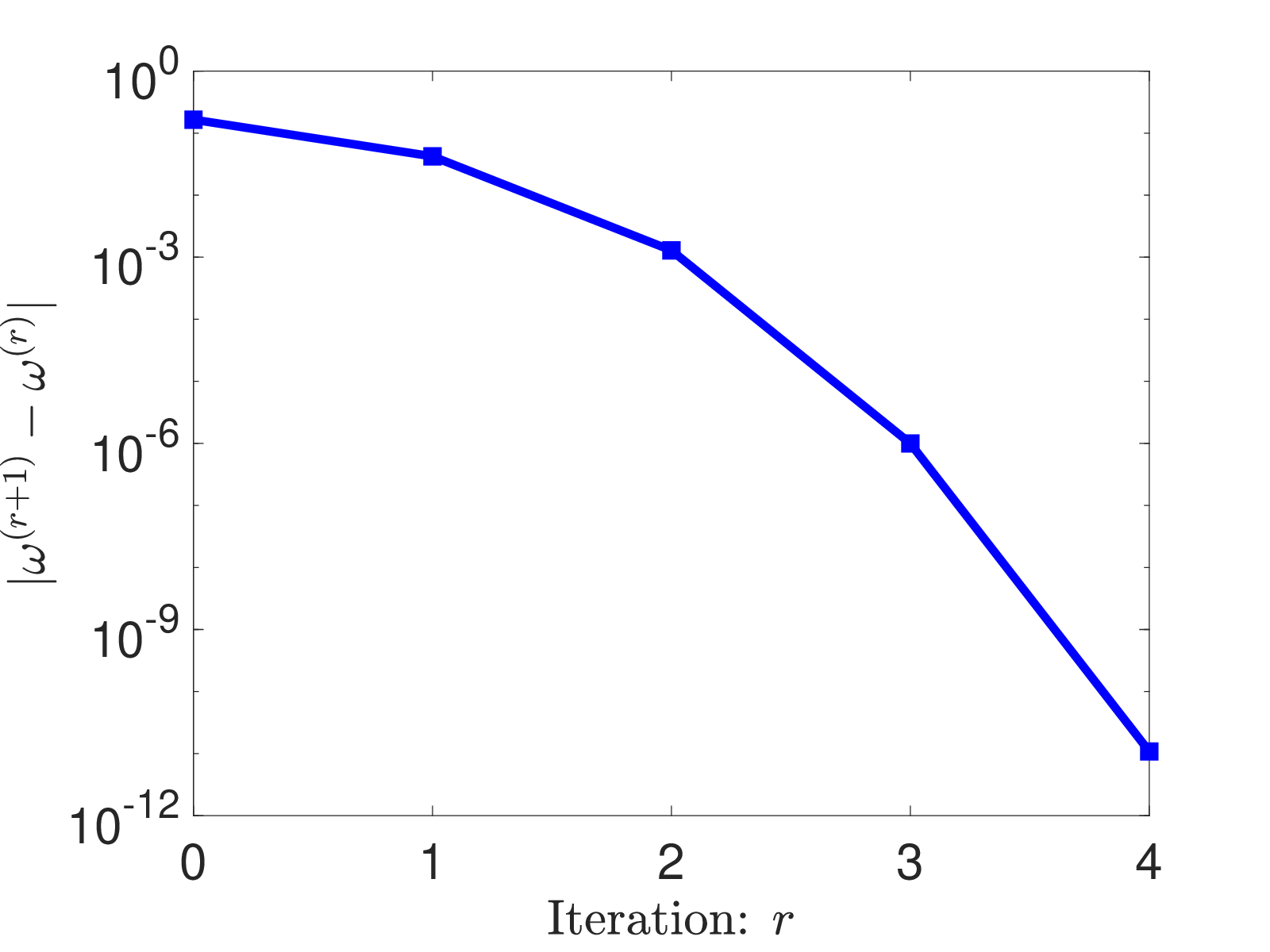}
    \end{subfigure}
    \begin{subfigure}[t]{0.325\linewidth}
    \centering
    \includegraphics[scale=0.19]{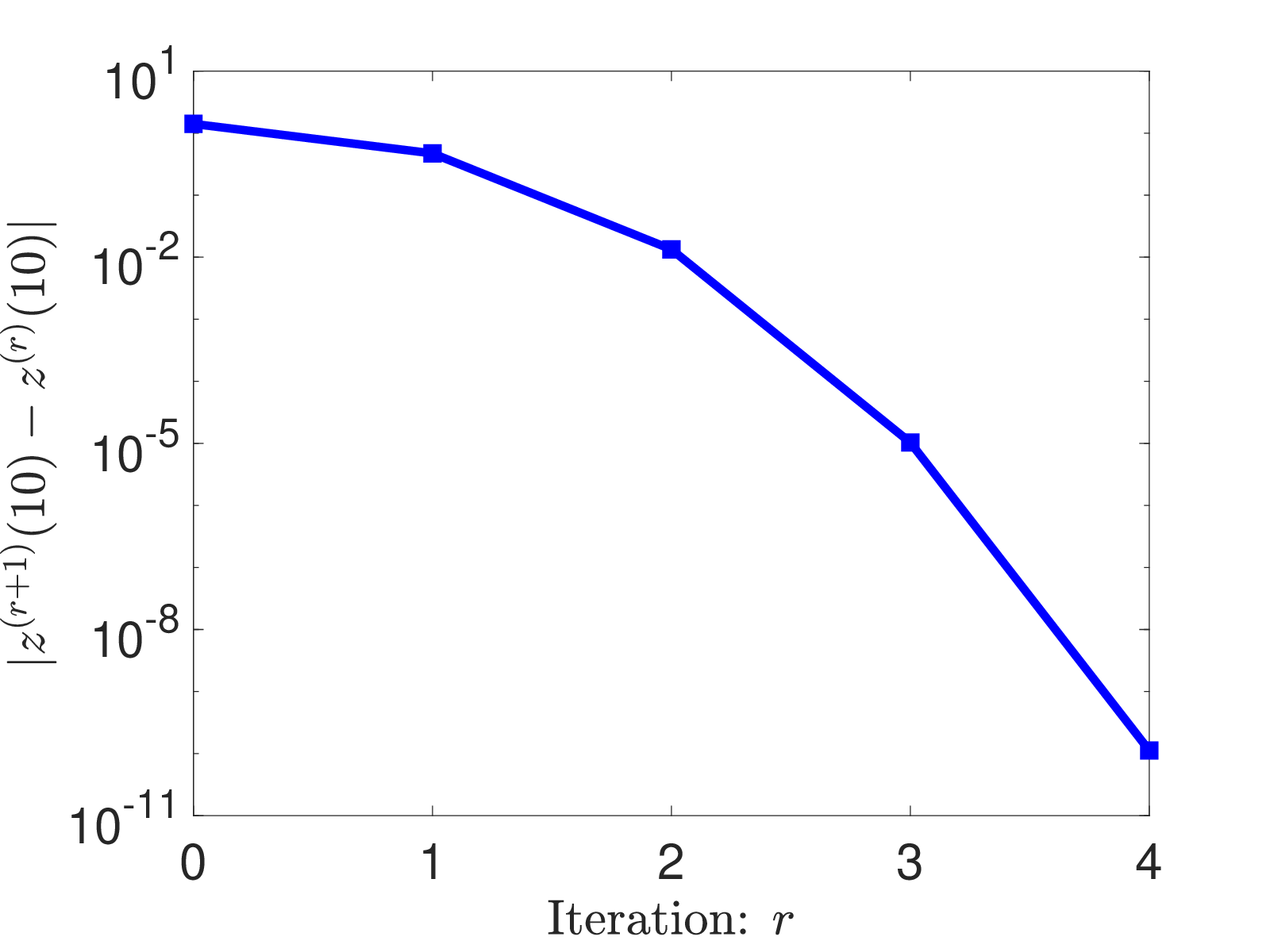}
    \end{subfigure} \\
    \begin{subfigure}[t]{0.325\linewidth}
    \centering
    \includegraphics[scale=0.19]{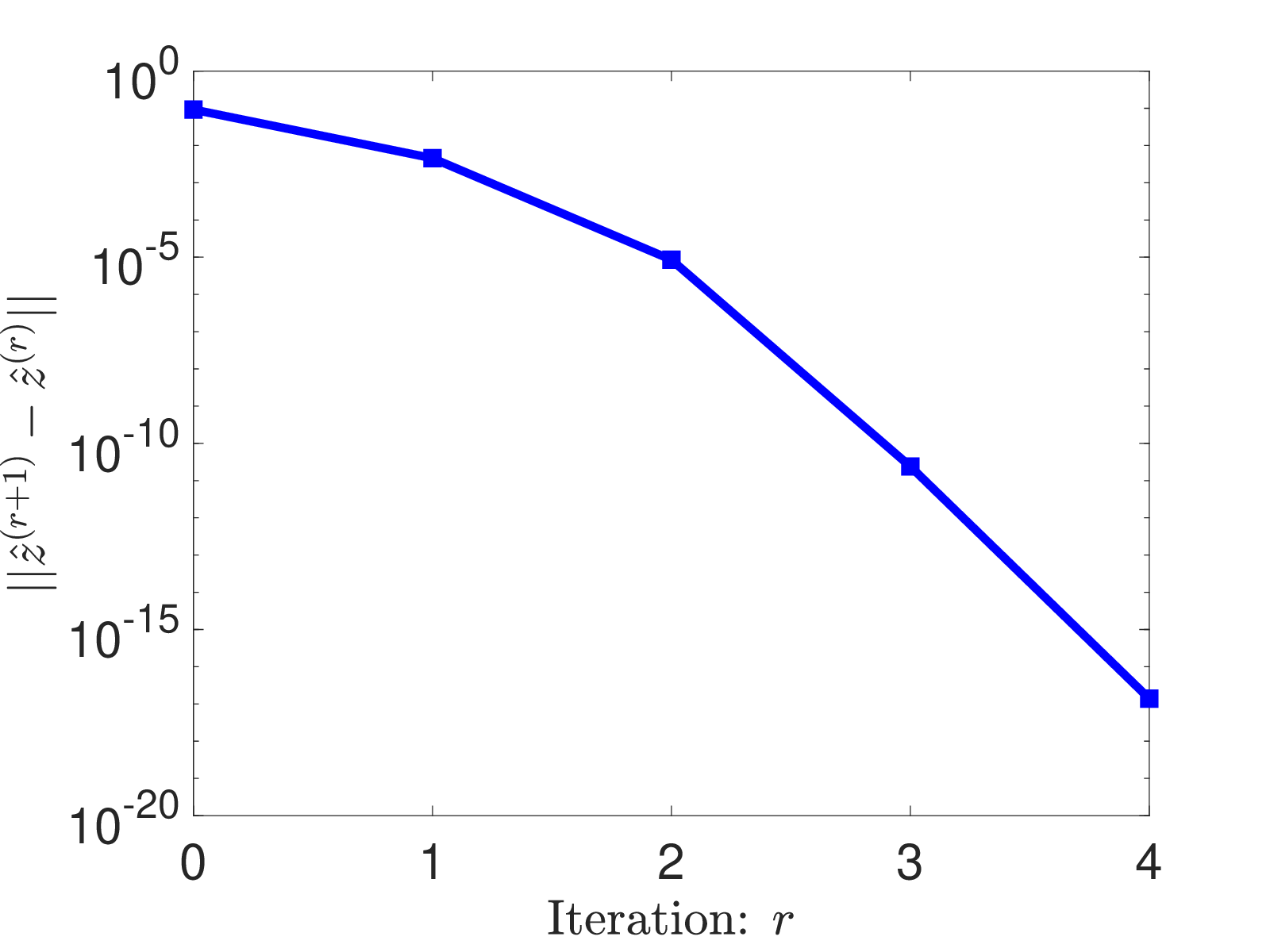}
    \end{subfigure}
    \begin{subfigure}[t]{0.325\linewidth}
    \centering
    \includegraphics[scale=0.19]{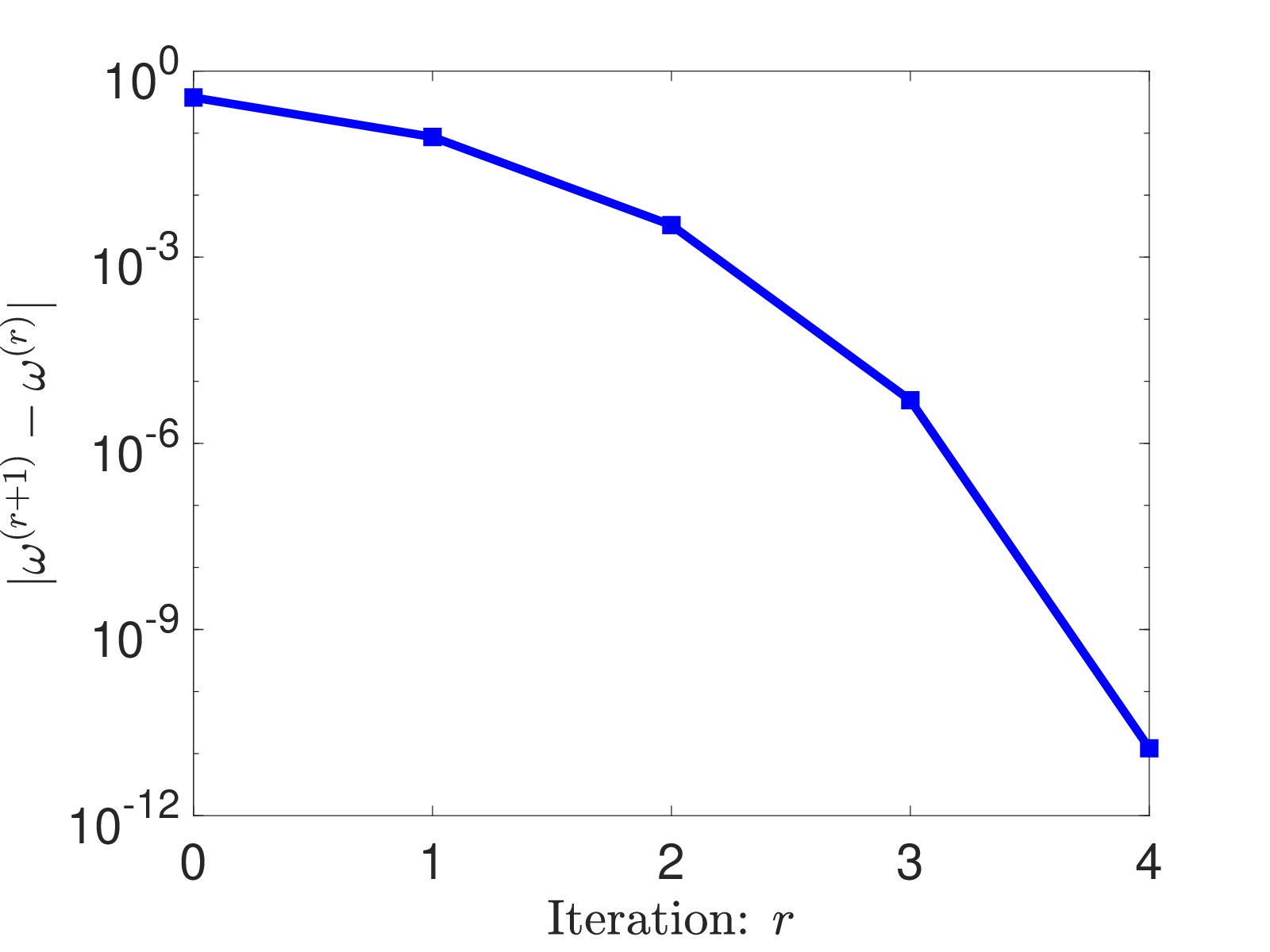}
    \end{subfigure}
    \begin{subfigure}[t]{0.325\linewidth}
    \centering
    \includegraphics[scale=0.19]{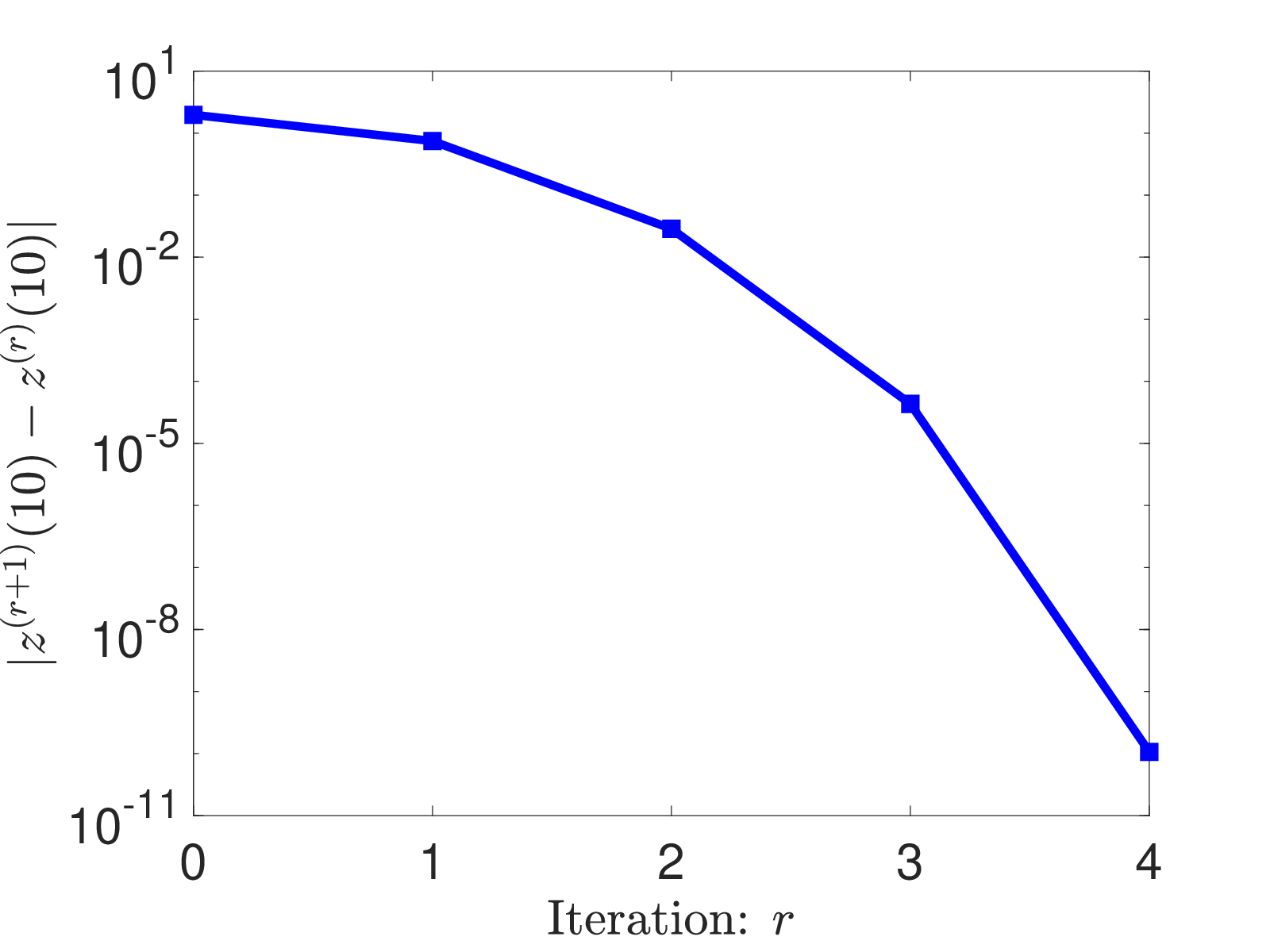}
    \end{subfigure} \\
    \begin{subfigure}[t]{0.325\linewidth}
    \centering
    \includegraphics[scale=0.19]{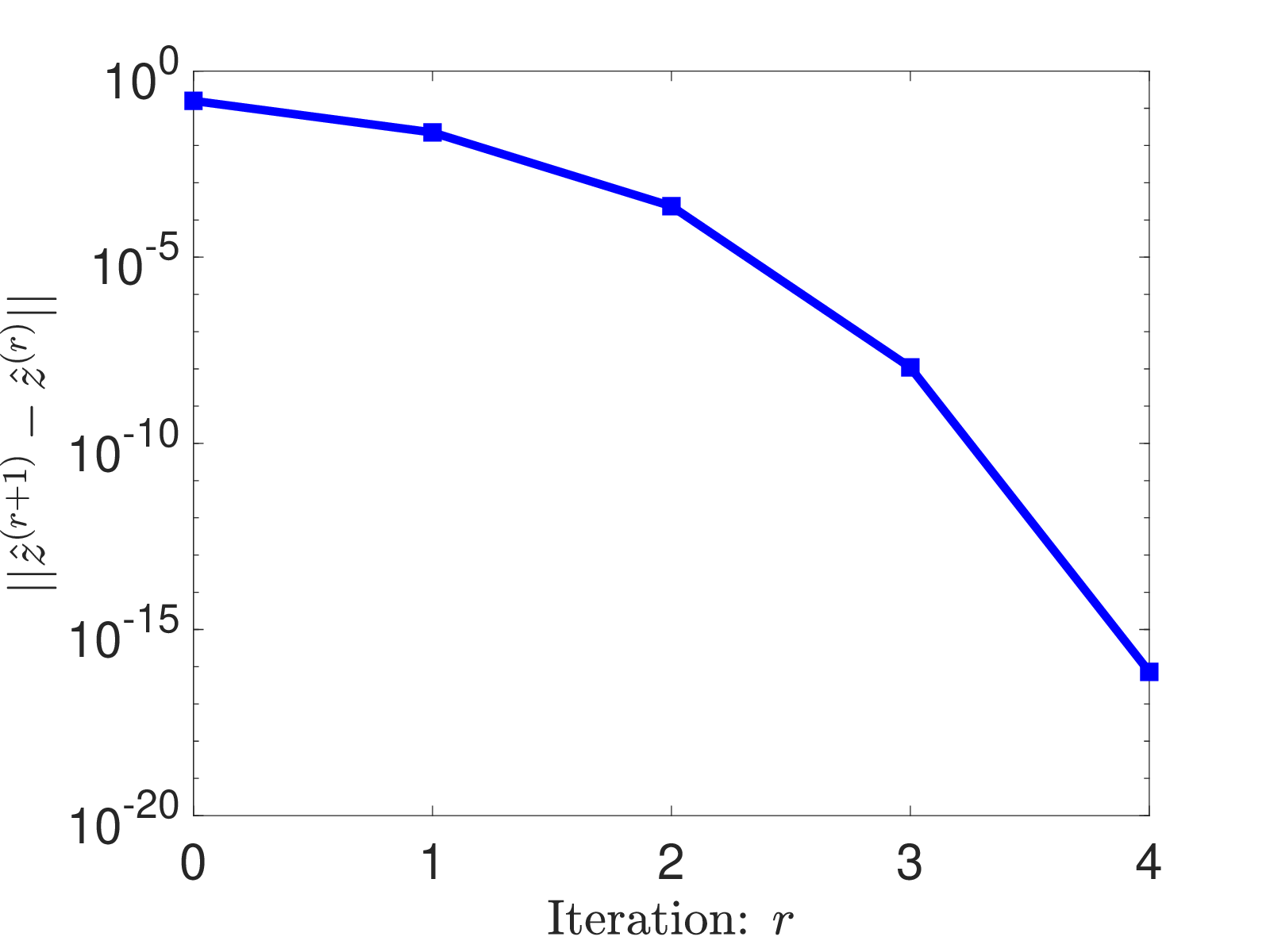}
    \caption{Fourier coefficient vectors}
    \end{subfigure}
    \begin{subfigure}[t]{0.325\linewidth}
    \centering
    \includegraphics[scale=0.19]{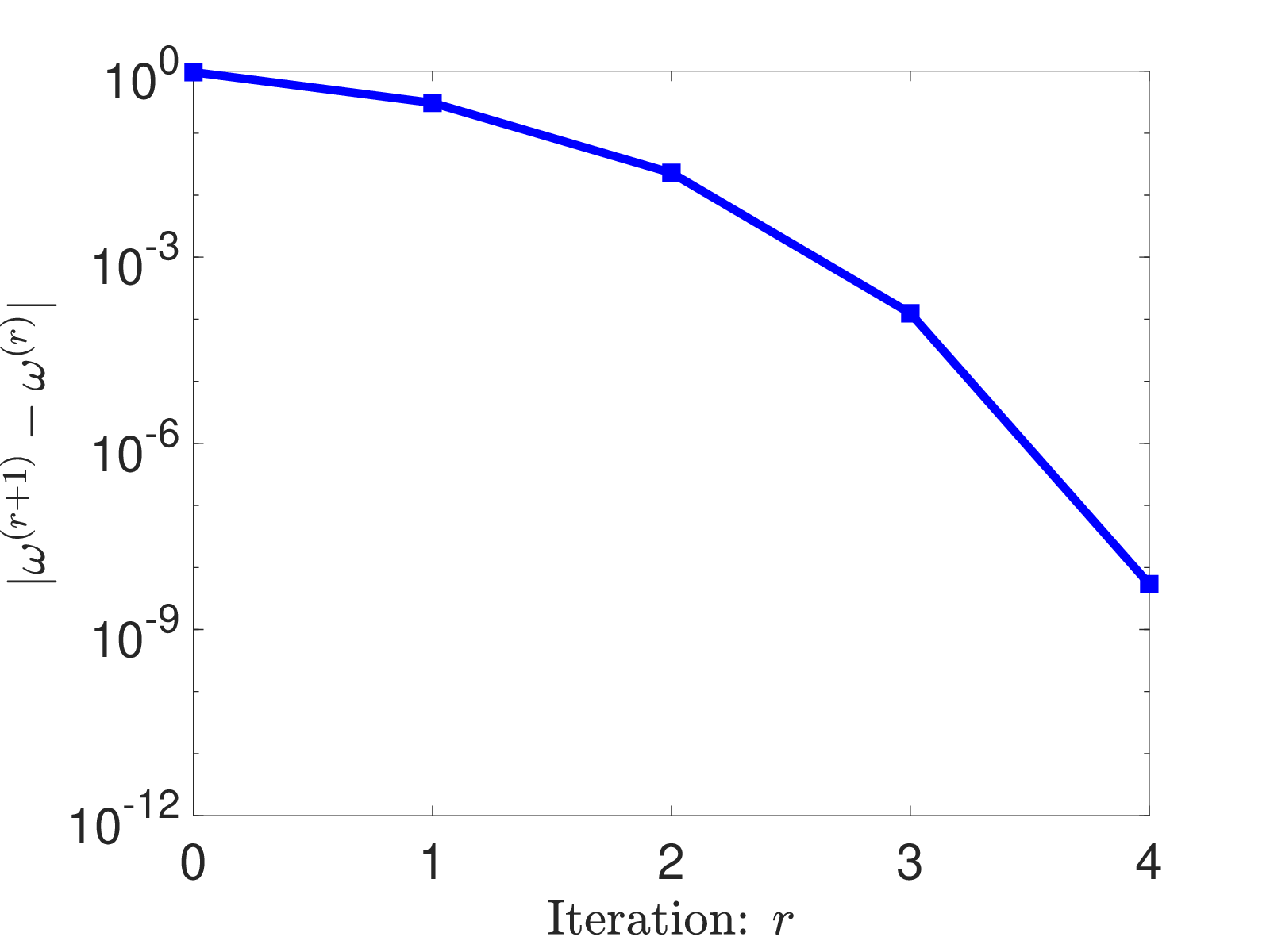}
    \caption{Frequency error}
    \end{subfigure}
    \begin{subfigure}[t]{0.325\linewidth}
    \centering
    \includegraphics[scale=0.19]{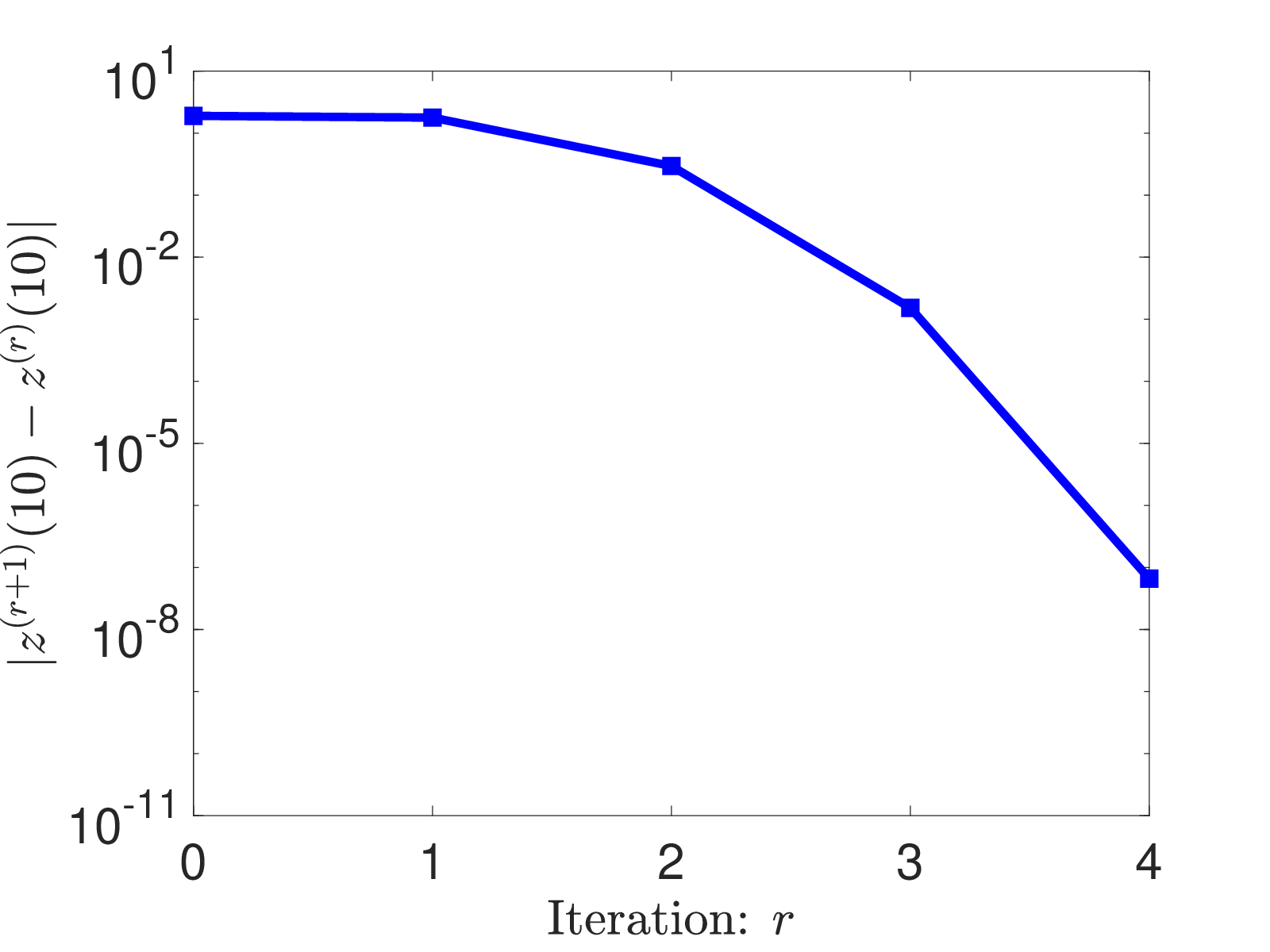}
    \caption{State at $t=10$}
    \end{subfigure}
    \caption{Convergence profiles across iterations for the FPU model. The panels represent different initial conditions: (top) $a = (1, 0, 0)$; (middle) $a=(0, 1, 0)$; (bottom) $a = (0, 0, 1)$.}
    \label{fig: FPU-1d-con}
\end{figure}
As illustrated, all three indicators consistently exhibit super-exponential decay. 
This rapid convergence not only validates the theoretical efficiency of the 
iterative scheme but also highlights its practical utility in achieving 
high-precision results while bypassing the cumulative phase errors associated with 
symplectic integrators.

%======================================================%
\subsubsection{Multi-frequency quasi-periodic solutions}
\label{subsubsec: fpu-2d}

In this section, we extend the alternating numerical 
procedure~\eqref{eqn: numerical-scheme} to compute multi-frequency quasi-periodic 
solutions. By setting $\varepsilon = 0.1$, we enhance the visibility of the 
trajectories within the phase space. Unlike the periodic solutions depicted 
in~\Cref{fig: henon-heiles-traj} and~\Cref{fig: FPU-1d-traj}, which appear as 
perfectly closed curves in phase space, the trajectories in~\Cref{fig: FPU-2d-traj} 
characterize quasi-periodic motion. 
Although these higher-dimensional trajectories never return to their exact starting 
states, they remain densely confined within the immediate vicinity of their linear 
unperturbed counterparts, providing visual evidence of the persistence and nonlinear 
stability of the underlying invariant structures.
\begin{figure}[htb!]
    \centering
    \begin{subfigure}[t]{0.325\linewidth}
    \centering
    \includegraphics[scale=0.19]{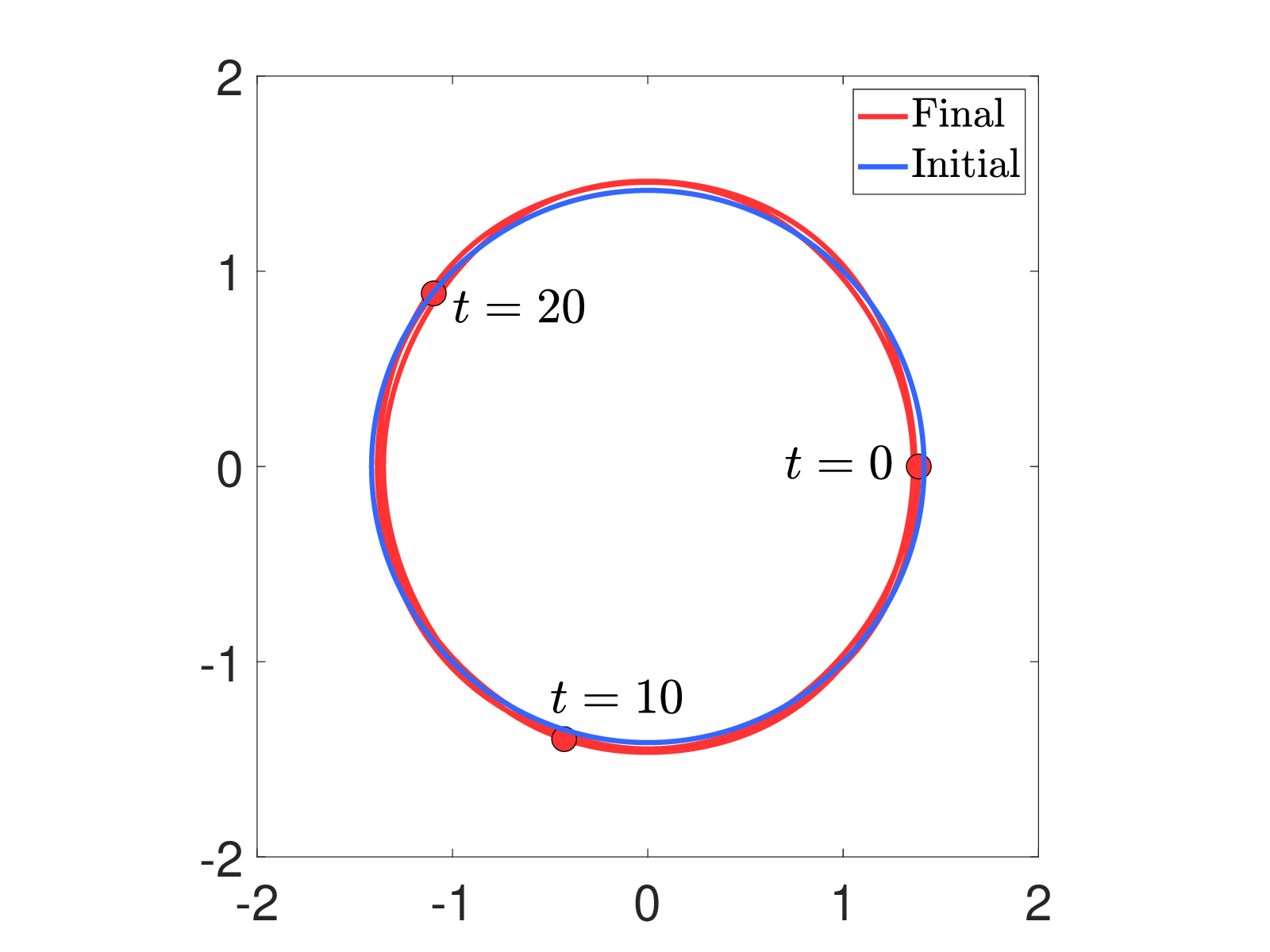}
    \end{subfigure}
    \begin{subfigure}[t]{0.325\linewidth}
    \centering
    \includegraphics[scale=0.19]{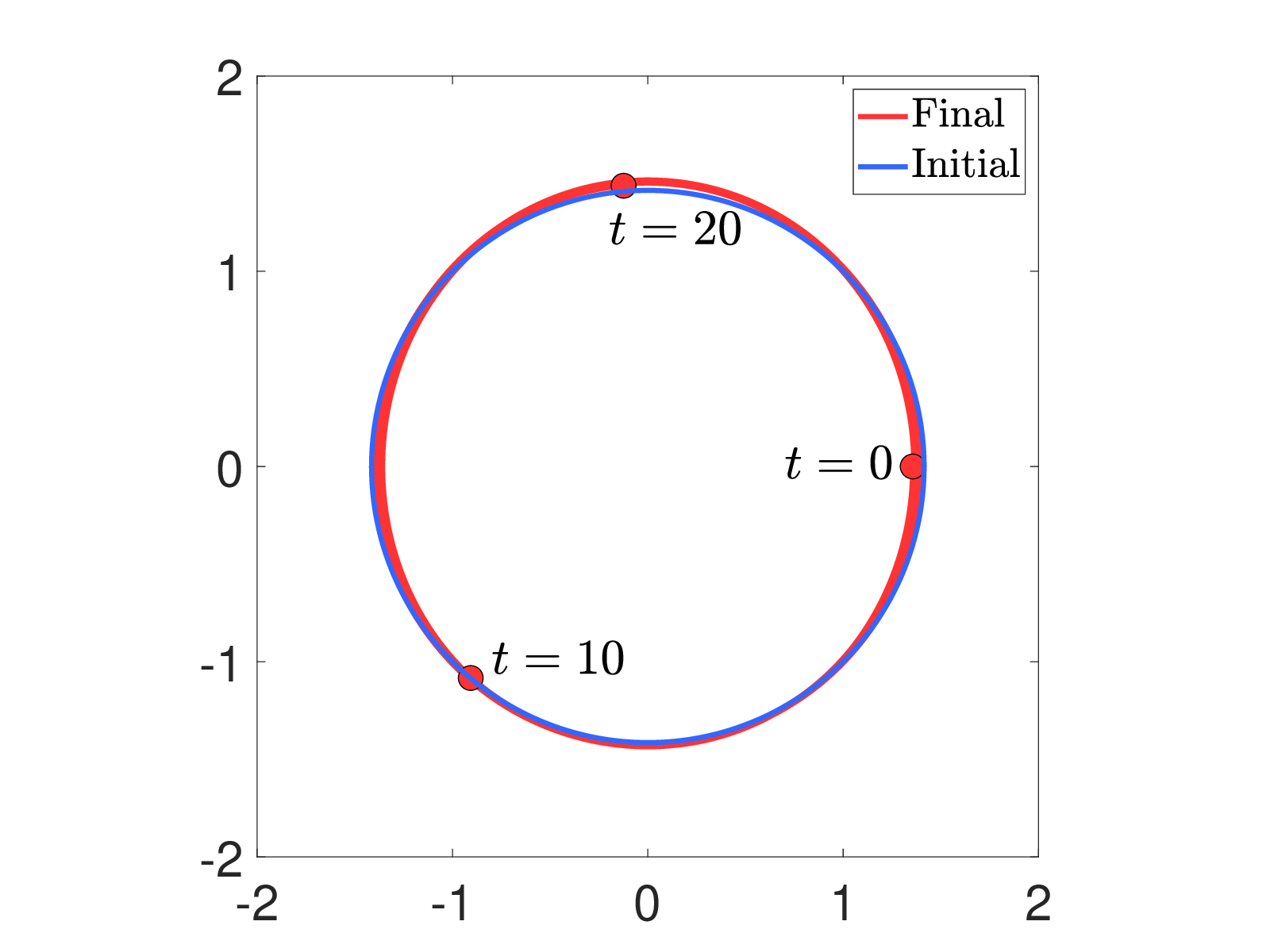}
    \end{subfigure}
    \begin{subfigure}[t]{0.325\linewidth}
    \centering
    \includegraphics[scale=0.19]{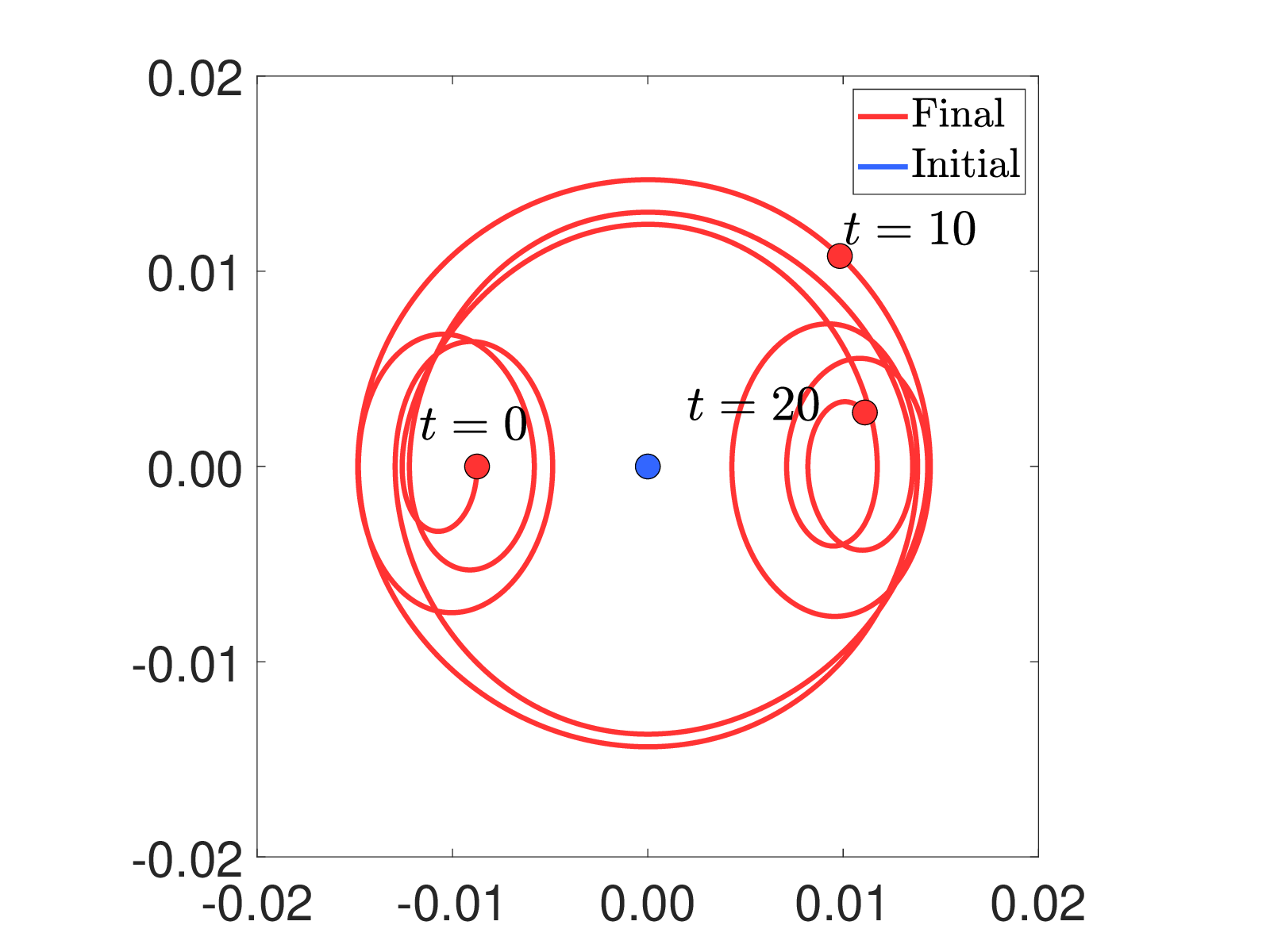}
    \end{subfigure} \\
    \begin{subfigure}[t]{0.325\linewidth}
    \centering
    \includegraphics[scale=0.19]{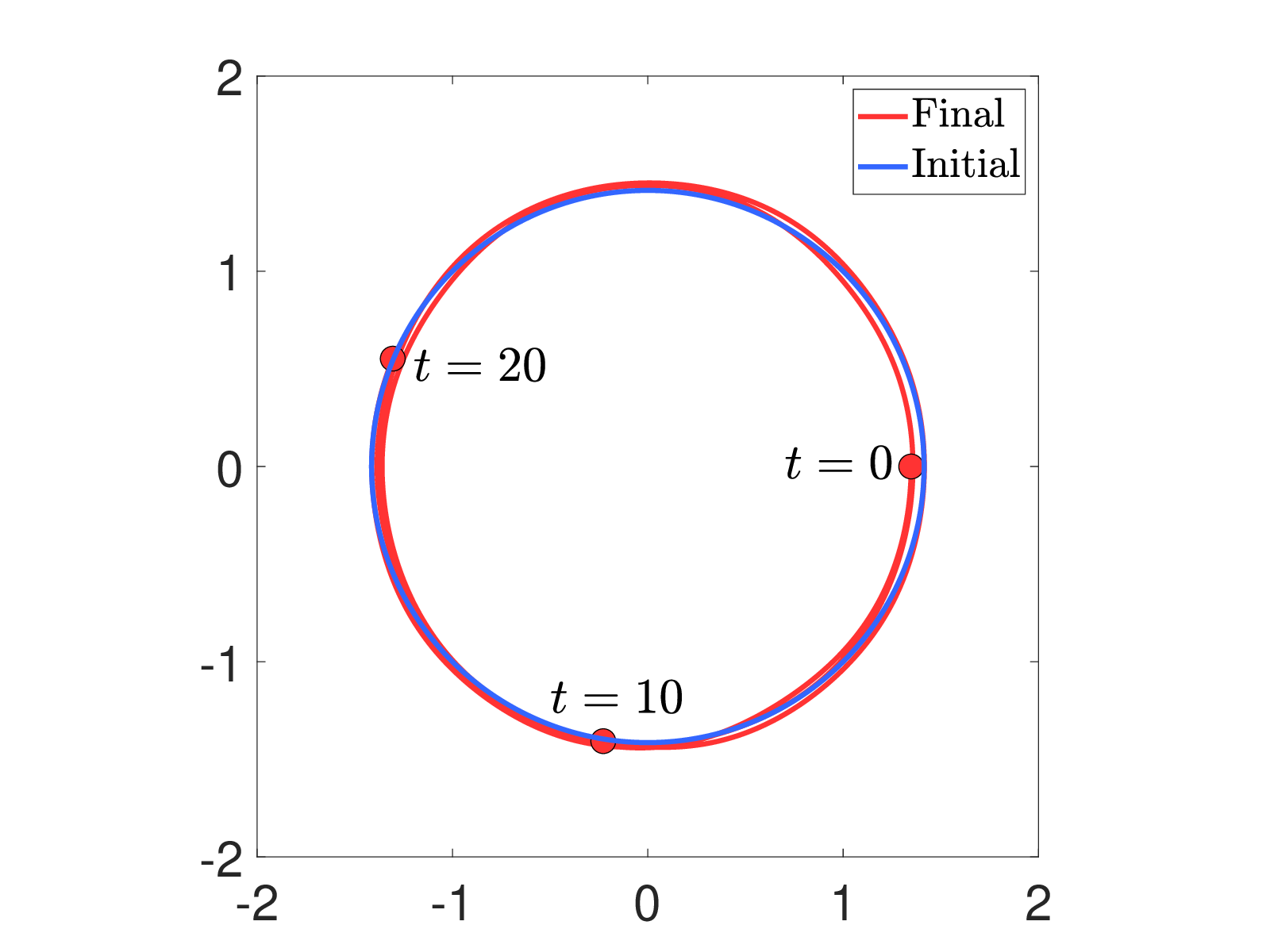}
    \end{subfigure}
    \begin{subfigure}[t]{0.325\linewidth}
    \centering
    \includegraphics[scale=0.19]{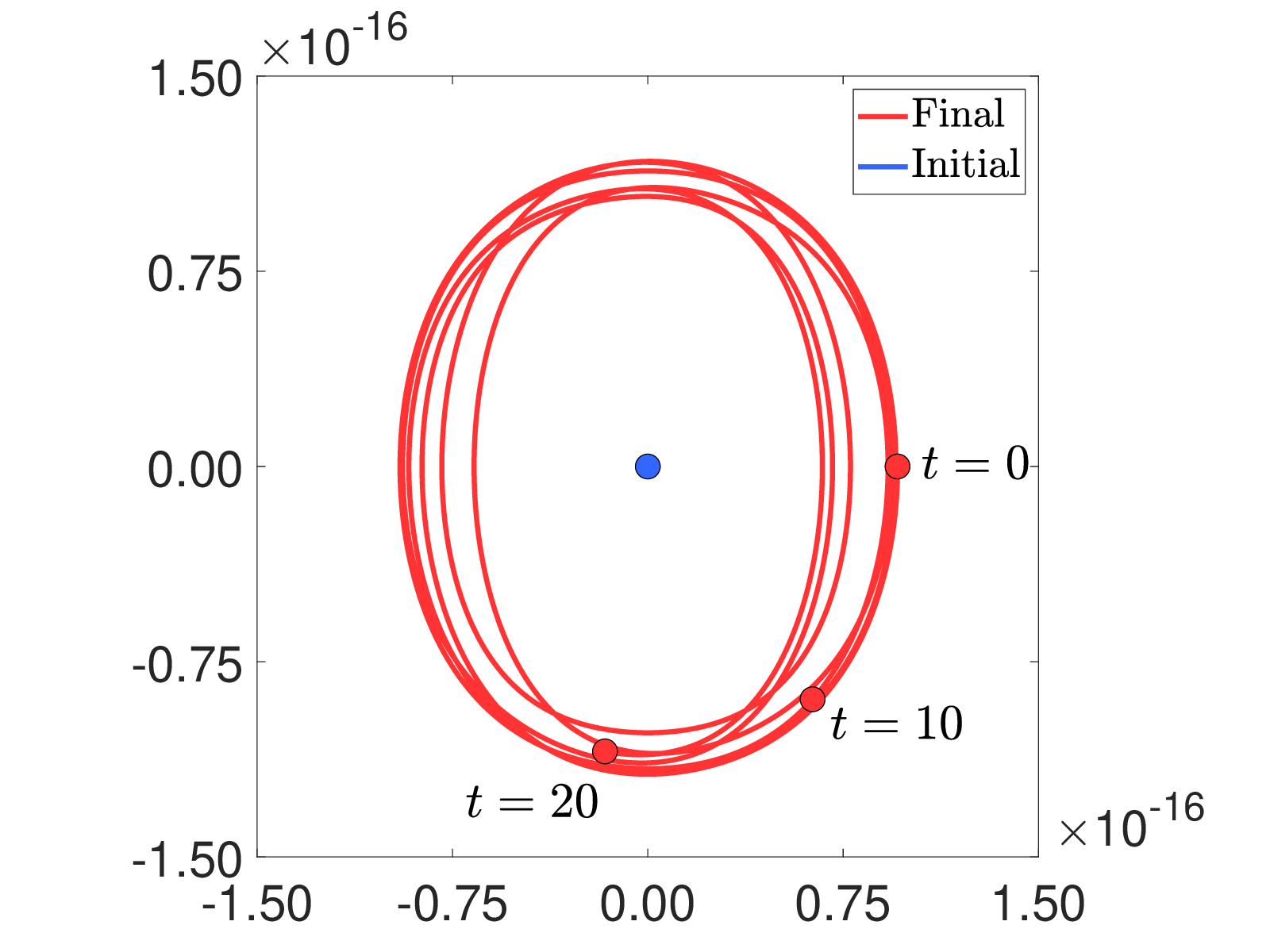}
    \end{subfigure}
    \begin{subfigure}[t]{0.325\linewidth}
    \centering
    \includegraphics[scale=0.19]{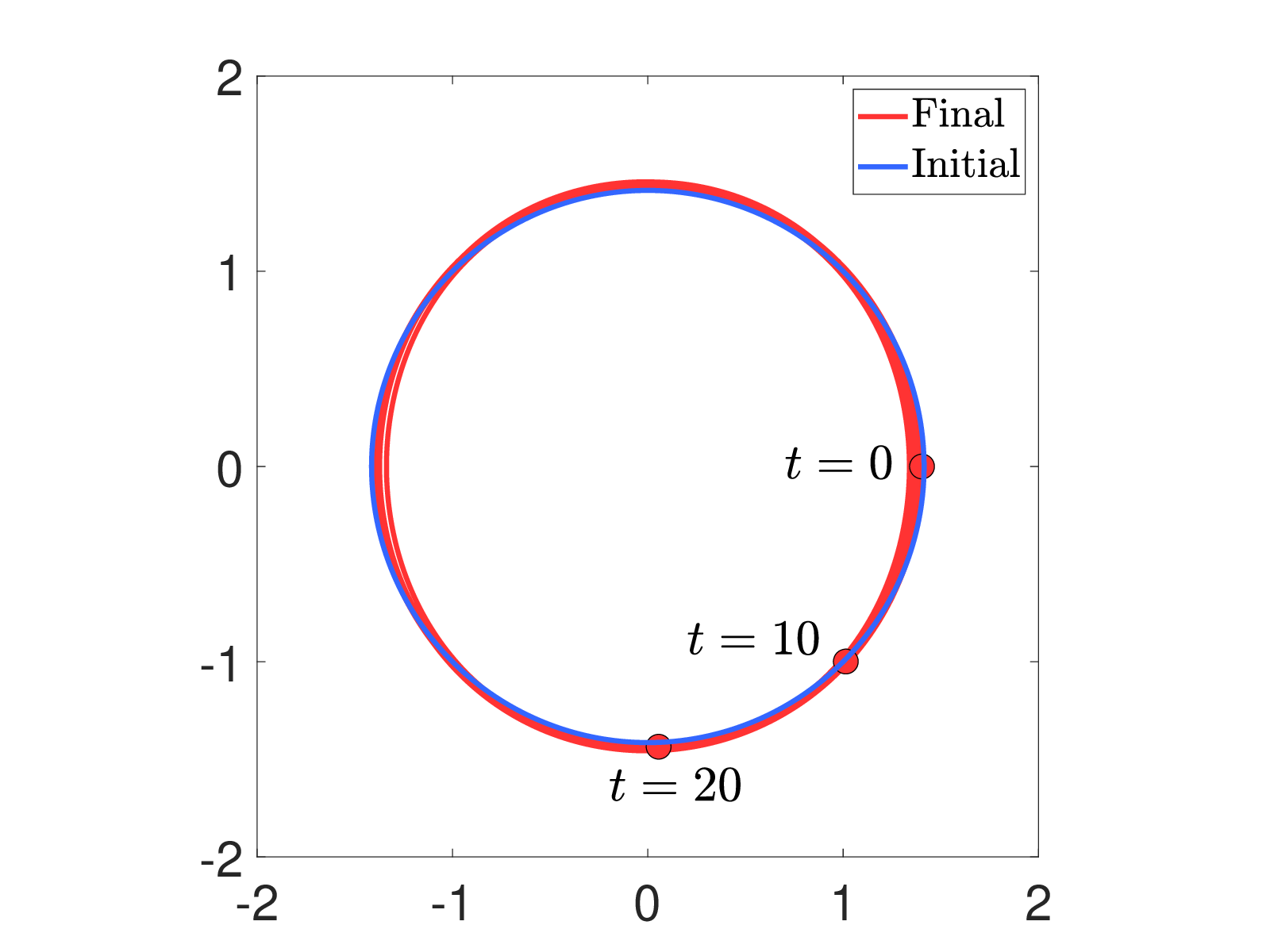}
    \end{subfigure} \\
    \begin{subfigure}[t]{0.325\linewidth}
    \centering
    \includegraphics[scale=0.19]{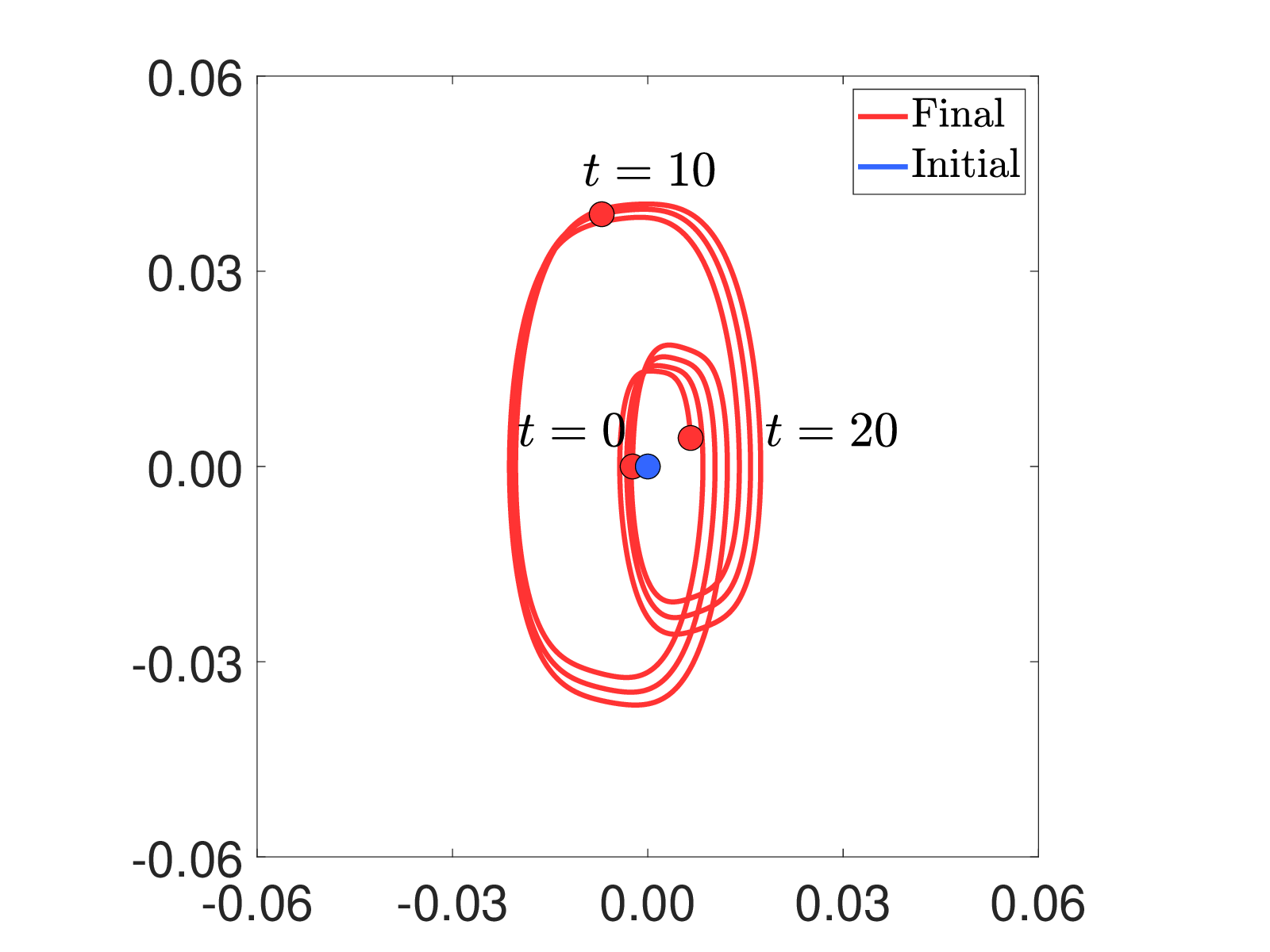}
    \caption{$(x_1, y_1)$}
    \end{subfigure}
    \begin{subfigure}[t]{0.325\linewidth}
    \centering
    \includegraphics[scale=0.19]{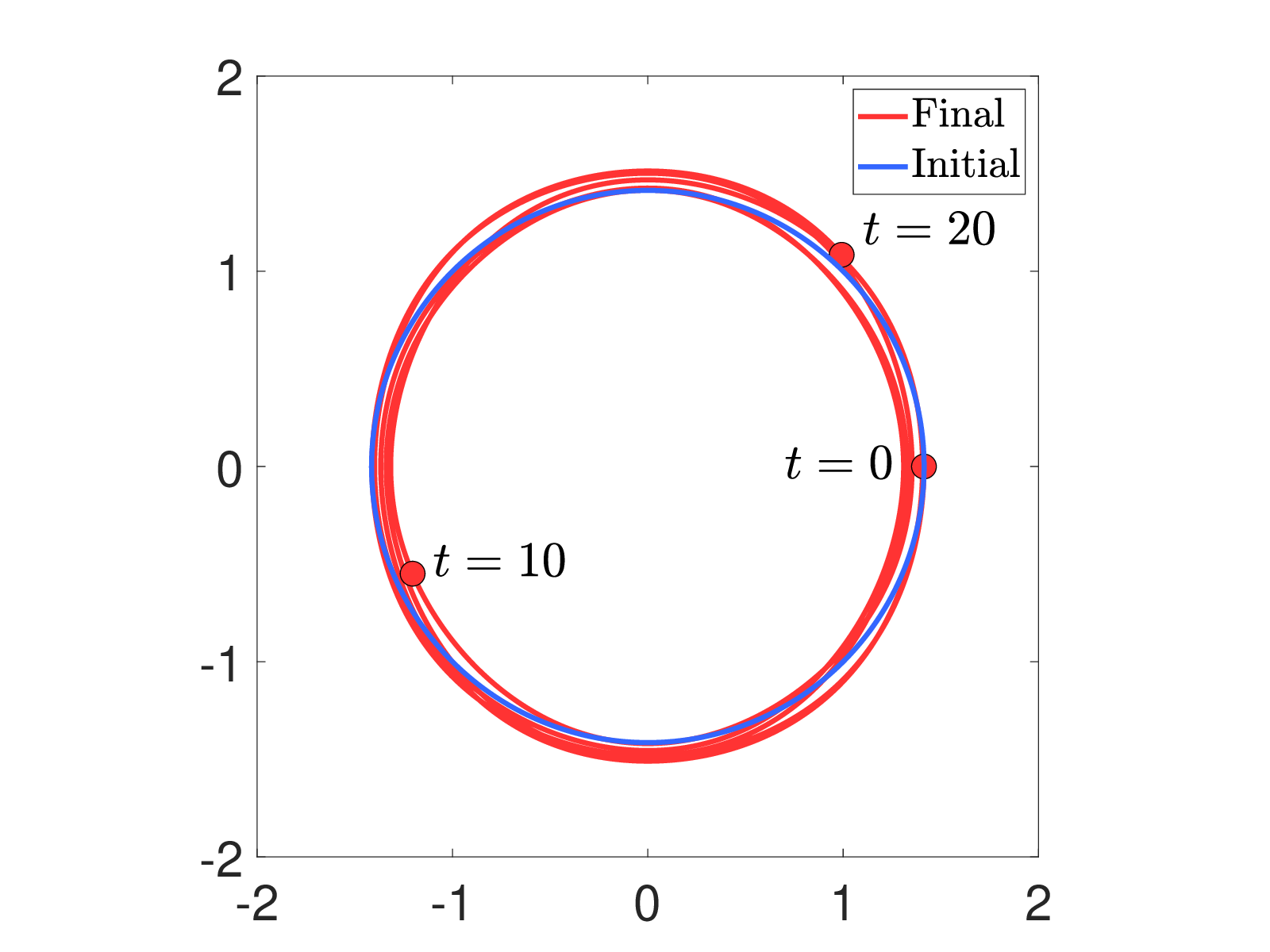}
    \caption{$(x_2, y_2)$}
    \end{subfigure}
    \begin{subfigure}[t]{0.325\linewidth}
    \centering
    \includegraphics[scale=0.19]{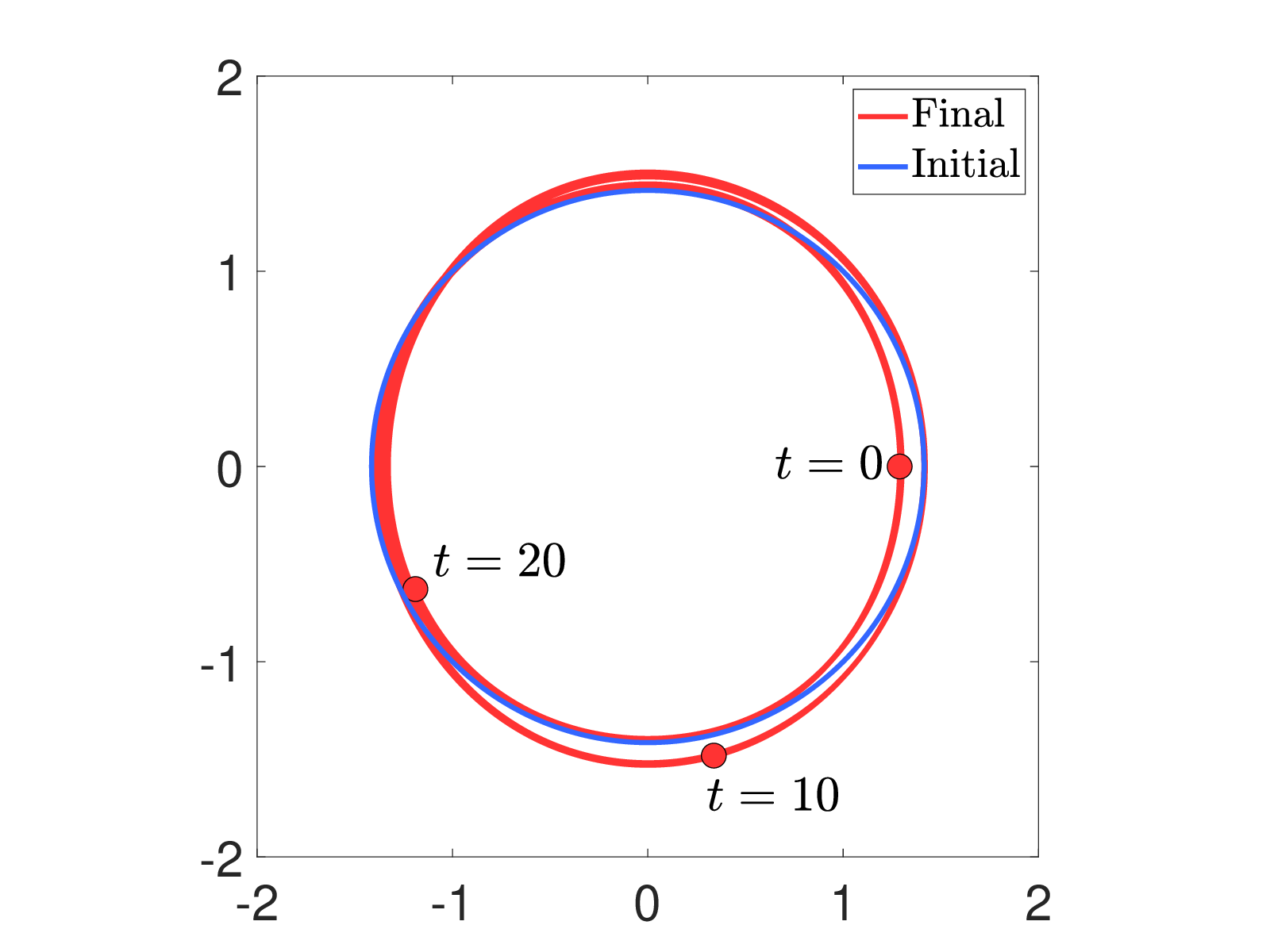}
    \caption{$(x_3, y_3)$}
    \end{subfigure}
    \caption{Phase space trajectories of the FPU model with markers indicating specific time points at $t = 0$, $10$,  and $20$. The panels represent different initial conditions: (top) $a = (1, 1, 0)$; (middle) $a=(1, 0, 1)$; (bottom) $a = (0, 1, 1)$.}
    \label{fig: FPU-2d-traj}
\end{figure}
A notable distinction emerges between the tangential and normal subspaces: while the 
macroscopic geometric features in the tangential subspaces remain nearly invariant 
across different active frequencies, indicating a degree of robustness in the 
dominant dynamics, the microscopic residual fluctuations in the normal subspaces 
display markedly different geometric patterns. 
This pronounced variation in the transverse directions suggests the presence of 
potentially unexplored mathematical or physical mechanisms.

\section{Conclusion and further work}
\label{sec: conclu}

In this study, we extend the alternating numerical procedure, originally proposed 
by~\citet{fu2026numerical}, to investigate elliptic lower-dimensional quasi-periodic 
solutions. 
By conducting a comparative analysis of two Newton-based iterative methods, the KAM 
scheme and the CWB scheme, we identify a potentially deeper underlying structure 
governing the quasi-periodic solutions of nearly integrable systems. 
For clarity and conciseness, we employ the full-dimensional case as an illustrative 
example. 
In an integrable system, quasi-periodic solutions are represented by the linear 
solution~\eqref{eqn: linear-near-soln}. 
The KAM scheme operates by subtracting this entire linear component from the total 
solution, effectively isolating the perturbation from the unperturbed torus 
$\{0\} \times \mathbb{T}^n$ associated with the 
Hamiltonian~\eqref{eqn: near-integrable-1}. 
Under the assumption that this perturbation contains no linear terms, the resulting 
solution is defined relative to the frequency $\omega(I_0)$, simplified as $\omega$. 
Consequently, the total quasi-periodic solution in complex coordinates is 
reconstructed as:
\begin{equation}
    \label{eqn: kam-soln}
    z(t) = \underbrace{\sum_{ j=1}^{n} \sqrt{I_{0,j}} e^{i \omega_j t}}_{\mathrm{Linear\; part}} + \underbrace{f(\omega t) }_{\mathrm{Perturbation}}. 
\end{equation}
In contrast, within the CWB scheme, the linear solution cannot be treated as the 
full initial condition. 
Instead, it must be considered as:
\[
    I(t) = I_0 - a^2, \quad \theta(t) = \omega(I_0 - a^2) t + \theta_0, 
\]
where $a^2 = (a_1^2, \ldots, a_n^2)^{\top} \neq 0$. 
The Hamiltonian for the perturbation solution becomes:
\[
    H = \langle \omega(I_0 - a^2), J + a^2 \rangle + \varepsilon H_1(J+a^2, \theta; I_0 - a^2). 
\]
Accordingly, the resulting quasi-periodic solution takes the form
\begin{equation}
    \label{eqn: CWB-solution}
    z(t) = \sum_{ j=1}^{n} \sqrt{I_{0,j} - a_j^2} e^{i \omega_j (I_0-a^2) t} + \sum_{ j=1}^{n} a_j \left( e^{i \omega_j'(I_0-a^2) t} - e^{i \omega_j(I_0-a^2) t} \right) + \sum_{k \notin \mathcal{S}} \hat{z}_k e^{i \langle k, \omega'(I_0-a^2) \rangle t}. 
\end{equation}
Intuitively, as the parameter $|a| \rightarrow 0$, the frequencies should satisfy 
$\omega(I_0 - a^2) \rightarrow \omega$ and $\omega' (I_0 - a^2) \rightarrow \omega$. 
If this convergence is rigorously established, the quasi-periodic solution from the 
CWB scheme~\eqref{eqn: CWB-solution} admits the simplified representation
\begin{equation}
    \label{eqn: cwb-soln-limit}
    z(t) = \underbrace{\sum_{ j=1}^{n} \sqrt{I_{0,j}} e^{i \omega_j  t}}_{\mathrm{Linear\; part}} + \underbrace{\sum_{k \notin \mathcal{S}} \hat{z}_k e^{i \langle k, \omega \rangle t}}_{\mathrm{Perturbation}}. 
\end{equation}
Comparing~\eqref{eqn: kam-soln} and~\eqref{eqn: cwb-soln-limit}, we can derive that 
the perturbation is given by 
\[
    f(\omega t) = \sum_{k \notin \mathcal{S}} \hat{z}_k e^{i \langle k, \omega \rangle t},
\]
which reveals that the perturbation involves only the coefficients of the 
non-resonant Fourier modes. 
Furthermore, as we transition from an integrable system to a nearly integrable one, 
the quasi-periodic solution evolves from a simple linear mode superposition into a 
complex periodic function of $\theta = \omega t$. 
An interesting direction for future research is to investigate the behavior of the 
coefficients of non-resonant Fourier modes across different perturbation Hamiltonians.

In the forthcoming work, we plan to demonstrate the application of our alternating 
numerical scheme to construct quasi-periodic solutions for nearly integrable systems 
involving infinite normal frequencies. 
Specifically, we first address the infinite-dimensional FPU model and weakly coupled 
lattice networks, as discussed by~\citet{aubry1997breathers}, for which the 
existence of invariant tori was established via the KAM technique 
by~\citet{yuan2002construction}. 
We then extend our investigation to nearly integrable systems governed by nonlinear 
partial differential equations (PDEs), including the nonlinear Schrödinger (NLS), 
nonlinear wave (NLW), and KdV equations~\citep{kuksin2000analysis, Kappeler2003}. 
Furthermore, we aim to present several previously undiscovered exact solutions, 
specifically for the NLS. It is also worth noting that for the NLS equation in 
higher spatial dimensions, a class of special $d$-dimensional quasi-periodic 
solutions was constructed via the KAM technique by~\citet{yuan2003quasi}.

\section*{Acknowledgements}
This work was partially supported by the NSFC (Grant No. 12241105) and by SIMIS (startup fund and cross-disciplinary research projects).

%We thank Bowen Li for his helpful and insightful discussions. Mingwei Fu acknowledges partial support from the Loo-Keng Hua Scholarship of CAS. Bin Shi was partially supported by the startup fund from SIMIS and Grant No.YSBR-034 of CAS.

\bibliographystyle{abbrvnat}
\bibliography{sigproc}

%===================================================================================================%
\appendix
\section{Proofs of results in~\Cref{sec: basic-setting}}
\label{sec: priori-error-app}

In this appendix, we provide detailed proofs for the results 
in~\Cref{sec: basic-setting}, especially 
for~\Cref{prop: vector-field} ---~\Cref{prop: B-operator} 
(see~\Cref{subsec: vector-field} ---~\Cref{subsec: b-operator}). 
To facilitate these proofs, we first establish an elementary inequality and show 
that the Gevrey decay property is preserved under convolution. 
These preliminary results are rigorously stated in the following lemmas.

%===============================================================%
\subsection{Preliminary lemmas}
\label{subsec: preliminary}

\begin{lemma}
    \label{lemma: App-1}
    Let $a, b \ge 0$ and $0 < s < 1$. 
    Then the following inequality holds:
    \begin{equation}
        \label{eqn: ineq_power}
        a^s + b^s - (a+b)^s \ge (2-2^s) \min\{a,b\}^s. 
    \end{equation}
\end{lemma}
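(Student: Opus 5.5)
By symmetry of both sides in $(a,b)$ we may assume $0 \le a \le b$, so that $\min\{a,b\} = a$ and the claim reads
\[
    a^s + b^s - (a+b)^s \ge (2-2^s)\, a^s .
\]
If $a = 0$ both sides vanish, so we assume $a > 0$. Both sides are homogeneous of degree $s$. Dividing by $a^s$ and setting $t = b/a \ge 1$, it suffices to prove
\[
    g(t) := 1 + t^s - (1+t)^s - (2-2^s) \ge 0, \qquad t \ge 1 .
\]

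The plan is to check the endpoint and then monotonicity.

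First, at $t=1$ we have $g(1) = 1 + 1 - 2^s - (2-2^s) = 0$.

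Second, for $t > 0$ the derivative is
\[
    g'(t) = s\bigl(t^{s-1} - (1+t)^{s-1}\bigr).
\]
Since $0<s<1$, the map $x \mapsto x^{s-1}$ is strictly decreasing on $(0,\infty)$, and $t < 1+t$ gives $t^{s-1} > (1+t)^{s-1}$. Hence $g'(t) > 0$, so $g$ is increasing on $[1,\infty)$ and $g(t) \ge g(1) = 0$. Multiplying back by $a^s$ yields \eqref{eqn: ineq_power}.

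The argument is elementary and has no real obstacle. The one point needing care is the reduction to $t \ge 1$, i.e.\ normalizing by the \emph{smaller} variable. This is what makes the constant $2-2^s$, attained at $a=b$, the correct one. Normalizing by the larger variable would instead require a concavity comparison on $[0,1]$, where the function is not monotone in the same convenient way.
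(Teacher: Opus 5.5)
Your proof is correct and follows essentially the same route as the paper: normalize by the smaller variable so the ratio is $\ge 1$, then show via the derivative $s\bigl(x^{s-1}-(1+x)^{s-1}\bigr)>0$ that $x \mapsto x^s + 1 - (1+x)^s$ is increasing on $[1,\infty)$, hence bounded below by its value $2-2^s$ at $x=1$. The only difference is cosmetic: you order the variables as $a \le b$ and absorb the constant into $g$, whereas the paper takes $a \ge b$ and compares $f(x)$ directly with $f(1)$.
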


\begin{proof}
    Without loss of generality, we assume $a \ge b$. 
    The case $b=0$ is trivial, so we consider $b > 0$. 
    Define $x = a/b \geq 1$ and consider the function $f(x) = x^s + 1 - (1+x)^s$. 
    Since $0 < s <1$, its derivative satisfies $f'(x) = s(x^{s-1} - (x+1)^{s-1}) > 0$ 
    for all $x \geq 1$. 
    Thus, $f$ is strictly increasing on $[1, \infty)$, which implies 
    $f \geq f(1)= 2 - 2^{s}$. 
    Taking $x=a/b$, we derive the inequality~\eqref{eqn: ineq_power}, so the proof 
    is complete. 
\end{proof}

Next, we use~\Cref{lemma: App-1} to show that the Gevrey decay property is preserved 
under convolution.

\begin{lemma}
    \label{lemma: App-2}
    Let the Gevrey decay set $\mathcal{K}(s)$ be defined in~\eqref{eqn: gevrey-l2}. 
    For any $\hat{a}, \hat{b} \in \mathcal{K}(s)$, there 
    exists a constant $C_2(m, s) > 0$ such that their convolution satisfies
    \begin{equation}    
        \label{eqn: convolution}   
        \sup_{k \in \mathbb{Z}^m} \left( |(\hat{a}*\hat{b})(k)| \exp\left\{| k |^{s} \right\} \right) \leq C_2(m,s).
    \end{equation}
    More generally, if $\hat{a}_j(k) \in \mathcal{K}(s)$ for 
    $j=1, \ldots, \ell$, then there exists a constant $C_{\ell}(m, s) > 0$ such that 
    the $\ell$-fold convolution satisfies: 
    \begin{equation}    
        \label{eqn: p-convolution}   
        \sup_{k \in \mathbb{Z}^m} \left( \big| \big(\hat{a}_1 * \hat{a}_2 * \cdots * \hat{a}_{\ell} \big)(k) \big| \exp\left\{|k|^s \right\} \right) \leq C_{\ell}(m,s).
    \end{equation}   
\end{lemma}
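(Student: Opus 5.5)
The plan is to prove the two-fold estimate~\eqref{eqn: convolution} first and then obtain~\eqref{eqn: p-convolution} by induction on $\ell$. Fix $k \in \mathbb{Z}^m$ and write
\[
|(\hat{a}*\hat{b})(k)| \le \sum_{k' \in \mathbb{Z}^m} |\hat{a}(k-k')|\,|\hat{b}(k')| \le \sum_{k'} \exp\{-|k-k'|^s - |k'|^s\},
\]
using only the pointwise bound $|\hat{a}(j)| \le e^{-|j|^s}$ from the definition of $\mathcal{K}(s)$. Multiplying by $e^{|k|^s}$, the quantity to control is
\[
\sum_{k'} \exp\{|k|^s - |k-k'|^s - |k'|^s\}.
\]
Since $|k| \le |k-k'| + |k'|$ and $t \mapsto t^s$ is increasing, we have $|k|^s \le (|k-k'|+|k'|)^s$. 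Applying \Cref{lemma: App-1} with $a = |k-k'|$ and $b = |k'|$ then gives
\[
|k|^s - |k-k'|^s - |k'|^s \le -(2-2^s)\min\{|k-k'|,|k'|\}^s.
\]

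The key step is to sum $\exp\{-(2-2^s)\min\{|k-k'|,|k'|\}^s\}$ over $k'$ uniformly in $k$. I would bound it by the sum over $k'$ of $e^{-c|k'|^s}$ plus the sum over $k'$ of $e^{-c|k-k'|^s}$, where $c = 2-2^s > 0$. After the change of variables $k' \mapsto k-k'$, this is at most $2\sum_{j\in\mathbb{Z}^m} e^{-c|j|^s}$. That sum is finite and independent of $k$: it is bounded by $\sum_{R\ge0} \#\{|j|=R\}\, e^{-cR^s} \le \sum_R (2R+1)^m e^{-cR^s} < \infty$, because the stretched exponential beats any polynomial. Calling this quantity $C_2(m,s)$ yields~\eqref{eqn: convolution}. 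This summability step, uniform in $k$, is the only genuine obstacle. It hinges on the strict positivity $2-2^s>0$ for $s<1$; without it, a naive triangle-inequality argument loses the factor needed for summation. If the $\hat{a}(k)$ are vector-valued, the same argument applies componentwise with norms.

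For the $\ell$-fold statement I would argue by induction. The complication is that $\hat{a}_1*\cdots*\hat{a}_{\ell-1}$ satisfies the decay bound only with constant $C_{\ell-1}$, not with constant $1$. The two-fold argument above is, however, linear in the constants: if $|\hat{u}(j)| \le A e^{-|j|^s}$ and $|\hat{v}(j)| \le B e^{-|j|^s}$, the same computation gives $|(\hat{u}*\hat{v})(k)|e^{|k|^s} \le AB\,C_2(m,s)$. So I would state and prove the two-fold bound in this scaled form. The recursion $C_\ell = C_{\ell-1}C_2$ then gives $C_\ell(m,s) = C_2(m,s)^{\ell-1}$, which completes the proof.
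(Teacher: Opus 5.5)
Your proof is correct and follows the same route as the paper: bound the convolution termwise, use the triangle inequality together with Lemma~\ref{lemma: App-1} to extract the factor $e^{-|k|^s}$ and leave the weight $e^{-(2-2^s)\min\{|k-k'|,|k'|\}^s}$, then extend to the $\ell$-fold case by induction. Your bound $e^{-c\min\{x,y\}^s}\le e^{-cx^s}+e^{-cy^s}$ makes the uniform-in-$k$ summability explicit, and your scaled two-fold estimate (giving $C_\ell=C_2^{\ell-1}$) makes the induction precise; the paper only asserts both steps.
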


\begin{proof}
    Given that $\hat{a}, \hat{b} \in \mathcal{K}(s)$, we estimate their convolution as
    \begin{align*}
        |(\hat{a}*\hat{b})(k)| 
        & \le \sum_{k' \in \mathbb{Z}^m} |\hat{a}(k-k')| |\hat{b}(k')|                                    \\
        & \le \sum_{k' \in \mathbb{Z}^m} \exp\left\{-|k-k'|^{s}  \right\} \exp\left\{-|k'|^{s} \right\}                               \\
        & \le \sum_{k' \in \mathbb{Z}^m} \exp\left\{-| k |^{s} \right\} \exp\left\{- \left( |k-k'|^{s}  + |k'|^{s}  - \left( |k-k'| + |k'| \right)^{s}  \right) \right\},                                                                                                                                                                   % \label{eqn: conol-1}  
    \end{align*}
    where the last step uses the triangle inequality. 
    Applying~\Cref{lemma: App-1} to bound the sum involving the remaining exponential 
    terms, we have
    \[
        |(\hat{a}*\hat{b})(k)| \leq \exp\left\{-| k |^{s} \right\} \left( \sum_{k' \in \mathbb{Z}^m} \exp\left\{ - (2 - 2^{s}) \min\left\{ | k - k'|^{s}, \; | k' |^{s} \right\} \right\}  \right).     
    \]
    Since the sum of the Gevrey decay series converges in an $m$-dimensional 
    lattice $\mathbb{Z}^m$, we establish the inequality~\eqref{eqn: convolution}. 
    By induction, this same argument extends to any finite $\ell$-fold convolutional 
    product, thereby establishing the inequality~\eqref{eqn: p-convolution}.
\end{proof}

%===============================================================%
\subsection{Proof of~\Cref{prop: vector-field}}
\label{subsec: vector-field}

Since the perturbation $H_1$ is a polynomial with real coefficients, each component 
of its associated vector field, $\partial H_1/ \partial \overline{z}_{j}$ for 
$j=1,\ldots, n$, is likewise a real-coefficient polynomial. 
Given that $\hat{z} \in \mathcal{K}(s)$,~\Cref{lemma: App-2} implies that the 
Fourier coefficients $\hat{X}(k)$ exhibit Gevrey decay. 
By applying the Leibniz rule, it follows that the derivative 
$\partial_{\omega_T}\hat{X}(k)$ also exhibits Gevrey decay, which leads directly to 
the estimate provided in~\eqref{eqn: vector-field-comp}. 
By summing these Gevrey decay series, we derive a uniform bound for the vector field 
as expressed in~\eqref{eqn: vector-field-bound}. 
This concludes the proof.

%===============================================================%
\subsection{Proof of~\Cref{prop: convolution}}
\label{subsec: convolution}

We begin by considering the full tangent operator, denoted as 
$\mathcal{H} = \partial \hat{X} / \partial \hat{z}$. 
For any $k, k' \in \mathbb{Z}^m$, its component is given by
\[   
    \mathcal{H} (k - k') = \frac{2}{(2\pi)^{m}} \int_{\mathbb{T}^{m}} \frac{\partial^2 H_1}{\partial z \partial \overline{z}} \cos \left(\langle k - k', \theta \rangle \right)  d\theta. 
\]
Following a similar argument to that used for the vector field, we 
apply~\Cref{lemma: App-2} to derive uniform bounds for $\mathcal{H}$ across the full 
lattice $\mathbb{Z}^m$. 
Since $S$ represents this operator restricted specifically to non-resonant indices, 
the uniform bound naturally extends directly to $S$, thereby verifying the 
inequality~\eqref{eqn: tangent-comp}.

To bound the operator norm of $\mathcal{H}$, we evaluate the norm of 
$\mathcal{H} \hat{z}$ for any $ \hat{z} \in \ell_2$ as
\begin{align*}
    \| \mathcal{H} \hat{z} \|^2 
    &= \sum_{k \in \mathbb{Z}^m}\bigg\| \sum_{k' \in \mathbb{Z}^m}\mathcal{H}(k') \hat{z}(k - k') \bigg\|^2 \\
    &\leq \sum_{k \in \mathbb{Z}^m} \left( \sum_{k' \in \mathbb{Z}^m} \left\| \mathcal{H}(k') \right\|^{\frac12} \cdot \left\| \mathcal{H}(k') \right\|^{\frac12} \|\hat{z}(k - k')\| \right)^2.                                                                                               
\end{align*}
Given that the operator $\mathcal{H}$ exhibits Gevrey decay, we apply the 
Cauchy-Schwarz inequality to establish the following bound
\begin{equation}
    \label{eqn: h1-bound}
    \|\mathcal{H} \hat{z}\|^2 \leq \bigg[ \sum_{k \in \mathbb{Z}^m} \|\mathcal{H}(k)\| \bigg]^2  \bigg[ \sum_{k \in \mathbb{Z}^m} \|\hat{z}(k)\|^2 \bigg] < \infty.
\end{equation}
Note that these operators, $\partial \hat{X}_q / \partial \hat{z}_p$, $S$, and 
$\partial \hat{X} / \partial \hat{z}_p$, are the full tangent operator 
$\partial \hat{X} / \partial \hat{z}$ restricted to specific index sets. 
Since the $\ell_2$-norm of a restricted operator is bounded by that of the full 
operator, we arrive at the inequality~\eqref{eqn: tangent-bound}, which completes 
the proof.

%===============================================================%
\subsection{Proof of~\Cref{prop: tensor}}
\label{subsec: tensor}

Following the same procedure used for vector fields and tangent operators, we 
analyze the third-order tensor $\partial^2 \hat{X} / \partial \hat{z}^{2}$. 
For any $k, k', k'' \in \mathbb{Z}^m$, we derive the kernels as
\begin{align*}
    \frac{\partial^2 \hat{X}(k)}{ \partial \hat{z}(k') \partial \hat{z}(k'') } 
    = & \mathcal{T}_{11}(k - k' - k'') + \mathcal{T}_{21}(k + k' - k'') + \mathcal{T}_{12}(k - k' + k'') + \mathcal{T}_{22}(k + k' + k'')  \\ 
    = & \frac{1}{(2\pi)^{m}} \int_{\mathbb{T}^{m}} \frac{\partial^3 H_1}{\partial z^2 \partial \overline{z}} e^{-i \langle k - k' - k'', \theta \rangle } d\theta + \frac{1}{(2\pi)^{m}} \int_{\mathbb{T}^{m}} \frac{\partial^3 H_1}{\partial z \partial \overline{z}^2} e^{-i \langle k + k' - k'', \theta \rangle } d\theta  \\
    + & \frac{1}{(2\pi)^{m}} \int_{\mathbb{T}^{m}} \frac{\partial^3 H_1}{\partial z \partial \overline{z}^2} e^{-i \langle k - k' + k'', \theta \rangle } d\theta + \frac{1}{(2\pi)^{m}} \int_{\mathbb{T}^{m}} \frac{\partial^3 H_1}{\partial z^2 \partial \overline{z}} e^{-i \langle k + k' + k'', \theta \rangle } d\theta.  
\end{align*}
Given that $\hat{z} \in \mathcal{K}(s)$ and the perturbation $H_1$ is a 
real-coefficient polynomial, there exists some constant 
$\gamma_8 = \gamma_8(H_1, s) > 0$ such that 
\[
    \sup_{k, k', k'' \in \mathbb{Z}^m}\bigg( \| \mathcal{T}_{ij}(k + (-1)^ik' + (-1)^jk'') \| \exp\left\{ |k + (-1)^ik' + (-1)^jk''|^s \right\} \bigg) \leq \gamma_8,
\] 
which demonstrates that the tensors $\mathcal{T}_{ij}$ for $i, j \in \{ 1, 2 \}$ 
exhibit Gevrey decay.

To bound the tensors, consider two vectors, $\hat{z}_1$ and $\hat{z}_2$, with 
support contained within $\Lambda_N$.
Taking $\mathcal{T}_{11}$ as a representative example, we apply the Cauchy-Schwarz 
inequality to derive the following bound as
\begin{align*}
    &\| \mathcal{T}_{11} ( \hat{z}_1, \hat{z}_2 ) \|^2 \\
    =& \sum_{k \in \mathbb{Z}^m} \bigg\| \sum_{k'' \in \Lambda_N} \sum_{k' \in \Lambda_N} \mathcal{T}_{11} (k - k' - k'') \hat{z}_1(k') \hat{z}_2(k'' ) \bigg\|^2 \\
    \leq& \sum_{k \in \mathbb{Z}^m} \left[ \sum_{k', k'' \in \Lambda_N} \left\| \mathcal{T}_{11} (k - k' - k'') \right\| \right] \cdot \left[ \sum_{k', k'' \in \Lambda_N} \left\| \mathcal{T}_{11} (k - k' - k'') \right\| \|\hat{z}_1(k')\|^2 \|\hat{z}_2(k'')\|^2 \right]  \\
    \leq& \left[ \sum_{k', k'' \in \Lambda_N} \gamma_8 e^{-|k - k' - k''|^s} \right] \cdot \left[ \sum_{k \in \mathbb{Z}^m} \sum_{k', k'' \in \Lambda_N} \gamma_8 e^{-|k - k' - k''|^s} \|\hat{z}_1(k')\|^2 \|\hat{z}_2(k'')\|^2 \right]  \\
    \leq& 2\gamma_8^2 (2N+1)^m \left[ \sum_{k \in \mathbb{Z}^m} e^{-|k|^s} \right] \left[ \sum_{k' \in \Lambda_N} \|\hat{z}_1(k')\|^2 \right] \left[ \sum_{k'' \in \Lambda_N} \|\hat{z}_2(k'')\|^2 \right] \\
    =& 2\gamma_8^2 (2N+1)^m \left[ \sum_{k \in \mathbb{Z}^m} e^{-|k|^s} \right] \cdot \|\hat{z}_1\|^2 \|\hat{z}_2\|^2 < \infty.                                                               
\end{align*}
Similarly, we establish the corresponding bounds for $\mathcal{T}_{21}$, 
$\mathcal{T}_{12}$, and $\mathcal{T}_{22}$. 
These operators, $\partial^2 \hat{X}_q / \partial \hat{z}_p^2$, 
$\partial S/\partial \hat{z}_p$, and $\partial^2 \hat{X} / \partial \hat{z}_p^2$, 
are restrictions of the full tangent operator $ \partial^2 \hat{X} / \partial \hat{z}^2$ to specific index sets. 
Since the $\ell_2$-norm of a restricted operator is bounded by that of the full 
operator, we arrive at the inequality~\eqref{eqn: tensor}, which completes the proof.

%===============================================================%
\subsection{Proof of~\Cref{prop: B-operator}}
\label{subsec: b-operator}

Recalling the operator $B$ defined in~\eqref{eqn: b-operator}, we observe 
that for any $k, k' \notin \mathcal{S}$, its entries satisfy the following 
inequality: 
\[
    \| B(k, k') \| \leq \frac{1}{e} \left\| \frac{ \partial \langle k, \hat{X}_q \rangle  }{ \partial \hat{z}_p(k') }~\hat{z}_p (k) \right\|.
\]
For any $\hat{z} \in \mathcal{K}(s)$, we apply the decay properties 
established in~\Cref{prop: vector-field} to verify the Gevrey bound 
in~\eqref{eqn: b-comp}. 
Given that $B$ is a rank-one operator, we apply the Cauchy-Schwarz inequality to 
bound its norm as follows: 
\[
    \| B(k, k') \| \leq \frac{1}{e} \| k \hat{z}_p(k) \| \cdot \left\| \frac{ \partial \hat{X}_q }{ \partial \hat{z}_p(k') } \right\|. 
\]
By utilizing the Gevrey decay of the tangent operators derived 
in~\Cref{subsec: convolution}, we extend the bound over the lattice 
$\mathbb{Z}^m \times \mathbb{Z}^m$ as
\begin{equation*}
    \| B \| \leq \sum_{k \in \mathbb{Z}^m} \left( \frac{\| k \hat{z}_p(k) \|}{e} \right) \cdot \sum_{k' \in \mathbb{Z}^m} \left( \left\|  \frac{\partial  \hat{X}_q}{\partial \hat{z}_p} (k') \right\| \right).
\end{equation*}
Finally, since $\hat{z} \in \mathcal{K}(s)$, we 
invoke~\Cref{prop: vector-field} and~\Cref{prop: convolution} to ensure the 
convergence of the infinite sum over the lattice, thereby confirming the uniform 
bound as stated in~\eqref{eqn: b-bound}.

\section{Proofs of results in~\Cref{sec: implement}}
\label{sec: implement-app}

In this appendix, we provide the detailed proofs for the results presented 
in~\Cref{sec: implement}, specifically addressing~\Cref{lem: near-mes-initial} 
and~\Cref{thm: small-size}.

%=================================================%
\subsection{Proof of~\Cref{lem: near-mes-initial} }
\label{sec: near-mes-initial}

Let $\Omega \subseteq \mathbb{R}^m$ be a bounded domain, and denote its diameter as 
$d = \max_{x,y \in \Omega} \| x- y \|_2$. 
For each integer vector $k \in \mathbb{Z}^{m} \setminus \{0\} $, the corresponding 
nearly-resonant strip is enclosed by two parallel hyperplanes perpendicular to $k$. 
Consequently, the measure of the tangent nearly-resonant sets defined 
in~\eqref{eqn: diophantine-nearly} satisfies 
\[
    \mathrm{mes}\left( \Omega_{0,M}^{\tau} \right) \leq \sum_{k \in \mathbb{Z}^m \setminus \{0\} } \frac{2 d^{m-1}}{ (\| k \|_2)^{\tau + 1} }.
\]
Similarly, we estimate the measures for the total single-mode resonant 
set~\eqref{eqn: 1-melnikov} and the total difference resonant 
set~\eqref{eqn: 2-melnikov} as
\[
    \left\{ 
    \begin{aligned}
        &  \mathrm{mes}\left( \Omega_{1,M}^{\tau} \right) \leq \sum_{k \in \mathbb{Z}^m} \frac{2(n - m) d^{m-1} }{ (\| k \|_2+1)^{\tau + 1}}, \\
        &  \mathrm{mes}\left( \Omega_{2,M}^{\tau} \right) \leq \sum_{k \in \mathbb{Z}^m} \frac{4(n-m)^2 d^{m-1} }{ (\| k \|_2+2)^{\tau + 1}}.
    \end{aligned} 
    \right.
\]
Since the exponent satisfies $\tau > m - 1$, the lattice series 
$\sum_{k \in \mathbb{Z}^{m} \setminus \{ 0\} } \| k \|_2^{-\tau - 1}$ converges. 
By combining the above estimates, we establish the desired 
bound~\eqref{eqn: near-mes-initial}, which completes the proof.

%=================================================%
\subsection{Proof of~\Cref{thm: small-size}}
\label{sec: small-size-original}

Before proceeding to the proof of~\Cref{thm: small-size}, we establish a lemma 
regarding the preservation of the Gevrey decay property under operator multiplication. 
We say that an operator $A : \ell_2(\mathbb{Z}^{m}) \mapsto \ell_2(\mathbb{Z}^{m})$ 
exhibits \textit{proper decay} if its entries satisfy: 
\[
\sup_{k, k' \in \mathbb{Z}^m} \left( |A(k, k')| \exp\left\{ |k - k'|^s \right\} \right) \leq 1.
\]

\begin{lemma}
    \label{lemma: App-3}
    Let $A_1$ and $A_2$ be two operators with proper decay. 
    Then there exists a constant $C'_2(m, s) > 0$ such that their product 
    $\mathcal{A} = A_1A_2$ satisfies
    \begin{equation}    
        \label{eqn: convolution-mix}   
        \sup_{k, k' \in \mathbb{Z}^m} \left( |\mathcal{A}(k, k')| \exp\left\{ |k - k'|^s \right\} \right) \leq C'_2(m,s).
    \end{equation}
    More generally, for any finite collection of operators $\{ A_j \}_{j=1}^{\ell}$ 
    with proper decay, there exists a constant $C'_{\ell}(m, s) > 0$ such that the 
    $\ell$-fold product $\mathcal{A}=\prod_{j=1}^{\ell}A_j $ satisfies: 
    \begin{equation}    
        \label{eqn: p-convolution-mix}   
        \sup_{k,k' \in \mathbb{Z}^m} \left( |\mathcal{A}(k, k')| \exp\left\{ |k - k'|^s \right\} \right) \leq C'_{\ell}(m,s).
    \end{equation}   
\end{lemma}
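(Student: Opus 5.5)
The plan is to mirror the proof of \Cref{lemma: App-2}, replacing convolution of sequences by composition of kernels. For the product $\mathcal{A} = A_1 A_2$ we write, for fixed $k, k' \in \mathbb{Z}^m$,
\[
|\mathcal{A}(k,k')| \leq \sum_{k'' \in \mathbb{Z}^m} |A_1(k,k'')|\,|A_2(k'',k')| \leq \sum_{k'' \in \mathbb{Z}^m} \exp\left\{ -|k-k''|^s - |k''-k'|^s \right\},
\]
using only the proper decay hypothesis on each factor. The triangle inequality $|k-k'| \leq |k-k''| + |k''-k'|$, together with monotonicity of $x \mapsto x^s$, lets us extract the factor $\exp\{-|k-k'|^s\}$. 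The leftover exponent is
\[
|k-k''|^s + |k''-k'|^s - \big(|k-k''|+|k''-k'|\big)^s ,
\]
which we bound from below by \Cref{lemma: App-1}.

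Applying \Cref{lemma: App-1} with $a = |k-k''|$ and $b = |k''-k'|$ gives
\[
|\mathcal{A}(k,k')|\, e^{|k-k'|^s} \leq \sum_{k''} \exp\left\{ -(2-2^s) \min\{|k-k''|, |k''-k'|\}^s \right\}.
\]
The key step is to show this sum is bounded uniformly in $k, k'$. We split $\mathbb{Z}^m$ into the region where the minimum is $|k-k''|$ and the region where it is $|k''-k'|$. On the first region the sum is at most $\sum_{j \in \mathbb{Z}^m} \exp\{-(2-2^s)|j|^s\}$ after the substitution $j = k - k''$, and the second region is handled symmetrically. Hence the whole sum is at most
\[
2\sum_{j\in\mathbb{Z}^m} \exp\left\{-(2-2^s)|j|^s\right\} =: C'_2(m,s),
\]
which is finite because $2-2^s>0$ for $0<s<1$ and sub-exponential decay is summable on $\mathbb{Z}^m$, for instance by comparison with $\int_{\mathbb{R}^m} e^{-c|x|^s}\,dx$. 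This proves \eqref{eqn: convolution-mix}.

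For the $\ell$-fold product we proceed by induction, and here lies the only genuine obstacle: after one step the product has decay constant $C'_2$ rather than $1$, so it is no longer of proper decay. This is harmless. The argument above is bilinear in the constants, so if $A_1$ has constant $c_1$ and $A_2$ has constant $c_2$, then $A_1A_2$ has constant $c_1 c_2\, C'_2(m,s)$. Applying this to $(A_1\cdots A_{\ell-1})A_\ell$ gives the recursion $C'_\ell = C'_{\ell-1}\,C'_2$, hence $C'_\ell(m,s) = (C'_2(m,s))^{\ell-1}$. This constant depends on $\ell$ but not on $k, k'$, which establishes \eqref{eqn: p-convolution-mix}. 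As in \Cref{lemma: App-2}, we make no attempt to keep the constant from growing with $\ell$; the lemma only asserts finiteness for each fixed $\ell$.
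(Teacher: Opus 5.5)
Your proposal is correct and follows the paper's proof step for step: bound the kernel of $A_1A_2$ by $\sum_{k''}\exp\{-|k-k''|^s-|k''-k'|^s\}$, use the triangle inequality and \Cref{lemma: App-1} to extract $\exp\{-|k-k'|^s\}$, sum the remainder, and induct on $\ell$. The paper leaves two details implicit that you make explicit: your two-region split, which shows the remainder sum is uniform in $(k,k')$ with $C'_2=2\sum_{j}\exp\{-(2-2^s)|j|^s\}$, and your tracking of the constant $(C'_2)^{\ell-1}$ through the induction, which is needed because the product no longer has proper decay.
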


\begin{proof}
    Given that $A_1$ and $A_2$ possess proper decay, the entries of their product 
    are bounded by:
    \begin{align*}
        |\mathcal{A}(k, k')| 
        & \le \sum_{k'' \in \mathbb{Z}^m} |A_1(k,k'')| |A_2(k'', k')|                                  \\
        & \le \sum_{k'' \in \mathbb{Z}^m}  \exp\left\{ - |k - k''|^s  - |k '' - k'|^s \right\}                    \\
        & \le \exp\left\{-| k - k' |^{s} \right\}  \sum_{k'' \in \mathbb{Z}^m} \exp\left\{ - (2 - 2^{s}) \min\left\{ | k - k'' |^{s}, \; | k'' - k' |^{s} \right\} \right\},      
    \end{align*}
    where the last inequality follows from~\Cref{lemma: App-1}. 
    Since the Gevrey decay series converges, the summation above is bounded by a 
    constant depending only on $m$ and $s$, thus we establish the 
    inequality~\eqref{eqn: convolution-mix}. 
    The general estimate~\eqref{eqn: p-convolution-mix} follows immediately by 
    induction on $\ell$.
\end{proof}

We now complete the proof of~\Cref{thm: small-size}. 
Since the diagonal operator $D_{N}$ admits the uniform lower 
bound~\eqref{eqn: uniform-lower-original-small}, it is invertible. 
This allows us to factor the restricted operator $(T + \varepsilon B)_{N}$ as follows: 
\[
    (T + \varepsilon B)_{N}= D_{N} + \varepsilon (S_{N}+B_{N}) = D_{N} \left[ I + \varepsilon D_{N}^{-1}(S_{N}+B_{N}) \right].
\]
Given the smallness of the perturbation parameter $\varepsilon$, the operator 
$(T + \varepsilon B)_{N}$ is invertible, and its inverse can be expanded as the
Neumann series: 
\begin{equation}
    \label{eqn: T-nu-expansion}
    (T+\varepsilon B)_{N}^{-1} = \left[ I + \varepsilon D_{N}^{-1}(S_{N}+B_{N}) \right]^{-1} D_{N}^{-1} = \left\{ \sum_{\ell=0}^{\infty} \left[ - \varepsilon D_{N}^{-1}(S_{N}+B_{N})  \right]^{\ell} \right\} D_{N}^{-1}.
\end{equation}
By applying the bound for $\| D_{N}^{-1} \|_2$ 
from~\eqref{eqn: uniform-lower-original-small} alongside the estimates 
from~\Cref{prop: convolution} and~\Cref{prop: B-operator}, we derive the following 
bound
\[
    \| (T+\varepsilon B)_{N}^{-1} \|_2 \leq \| D_{N}^{-1} \|_2 \left( \sum_{\ell =0}^{\infty} \| \varepsilon D_{N}^{-1}(S_{N} + B_{N})\|_2^{\ell} \right) \leq \frac{1}{\varepsilon_{N}},
\] 
which confirms the norm bound~\eqref{eqn: small-size-inverse}. 
To establish the off-diagonal decay, we rewrite the Neumann 
series~\eqref{eqn: T-nu-expansion} to isolate the higher-order terms:
\[
    (T+\varepsilon B)_{N}^{-1} = D_{N}^{-1} - \left[ \sum_{\ell=0}^{\infty} \left( - \varepsilon D_{N}^{-1}(S_{N} +B_N) \right)^{\ell} \right] \left(\varepsilon D_{N}^{-1}(S_{N}+B_{N}) D_{N}^{-1}\right).
\]
Utilizing the Gevrey decay properties for the entries of the operators $S$ and 
$B$ (\Cref{prop: convolution} and~\Cref{prop: B-operator}),~\Cref{lemma: App-3} 
implies that for any $k \neq k'$:
\[
    \left| \left( - \varepsilon D_{N}^{-1} (S_{N}+B_{N}) \right)^{\ell} \left(  k, k' \right) \right| \leq  \left(\frac{1}{2}\right)^{\ell+1} \exp\left\{- | k - k' |^{s} \right\}.
\]
By summing the corresponding Neumann series, we establish the inequality for 
the off-diagonal entries~\eqref{eqn: small-size-localization}. 
The proof of~\Cref{thm: small-size} is thus complete.

\section{Proofs of results in~\Cref{sec: iterative-scheme}}
\label{sec: completion-app}

In this appendix, we provide the full proof for~\Cref{lem: inverse-derivative}.

%==============================================================================%
\subsection{Proof of~\Cref{lem: inverse-derivative}}
\label{subsec: derivative-app}

From~\Cref{prop: convolution} and~\Cref{prop: B-operator}, it follows that for any 
$k \neq  k'$, the off-diagonal entries of $\partial T_N$ satisfy
\[
    |\partial L_N(k, k')| \leq \exp\left\{ - | k - k' |^{s} \right\}.
\]
Following the operator identity in~\eqref{eqn: t-n-derivative-inverse}, we 
expand each entry $\partial L_{N}^{-1}(k, k')$ as a double sum as:
\[
    \partial L_{N}^{-1}(k, k') = - \sum_{k_1 \in  \Lambda_N} \sum_{k_2 \in  \Lambda_N}L_{N}^{-1}(k, k_1) \partial L_N(k_1, k_2) L_{N}^{-1}(k_2, k')
\]
By applying the triangle inequality and substituting the bounds 
from~\Cref{lemma: App-3}, we obtain: 
\begin{align*}
|\partial L_N^{-1}(k, k')| 
    &\leq \sum_{k_1 \in  \Lambda_N} \sum_{k_2 \in  \Lambda_N } |L_{N}^{-1}(k, k_1)| \cdot |\partial L_{N} (k_1, k_2)| \cdot |L_{N}^{-1}(k_2, k')| \\
    &\leq \frac{4N^2 (2N+1)^{2m}}{\varepsilon_N^2}  \left(\exp\left\{ - \frac{\left( |k - k'| - 2N^{\frac12} \right)^{s}}{2} \right\} \right).
\end{align*}
Given that the spatial separation satisfies $|k - k'| \geq N^{\frac34}$, 
the off-diagonal entries of $\partial L_{N}^{-1}$ satisfy the Gevrey 
bound~\eqref{eqn: inverse-restricted-entries}. 
The proof thus is complete.

%\appendix
%\input{022_newton}
%\input{033_property}
%\input{044_small}
%\input{05_maltiscale}

\end{document}